\documentclass[reqno]{amsart}      
\usepackage{amssymb,amsmath,graphicx}
\newtoks\prt 
  \numberwithin{equation}{section}
 \textwidth160mm
\hoffset-20mm

\newtheorem{thm}{Theorem}[section]

\newtheorem{ques}[thm]{Question} 
\newtheorem{lemma}[thm]{Lemma} 
\newtheorem{prop}[thm]{Proposition} 
\newtheorem{cor}[thm]{Corollary} 
\newtheorem{example}[thm]{Example}

\theoremstyle{definition} 
\newtheorem{example2}[thm]{Example} 
\newtheorem{remark}[thm]{Remark}

\def\eqn#1$$#2$${\begin{equation}\label#1#2\end{equation}}

\def\A{\mathcal A} 
\def\B{\mathcal B} 
\def\C{\mathcal C} 
\def\D{\mathcal D}

\def\F{\mathcal F} 
\def\N{\mathcal N}

\def\U{\mathcal U} 
\def\V{\mathcal V}

\def\M{\mathcal M}

\def\P{\mathcal{P}} 
\def\Pb{\overline{\P}}

\def\ce{\mathbb C}

\def\en{\mathbb N} 
\def\er{\mathbb R} 
\def\qe{\mathbb Q}

\def \htt{\operatorname{ht}}

\def \ims {\operatorname{ims}} 
\def \dens {\operatorname{dens}} 
\def \card {\operatorname{card}}

\def \Lev {\operatorname{Lev}} 
\def\cf{\operatorname{cf}} 
 
\def\spt{\operatorname{spt}} 
\def\lin{\operatorname{span}} 
\def\clin{\operatorname{\overline{span}}}

\def \reg {\partial _{\kern1pt\text{reg}}}

\def\di{\,\mbox{\rm d}}

\newcommand{\norm}[1]{\left\|#1\right\|}

\newcommand{\wscl}[1]{\overline{#1}^{\,w^*}}

\newcommand{\abs}[1]{\left|#1\right|}
\newcommand{\setsep}{;\,}

\begin{document}

\title{Projectional skeletons and Markushevich bases}
\author{Ond\v{r}ej F.K. Kalenda}
\address{Department of Mathematical Analysis \\
Faculty of Mathematics and Physic\\ Charles University\\
Sokolovsk\'{a} 83, 186 \ 75\\Praha 8, Czech Republic}
\email{kalenda@karlin.mff.cuni.cz}
\thanks{Supported by the Research grant GA \v{C}R 17-00941S} 
\subjclass[2010]{46B26, 46B15, 46E15}
\keywords{projectional skeleton, Markushevich basis, spaces of continuous functions on ordinals, spaces of continuous functions on trees,
duals to Asplund spaces}
\begin{abstract} 
We prove that Banach spaces with a $1$-projectional skeleton form a $\mathcal{P}$-class and deduce that any such space
admits a strong Markushevich basis. We provide several equivalent characterizations of spaces with a projectional skeleton
and of spaces having a commutative one. We further analyze known examples of spaces with a non-commutative projectional skeleton and 
compare their behavior with the commutative case. Finally, we collect several open problems. 

\end{abstract}

\maketitle

\section{Introduction}

\emph{Projectional resolutions of the identity} (shortly \emph{PRI}), introduced and used for the first time by J.~Lindenstrauss \cite{lindenstrauss-65}, are an important tool for investigation of nonseparable Banach spaces. The main application consists in extending some results from separable spaces to certain classes of non-separable ones using transfinite induction, see, e.g., \cite[Section 6.2]{fabian-kniha}.
Let us recall the definition of a PRI. Let $X=(X,\norm{\cdot})$ be a non-separable Banach space and let $\kappa=\dens X$.
(Recall that $\dens X$ denotes the density character of $X$, i.e., the smallest cardinality of a dense subset of $X$. Further, any cardinal number is, as usually, identified with the first ordinal of the given cardinality.)
 A PRI is a transfinite sequence of projections $(P_\alpha)_{\alpha\le\kappa}$ satisfying the following properties.
\begin{itemize}
\item[(i)] $P_0=0$, $P_\kappa=I$;
\item[(ii)] $\norm{P_\alpha}=1$ for $0<\alpha\le\kappa$;
\item[(iii)] $\dens P_\alpha X\le \max\{\aleph_0,\card\alpha\}$ for $\alpha\le\kappa$;
\item[(iv)] $P_\alpha P_\beta=P_\beta P_\alpha=P_\alpha$ for $\alpha\le\beta\le\kappa$;
\item[(v)] $P_\lambda X=\overline{\bigcup_{\alpha<\lambda}P_\alpha X}$ for $\lambda\le\kappa$ limit.
\end{itemize}
So, a PRI provides a decomposition of the space $X$ to certain subspaces of a smaller density. In order to prove a property of $X$ using a transfinite induction argument, we need to know that the property is satisfied by the smaller subspaces. It inspires the following definitions of a $\P$-class and of a $\Pb$-class of Banach spaces.

Let $\C$ be a class of Banach spaces.
\begin{itemize}
\item \cite[Definition 3.45 on p. 107]{HMVZ-biortogonal} We say that $\C$ is a \emph{$\P$-class} if for any nonseparable space $X\in\C$ there is a PRI $(P_\alpha)_{\alpha\le\kappa}$ on $X$ such that $(P_{\alpha+1}-P_\alpha)X\in\C$ for each $\alpha<\kappa$.
\item \cite[p. 417]{hajek-johanis}  We say that $\C$ is a \emph{$\Pb$-class} if for any nonseparable space $X\in\C$ there is a PRI $(P_\alpha)_{\alpha\le\kappa}$ on $X$ such that $P_\alpha X\in\C$ for each $\alpha<\kappa$.
\end{itemize}
Certain classes of Banach spaces are easily seen to be both $\P$-classes and $\Pb$-classes as soon as we know they admit a PRI. For example, any weakly compactly generated Banach spaces admits a PRI by \cite{Amir-Lindenstrauss}. Since this class is stable to taking complemented subspaces, it is clearly both a $\P$-class and a $\Pb$-class. Similarly we can proceed for the classes of reflexive spaces, subspaces of weakly compactly generated spaces, weakly K-analytic spaces (see e.g. \cite[Section 4.1]{fabian-kniha}), weakly countably determined (Va\v{s}\'ak) spaces (see \cite{vasak,gulko79} or \cite[Chapter 7]{fabian-kniha}) and weakly Lindel\"of determined (WLD) spaces (see \cite{AM}). Indeed, all these classes are stable to subspaces and any nonseparable space belonging there admits a PRI.
 
The situation becomes more complicated if we look at the larger classes of $1$-Plichko Banach spaces or on spaces admitting a $1$-projectional skeleton. The class of $1$-Plichko spaces was investigated already in \cite{valdivia-sim}, later in \cite{orihuela} under the name class $\V$, 
the current name was given in \cite{ja-survey}. This class contains many Banach spaces naturally appearing in mathematics, see \cite{ja-val-exa,BHK-vN,BHK-JBW,triples}.
Let us recall the respective definitions:

Let $X$ be a Banach space.
\begin{itemize}
\item A subspace $D\subset X^*$ is said to be a \emph{$\Sigma$-subspace} of $X^*$ if there is a linearly dense set $M\subset X$ such that
$$D=\{x^*\in X^*\setsep \{x\in M\setsep x^*(x)\ne0\}\mbox{ is countable}\}.$$
\item $X$ is said to be \emph{$1$-Plichko} if $X^*$ admits a $1$-norming $\Sigma$-subspace.
\item $X$ is said to be \emph{Plichko} if if $X^*$ admits a norming $\Sigma$-subspace.
\item $X$ is said to be \emph{weakly Lindel\"of determined} (\emph{WLD}) if $X^*$ is a $\Sigma$-subspace of itself.
\end{itemize}

Note that, as indicated by the presence of the constant $1$ in the name, $1$-Plichko spaces are not stable to isomorphisms.
(The stability fails even in a very strong way, see \cite{ja-valeqnorms}.)

The definitions used in \cite{valdivia-sim,orihuela} were different, their equivalence with the current one follows from \cite[Theorem 2.7]{ja-survey}. Any $1$-Plichko space admits a PRI -- this follows from \cite[Theorem 1 and Note 1]{valdivia-sim}. Moreover, $1$-Plichko spaces form both a $\P$-class (by \cite[Theorem 4.14]{ja-survey}) and a $\Pb$-class (this can be proved by a minor adjustment of the proof of \cite[Theorem 4.14]{ja-survey}; it also follows from \cite[Theorem 17.6]{KaKuLo} -- more precisely from its proof using \cite[Theorem 27]{kubis-skeleton}). These results are not just a mere consequence of the existence of a PRI, as a (complemented) subspace of a $1$-Plichko space need not be $1$-Plichko, see \cite{ja-serdica,ja-subspacesC(K)} or \cite[Sections 4.5 and 5.2]{ja-survey}. So, one should take care during the construction of a PRI.

$1$-Plichko spaces can be characterized and generalized using the notion of a projectional skeleton introduced in \cite{kubis-skeleton}. Let us recall the definition and basic properties. 

Let $X$ be a Banach space. A \emph{projectional skeleton} on $X$ is an indexed family $(P_s)_{s\in\Gamma}$ of bounded linear projections on $X$, where $\Gamma$ is an  up-directed partial ordered set, satisfying the following properties:
\begin{itemize}
\item[(i)] $P_sX$ is separable for $s\in\Gamma$;
\item[(ii)] $P_sP_t=P_tP_s=P_s$ whenever $s,t\in\Gamma$ and $s\le t$;
\item[(iii)] If $(s_n)$ is an increasing sequence in $\Gamma$, then $s=\sup_{n\in\en}s_n$ exists in $\Gamma$ and $P_sX=\overline{\bigcup_{n\in\en}P_{s_n}X}$;
\item[(iv)] $X=\bigcup_{s\in\Gamma} P_s X$.
\end{itemize}
Note that the condition (iii) in particular implies, that any increasing sequence in $\Gamma$ has a supremum, i.e., $\Gamma$ is \emph{$\sigma$-complete}. 

If  $(P_s)_{s\in\Gamma}$ is a projectional skeleton $X$, the subspace of $X^*$ defined by
$$D=\bigcup_{s\in\Gamma}P_s^*X^*$$
is said to be \emph{induced by} the skeleton. If $\Gamma'\subset\Gamma$ is \emph{cofinal}, i.e.,
$$\forall s\in\Gamma\,\exists t\in\Gamma': s\le t,$$
and \emph{$\sigma$-closed}, i.e.,
$$\mbox{whenever $(s_n)$ is an increasing sequence in $\Gamma'$, its supremum in $\Gamma$ belongs to $\Gamma'$,}$$
then clearly $(P_s)_{s\in\Gamma'}$ is also a projectional skeleton on $X$ and the respective induced subspace is again $D$. Therefore, by \cite[Proposition 9 and Lemma 10]{kubis-skeleton} we can assume without loss of generality that the projections are uniformly bounded and, moreover, the following stronger version of (iii) holds:
\begin{itemize}
\item[(iii')] If $(s_n)$ is an increasing sequence in $\Gamma$ and $s=\sup_{n\in\en}s_n$, then $P_sx=\lim\limits_{n\to\infty}P_{s_n}x$ for $x\in X$.
\end{itemize}
A \emph{$1$-projectional skeleton} is a projectional skeleton made from norm one projections. 

By \cite[Theorem 27]{kubis-skeleton} a Banach space is $1$-Plichko if and only if it admits a commutative $1$-projectional skeleton. Here, the word \emph{commutative} means that $P_sP_t=P_tP_s$ for any $s,t\in\Gamma$, not only for comparable pairs. A more precise version of the equivalence says that $D\subset X^*$ is a $1$-norming $\Sigma$-subspace if and only if it is induced by a commutative $1$-projectional skeleton. Indeed, the `only if part' follows by \cite[Proposition 21]{kubis-skeleton}. As for the `if part', assuming $D$ is induced by a commutative $1$-projectional skeleton, there is a $1$-norming subspace $D'\subset D$ contained in a $\Sigma$-subspace. Therefore the $\Sigma$-subspace equals $D$ by Lemma~\ref{L:tightness}(b) below.

On the other hand, there are spaces admitting a (non-commutative) $1$-projectional skeleton which fail to be $1$-Plichko -- spaces of continuous functions on ordinals or, more generally, on certain trees, or duals to Asplund spaces. For a more detailed discussion of these examples we refer to Section~\ref{S:examples}.

Any Banach space with a $1$-projectional skeleton admits a PRI
\cite[Theorem 12]{kubis-skeleton}. In fact, spaces with a $1$-projectional skeleton form a $\Pb$-class \cite[Theorem 17.6]{KaKuLo}. Up to the knowledge of the author, the statement  that they form also a $\P$-class is nowhere proved in the literature, contrary to the claim contained in the introduction of \cite{cuth-simult} (referring in error to \cite[Theorem 17.6]{KaKuLo}). The aim of the present paper is, among others, to fill in this gap by the following theorem.

\begin{thm}\label{T:P-class} Let $X$ be a non-separable Banach space admitting a $1$-projectional skeleton. Let $\kappa=\dens X$. Then there is a PRI $(P_\alpha)_{\alpha\le\kappa}$ on $X$ such that the space $(P_{\alpha+1}-P_\alpha)X$ admits a $1$-projectional skeleton for each $\alpha<\kappa$. In other words, the spaces admitting a $1$-projectional skeleton form a $\P$-class.
\end{thm}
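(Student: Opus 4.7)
The plan is to carry out the standard elementary-submodels construction already used to establish the $\Pb$-class property of spaces with a $1$-projectional skeleton, and then to exhibit a $1$-projectional skeleton on each slice $(P_{\alpha+1}-P_\alpha)X$ by restricting the original skeleton to a carefully chosen cofinal subfamily of $\Gamma_{\alpha+1}$. First, fix a sufficiently large regular cardinal $\theta$ and a continuous chain $(M_\alpha)_{\alpha\le\kappa}$ of $\omega$-closed elementary submodels of $H(\theta)$ with $|M_\alpha|=\max\{\aleph_0,|\alpha|\}$, each containing $X$, $\Gamma$, $(P_s)_{s\in\Gamma}$ and a dense subset of $X$ of size $\kappa$. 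Put $\Gamma_\alpha=\Gamma\cap M_\alpha$ and $X_\alpha=\bigcup_{s\in\Gamma_\alpha}P_sX$. Elementarity and $M_\alpha^\omega\subset M_\alpha$ imply that $\Gamma_\alpha$ is up-directed and $\sigma$-closed in $\Gamma$; since a directed $\sigma$-closed union of the nested closed sets $P_sX$ is itself closed, $X_\alpha$ is a closed subspace on which $(P_s)_{s\in\Gamma_\alpha}$ is itself a $1$-projectional skeleton, with $\dens X_\alpha\le\max\{\aleph_0,|\alpha|\}$. The existence of norm-one projections $P_\alpha\colon X\to X_\alpha$ forming a PRI and satisfying the compatibility $P_\alpha P_s=P_sP_\alpha=P_s$ for every $s\in\Gamma_\alpha$ is precisely the content of \cite[Theorem 17.6]{KaKuLo}.

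Next fix $\alpha<\kappa$ and put $Y_\alpha=(P_{\alpha+1}-P_\alpha)X=X_{\alpha+1}\cap\ker P_\alpha$; its density is at most $\max\{\aleph_0,|\alpha|\}$, so the statement is non-trivial only when $Y_\alpha$ is non-separable. The candidate $1$-projectional skeleton on $Y_\alpha$ is the family $(P_s|_{Y_\alpha})_{s\in\Lambda_\alpha}$, where $\Lambda_\alpha$ will be an appropriate cofinal $\sigma$-closed subfamily of $\Gamma_{\alpha+1}$ on which the commutation $P_sP_\alpha=P_\alpha P_s$ holds. Indeed, granted commutation, for $y\in Y_\alpha$ one has $P_\alpha P_sy=P_sP_\alpha y=0$ and $P_{\alpha+1}P_sy=P_sy$ (by the compatibility in $X_{\alpha+1}$), so $P_sy\in Y_\alpha$; hence $P_s|_{Y_\alpha}$ is a norm-one projection on $Y_\alpha$. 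The remaining skeleton axioms for $(P_s|_{Y_\alpha})_{s\in\Lambda_\alpha}$ transfer directly from those of $(P_s)_{s\in\Gamma_{\alpha+1}}$: conditions (ii) and (iii$'$) are inherited verbatim, while (iv) follows because every $y\in Y_\alpha\subseteq X_{\alpha+1}$ is fixed by some $P_t$ with $t\in\Gamma_{\alpha+1}$, and then fixed by any $u\ge t$ in $\Lambda_\alpha$ via $P_uP_t=P_t$.

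The main obstacle is thus the construction of $\Lambda_\alpha$. Using $P_tP_\alpha=P_t=P_\alpha P_t$ for $t\in\Gamma_\alpha$, the condition $P_sP_\alpha=P_\alpha P_s$ for $s\in\Gamma_{\alpha+1}$ is equivalent to $P_s$ leaving both $X_\alpha$ and $\ker P_\alpha$ invariant. I would produce $\Lambda_\alpha$ by a closing-off inside $M_{\alpha+1}$: given $s_0\in\Gamma_{\alpha+1}$, build inductively $s_0\le s_1\le\cdots$ in $\Gamma_{\alpha+1}$ so that $s_{n+1}$ dominates countably many elements of $\Gamma_\alpha$ whose ranges cover a fixed countable dense subset of $P_{s_n}(X_\alpha\cap M_{\alpha+1})$; such a choice is available inside $M_{\alpha+1}$ by elementarity and the directedness of $\Gamma$. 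Put $s=\sup_n s_n\in\Gamma_{\alpha+1}$ (by $\sigma$-closedness); then (iii$'$), the choices above and closedness of $X_\alpha$ force $P_sX_\alpha\subseteq X_\alpha$, and a dual argument yields $P_s(\ker P_\alpha)\subseteq\ker P_\alpha$, so $s\in\Lambda_\alpha$. The resulting family $\Lambda_\alpha$ is cofinal in $\Gamma_{\alpha+1}$, and its $\sigma$-closedness is automatic from (iii$'$) together with continuity of $P_\alpha$. This closing-off is the technical heart of the argument; once it is in place, the restriction skeleton from the previous paragraph completes the proof.
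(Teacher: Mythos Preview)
Your overall architecture is right: build the PRI from the skeleton, then get a skeleton on each slice $Y_\alpha=(P_{\alpha+1}-P_\alpha)X$ by restricting $(P_s)_{s\in\Gamma_{\alpha+1}}$ to a cofinal $\sigma$-closed $\Lambda_\alpha$ on which $P_s(Y_\alpha)\subset Y_\alpha$. The gap is precisely in the construction of $\Lambda_\alpha$.

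Your closing-off step for $P_sX_\alpha\subset X_\alpha$ is circular. You ask for elements $t\in\Gamma_\alpha$ ``whose ranges cover a fixed countable dense subset of $P_{s_n}(X_\alpha\cap M_{\alpha+1})$''; but a point of $P_{s_n}(X_\alpha)$ lies in $\bigcup_{t\in\Gamma_\alpha}P_tX=X_\alpha$ only if $P_{s_n}$ already sends $X_\alpha$ into $X_\alpha$, which is exactly the conclusion you are after. Elementarity does not rescue this: $\Gamma_\alpha$ is not cofinal in $\Gamma_{\alpha+1}$, so you cannot interleave as in the usual ``back and forth'', and for incomparable $s,t$ there is no control on $P_sP_t$. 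In fact, by Proposition~\ref{P:sigma(X,D)-spojitost} the existence of a cofinal $\sigma$-closed family of $s$ with $P_s(P_\alpha X)\subset P_\alpha X$ is \emph{equivalent} to $P_\alpha X$ being $\sigma(X,D)$-closed, and this genuinely fails in the non-commutative examples. So no closing-off of the kind you describe can produce $X_\alpha$-invariance in general. The ``dual argument'' for $\ker P_\alpha$ is left unspecified and would face the same obstruction if attempted symmetrically.

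The paper avoids this by observing two things. First, you do not need $X_\alpha$-invariance at all: for $s\in\Gamma_{\alpha+1}$ one has $P_sX\subset X_{\alpha+1}$ automatically, so $P_s(Y_\alpha)\subset Y_\alpha$ follows from $P_s(\ker P_\alpha)\subset\ker P_\alpha$ alone. Second --- and this is the missing idea --- $\ker P_\alpha$ is $\sigma(X,D)$-closed, since
\[
\ker P_\alpha=\bigcap\{\ker P_s^*x^*\setsep s\in A_\alpha,\ x^*\in X^*\}
\]
and each $P_s^*x^*\in D$ (Proposition~\ref{P:P_AX a ker P_A}(iv)). One then invokes Lemma~\ref{L:invariance}(b) (which rests on \cite[Proposition~3.1]{cuth-simult}, a retractional-skeleton argument on $(B_{X^*},w^*)$) to obtain a cofinal $\sigma$-closed $\Lambda_\alpha\subset\Gamma_{\alpha+1}$ with $P_s(\ker P_\alpha)\subset\ker P_\alpha$ for $s\in\Lambda_\alpha$. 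With this $\Lambda_\alpha$ in hand, your restriction argument in the third paragraph goes through verbatim.
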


This theorem will be proved in the next section -- it follows immediately from Proposition~\ref{P:PRI}(a,c) below.
The main application of Theorem~\ref{T:P-class} is the below result on existence of a Markushevich basis.

Recall that a \emph{Markushevich basis} of a Banach space $X$ is an indexed family $(x_\alpha,x_\alpha^*)_{\alpha\in\Lambda}$ in $X\times X^*$ satisfying the following three conditions.
\begin{itemize}
\item $x_\alpha^*(x_\alpha)=1$ and $x_\alpha^*(x_\beta)=0$ for $\alpha\ne\beta$ in $\Lambda$ (i.e., it is a biorthogonal system);
\item $\clin\{x_\alpha\setsep \alpha\in\Lambda\}=X$;
\item the set $\{x_\alpha^*\setsep \alpha\in\Lambda\}$ separates points of $X$.
\end{itemize}
Moreover, a Markushevich basis  $(x_\alpha,x_\alpha^*)_{\alpha\in\Lambda}$ is said to be \emph{strong} if
$$\forall x\in X: x\in\clin\{x_\alpha\setsep \alpha\in \Lambda\ \&\ x_\alpha^*(x)\ne0\}.$$

Any separable Banach space admits a strong Markushevich basis by \cite{Terenzi}, hence it may be viewed as a weak substitute of a Schauder basis which need not exist. 

Using Theorem~\ref{T:P-class} and \cite[Theorem 5.1]{HMVZ-biortogonal} we immediately obtain the following consequence.

\begin{thm}\label{T:M-basis}
Any Banach space with a projectional skeleton admits a strong Markushevich basis.
\end{thm}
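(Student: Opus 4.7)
The plan is a two-step reduction: first pass from a general projectional skeleton to a $1$-projectional skeleton by an equivalent renorming, and then invoke Theorem~\ref{T:P-class} to conclude via \cite[Theorem 5.1]{HMVZ-biortogonal}.

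For the first step, I would use the fact (established in \cite{kubis-skeleton}) that any Banach space admitting a projectional skeleton can be equivalently renormed so as to admit a $1$-projectional skeleton; after passing to a cofinal $\sigma$-closed subset of the index set one has uniformly bounded projections, and a suitable supremum-type norm then makes them contractive. Since the definition of a strong Markushevich basis is invariant under equivalent renormings (only biorthogonality and closed linear spans are involved), it suffices to prove the theorem assuming that $X$ admits a $1$-projectional skeleton.

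With this reduction in place, Theorem~\ref{T:P-class} puts $X$ in a $\P$-class, and \cite[Theorem 5.1]{HMVZ-biortogonal} then yields a strong Markushevich basis on any nonseparable member of such a class (the separable base case being Terenzi's theorem). The proof there proceeds by transfinite induction on $\dens X$: given the PRI $(P_\alpha)_{\alpha\le\kappa}$ whose consecutive differences remain in the class, one glues strong M-bases of the smaller spaces $(P_{\alpha+1}-P_\alpha)X$ (obtained from the induction hypothesis) into a strong M-basis of $X$, composing each dual functional with $P_{\alpha+1}-P_\alpha$ to lift it to an element of $X^*$. The verification that biorthogonality, linear density, point-separation and the strong M-basis condition survive the gluing is the technical heart of \cite[Theorem 5.1]{HMVZ-biortogonal}. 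The main obstacle has in fact already been resolved by Theorem~\ref{T:P-class} itself, namely ensuring that the differences $(P_{\alpha+1}-P_\alpha)X$ of some PRI remain within the class of spaces admitting a $1$-projectional skeleton; once that is in hand, the present theorem is indeed immediate.
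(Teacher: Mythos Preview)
Your proposal is correct and follows essentially the same route as the paper: the paper simply says that Theorem~\ref{T:M-basis} follows immediately from Theorem~\ref{T:P-class} together with \cite[Theorem 5.1]{HMVZ-biortogonal}, leaving the passage from a general projectional skeleton to a $1$-projectional skeleton implicit. You make this renorming reduction explicit (which is indeed the right way to bridge the gap), but otherwise the argument coincides with the paper's.
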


To give a proof of Theorem~\ref{T:M-basis} was the first motivation of the present paper, as it is used in the proof of \cite[Theorem 4.1]{cuka-cejm} with the reference to the introduction of \cite{cuth-simult}. 

Apart of the proof of Theorem~\ref{T:M-basis} we discuss in more detail the difference between commutative and non-commutative cases. In Section~\ref{S:commutative} we provide some characterizations and special properties of ($1$-)Plichko spaces. In Section~\ref{S:noncommutative}
we give analogous characterizations of spaces with a (possibly non-commutative) projectional skeleton. The analogy is not complete, some questions remain open. This is illustrated in Section~\ref{S:examples} where the currently known examples of spaces admitting a non-commutative projectional skeletons are described and analyzed. In the last section we collect open problems.

We finish the introductory section by giving two lemmata we will use throughout the paper.

\begin{lemma}\label{L:norma projekci} Let $X$ be a Banach space, $(P_s)_{s\in\Gamma}$ a projectional skeleton on $X$ with induced subspace $D$.
Let $C\ge1$.
\begin{itemize}
\item Assume that $\norm{P_s}\le C$ for each $s\in\Gamma$. Then $D$ is $C$-norming.
\item Assume that $D$ is $C$-norming. Then there is a cofinal $\sigma$-closed $\Gamma'\subset\Gamma$ such that $\norm{P_s}\le C$ for $s\in\Gamma'$.
\end{itemize}
\end{lemma}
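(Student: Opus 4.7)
Part (a) is a direct Hahn--Banach step: given $x \in X$, condition (iv) of the skeleton supplies $s \in \Gamma$ with $P_s x = x$; picking $\varphi \in B_{X^*}$ with $\varphi(x) = \norm{x}$, the functional $P_s^*\varphi \in D$ has norm at most $\norm{P_s} \le C$ and satisfies $(P_s^*\varphi)(x) = \varphi(P_s x) = \norm{x}$, witnessing $C$-normingness.

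For part (b), my plan is to take $\Gamma' := \{s \in \Gamma \setsep \norm{P_s} \le C\}$ itself and verify both required properties. $\sigma$-closedness is immediate from (iii'): if $(s_n) \subset \Gamma'$ is increasing with supremum $s$, then $P_s y = \lim_n P_{s_n} y$ in norm for every $y$, so $\norm{P_s y} \le \liminf_n \norm{P_{s_n} y} \le C\norm{y}$. The substantive task is cofinality. Given $s_0 \in \Gamma$, I would construct an increasing sequence $(s_n)_n$ with $\sup_n s_n \in \Gamma'$ as follows. At stage $n$, pick a countable dense sequence $(x_{n,k})_k$ in the separable space $P_{s_n}X$; using the $C$-norming property of $D$, for each $k,j \in \en$ select $e_{n,k,j} \in D \cap B_{X^*}$ with $|e_{n,k,j}(x_{n,k})| \ge \norm{x_{n,k}}/C - 1/j$, and collect these into a countable set $E_n$. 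Each $e \in E_n$ lies in $P_{t(e)}^* X^*$ for some $t(e) \in \Gamma$; by up-directedness together with $\sigma$-completeness of $\Gamma$, choose $s_{n+1} \in \Gamma$ dominating $s_n$ and every $t(e)$ with $e \in E_n$. Set $s := \sup_n s_n$ and $E_\infty := \bigcup_n E_n$.

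Two observations about $s$ then combine to yield $\norm{P_s} \le C$. First, every $e \in E_\infty$ satisfies $t(e) \le s$, so
\[
P_s^* e \;=\; (P_{t(e)} P_s)^* e \;=\; P_{t(e)}^* e \;=\; e,
\]
whence $e(P_s y) = e(y)$ for all $y \in X$. Second, a routine perturbation argument shows that $E_\infty$ remains $C$-norming on $P_s X = \overline{\bigcup_n P_{s_n}X}$: any $z \in P_sX$ can be approximated by some $x_{n,k}$, and $e_{n,k,j}$ with large $j$ then gives $|e_{n,k,j}(z)|$ arbitrarily close to $\norm{z}/C$. Combining these,
\[
\norm{P_s y} \;\le\; C \sup_{e \in E_\infty \cap B_{X^*}} |e(P_s y)| \;=\; C \sup_{e \in E_\infty \cap B_{X^*}} |e(y)| \;\le\; C\norm{y},
\]
so $s \in \Gamma'$.

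The key difficulty is the limit step: the collection $E_\infty$ must be simultaneously rich enough to $C$-norm the larger space $P_sX$ (not just each individual $P_{s_n}X$) and still lie pointwise in the fixed-point set of $P_s^*$. The interleaved bookkeeping -- choosing $E_n$ from $P_{s_n}X$ and then forcing $s_{n+1}$ to absorb every $t(e)$ with $e \in E_n$ -- together with $\sigma$-completeness of $\Gamma$, is precisely what reconciles these two demands.
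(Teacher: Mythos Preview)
Your proof is correct and follows the standard closing-off argument, which is presumably what the paper's cited reference \cite[Lemma 1]{ja-vasak} contains (the paper itself only cites rather than proves). One minor point worth making explicit: when you invoke (iii') for $\sigma$-closedness of $\Gamma'$, recall that (iii') is not a primitive axiom but follows from (iii) once the projections along the sequence are uniformly bounded---which holds here precisely because $(s_n)\subset\Gamma'$ gives $\norm{P_{s_n}}\le C$; the same remark applies to your limit $s=\sup_n s_n$ in the cofinality construction, where the bound on $\norm{P_s}$ is exactly what you are establishing.
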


\begin{proof}
The proof is easy, it is explicitly given in \cite[Lemma 1]{ja-vasak}.
\end{proof}

\begin{lemma}\label{L:tightness} Let $X$ be a Banach space.
\begin{itemize}
\item[(a)]  Let $(P_s)_{s\in\Gamma}$ be a projectional skeleton on $X$ with induced subspace $D$. Then $D$ is weak$^*$-countably closed in $X^*$ and norm-bounded subsets of $D$ have countable tightness in the weak$^*$ topology.
\item[(b)] Let $D_1$ and $D_2$ be two subspaces of $X^*$ such that bounded subsets of $D_1$ have countable tightness in the weak$^*$ topology, $D_2$ is weak$^*$-countably closed  and $D_1\cap D_2$ is norming. Then $D_1\subset D_2$.

Hence, if $D_1$ and $D_2$ are moreover subspaces induced by some projectional skeletons, then $D_1=D_2$.
\end{itemize}
\end{lemma}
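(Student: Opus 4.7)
The plan is to prove (a) directly from the skeleton axioms and then to deduce (b) by combining (a) with the bipolar theorem. For the weak$^*$-countable closedness in (a), given a countable $A=\{a_n\}\subset D$ I would pick indices $r_n\in\Gamma$ with $a_n\in P_{r_n}^*X^*$ and use up-directedness to build an increasing chain $t_1\le t_2\le\cdots$ in $\Gamma$ with $t_n\ge r_n$. Setting $t=\sup_n t_n$, which exists by $\sigma$-completeness, the identity $P_{r_n}P_t=P_{r_n}$ yields $P_t^*a_n=a_n$, so $A\subset P_t^*X^*$. Since $P_t^*X^*=\{x^*\in X^*:(I-P_t)^*x^*=0\}$ is weak$^*$-closed as the preimage of $\{0\}$ under the weak$^*$-continuous map $(I-P_t)^*$, we conclude $\overline{A}^{w^*}\subset P_t^*X^*\subset D$.

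The countable tightness claim is the main technical point. Let $A\subset D$ be bounded and $x^*\in D$ with $x^*\in\overline{A}^{w^*}$; pick $s\in\Gamma$ with $x^*=P_s^*x^*$. I would build recursively an increasing sequence $s=s_0\le s_1\le\cdots$ in $\Gamma$, points $b_n\in A$, and nested finite sets $F_n\subset P_{s_n}X$ chosen by a standard diagonal procedure so that $\bigcup_n F_n$ is dense in $P_{s_\omega}X$ (where $s_\omega:=\sup_n s_n$), subject to the constraints $b_n\in P_{s_n}^*X^*$ and $|b_n(y)-x^*(y)|<1/n$ for every $y\in F_{n-1}$. The former is arranged by choosing $s_n$ above both $s_{n-1}$ and any index representing $b_n$ in $D$; the latter by picking $b_n$ in the weak$^*$-neighborhood of $x^*$ determined by $F_{n-1}$ and $1/n$, non-empty because $x^*\in\overline{A}^{w^*}$. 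Property (iii') gives $P_{s_n}x\to P_{s_\omega}x$ in norm for every $x\in X$; combined with density of $\bigcup_n F_n$ in $P_{s_\omega}X$ and uniform boundedness of $(b_n)$, this forces $b_n(y)\to x^*(y)$ for every $y\in P_{s_\omega}X$. For general $x\in X$, write $b_n(x)=b_n(P_{s_n}x)=b_n(P_{s_\omega}x)+b_n(P_{s_n}x-P_{s_\omega}x)$ using $b_n\in P_{s_n}^*X^*$: the first summand tends to $x^*(P_{s_\omega}x)$, while the second vanishes. Finally, $s\le s_\omega$ gives $P_sP_{s_\omega}=P_s$, so $x^*(P_{s_\omega}x)=x^*(P_sx)=x^*(x)$, and hence $b_n\to x^*$ in the weak$^*$ topology of $X^*$; the countable set $\{b_n\}\subset A$ is the one required.

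For (b), scaling reduces to showing $x^*\in D_2$ whenever $x^*\in D_1$ with $\|x^*\|\le 1/C$, where $D_1\cap D_2$ is $C$-norming; set $B:=\{e^*\in D_1\cap D_2:\|e^*\|\le 1\}$. The norming condition translates to $B^{\circ}\subset CB_X$, so the bipolar theorem gives $\overline{B}^{w^*}=B^{\circ\circ}\supset(CB_X)^{\circ}=(1/C)B_{X^*}\ni x^*$. Since $B\subset D_1$ is bounded and $x^*\in D_1$, the countable-tightness hypothesis on $D_1$ produces a countable $B_0\subset B\subset D_2$ with $x^*\in\overline{B_0}^{w^*}$, and weak$^*$-countable closedness of $D_2$ then forces $x^*\in D_2$. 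The final assertion follows by applying (b) twice with the roles of $D_1,D_2$ swapped: if both are induced by projectional skeletons, part (a) supplies the structural hypotheses in each direction.

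The main obstacle I anticipate is the countable-tightness step in (a). A naive diagonal construction yields pointwise convergence of $b_n$ to $x^*$ only on the fixed separable subspace $P_sX$, with no control over $b_n(x)$ for $x\notin P_sX$. The key device is to let the separable subspace grow with $n$, forcing $b_n\in P_{s_n}^*X^*$ so that the action of $b_n$ on any fixed $x$ collapses to its action on $P_{s_n}x$; property (iii') then compares $P_{s_n}x$ with its limit $P_{s_\omega}x$, which lies in the separable subspace where convergence has already been secured.
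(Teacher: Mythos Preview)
Your proof is correct. The arguments for weak$^*$-countable closedness in (a) and for (b) match the paper's essentially verbatim (the paper calls (b) ``trivial'' and omits the bipolar computation you spell out). For the countable-tightness part of (a) you take a slightly different route: the paper builds countable sets $C_n\subset A$ with $P_{s_{n-1}}^*C_n$ weak$^*$-dense in $P_{s_{n-1}}^*A$ and shows $\bigcup_n C_n$ is weak$^*$-dense in $P_{s_\omega}^*A$, whereas you run a direct diagonalisation producing a single sequence $(b_n)\subset A$ that weak$^*$-converges to $x^*$. Both constructions exploit the same mechanism---growing the index along the skeleton so that the approximants live in $P_{s_n}^*X^*$ and then invoking (iii')---but yours is a touch sharper, since it yields sequential approximation rather than merely a countable set whose closure captures $x^*$.
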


\begin{proof} The first part of (b) is trivial, the second part follows from the first one using (a). So, let us prove the assertion (a).
This assertion follows from a more general result \cite[Theorem 18]{kubis-skeleton}. However, the proof uses the method of elementary submodels and contains a gap (see the comments at the end of the current proof). We give an easy direct proof.

First observe, that for any countable set $C\subset D$ there is some $s\in \Gamma$ with $C\subset P_s^*X^*$ (this follows easily from definitions). Since $P_s^*X^*$ is weak$^*$ closed, we deduce that $\wscl C\subset D$. This shows that $D$ is weak$^*$-countably closed.

Further, for any $s\in \Gamma$ the space $P_s^*X^*$ is hereditarily separable in the weak$^*$ topology. Indeed, the mapping $y^*\mapsto y^*\circ P_s$ is an isomorphism of $(P_sX)^*$ onto $P_s^*X^*$ which is also a weak$^*$-to-weak$^*$ homeomorphism. $(P_sX)^*$, as the dual of a separable space, is hereditarily separable in the weak$^*$ topology (it has even a countable network), so the same is true for $P_s^*X^*$.

Now assume that $A\subset D$ is bounded, $x^*\in D$ and $x^*\in\wscl A$. Without loss of generality assume that the projections $P_s$ are   uniformly bounded. Fix $s_0\in\Gamma$ such that $P_{s_0}^*x^*=x^*$. We can construct by induction countable sets $C_n\subset A$ and elements $s_n\in\Gamma$ such that
\begin{itemize}
\item $P_{s_{n-1}}^* C_n$ is weak$^*$ dense in $P_{s_{n-1}}^*A$;
\item $s_n\ge s_{n-1}$ and $C_n\subset P_{s_n}^*X^*$.
\end{itemize}
Let $s=\sup_n s_n$. We claim that $\bigcup_n C_n$ is weak$^*$-dense in $P_s^*A$. So, fix any $y^*\in A$ and let $U$ be a weak$^*$-neighborhood of $P_s^*y^*$. We are going to prove that $U\cap\bigcup C_n\ne\emptyset$. We may (and do) assume that $U$ is of the form
$$U=\{z^*\in X^*\setsep \abs{P_s^*y^*(x_j)-z^*(x_j)}<\varepsilon\mbox{ for }j=1,\dots,k\}$$
for some $x_1,\dots,x_n\in X$ and $\varepsilon>0$. Since $\{P_s^*y^*\}\cup\bigcup_n C_n\subset P_s^*X^*$, we can without loss of generality assume that $x_j\in P_sX$ for each $j\in\{1,\dots,k\}$. Moreover, the mentioned set is bounded, so without loss of generality we may assume that $x_j\in\bigcup_n P_{s_n}X$ (as this is a dense subset of $P_sX$). Then there is some $n$ such that $x_j\in P_{s_n}X$ for $j=1,\dots,k$. Since $P_{s_m}^*y^*\overset{w^*}{\longrightarrow} P_s^* y^*$, there is some $m>n$ such that $P_{s_m}^*y^*\in U$. Then there is $z^*\in C_{m+1}$ such that $P_{s_m}^*z^*\in U$. Since for any $j=1,\dots,k$ we have
$$z^*(x_j)=z^*(P_{s_n} x_j)=z^*(P_{s_{m}} P_{s_n}x_j)=z^*(P_{s_{m}}x_j)=P_{s_{m}}^*z^*(x_j),$$ 
we deduce $z^*\in U$. This completes the proof that $\bigcup_n C_n$ is weak$^*$-dense in $P_s^*A$, in particular
$$x^*=P_s^*x^*\in\wscl{P_s^*A}=\wscl{\bigcup_n C_n}.$$

This completes the proof. Let us now point out what is the gap in the proof in \cite{kubis-skeleton}. The quoted result claims that $D$ has countable tightness, not only bounded subsets of $D$ do. The proof uses elementary submodels, but the procedure is in fact similar to our proof. The difficulty appears when one assumes that $x_j$ are from a dense subset. 
It is possible if $A$ is bounded, but not always for an unbounded set. Fortunately, the statement of \cite[Theorem 18]{kubis-skeleton} is true
(see Remark~\ref{remark induced}(g)), but we do not know of any easy and elementary proof.
\end{proof}

\section{Projectional resolutions constructed from projectional skeletons}

The aim of this section is to prove Theorem~\ref{T:P-class}. This will be done by proving Proposition~\ref{P:PRI} below.
We will proceed by refining the proof of \cite[Theorem 17.6]{KaKuLo} using some results of \cite{cuth-simult}.

Throughout this section $X=(X,\norm{\cdot})$ will be a fixed Banach space, $(P_s)_{s\in\Gamma}$ a fixed $1$-projectional skeleton on $X$ and 
$$D=\bigcup_{s\in\Gamma}P_s^*X^*$$ 
the respective induced subspace of $X^*$. Further, $\sigma(X,D)$ will denote the weak topology on $X$ generated by $D$ (i.e., the weakest topology making all functionals from $D$ continuous).

One of the key tools to prove Theorem~\ref{T:P-class} is the following lemma, especially its part (b).
It follows easily from \cite[Proposition 3.1]{cuth-simult} (cf. the proof of \cite[Theorem 4.6]{cuth-simult}). In view of the fact that this lemma is not explicitly formulated and proved in \cite{cuth-simult} and it is important for the present paper, we provide a complete proof.

\begin{lemma}\label{L:invariance}
Let $Y\subset X$ be a closed subspace.
\begin{itemize}
\item[(a)] Suppose that $P_s(Y)\subset Y$ for each $s\in\Gamma$. Then $(P_s|_Y)_{s\in\Gamma}$ is a $1$-projectional skeleton on $Y$ and the respective induced subspace is $\{x^*|_Y\setsep x^*\in D\}$.
\item[(b)] Suppose that $Y$ is $\sigma(X,D)$-closed. Then there is a cofinal $\sigma$-closed subset $\Gamma'\subset\Gamma$ such that $P_s(Y)\subset Y$ for each $s\in\Gamma'$. In particular, $(P_s|_Y)_{s\in\Gamma'}$ is a $1$-projectional skeleton on $Y$ and the respective induced subspace is $\{x^*|_Y\setsep x^*\in D\}$.
\end{itemize}
\end{lemma}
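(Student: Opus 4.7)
For part (a), I would verify directly that $(P_s|_Y)_{s\in\Gamma}$ is a $1$-projectional skeleton on $Y$. The hypothesis $P_s(Y)\subset Y$ ensures $P_s|_Y$ is a norm-one projection with range $Y\cap P_sX$, separable as a subspace of the separable space $P_sX$. Axiom (ii) transfers by restriction. Axiom (iii') carries over since the norm limit $P_sx=\lim_n P_{s_n}x$ specializes from $x\in X$ to $x\in Y$, and the limit lies in $Y$ by norm-closedness; (iii) then follows. For (iv), any $y\in Y\subset X=\bigcup_s P_sX$ lies in some $P_sX$, whence $y=P_sy\in P_s(Y)$. The induced subspace is identified by Hahn--Banach: every $y^*\in Y^*$ has an extension $\widetilde{y^*}\in X^*$ with $(P_s|_Y)^*y^*=(P_s^*\widetilde{y^*})|_Y$, and conversely every $(P_s^*x^*)|_Y$ equals $(P_s|_Y)^*(x^*|_Y)$; unioning over $s\in\Gamma$ gives $\{x^*|_Y\setsep x^*\in D\}$.

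For part (b), set $\Gamma'=\{s\in\Gamma\setsep P_s(Y)\subset Y\}$; once $\Gamma'$ is shown to be cofinal and $\sigma$-closed, part (a) applied to the subfamily $(P_s)_{s\in\Gamma'}$ concludes the argument. The $\sigma$-closedness is immediate: for an increasing $(s_n)\subset\Gamma'$ with supremum $s$ and any $y\in Y$, $P_sy=\lim_n P_{s_n}y$ in norm by (iii'), each $P_{s_n}y\in Y$, and $Y$ is norm-closed because $\sigma(X,D)$-closed implies norm-closed (the norm topology being stronger than $\sigma(X,D)$).

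The crux is cofinality. My plan is to follow the iterative construction of \cite[Proposition 3.1]{cuth-simult}, working through the dual reformulation: $P_s(Y)\subset Y$ is equivalent to $P_s^*(Y^{\perp_D})\subset Y^{\perp_D}$, where $Y^{\perp_D}=Y^\perp\cap D$. The equivalence uses that $Y=(Y^{\perp_D})_\perp$ by $\sigma(X,D)$-closedness of $Y$ and that $P_s^*\phi\in D$ automatically. Starting from $s_0\in\Gamma$, one recursively builds $s_0\le s_1\le s_2\le\cdots$ in $\Gamma$, arranging at stage $n+1$ that $s_{n+1}$ absorbs countable packages of data capturing (i) a norm-dense subset of $Y\cap P_{s_n}X$ picked from $Y$ and (ii) a countable weak$^*$-dense subset of $Y^{\perp_D}\cap P_{s_n}^*X^*$ (which is weak$^*$-separable since $P_{s_n}^*X^*\cong(P_{s_n}X)^*$), then using $\sigma$-completeness of $\Gamma$ to find $s_{n+1}$ whose range/co-range contains this data. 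Setting $s=\sup_n s_n$, the driving algebraic identity is that for $\phi\in P_t^*X^*$ with $t\le s$ one has $P_s^*\phi=\phi\circ P_t\circ P_s=\phi\circ P_t=\phi$, so countable approximants captured before the limit are automatically fixed by $P_s^*$. Combining this with weak$^*$-continuity of $P_s^*$ and the weak$^*$-countable tightness of bounded subsets of $D$ granted by Lemma~\ref{L:tightness}(a), the preservation of $Y^{\perp_D}$ propagates from the countable weak$^*$-dense approximants to all of $Y^{\perp_D}$.

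The main obstacle is precisely this cofinality step. In the non-commutative setting there is no simple commutation of projections to force invariance of $Y$; one must carefully interleave the $X$-side data capturing $Y\cap P_{s_n}X$ with the $X^*$-side data capturing $Y^{\perp_D}\cap P_{s_n}^*X^*$, and the weak$^*$-countable tightness of Lemma~\ref{L:tightness}(a) is the indispensable ingredient that makes the limit step succeed, since neither $Y^\perp$ nor $Y^{\perp_D}$ need be weak$^*$-separable in general.
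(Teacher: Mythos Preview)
Your treatment of part~(a) and of the $\sigma$-closedness of $\Gamma'$ in part~(b) is correct and matches the paper. The divergence is in the cofinality step.

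The paper does not attempt a direct iteration. Instead it sets $K=(B_{X^*},w^*)$, observes that $(P_s^*|_K)_{s\in\Gamma}$ is a retractional skeleton on $K$ with induced set $D\cap K$, embeds $X$ into $\mathcal{C}(K)$ via $J(x)(x^*)=x^*(x)$, checks that $J(Y)$ is $\tau_p(D\cap K)$-closed, and then applies \cite[Proposition~3.1]{cuth-simult} as a black box to obtain $\Gamma'$ with $J(y)\circ P_s^*|_K\in J(Y)$ for all $y\in Y$, $s\in\Gamma'$. Since $J(y)\circ P_s^*=J(P_sy)$, this gives $P_s(Y)\subset Y$.

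Your direct iteration, as written, does not progress: the data in your items (i) and (ii) --- a dense subset of $Y\cap P_{s_n}X$ and a weak$^*$-dense subset of $Y^{\perp_D}\cap P_{s_n}^*X^*$ --- already lies in $P_{s_n}X$ and $P_{s_n}^*X^*$ respectively, so asking $s_{n+1}$ to ``absorb'' it is vacuous; one could take $s_{n+1}=s_n$. Consequently nothing forces $P_s^*\phi\in Y^\perp$ for $\phi\in Y^{\perp_D}\setminus P_s^*X^*$, which is precisely the case you must handle. The appeal to countable tightness does not close this gap: your approximants sit inside $Y^{\perp_D}\cap P_s^*X^*$, where $P_s^*$ already acts as the identity, so the ``propagation'' is trivial on that set and says nothing about functionals outside it.

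A correct direct argument runs differently. Fix a countable norm-dense set $E_n\subset P_{s_n}X$; for each $z\in E_n\setminus Y$ pick $\phi_z\in Y^{\perp_D}\cap B_{X^*}$ with $|\phi_z(z)|>\tfrac12\sup\{|\psi(z)|:\psi\in Y^{\perp_D}\cap B_{X^*}\}$ (these $\phi_z$ typically are \emph{not} yet in any $P_{s_m}^*X^*$); then choose $s_{n+1}\ge s_n$ so that all these $\phi_z$ lie in $P_{s_{n+1}}^*X^*$. At $s=\sup_n s_n$, if $y\in Y$ and $P_sy\notin Y$, approximate $P_sy$ in norm by some $z\in\bigcup_n E_n$; the corresponding $\phi_z$ satisfies $\phi_z\in P_s^*X^*$, hence $\phi_z(P_sy)=P_s^*\phi_z(y)=\phi_z(y)=0$, while $|\phi_z(z)|$ is bounded below and $\|\phi_z\|\le 1$ controls $|\phi_z(z)-\phi_z(P_sy)|$, yielding a contradiction. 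Countable tightness plays no role here; the essential point is that the separating functionals are chosen \emph{outside} the current $P_{s_n}^*X^*$ and then absorbed at the next step.
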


\begin{proof} (a) It is obvious that $(P_s|_Y)_{s\in\Gamma}$ is a $1$-projectional skeleton on $Y$ and the respective induced subspace is $\bigcup_{s\in\Gamma}(P_s|_Y)^*Y^*$.
Fix any $s\in\Gamma$, $y^*\in Y^*$ and  $x^*\in X^*$ such that $y^*=x^*|_Y$ (such $x^*$ exists by the Hahn-Banach theorem). For any $y\in Y$ we have
$$ (P_s|_Y)^*y^*(y)=y^*(P_s y)=x^*(P_s y)=P_s^* x^*(y), 
$$
hence $(P_s|_Y)^*y^*=P_s^*x^*|_Y$.
Therefore 
$$\bigcup_{s\in\Gamma}(P_s|_Y)^*Y^*=\bigcup_{s\in \Gamma}\{P_s^*x^*|_Y\setsep x^*\in X^*\}=\{x^*|_Y\setsep x^*\in D\}.$$

(b) Let $K=(B_{X^*},w^*)$. Then $K$ is a compact space and $(P_s^*|_K)_{s\in\Gamma}$ is a retractional skeleton on $K$, the respective induced subset is $D\cap K$. (For the definition of a retractional skeleton see Section~\ref{S:examples}.1 or, for example, \cite{cuth-simult}; the statement follows easily from definitions, cf. \cite[Proposition 3.14]{cuth-cmuc}.)
The canonical mapping $J:X\to \C(K)$ defined by 
$$J(x)(x^*)=x^*(x), \quad x^*\in K, x\in X,$$
is a $\sigma(X,D)$-to-$\tau_p(D\cap K)$ homeomorphism of $X$ into $\C(K)$ (where $\tau_p(D\cap K)$ denotes the topology of pointwise convergence on $D\cap K$). Moreover, it is easy to observe that $J(X)$ is a $\tau_p(D\cap K)$-closed subset of $\C(K)$ (cf. \cite[Lemma 2.14]{ja-valchar} for the real case and the proof of \cite[Theorem 3.2]{ja-complex-v} for the complex case). Hence $J(Y)$ is $\tau_p(D\cap K)$ closed in $\C(K)$. Now it follows directly from  \cite[Proposition 3.1]{cuth-simult} that there is a cofinal $\sigma$-closed subset $\Gamma'\subset\Gamma$ such that for each $s\in\Gamma'$
$$f\circ (P_s^*|_K)\in J(Y) \mbox{ for each }f\in J(Y).$$
Since for any $y\in Y$ and $x^*\in K$ we have
$$(Jy \circ P_s^*)(x^*)=Jy(P_s^*x^*)=P_s^*x^*(y)=
x^*(P_sy)=J(P_sy)(x^*),$$
we conclude that $P_s(Y)\subset Y$ for $s\in\Gamma'$.

Now it is clear that $(P_s)_{s\in\Gamma'}$ is a $1$-projectional skeleton on $X$
with induced subspace equal to $D$. Hence the rest of (b) follows from (a) applied to the skeleton $(P_s)_{s\in\Gamma'}$.
\end{proof}

A key tool to constructing a PRI is the following construction of a single projection coming from \cite[Lemma 11]{kubis-skeleton}.
For any nonempty directed subset $A\subset\Gamma$ we define a mapping
\begin{equation}\label{eq:P_A def}
P_A x=\lim_{s\in A}P_s x,\qquad x\in X.
\end{equation}
By  \cite[Lemma 11]{kubis-skeleton} (or \cite[Proposition 17.8]{KaKuLo}) the mapping $P_A$ is a well-defined projection of $X$ onto $\overline{\bigcup_{s\in A}P_sX}$.
It is clear that $\norm{P_A}=1$.  For completeness we set $P_\emptyset=0$.

The next easy lemma deals with compatibility of the projections $P_A$ and $P_s$.

\begin{lemma}\label{L:P_sP_A}  Let $A\subset\Gamma$ be a nonempty directed subset. Then the following assertions hold.
\begin{itemize}
\item[(a)] $P_sP_A=P_AP_s=P_s$ for each $s\in A$.
\item[(b)] Let $t\in\Gamma$. If $P_t$ commutes with $P_s$ for each $s\in A$, then $P_t$ commutes with $P_A$.
\item[(c)] If $B\subset\Gamma$ is a directed set containing $A$, then $P_AP_B=P_BP_A=P_A$.
\end{itemize}
\end{lemma}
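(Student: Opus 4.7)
The plan is to derive all three parts directly from the defining formula~\eqref{eq:P_A def}, the skeleton relations $P_sP_t=P_tP_s=P_s$ for $s\le t$, and continuity of the individual $P_s$ (they are uniformly bounded, so limits commute with them). The only property of $A$ I will really use is that it is directed: for every $s_0\in A$ the set $\{t\in A\setsep t\ge s_0\}$ is cofinal in $A$, so any statement that holds for all such $t$ holds eventually in the net $(P_tx)_{t\in A}$.

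For (a), I fix $s\in A$. For every $t\in A$ with $t\ge s$ the skeleton gives $P_sP_t=P_tP_s=P_s$, and such $t$'s are cofinal in $A$, so
\[
P_AP_sx=\lim_{t\in A}P_tP_sx=P_sx\qquad\text{and}\qquad P_sP_Ax=P_s\lim_{t\in A}P_tx=\lim_{t\in A}P_sP_tx=P_sx,
\]
the middle equality in the second chain using boundedness (hence continuity) of $P_s$. For (b), the same interchange of $P_t$ with the limit combined with the hypothesis $P_tP_s=P_sP_t$ for every $s\in A$ yields
\[
P_tP_Ax=\lim_{s\in A}P_tP_sx=\lim_{s\in A}P_sP_tx=P_AP_tx,
\]
so $P_t$ commutes with $P_A$. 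Part (c) I would reduce to (a): since $A\subset B$ and $B$ is directed, applying (a) to the set $B$ gives $P_tP_B=P_BP_t=P_t$ for every $t\in A$. Therefore
\[
P_AP_Bx=\lim_{t\in A}P_tP_Bx=\lim_{t\in A}P_tx=P_Ax,
\]
and, using continuity of $P_B$,
\[
P_BP_Ax=P_B\lim_{t\in A}P_tx=\lim_{t\in A}P_BP_tx=\lim_{t\in A}P_tx=P_Ax.
\]

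There is no real obstacle here; the only point that requires any care is the bookkeeping of nets indexed by directed sets, namely that ``$t\ge s_0$'' is an eventual condition along $A$ whenever $s_0\in A$, and that continuous (bounded linear) operators may be pulled inside the limit defining $P_A$. Everything else is an immediate application of the skeleton relations.
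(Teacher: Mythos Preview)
Your proof is correct and follows essentially the same approach as the paper: in each part you use the skeleton relations on comparable indices and pass to the limit over the directed set, reducing (c) to (a) applied to $B$. The only difference is cosmetic---you spell out the cofinality and continuity justifications that the paper leaves implicit.
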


\begin{proof}
(a) Fix $s\in A$ and $x\in X$. Then for each $t\in A$, $t\ge s$ we have
$$P_s P_t x=P_t P_s x= P_sx.$$
Hence, by taking the limit over $t\in A$ we get
$$P_s P_A x=P_A P_s x=P_sx.$$

(b) Suppose $t\in\Gamma$ satisfies the assumptions. Then for each $s\in A$ we have
$$P_s P_t x=P_t P_s x.$$
Thus, by taking the limit over $s\in A$ we get
$$P_A P_t x=P_t P_A x.$$

(c) Fix $x\in X$. By (a) we get
$$P_s P_B x=P_B P_s x=P_sx \mbox{ for }s\in A,$$
thus by taking the limit over $s\in A$ we deduce
$$P_A P_B x=P_B P_A x=P_A x.$$
\end{proof}

Next we will study in more detail the projection $P_A$. The first statement of the assertion (iii) is used as obvious in the last two sentences of the proof of \cite[Theorem 17.6]{KaKuLo}. The added value of our version is a more precise statement of (iii) and, mainly, the assertion (iv) which plays a key role below.

If $A\subset \Gamma$, we denote by $A_\sigma$ the smallest $\sigma$-closed subset of $\Gamma$ containing $A$.

\begin{prop}\label{P:P_AX a ker P_A} Let $A\subset\Gamma$ be a nonempty directed subset. Denote $Y=P_A X$.
\begin{itemize}
\item[(i)] $A_\sigma$ is a directed subset of $\Gamma$ 
\item[(ii)] $P_A=P_{A_\sigma}$ and, moreover, $P_{A}X=\bigcup_{s\in A_\sigma}P_sX$.
\item[(iii)] The family $(P_s|_Y)_{s\in A_\sigma}$ is a $1$-projectional skeleton in $Y$. The respective induced subspace in $Y^*$ is
\begin{equation}\label{eq:D_A}
D_A=\bigcup_{s\in A_\sigma}(P_s|_Y)^*(Y^*)=\bigcup_{s\in A_\sigma}\{P_s^*x^*|_Y\setsep x^*\in X^*\}\subset \{x^*|_Y\setsep x^*\in D\}.\end{equation}
If the skeleton on $X$ is commutative, then the last inclusion can be replaced by equality.
\item[(iv)] $\ker P_A$ is $\sigma(X,D)$-closed. Therefore there is a cofinal $\sigma$-closed subset $\Gamma'\subset\Gamma$ such that the family $(P_s|_{\ker P_A})_{s\in\Gamma'}$ is a $1$-projectional skeleton on $\ker P_A$. The respective induced subspace is
\begin{equation}\label{eq:D_ker P_A}
D_A^0=\{x^*|_{\ker P_A}\setsep x^*\in D\}.
\end{equation}
\end{itemize}
\end{prop}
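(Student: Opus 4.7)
I plan to establish (i)--(iv) in order; the genuinely new step is (iv), whose two key ingredients will already be in place by the end of (iii), namely the identity $P_sP_A=P_s$ for $s\in A_\sigma$ and the fact that the induced subspace $D_A$ is $1$-norming on $Y$. For (i), I would build $A_\sigma$ by transfinite iteration, $A_0=A$, $A_{\alpha+1}=A_\alpha\cup\{\sup_n s_n:(s_n)\text{ increasing in }A_\alpha\}$, $A_\lambda=\bigcup_{\alpha<\lambda}A_\alpha$, so that $A_\sigma=A_{\omega_1}$, and prove by transfinite induction that each $A_\alpha$ is directed. The only nontrivial case is a successor step in which, given $s=\sup_n s_n\in A_{\alpha+1}$ and $t\in A_{\alpha+1}$, I would inductively select an increasing sequence $u_n\in A_\alpha$ with $u_n\ge s_n$ and $u_n\ge t$ (using directedness of $A_\alpha$) and take $\sup_n u_n\in A_{\alpha+1}$.

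For (ii), I would first show $P_AP_s=P_s$ for all $s\in A_\sigma$ by verifying that the set $T=\{s\in\Gamma:P_AP_s=P_s\}$ is $\sigma$-closed (from continuity of $P_A$ and property (iii$'$)) and contains $A$ (by Lemma~\ref{L:P_sP_A}(a)), so $T\supset A_\sigma$; this yields $P_{A_\sigma}X\subset P_AX$. Combined with $P_AP_{A_\sigma}=P_A$ from Lemma~\ref{L:P_sP_A}(c), the element $P_{A_\sigma}x\in P_AX$ is fixed by $P_A$, giving $P_{A_\sigma}x=P_AP_{A_\sigma}x=P_Ax$, hence $P_{A_\sigma}=P_A$. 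The unclosed equality $P_AX=\bigcup_{s\in A_\sigma}P_sX$ then follows from (i): given a norm-convergent sequence $y_n\in P_{s_n}X$ with $s_n\in A_\sigma$, directedness of $A_\sigma$ lets me pick increasing $t_n\in A_\sigma$ with $t_n\ge s_n$, and for $t=\sup_n t_n\in A_\sigma$ each $y_n$ lies in the closed subspace $P_tX$, so does the limit.

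For (iii), the identity $P_AP_s=P_s$ from (ii) gives $P_sX\subset Y$, hence $P_sY\subset Y$, for $s\in A_\sigma$; the skeleton axioms for $(P_s|_Y)_{s\in A_\sigma}$ on $Y$ are then verified in the spirit of Lemma~\ref{L:invariance}(a), and the adjoint identity $(P_s|_Y)^*y^*=P_s^*x^*|_Y$ (for any Hahn--Banach extension $x^*$ of $y^*$) yields \eqref{eq:D_A} together with the inclusion in $\{x^*|_Y:x^*\in D\}$. In the commutative case, to upgrade this inclusion to equality, I would introduce $Q_t=P_AP_t=P_tP_A$ for $t\in\Gamma$ (commutativity via Lemma~\ref{L:P_sP_A}(b)) and verify that $(Q_t|_Y)_{t\in\Gamma}$ is itself a $1$-projectional skeleton on $Y$ whose induced subspace is precisely $\{x^*|_Y:x^*\in D\}$; since $D_A\subset\{x^*|_Y:x^*\in D\}$ and $D_A$ is $1$-norming by Lemma~\ref{L:norma projekci}, Lemma~\ref{L:tightness}(b) then forces equality.

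The decisive step is (iv). Applying Lemma~\ref{L:P_sP_A}(a) to the directed set $A_\sigma$ (and using $P_A=P_{A_\sigma}$) gives $P_sP_A=P_AP_s=P_s$ for every $s\in A_\sigma$, so each such $P_s$ annihilates $\ker P_A$. Given $x\notin\ker P_A$, the point $P_Ax\in Y$ is nonzero, so by the $1$-norming property of $D_A$ on $Y$ (from (iii) and Lemma~\ref{L:norma projekci}) there exists $\phi=P_s^*x^*$ with $s\in A_\sigma$ satisfying $\phi(P_Ax)\ne 0$. For any $z\in\ker P_A$ one has $\phi(z)=x^*(P_sz)=x^*(P_sP_Az)=0$, and since $x-P_Ax\in\ker P_A$ this yields $\phi(x)=\phi(P_Ax)\ne 0$. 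Thus $\phi\in D$ vanishes on $\ker P_A$ and separates it from $x$, proving that $\ker P_A$ is $\sigma(X,D)$-closed. The remaining assertion of (iv), together with the description \eqref{eq:D_ker P_A} of the induced subspace, then follows immediately from Lemma~\ref{L:invariance}(b) applied to $\ker P_A$.
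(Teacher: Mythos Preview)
Your proof is correct and follows essentially the same route as the paper: the same transfinite construction of $A_\sigma$ in (i), the same reduction $P_{A_\sigma}x=P_AP_{A_\sigma}x=P_Ax$ in (ii), the same Hahn--Banach computation and appeal to Lemma~\ref{L:tightness}(b) in (iii), and the same mechanism in (iv), namely that functionals of the form $P_s^*x^*\in D$ vanish on $\ker P_A$ (because $P_s=P_sP_A$) while separating any $x\notin\ker P_A$.

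Two minor remarks. In the successor step of (i) your sketch handles $t\in A_\alpha$ but not $t\in A_{\alpha+1}\setminus A_\alpha$; as in the paper you should write \emph{both} $s$ and $t$ as suprema of increasing sequences in $A_\alpha$ and interleave. In (iv) your route via the $1$-norming property of $D_A$ is a small detour: the paper simply writes $\ker P_A=\bigcap\{\ker P_s^*x^*: s\in A,\ x^*\in X^*\}$ directly (using $P_Ax=0\Leftrightarrow P_sx=0$ for all $s\in A$), which avoids invoking (iii) and Lemma~\ref{L:norma projekci}.
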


\begin{proof}
(i) Let us start by describing $A_\sigma$. Define sets $B_\alpha$ for $\alpha<\omega_1$ as follows.
\begin{itemize}
\item $B_0=A$;
\item $B_{\alpha+1}=B_\alpha\cup\{\sup_n t_n\setsep (t_n) \mbox{ is an increasing sequence in }B_\alpha\}$ for $\alpha<\omega_1$;
\item $B_\lambda=\bigcup_{\alpha<\lambda}B_\alpha$ if $\alpha<\omega_1$ is limit.
\end{itemize}
Then clearly $A_\sigma=\bigcup_{\alpha<\omega_1}B_\alpha$.
Indeed, since $A_\sigma$ is $\sigma$-closed and contains $B_0=A$, by transfinite induction we get $B_\alpha\subset A_\sigma$ for $\alpha<\omega_1$, which proves the inclusion `$\supset$'. To prove the converse inclusion it is enough to observe that the set on the right-hand side is $\sigma$-closed.

To show that $A_\sigma$ is directed, it is enough to prove that $B_\alpha$ is directed for each $\alpha<\omega_1$. 

It is true for $\alpha=0$ as $B_0=A$ and $A$ is assumed to be directed. Suppose that $B_\alpha$ is directed for some $\alpha<\omega_1$. Fix any two indices $s,t\in B_{\alpha+1}$. Then there are increasing sequences $(s_n)$ and $(t_n)$ in $B_{\alpha}$ such that $s=\sup_n s_n$ and $t=\sup_n t_n$. (If $s\in B_\alpha$, we can take $s_n=s$ for each $n\in\en$, and similarly for $t$.) Since $B_{\alpha}$ is directed, we can find a sequence $(u_n)$ in $B_\alpha$ such that
\begin{itemize}
\item $u_1\ge s_1$ and $u_1\ge t_1$;
\item $u_{2n}\ge u_{2n-1}$ and $u_{2n}\ge s_{n+1}$ for $n\in\en$;
\item $u_{2n+1}\ge u_{2n}$ and $u_{2n+1}\ge t_{n+1}$ for $n\in\en$.
\end{itemize}
Since $(u_n)$ is increasing, $u=\sup_n u_n\in B_{\alpha+1}$. Moreover, $u\ge s_n$ for all $n\in\en$, hence $u\ge s$. Similarly, $u\ge t$. This completes the proof that $B_{\alpha+1}$ is directed.

Since the limit induction step is obvious, the proof of (i) is completed. 

(ii) By (i) the mapping $P_{A_\sigma}$ is a well-defined projection with range 
$\overline{\bigcup_{s\in A_\sigma} P_s X}$. Since $A\subset A_\sigma$, 
$P_{A}X\subset P_{A_\sigma}X$. Conversely, using the sets $B_\alpha$ defined within the proof of (i) and the property (iii) of projectional skeletons, by transfinite induction we deduce that $P_s X\subset P_A X$ for each $s\in A_\sigma$. Hence $P_{A}X=P_{A_\sigma}X$.

Let us continue by proving $P_A=P_{A_\sigma}$. Fix $x\in X$. Then
$$P_A x = P_A P_{A_\sigma}x = P_{A_\sigma} x.$$
Indeed, the first equality follows from Lemma~\ref{L:P_sP_A}(c).
To prove the second one observe that $P_{A_\sigma}x\in P_A X$ due to the previous paragraph.

Finally, take any $x\in P_AX$. 
By the definition of $P_A$ there is a sequence $(s_n)$ in $A$ such that
$\norm{P_s x-x}<\frac1n$ whenever $s\in A$ satisfies $s\ge s_n$.
Using the fact that $A$ is directed, we can assume without loss of generality
that the sequence $(s_n)$ is increasing. Set $s=\sup_n s_n$. Then $s\in A_\sigma$ and 
$$P_s x=\lim_n P_{s_n}x= x.$$

(iii) The properties (i)--(iii) of projectional skeleton are obvious, the last property follows from (ii). 

Let us continue by proving \eqref{eq:D_A}. The first equality is just the definition of the induced subspace. 
To show the second equality fix $s\in A_\sigma$. Take any $y^*\in Y^*$ and any $x^*\in X^*$ such that $y^*=x^*|_Y$ (such $x^*$ exists by the Hahn-Banach theorem). Then $(P_s|_Y)^*y^*=P_s^*x^*|_Y$ (see the proof of Lemma~\ref{L:invariance}(a)), so the second equality follows.
The last inclusion is obvious.

Next suppose that the skeleton on $X$ is commutative. By Lemma~\ref{L:P_sP_A}(b) we see that, for any $s\in\Gamma$ we have $P_sP_A=P_AP_s$ and hence the subspace $Y=P_AX$ is invariant for $P_s$. It follows from Lemma~\ref{L:invariance}(a) that $(P_s|_Y)_{s\in\Gamma}$ is a projectional skeleton on $Y$ and the respective induced subspace
is
$$D'_A=\{x^*|_Y\setsep x^*\in D\}
$$
So, due to \eqref{eq:D_A} we see that $D'_A\supset D_A$, hence $D'_A=D_A$ by Lemma~\ref{L:tightness}(b).

(iv) Observe that
$$\begin{aligned}
\ker P_A&=\{x\in X\setsep P_A x=0\}=\{x\in X\setsep (\forall s\in A)(P_s x=0)\}
\\&=\{x\in X\setsep (\forall s\in A)(\forall x^*\in X^*)(x^*(P_s x)=0) \}
\\&=\{x\in X\setsep (\forall s\in A)(\forall x^*\in X^*)(P_s^*x^*(x)=0) \}
\\&=\bigcap\{\ker P_s^*x^*\setsep s\in A,x^*\in X\}.
\end{aligned}$$
Indeed, the first equality is just the definition of the kernel.  The inclusion `$\supset$' from the second one follows from the definition of $P_A$. To prove the converse observe that $P_s=P_sP_A$ by Lemma~\ref{L:P_sP_A}(a).
 The third equality is a consequence of the Hahn-Banach theorem and the last two equalities follow easily from definitions.

Since $P_s^*X^*\subset D$ for each $s\in A$, we conclude that $\ker P_A$ is $\sigma(X,D)$-closed. 

The rest of (iv) now follows immediately from Lemma~\ref{L:invariance}(b).
\end{proof}

The next proposition deals in more detail with the description of $D_A$ from \eqref{eq:D_A} and characterizes the situation when it is maximal possible.

\begin{prop}\label{P:sigma(X,D)-spojitost} Let $A\subset\Gamma$ be a directed subset. The following assertions are equivalent.
\begin{enumerate}
\item $P_A^*(D)\subset D$. 
\item The projection $P_A$ is $\sigma(X,D)$-to-$\sigma(X,D)$ continuous.
\item $P_AX$ is a $\sigma(X,D)$-closed subspace of $X$.
\item There is a cofinal $\sigma$-closed subset $\Gamma'\subset\Gamma$ such that $P_s(P_AX)\subset P_AX$ for each $s\in\Gamma'$.
\item There is a cofinal $\sigma$-closed subset $\Gamma'\subset\Gamma$ such that $P_s P_A = P_A P_s$ for each $s\in\Gamma'$.
\item $D_A=\{x^*|_{P_AX}\setsep x^*\in\D\}$.
\end{enumerate}
\end{prop}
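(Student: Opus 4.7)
My plan is to close the cycle $(1)\Leftrightarrow(2)\Rightarrow(3)\Rightarrow(4)\Rightarrow(6)\Rightarrow(1)$, and separately establish $(4)\Leftrightarrow(5)$. The equivalence $(1)\Leftrightarrow(2)$ is the standard characterization of continuity with respect to an initial topology: since $D$ is the topological dual of $(X,\sigma(X,D))$, a linear map $T\colon X\to X$ is $\sigma(X,D)$-continuous exactly when $\varphi\circ T\in D$ for every $\varphi\in D$. Implication $(2)\Rightarrow(3)$ follows because $P_AX=\ker(I-P_A)$ and $\sigma(X,D)$ is Hausdorff (since $D$ is $1$-norming by Lemma~\ref{L:norma projekci}). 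Next, $(3)\Rightarrow(4)$ is the direct application of Lemma~\ref{L:invariance}(b) to $Y:=P_AX$. Finally, $(5)\Rightarrow(4)$ is immediate: commutativity forces invariance of the range of $P_A$.

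The heart of the argument is $(4)\Rightarrow(6)$, which I plan to handle by exhibiting two $1$-projectional skeletons on $Y=P_AX$ and appealing to Lemma~\ref{L:tightness}(b). Proposition~\ref{P:P_AX a ker P_A}(iii) supplies one skeleton, $(P_s|_Y)_{s\in A_\sigma}$, with induced subspace $D_A$. On the other hand, the cofinal $\sigma$-closed $\Gamma'\subset\Gamma$ provided by (4) itself indexes a $1$-projectional skeleton on $X$ inducing the same $D$, and since $Y$ is $P_s$-invariant for $s\in\Gamma'$ by hypothesis, Lemma~\ref{L:invariance}(a) yields a second skeleton $(P_s|_Y)_{s\in\Gamma'}$ on $Y$ with induced subspace $\{x^*|_Y\setsep x^*\in D\}$. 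Both induced subspaces are $1$-norming in $Y^*$, so the uniqueness part of Lemma~\ref{L:tightness}(b) forces them to coincide, which is exactly (6).

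To close the cycle via $(6)\Rightarrow(1)$, I would take $x^*\in D$, use (6) and the description \eqref{eq:D_A} to write $x^*|_Y=P_s^*z^*|_Y$ for some $s\in A_\sigma$ and $z^*\in X^*$, and then compute directly that for every $x\in X$,
$$P_A^*x^*(x)=x^*(P_Ax)=P_s^*z^*(P_Ax)=z^*(P_sP_Ax)=z^*(P_sx)=P_s^*z^*(x),$$
where the collapse $P_sP_A=P_s$ for $s\in A_\sigma$ comes from Proposition~\ref{P:P_AX a ker P_A}(ii) combined with Lemma~\ref{L:P_sP_A}(a). Hence $P_A^*x^*=P_s^*z^*\in D$.

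For the remaining $(4)\Rightarrow(5)$, the cycle above already delivers (1), so $P_A$ is $\sigma(X,D)$-continuous and $\ker P_A$ is $\sigma(X,D)$-closed. A second invocation of Lemma~\ref{L:invariance}(b) then produces a cofinal $\sigma$-closed $\Gamma''\subset\Gamma$ on which $P_s(\ker P_A)\subset\ker P_A$, and the standard interleaving trick shows that $\Gamma'\cap\Gamma''$ is itself cofinal and $\sigma$-closed. For $s$ in this intersection, decomposing any $x\in X$ as $P_Ax+(x-P_Ax)\in Y\oplus\ker P_A$ and applying $P_A$ before and after $P_s$ yields $P_sP_A=P_AP_s$. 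I expect this final step, $(4)\Rightarrow(5)$, to be the main obstacle: invariance of $Y$ alone does not visibly force commutativity, and the only way I see to proceed is to pass through (1) in order to transfer invariance from $Y$ to $\ker P_A$.
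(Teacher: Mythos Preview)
Your argument is correct, and most of it tracks the paper's proof closely: the chain $(1)\Leftrightarrow(2)\Rightarrow(3)\Rightarrow(4)$ and the step $(4)\Rightarrow(6)$ via two skeletons on $Y=P_AX$ and Lemma~\ref{L:tightness}(b) are exactly what the paper does. There are, however, two genuine differences worth noting.

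First, your closing link $(6)\Rightarrow(1)$ is a self-contained computation, whereas the paper closes the loop by proving $(6)\Rightarrow(3)$ via an external citation to \cite[Theorem~4.6]{cuth-simult}. Your route is more elementary and avoids that reference; the identity $P_sP_A=P_s$ for $s\in A_\sigma$ (from Proposition~\ref{P:P_AX a ker P_A}(ii) and Lemma~\ref{L:P_sP_A}(a)) does all the work.

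Second, your handling of $(4)\Rightarrow(5)$ takes an unnecessary detour. You pass through the full cycle to obtain (2) and then conclude that $\ker P_A$ is $\sigma(X,D)$-closed before invoking Lemma~\ref{L:invariance}(b). But Proposition~\ref{P:P_AX a ker P_A}(iv) already establishes, \emph{unconditionally}, that $\ker P_A$ is $\sigma(X,D)$-closed for every directed $A$. The paper therefore proves $(4)\Rightarrow(5)$ directly: starting from the $\Gamma'$ in (4), apply Proposition~\ref{P:P_AX a ker P_A}(iv) (to the skeleton restricted to $\Gamma'$) to obtain a cofinal $\sigma$-closed $\Gamma''\subset\Gamma'$ on which $\ker P_A$ is invariant; for $s\in\Gamma''$ both $P_AX$ and $\ker P_A$ are $P_s$-invariant, which yields $P_sP_A=P_AP_s$. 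So your remark that passing through (1) is ``the only way I see to proceed'' overlooks this shortcut. One minor point: when invoking Lemma~\ref{L:tightness}(b) in $(4)\Rightarrow(6)$, the hypothesis needed is that $D_A\cap D'_A$ is norming, not merely that both are; this is automatic here because $D_A\subset D'_A$ by \eqref{eq:D_A}, but it is worth making explicit.
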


\begin{proof}
(1)$\Rightarrow$(2) To prove the $P_A$ is $\sigma(X,D)$-to-$\sigma(X,D)$ continuous it is enough to show that $x^*\circ P_A$ is $\sigma(X,D)$-continuous for each $x^*\in D$. So, fix $x^*\in D$. Then $x^*\circ P_A=P_A^*x^*\in D$ by the assumption, hence it is $\sigma(X,D)$-continuous.

(2)$\Rightarrow$(3) This implication is obvious.

(3)$\Rightarrow$(4) This follows from Lemma~\ref{L:invariance}(b).

(4)$\Rightarrow$(5) Let $\Gamma'\subset\Gamma$ be the set provided by (4).
By Proposition~\ref{P:P_AX a ker P_A}(iv) we know that there is a cofinal $\sigma$-closed subset $\Gamma''\subset\Gamma'$ such that $\ker P_A$ is invariant for $P_s$ for each $s\in\Gamma''$. 

We claim that for any $s\in\Gamma''$ we have $P_sP_A=P_AP_s$. 
Indeed, since $P_s(P_A X)\subset P_A X$, we deduce $P_A P_s P_A=P_s P_A$.
Moreover, $\ker P_A=(I-P_A)X$ is also invariant for $P_s$, thus
$P_s(I-P_A)X\subset \ker P_A$, i.e., $P_AP_s(I-P_A)=0$. In other words, $P_AP_s=P_AP_sP_A$. Therefore $P_sP_A=P_AP_s$ and the proof is complete.

(5)$\Rightarrow$(1) Fix $x^*\in D$. By the definition of $D$ there is $s\in\Gamma$ with $P_s^*x^*=x^*$. Since $\Gamma'$ is cofinal in $\Gamma$, 
without loss of generality we may assume that $s\in\Gamma'$. By the choice of $\Gamma'$ we have $P_sP_A=P_AP_s$, hence $P_A^*P_s^*=P_s^*P_A*$ as well.
Therefore
$$P_A^*x^*=P_A^*P_s^*x^*=P_s^*P_A^*x^*\in D.$$

(4)$\Rightarrow$(6) Note that under the assumptions of (4)
the system $(P_s)_{s\in\Gamma'}$ is a $1$-projectional skeleton on $X$ with induced subspace $D$. Lemma~\ref{L:invariance}(a) applied to the skeleton
$(P_s)_{s\in\Gamma'}$ yields that
 $(P_s|_{P_AX})_{s\in\Gamma'}$ is a $1$-projectional skeleton on $P_AX$ with induced subspace
$$D'_A= \{x^*|_{P_A}\setsep x^*\in D\}.
$$  
Therefore $D'_A\supset D_A$ (by \eqref{eq:D_A}), hence $D'_A=D_A$ by Lemma~\ref{L:tightness}(b).

(6)$\Rightarrow$(3) This implication follows from \cite[Theorem 4.6(i)$\Rightarrow$(iii)]{cuth-simult}.
\end{proof}

\begin{cor} \
\begin{itemize}
\item[(i)] $P_A$ is $\sigma(X,D)$-to-$\sigma(X,D)$ continuous whenever $A\subset\Gamma$ is a countable directed subset.
\item[(ii)] If the skeleton is commutative, then $P_A$ is $\sigma(X,D)$-to-$\sigma(X,D)$ continuous for any directed subset $A\subset\Gamma$.
\end{itemize}
\end{cor}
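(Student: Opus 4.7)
The plan is to verify, for each of the two statements, one of the equivalent conditions from Proposition~\ref{P:sigma(X,D)-spojitost} and thereby deduce condition (2), which is exactly the asserted continuity.

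For part (i), I would verify condition (1) by first showing that, under the countability assumption, $P_A$ coincides with $P_s$ for some $s\in\Gamma$. Enumerating $A=\{t_n\setsep n\in\en\}$, I would construct inductively an increasing sequence $(s_n)$ in $A$ by setting $s_1=t_1$ and choosing $s_{n+1}\in A$ to be any common upper bound of $s_n$ and $t_{n+1}$, which exists because $A$ is directed. Then $(s_n)$ is cofinal in the net $A$. By property (iii') of the skeleton, $s=\sup_n s_n$ exists in $\Gamma$ and $P_{s_n}x\to P_s x$ for every $x\in X$. A standard net-theoretic observation then gives $P_A x=\lim_{t\in A}P_t x=\lim_n P_{s_n}x=P_s x$ for each $x\in X$, so $P_A=P_s$. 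Since $P_s^*X^*\subset D$ by the definition of $D$, one gets $P_A^*(D)=P_s^*(D)\subset D$, which is condition (1) of Proposition~\ref{P:sigma(X,D)-spojitost}.

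For part (ii), I would verify condition (5) with $\Gamma'=\Gamma$. If the skeleton is commutative, then for every $t\in\Gamma$ and every $s\in A$ we have $P_tP_s=P_sP_t$, so Lemma~\ref{L:P_sP_A}(b) yields $P_tP_A=P_AP_t$ for every $t\in\Gamma$. This is exactly condition (5) with $\Gamma'=\Gamma$, and Proposition~\ref{P:sigma(X,D)-spojitost} delivers the continuity.

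No real obstacle is anticipated, since the technical content has been carried out already in Proposition~\ref{P:sigma(X,D)-spojitost} and Lemma~\ref{L:P_sP_A}. The only mildly delicate point in (i) is the passage from the net limit defining $P_A$ to the sequential limit along the cofinal sequence $(s_n)$; this is a routine fact about convergence of nets possessing a cofinal countable subset.
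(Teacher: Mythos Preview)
Your proof is correct. For part (ii) it is essentially identical to the paper's argument: apply Lemma~\ref{L:P_sP_A}(b) to verify condition~(5) of Proposition~\ref{P:sigma(X,D)-spojitost} with $\Gamma'=\Gamma$.

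For part (i) your approach differs from the paper's. The paper enumerates $A=\{s_n\}$, chooses an increasing sequence $(t_n)$ in $\Gamma$ (not in $A$) with $t_n\ge s_n$, sets $t=\sup_n t_n$, and observes that $\Gamma'=\{s\in\Gamma\setsep s\ge t\}$ is cofinal and $\sigma$-closed; since every $s\in\Gamma'$ dominates each $s_n$, Lemma~\ref{L:P_sP_A}(b) gives $P_sP_A=P_AP_s$, verifying condition~(5). You instead build the increasing cofinal sequence $(s_n)$ \emph{inside} $A$, take $s=\sup_n s_n\in\Gamma$, and conclude directly that $P_A=P_s$ via the subnet argument and property~(iii'); this verifies condition~(1). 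Your route yields the slightly stronger intermediate fact that $P_A$ is literally one of the skeleton projections (indeed $P_A=P_s$ for $s\in A_\sigma$, cf.\ Proposition~\ref{P:P_AX a ker P_A}(ii)), while the paper's route stays at the level of commutation. Both are short and rely on Proposition~\ref{P:sigma(X,D)-spojitost} to finish; neither has a real advantage over the other.
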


\begin{proof} The assertion (ii) follows immediately from Proposition~\ref{P:sigma(X,D)-spojitost}. Let us show the assertion (i).

Enumerate $A=\{s_n\setsep n\in\en\}$. Since $\Gamma$ is directed, we can find an increasing sequence $(t_n)$ in $\Gamma$ such that $t_n\ge s_n$ for $n\in\en$. Let $t=\sup_n t_n$. Then the set
$$\Gamma'=\{s\in\Gamma\setsep s\ge t\}$$
is a cofinal $\sigma$-closed subset. Further for each $s\in \Gamma'$ and $n\in\en$ we have $s\ge t\ge s_n$, thus $P_sP_{s_n}=P_{s_n}P_s (=P_s)$. By Lemma~\ref{L:P_sP_A}(b) we deduce that $P_sP_A=P_AP_s$. Hence, we can conclude by using Proposition~\ref{P:sigma(X,D)-spojitost}.
\end{proof}

The following lemma is the key step to constructing a PRI starting from a $1$-projectional skeleton. Its proof is completely standard. In the proof of \cite[Theorem 17.6]{KaKuLo} it is used without explicit formulation and proof.
We provide a proof for the sake of completeness.

\begin{lemma}\label{L:A_alpha}
Let $\kappa=\dens X$. Then there is a transfinite sequence $(A_\alpha)_{\alpha\le\kappa}$ of subsets of $\Gamma$ satisfying the following properties.
\begin{itemize}
\item[(i)] $A_0=\emptyset$.
\item[(ii)] $A_\alpha$ is a directed subset of $\Gamma$ for $\alpha\le\kappa$.
\item[(iii)] $\card A_\alpha=\dens P_{A_\alpha} X=\max\{\card\alpha,\aleph_0\}$ if $0<\alpha\le\kappa$.
\item[(iv)] $A_\alpha\subset A_\beta$ for $\alpha<\beta\le\kappa$.
\item[(v)] $A_\lambda=\bigcup_{\alpha<\lambda} A_\alpha$ whenever $\lambda\le\kappa$ is limit.
\item[(vi)] $P_{A_\kappa}X=X$, i.e., $P_{A_\kappa}=I$.
\end{itemize}
Moreover, the system $(P_s)_{s\in (A_\kappa)_\sigma}$ is a $1$-projectional skeleton on $X$ with induced subset $D$.
\end{lemma}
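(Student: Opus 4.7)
The plan is to construct the sequence by a straightforward transfinite recursion driven by a dense enumeration of $X$, and then derive the final assertion from Proposition~\ref{P:P_AX a ker P_A}(iii) combined with Lemma~\ref{L:tightness}(b). Fix a dense set $\{x_\xi\setsep \xi<\kappa\}$ in $X$. Starting from $A_0=\emptyset$, at a successor stage, given a directed $A_\alpha\subset\Gamma$ of cardinality $\max\{\card\alpha,\aleph_0\}$, use property (iv) of the skeleton to pick $s_\alpha\in\Gamma$ with $P_{s_\alpha}x_\alpha=x_\alpha$, and define $A_{\alpha+1}$ to be any directed subset of $\Gamma$ containing $A_\alpha\cup\{s_\alpha\}$ of cardinality $\max\{\card\alpha,\aleph_0\}$; such a set exists because $\Gamma$ is up-directed, so we can close $A_\alpha\cup\{s_\alpha\}$ under choosing upper bounds for pairs in $\omega$ stages without increasing the cardinality. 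At a limit ordinal $\lambda$ set $A_\lambda=\bigcup_{\alpha<\lambda}A_\alpha$; this is automatically directed as an increasing union of directed sets, and has cardinality at most $\max\{\card\lambda,\aleph_0\}$.

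Properties (i), (iv), (v) hold by construction, and directedness (ii) is preserved at successor steps by the directed-closure procedure and at limits by the previous remark. The upper bound $\card A_\alpha\le\max\{\card\alpha,\aleph_0\}$ follows by induction; the matching density bound $\dens P_{A_\alpha}X\le\card A_\alpha$ is immediate because $P_{A_\alpha}X=\overline{\bigcup_{s\in A_\alpha}P_sX}$ and each $P_sX$ is separable. For the reverse inequality we use that the construction forces $\{x_\xi\setsep \xi<\alpha\}\subset P_{A_\alpha}X$, so $P_{A_\alpha}X$ contains a set of cardinality $\card\alpha$; a final harmless padding of each $A_\alpha$ to cardinality exactly $\max\{\card\alpha,\aleph_0\}$ (by adding arbitrary comparable elements from $\Gamma$, which still lie in a directed superset of the same cardinality) produces (iii). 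Finally, (vi) follows from the fact that $P_{A_\kappa}X$ is closed and contains the dense set $\{x_\xi\setsep \xi<\kappa\}$, hence equals $X$, so $P_{A_\kappa}=I$.

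For the concluding assertion, apply Proposition~\ref{P:P_AX a ker P_A}(iii) to $A=A_\kappa$: since $P_{A_\kappa}X=X$, the family $(P_s)_{s\in (A_\kappa)_\sigma}$ is a $1$-projectional skeleton on $X$, with induced subspace $D_{A_\kappa}\subset D$ by \eqref{eq:D_A}. By Lemma~\ref{L:norma projekci} the subspace $D_{A_\kappa}$ is $1$-norming. Both $D$ and $D_{A_\kappa}$ are weak$^*$-countably closed and their bounded subsets have countable tightness in the weak$^*$ topology by Lemma~\ref{L:tightness}(a), so Lemma~\ref{L:tightness}(b) with $D_1=D$ and $D_2=D_{A_\kappa}$ (noting $D_1\cap D_2=D_{A_\kappa}$ is norming) gives $D\subset D_{A_\kappa}$, hence $D=D_{A_\kappa}$. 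The main obstacle in this plan is the bookkeeping in the cardinality-preserving directed closure at successor stages; once that is handled, everything else reduces to invoking the already proved lemmas.
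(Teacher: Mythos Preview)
Your overall strategy matches the paper's: a transfinite recursion driven by a dense enumeration, closing under pairwise upper bounds at successors to maintain directedness, and taking unions at limits. Your variant of inserting $x_\alpha$ at stage $\alpha+1$ (rather than the smallest missing $x_\gamma$, as the paper does) is a harmless simplification that makes (vi) immediate. Your treatment of the ``Moreover'' clause via Proposition~\ref{P:P_AX a ker P_A}(iii) and Lemma~\ref{L:tightness}(b) is correct and in fact more explicit than the paper, which leaves that part to the reader.

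There is, however, a genuine gap in your verification of (iii), specifically the lower bound $\dens P_{A_\alpha}X\ge\max\{\card\alpha,\aleph_0\}$. The inclusion $\{x_\xi\setsep\xi<\alpha\}\subset P_{A_\alpha}X$ does \emph{not} force $\dens P_{A_\alpha}X\ge\card\alpha$: a separable Banach space contains subsets of every cardinality up to $2^{\aleph_0}$. Your subsequent ``padding'' only adjusts $\card A_\alpha$, not $\dens P_{A_\alpha}X$; and if done independently at each stage it risks breaking the nesting (iv) and the limit formula (v). The argument that actually works, and is the one the paper uses, is by induction on $\alpha$ via monotonicity of the ranges: since $A_\alpha\subset A_\beta$ gives $P_{A_\alpha}X\subset P_{A_\beta}X$, one has $\dens P_{A_{\alpha+1}}X\ge\dens P_{A_\alpha}X$ at successors and $\dens P_{A_\lambda}X\ge\sup_{\alpha<\lambda}\dens P_{A_\alpha}X$ at limits; combined with the upper bound you already have, equality follows at every stage. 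For the cardinality equality $\card A_\alpha=\max\{\card\alpha,\aleph_0\}$ it suffices to pad once, making $A_1$ countably infinite (as the paper does); the equality then propagates through successors and limits automatically without any further intervention.
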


\begin{proof} Since $\Gamma$ is up-directed, we can fix a mapping $\varphi:\Gamma\times \Gamma\to\Gamma$ such that
$$\varphi(s,t)\ge s\ \&\ \varphi(s,t)\ge t\qquad\mbox{for }s,t\in\Gamma.$$
If $B\subset\Gamma$ is any nonempty subset, we define the sequence $(B_k)$ by $B_0=B$ and $B_{k}=B_{k-1}\cup \varphi(B_{k-1}\times B_{k-1})$. If we set
$$\eta(B)=\bigcup_{k=0}^\infty B_k,$$
then $\eta(B)\supset B$, $\eta(B)$ is directed and $\card\eta(B)\le \max\{\card B,\aleph_0\}$.

Now we are going to perform the main construction.
Let $\{x_\alpha\setsep\alpha<\kappa\}$ be a dense subset of $X$ not containing $0$. We proceed by transfinite induction.

Set $A_0=\emptyset$. Then (i) is fulfilled.
Fix some $s\in \Gamma$ with $P_s x_0=x_0$, let $B_1\subset\Gamma$ be an infinite countable set containing $s$ and set $A_1=\eta(B_1)$. Then $A_1$ is directed, hence (ii) is satisfied. Moreover, $A_1$ is infinite countable and $P_{A_1}X$ is separable (as it is the closure of $\bigcup_{s\in A_1}P_sX$), hence (iii) is satisfied as well.
The remaining conditions are void, so the first step of the induction is completed.

Suppose that $1\le\alpha<\kappa$ and we have constructed $A_\beta$ for $\beta\le\alpha$ satisfying the conditions (i)-(v). Since $\dens P_{A_\alpha} X<\kappa$, we have 
$P_{A_{\alpha}}X\subsetneqq X$. Let $\gamma<\kappa$ be the smallest ordinal such that $x_{\gamma}\notin P_{A_{\alpha}}X$.  
Take some $s\in\Gamma$ such that $P_s x_\gamma=x_\gamma$. Finally, set $A_{\alpha+1}=\eta(A_\alpha\cup\{s\})$. Then (ii) and (iv) are satisfied for 
$A_\beta$, $\beta\le\alpha+1$. Moreover, since $A_\alpha$ is infinite, we have $\card A_{\alpha+1}=\card A_\alpha$, hence $\card A_{\alpha+1}=\max\{\card(\alpha+1),\aleph_0\}$. Further,
$$\dens P_{A_{\alpha+1}}X\ge\dens P_{A_\alpha}X=\card A_\alpha=\card A_{\alpha+1}$$
and, clearly, $\dens P_{A_{\alpha+1}}X\le\card A_{\alpha+1}$
(as $\bigcup_{s\in A_{\alpha+1}}P_sX$ is dense in $P_{A_{\alpha+1}}X$. Thus (iii) is valid as well. Since (v) and (vi) are void in this case, the `isolated' induction step is completed.

Next suppose that $\lambda\le\kappa$ is limit and we have constructed $A_\alpha$ for $\alpha<\lambda$ such that the conditions (i)-(v) are satisfied. We simply let $A_\lambda=\bigcup_{\alpha<\lambda}A_\lambda$. Then clearly the conditions (ii), (iv) and (v) are again fulfilled. 
Moreover,
$$\card A_\lambda=\sup_{\alpha<\lambda}\card A_\alpha=
\sup_{\alpha<\lambda}\max\{\card\alpha,\aleph_0\}=\card\lambda=\max\{\card\lambda,\aleph_0\}.$$
Further,
$$\dens P_{A_\lambda} X\ge\sup_{\alpha<\lambda}\dens P_{A_\alpha}X=\card\lambda$$
and $\dens P_{A_\lambda}X\le\card\lambda$ as $\bigcup_{s\in A_\lambda}P_sX$ is dense in $P_{A_\lambda}X$.
So, the condition (iii) is fulfilled as well.

It remains to prove (vi). We will show that $P_{A_\kappa}X$ contains $x_\alpha$ for each $\alpha<\kappa$. To this end it is enough to observe that $x_\alpha\in P_{A_{\alpha+1}}X$ for each $\alpha<\kappa$. This can be proved by transfinite induction: $x_0\in P_{A_1}X$ by the construction. Suppose that $\alpha<\kappa$ is such that $x_\gamma\in P_{\gamma+1}$ for each $\gamma<\alpha$. Then
$\{x_\gamma\setsep \gamma<\alpha\}\subset P_{A_\alpha}X$ Therefore, by the construction of $A_{\alpha+1}$ we have $x_\alpha\in P_{A_{\alpha+1}}X$.
\end{proof}

The next proposition is the main achievement of this section. The assertions (a) and (b) are just a bit more precise version of \cite[Theorem 17.6]{KaKuLo}, the assertion (c) is new and provides a proof of Theorem~\ref{T:P-class}.

\begin{prop}\label{P:PRI} Let $\kappa=\dens X$ and let $(A_\alpha)_{\alpha\le\kappa}$ be the family provided by Lemma~\ref{L:A_alpha}.
Then the following assertions hold.
\begin{itemize}
\item[(a)] The family $(P_{A_\alpha})_{\alpha\le\kappa}$ is a PRI on $X$.
\item[(b)] For each $\alpha<\kappa$ the family
$$P_s|_{P_{A_\alpha}X}, s\in (A_{\alpha})_\sigma$$
is a $1$-projectional skeleton on $P_{A_\alpha}X$. The induced subspace is
$$D_\alpha=\{P_s^* x^*|_{P_{A_\alpha}X}\setsep x^*\in X^*\}\subset\{ x^*|_{P_{A_\alpha}X}\setsep x^*\in D\}.$$
If the skeleton on $X$ is commutative, we have 
$$D_\alpha=\{ x^*|_{P_{A_\alpha}X}\setsep x^*\in D\}.$$
\item[(c)] For each $\alpha<\kappa$ the space $(P_{\alpha+1}-P_\alpha)X$ admits a $1$-projectional skeleton with induced subspace
$$D_{\alpha}^{\alpha+1}=\{x^*|_{(P_{\alpha+1}-P_\alpha)X}\setsep x^*\in D_{\alpha+1}\}.$$
\end{itemize}
\end{prop}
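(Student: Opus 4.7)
The plan is to apply part~(b) together with Lemma~\ref{L:invariance}(b) inside the smaller space $Y:=P_{A_{\alpha+1}}X$. Fix $\alpha<\kappa$ and set $Z:=(P_{A_{\alpha+1}}-P_{A_\alpha})X$. Since $A_\alpha\subset A_{\alpha+1}$, Lemma~\ref{L:P_sP_A}(c) yields $P_{A_\alpha}P_{A_{\alpha+1}}=P_{A_{\alpha+1}}P_{A_\alpha}=P_{A_\alpha}$; in particular $P_{A_\alpha}$ leaves $Y$ invariant, and a direct computation shows that $Z$ coincides with $\ker(P_{A_\alpha}|_Y)$, so $Z$ is a closed subspace of $Y$. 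By part~(b), $(P_s|_Y)_{s\in(A_{\alpha+1})_\sigma}$ is a $1$-projectional skeleton on $Y$ whose induced subspace is $D_{\alpha+1}$, which puts me in a situation where Lemma~\ref{L:invariance} can be used inside $Y$.

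The main step will be to show that $Z$ is $\sigma(Y,D_{\alpha+1})$-closed. I would argue in the spirit of the proof of Proposition~\ref{P:P_AX a ker P_A}(iv): combining the definition \eqref{eq:P_A def} of $P_{A_\alpha}$ as a net-limit over the directed set $A_\alpha$ with Lemma~\ref{L:P_sP_A}(a), one sees that for $y\in Y$ the condition $P_{A_\alpha}y=0$ is equivalent to $P_sy=0$ for every $s\in A_\alpha$, and by Hahn--Banach further equivalent to $P_s^*x^*(y)=0$ for every $s\in A_\alpha$ and every $x^*\in X^*$. Since $A_\alpha\subset A_{\alpha+1}\subset(A_{\alpha+1})_\sigma$, each such functional $P_s^*x^*|_Y$ belongs to $D_{\alpha+1}$, and $Z$ is therefore the intersection of kernels of elements of $D_{\alpha+1}$, hence $\sigma(Y,D_{\alpha+1})$-closed.

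With this in hand, Lemma~\ref{L:invariance}(b), applied to the skeleton on $Y$ and to the subspace $Z$, produces a cofinal $\sigma$-closed set $\Gamma''\subset(A_{\alpha+1})_\sigma$ with $P_s(Z)\subset Z$ for every $s\in\Gamma''$. Since passing to $\Gamma''$ preserves the induced subspace, $(P_s|_Y)_{s\in\Gamma''}$ is still a $1$-projectional skeleton on $Y$ with induced subspace $D_{\alpha+1}$, and Lemma~\ref{L:invariance}(a) then yields that $(P_s|_Z)_{s\in\Gamma''}$ is a $1$-projectional skeleton on $Z$ whose induced subspace is $\{f|_Z\setsep f\in D_{\alpha+1}\}$. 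By the definition of $D_\alpha^{\alpha+1}$ this is exactly the subspace claimed in the statement.

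The only non-mechanical step is the $\sigma(Y,D_{\alpha+1})$-closedness of $Z$; everything else is a direct invocation of results already established in the section. I expect the main bookkeeping hazard to be keeping track of which weak topology lives on which space and checking that each functional $P_s^*x^*|_Y$ with $s\in A_\alpha$ genuinely appears in $D_{\alpha+1}$, but both points are immediate from the description of the induced subspace of the skeleton on $Y$ given in part~(b).
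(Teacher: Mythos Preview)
Your argument for part~(c) is correct and matches the paper's route: the paper simply writes that (c) ``follows from (b) and Proposition~\ref{P:P_AX a ker P_A}(iv)'', meaning that one applies Proposition~\ref{P:P_AX a ker P_A}(iv) \emph{inside} $Y=P_{A_{\alpha+1}}X$ equipped with the skeleton $(P_s|_Y)_{s\in(A_{\alpha+1})_\sigma}$ from~(b), and you have unpacked that citation inline by redoing its $\sigma(Y,D_{\alpha+1})$-closedness computation and then invoking Lemma~\ref{L:invariance}(b). Parts~(a) and~(b), which you take as given, are indeed immediate from Lemma~\ref{L:A_alpha} together with Lemma~\ref{L:P_sP_A}(c), and from Proposition~\ref{P:P_AX a ker P_A}(iii), respectively.
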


\begin{proof} 
(a) The property (i) of a PRI follows from the properties (i) and (vi) of the family $(A_\alpha)$. By the property (ii) it is clear that  $P_{A_\alpha}$ is a norm one projection for $\alpha>0$, thus the property (ii) of a PRI is fulfilled.
The properties (iii)--(v) of a PRI follow from the respective properties of the family $(A_\alpha)$, in case of (iv) together with Lemma~\ref{L:P_sP_A}(c).

The assertion (b) follows from Proposition~\ref{P:P_AX a ker P_A}(iii). The assertion (c) follows from (b) and Proposition~\ref{P:P_AX a ker P_A}(iv).
\end{proof}

\section{A characterization of commutativity of a projectional skeleton}\label{S:commutative}

The aim of this section is to prove two theorems -- Theorem~\ref{T:char sigma subspace} characterizing $\Sigma$-subspaces and Theorem~\ref{T:komutativni} characterizing commutativity of a projectional skeleton. A large part of the characterization given in Theorem~\ref{T:char sigma subspace} is not new, but we provide a unified approach and some new points of view.
This is explained in more detail in Remarks~\ref{Remark Sigma}  below.

We continue by recalling definitions of some notions used in the following theorem.

A \emph{projectional generator} on a Banach space $X$ is a pair $(D,\Phi)$, where $D$ is a norming subspace of $X^*$ and $\Phi$ is a mapping defined on $D$ whose values are countable subsets of $X$ satisfying moreover the condition
$$\forall A\subset D: \overline{A}\mbox{ is a linear subspace}\Rightarrow \Phi(A)^\perp\cap\wscl{A}=\{0\}.$$
The notion of a projectional generator was introduced in \cite{OV-projgen} as a technical tool for constructing a PRI. It is used for example in \cite{fabian-kniha}. There are some minor differences between the definitions used by different authors, it is not clear whether the definitions are equivalent but the differences are not important for applications.

Further, if $M$ is any set, by $[M]^{\le\omega}$ we denote the family of all the countable subsets of $M$ (including the finite sets).  A mapping $\varphi:[M_1]^{\le\omega}\to[M_2]^{\le\omega}$ is called \emph{$\omega$-monotone} if it satisfies the following two properties.
\begin{itemize}
\item $\forall  A,B\in[M_1]^{\le\omega}:A\subset B\Rightarrow \varphi(A)\subset\varphi(B)$;
\item $\varphi(\bigcup_n A_n)=\bigcup_n\varphi(A_n)$ whenever $(A_n)$ is an increasing sequence in $[M_1]^{\le\omega}$.
\end{itemize}
This terminology is a bit misleading (a more natural name would be \emph{$\sigma$-continuous monotone mapping}), but we prefer to use the usual terminology which is nowadays becoming standard, cf. \cite{RoHe-sokolov,RoHe-monotprop,cuth-sepdeterm}.

Finally, a topological space is said to be \emph{primarily Lindel\"of}
if it is a continuous image of a closed subset of the space $(L_\Gamma)^\en$ for a set $\Gamma$, where $L_\Gamma$ is the one-point lindel\"ofication of the discrete set $\Gamma$ (i.e., $L_\Gamma=\Gamma\cup\{\infty\}$, the points of $\Gamma$ are isolated in $L_\Gamma$ and neighborhoods of $\infty$ are complements of countable subsets of $\Gamma$). This class of topological spaces was used in \cite{polprep} to characterize Corson compact spaces (see also \cite{archang-kniha}),
the characterization was generalized to Valdivia compacta and $1$-Plichko Banach spaces in \cite{ja-valchar} (see also \cite[Chapter 2]{ja-survey}).

\begin{thm}\label{T:char sigma subspace} Let $X$ be a Banach space and $D\subset X^*$ a norming subspace.
The following assertions are equivalent.
\begin{enumerate}
\item $D$ is a $\Sigma$-subspace.
\item $D$ is weak$^*$-countably closed and there is a linearly dense subset $M\subset X$ such that the pair $(D,\Phi)$, where
$$\Phi(x^*)=\{x\in M\setsep x^*(x)\ne0\},\quad x^*\in D,$$
is a projectional generator on $X$.
\item There is a linearly dense subset $M\subset X$ and an $\omega$-monotone mapping $\psi:[M\cup D]^{\le\omega}\to[M]^{\le\omega}$ such that for any $A\in[M\cup D]^{\le\omega}$ we have
\begin{itemize}
\item $A\cap M\subset \psi(A)$;
\item $A\cap D\subset (M\setminus\psi(A))^\perp\subset D$;
\item the mapping $x^*\mapsto x^*|_{\clin{\psi(A)}}$ is a bijection of $(M\setminus\psi(A))^\perp$ onto $(\clin \psi(A))^*$.
\end{itemize}
\item There is a linearly dense subset $M\subset X$ and an $\omega$-monotone mapping $\vartheta:[M]^{\le\omega}\to[M]^{\le\omega}$ such that
$$D=\bigcup\{(M\setminus\vartheta(A))^\perp\setsep A\in[M]^{\le\omega}\}$$ and, moreover,  for any $A\in[M]^{\le\omega}$ we have
\begin{itemize}
\item $A\subset \psi(A)$;
\item the mapping $x^*\mapsto x^*|_{\clin{\vartheta(A)}}$ is a bijection of $(M\setminus\psi(A))^\perp$ onto $(\clin\vartheta(A))^*$.
\end{itemize}
\item $D$ is induced by a commutative projectional skeleton on $X$.
\item There is a Markushevich basis $(x_\alpha,x^*_\alpha)_{\alpha\in\Lambda}$
in $X$ such that
$$D=\{x^*\in X^*\setsep   \{\alpha\in\Lambda\setsep x^*(x_\alpha)\ne0\}\mbox{ is countable}\}.$$
\item $D$ is weak$^*$-countably closed and there is a Markushevich basis $(x_\alpha,x^*_\alpha)_{\alpha\in\Lambda}$ in $X$ such that
\begin{itemize}
\item $x^*_\alpha\in D$ for $\alpha\in\Lambda$, and
\item $\{x_\alpha\setsep\alpha\in\Lambda\}\cup\{0\}$ is $\sigma(X,D)$-Lindel\"of.
\end{itemize}
\item $D$ is weak$^*$-countably closed and there is a Markushevich basis $(x_\alpha,x^*_\alpha)_{\alpha\in\Lambda}$ in $X$ such that $\{x_\alpha\setsep\alpha\in\Lambda\}\cup\{0\}$ is primarily Lindel\"of in the topology $\sigma(X,D)$.
\item  $D$ is weak$^*$-countably closed  and $(X,\sigma(X,D))$ is primarily Lindel\"of.
\end{enumerate}
\end{thm}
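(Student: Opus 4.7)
The plan is to prove these nine conditions equivalent by establishing several interlocking implications rather than a single long cycle, exploiting that $(1)\Leftrightarrow(5)$ is essentially \cite[Theorem 27]{kubis-skeleton}, as already discussed in the introduction: a commutative $1$-projectional skeleton induces a $1$-norming subspace contained in some $\Sigma$-subspace, and Lemma~\ref{L:tightness}(b) forces this $\Sigma$-subspace to coincide with $D$. With this backbone in place I would split the remaining conditions into two natural clusters: the projectional-generator cluster $\{(1),(2),(3),(4),(5)\}$ and the Markushevich--Lindel\"of cluster $\{(5),(6),(7),(8),(9)\}$, each established through a short cycle.

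For the projectional-generator cluster I start with $(1)\Rightarrow(2)$: weak$^*$-countable closure follows directly from the definition of a $\Sigma$-subspace, since any countable subset of $D$ has nonzero coordinates on a countable subset of $M$, and weak$^*$-cluster points inherit this property. The pair $(D,\Phi)$ with $\Phi(x^*)=\{x\in M\setsep x^*(x)\ne 0\}$ satisfies the projectional-generator axiom by a direct verification using the linear density of $M$. The $\omega$-monotone conditions $(3)$ and $(4)$ reformulate the projectional-generator data inductively: one builds $\psi(A)$ by closing $A\cap M$ under countably many operations that ensure the bijectivity condition, where the third bullet in (3) encodes exactly the existence of a bounded projection onto $\clin\psi(A)$ with kernel $(M\setminus\psi(A))^\perp$. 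The implications $(2)\Rightarrow(3)\Rightarrow(4)$ are mostly bookkeeping, while $(4)\Rightarrow(5)$ produces a commutative projectional skeleton indexed by the $\sigma$-closed cofinal family $\{A\in[M]^{\le\omega}\setsep \vartheta(A)=A\}$, with the bijection furnishing the projection $P_A$ onto $\clin\vartheta(A)$; commutativity follows because all projections arise from complementary pairs sharing the common coordinate system $M$.

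For the Markushevich--Lindel\"of cluster, $(5)\Rightarrow(6)$ follows by extracting an M-basis from a commutative skeleton in the standard way: along any chain of projections with separable differences one chooses biorthogonal elements with $x^*_\alpha\in D$, and countability of $\{\alpha\setsep x^*(x_\alpha)\ne 0\}$ for $x^*\in D$ is then automatic. The chain $(6)\Rightarrow(9)\Rightarrow(8)\Rightarrow(7)\Rightarrow(1)$ runs as follows: the M-basis supplies a continuous surjection from a closed subspace of $(L_\Lambda)^\en$ onto $X$ in the $\sigma(X,D)$-topology by sending a sequence encoding a countable support $\{\alpha_n\}$ to the norm limit of a rescaled series $\sum_n 2^{-n}x_{\alpha_n}/\norm{x_{\alpha_n}}$, which yields $(6)\Rightarrow(9)$. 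Since the primarily Lindel\"of property is preserved by continuous images of closed subsets and $\{x_\alpha\setsep\alpha\in\Lambda\}\cup\{0\}$ is $\sigma(X,D)$-closed (as the biorthogonal functionals lie in $D$ and separate), the steps $(9)\Rightarrow(8)\Rightarrow(7)$ are routine. Finally $(7)\Rightarrow(1)$ uses the Lindel\"of property of the support set combined with weak$^*$-countable closure of $D$ to force every $x^*\in D$ to have countable support against the M-basis, yielding the $\Sigma$-subspace representation via $M=\{x_\alpha\setsep \alpha\in\Lambda\}$.

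The main obstacle I expect is the implication $(4)\Rightarrow(5)$ together with the precise formulation of the bijectivity clause in $(3)$ and $(4)$: encoding the existence of a well-defined bounded projection as a purely combinatorial statement about countable subsets of $M$ requires combining weak$^*$-countable closure of $D$ with the projectional-generator axiom inside a single inductive step, and verifying that the resulting skeleton is commutative (rather than merely compatible with inclusion) relies on the fact that all projections are built from the same partition of $M$ into $\vartheta(A)$ and its complement.
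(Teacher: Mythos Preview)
Your overall decomposition into two clusters is reasonable and the first cluster $(1)\Rightarrow(2)\Rightarrow(3)\Rightarrow(4)\Rightarrow(5)$, closed via the Kubi\'s backbone $(1)\Leftrightarrow(5)$, is essentially what the paper does (the paper routes through $(1)\Rightarrow(3)$ rather than $(2)\Rightarrow(3)$, but your variant works with the same iteration argument). Note however that the paper's stated aim is precisely to avoid citing \cite{kubis-skeleton} for $(1)\Leftrightarrow(5)$, since that proof relies on elementary submodels; the chain $(1)\Rightarrow(3)\Rightarrow(4)\Rightarrow(5)$ is what replaces it.

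The second cluster has a genuine gap: your chain $(6)\Rightarrow(9)\Rightarrow(8)\Rightarrow(7)\Rightarrow(1)$ runs in the wrong direction. The implication $(9)\Rightarrow(8)$ asks you to \emph{produce} a Markushevich basis from the mere hypothesis that $(X,\sigma(X,D))$ is primarily Lindel\"of; you cannot borrow the basis from $(6)$, since then you are proving $(6)\wedge(9)\Rightarrow(8)$ and the cycle does not close at $(9)$. Likewise $(8)\Rightarrow(7)$ requires showing $x^*_\alpha\in D$, which is not part of $(8)$. The paper orients this chain the opposite way, $(6)\Rightarrow(7)\Rightarrow(8)\Rightarrow(9)\Rightarrow(1)$: each step relaxes hypotheses (drop $x^*_\alpha\in D$; pass from the linearly dense set $H$ to all of $X$ via products, spans, and a series map), and the final step $(9)\Rightarrow(1)$ is the nontrivial closing move, done by citing \cite[Theorem~2.7]{ja-survey}. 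Your proposed map for $(6)\Rightarrow(9)$, sending $(\alpha_n)\mapsto\sum 2^{-n}x_{\alpha_n}/\norm{x_{\alpha_n}}$, is also not onto $X$: such series hit only a specific convex set, not every vector. The paper's construction first shows $H$ is primarily Lindel\"of (a continuous image of $L_\Lambda$), bootstraps to $\lin H$ via finite products with scalar cubes, and only then uses a series map $\lin H\times(B_X\cap\lin H)^{\en}\to X$ whose surjectivity comes from density of $\lin H$.

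Your $(7)\Rightarrow(1)$ does work and is a legitimate shortcut the paper does not take: the Lindel\"of property of $H$ together with the isolatedness of each $x_\alpha$ forces $\{x_\alpha:\,x^*(x_\alpha)\ne0\}$ to be countable for every $x^*\in D$, so $D$ lies inside the $\Sigma$-subspace $D'$ generated by $\{x_\alpha\}$; equality then follows from Lemma~\ref{L:tightness}(b) since $D'$ has weak$^*$-countable tightness and $D$ is weak$^*$-countably closed and norming. But this does not rescue the cycle unless you also supply standalone arguments for $(8)\Rightarrow(\text{something})$ and $(9)\Rightarrow(\text{something})$.
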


\begin{proof} It is clear that the assertions (1)--(9) are not changed by taking an equivalent norm. Therefore without loss of generality we may and shall assume that $D$ is $1$-norming. So, we can fix a mapping $\eta$ assigning to each $x\in X$ a countable subset $\eta(x)\subset D\cap B_{X^*}$ such that
$$\norm{x}=\sup\{\abs{x^*(x)}\setsep x^*\in\eta(x)\}.$$

(1)$\Rightarrow$(2) Suppose that $D$ is a $\Sigma$-subspace and let $M\subset X$ be a linearly dense set witnessing it. Define $\Phi$ as in the statement of (2). We will show that the pair $(D,\Phi)$ is a projectional generator. By the very definition of a $\Sigma$-subspace it is clear that $\Phi$ is countably-valued. Further, take any $A\subset D$. Then
$$\wscl{\lin A}\cap \Phi(A)^\perp=\{0\}.$$
Indeed, suppose $x^*\in\wscl{\lin A}\setminus\{0\}$. Since $M$ is linearly dense, there is $x\in M$ with $x^*(x)\ne 0$. Hence, there is $y^*\in A$ such that $y^*(x)\ne0$.
It follows that $x\in\Phi(y^*)\subset\Phi(A)$. So, $x^*\notin\Phi(A)^\perp$.

Finally, to show that $D$ is weak$^*$-countably closed, fix a countable set $C\subset D$. Then 
$$C\subset (M\setminus\Phi(C))^\perp\subset D.$$
Indeed, the first inclusion follows from the definition of $\Phi$ and the second one follows from the definition of a $\Sigma$-subspace as $\Phi(C)$ is countable. Since
$(M\setminus\Phi(C))^\perp$ is weak$^*$-closed, the prove is completed.

(2)$\Rightarrow$(1) Let $M$ and $\Phi$ be as in the statement of (2). Since $M$ is linearly dense, we can define $D'$ to be the $\Sigma$-subspace generated by $M$. $\Phi$ is countably-valued, thus $D\subset D'$. Further, $D$ is $1$-norming and weak$^*$-countably closed, thus $D\cap B_{X^*}$ is weak$^*$-dense and weak$^*$-countably closed in $D'\cap B_{X^*}$. Since $D'\cap B_{X^*}$ equipped with the weak$^*$-topology has countable tightness (in fact, it is Fr\'echet-Urysohn, cf. \cite[Lemma 1.6]{ja-survey}), we conclude that $D\cap B_{X^*}=D'\cap B_{X^*}$,
hence $D=D'$.

(1)$\Rightarrow$(3) We are going to prove the implication for real spaces. The proof for complex ones is exactly the same, one just needs to replace everywhere $\qe$ by its complex version $\qe+i\qe$.

Suppose that $D$ is a $\Sigma$-subspace and let $M$ be a linearly dense set witnessing it. Define $\Phi$ as in the statement of (2).
Define a mapping $\theta_1:[D\cup M]^{\le\omega}\to[D\cup M]^{<\omega}$
by the formula
$$\theta_1(A)=A\cup\Phi(A\cap D)\cup\eta(\lin_{\qe} (A\cap M)),\quad A\in[D\cup M]^{\le\omega}.$$
It is obvious that $\theta_1$ is an $\omega$-monotone mapping. Further, for each $n\ge 2$ define a mapping $\theta_n$ by 
$$\theta_n=\theta_1(\theta_{n-1}(A)),\quad A\in[D\cup M]^{\le\omega}.$$
It is clear that $\theta_n$ is an $\omega$-monotone mapping for each $n\in\en$.
Finally, for each $A\in[D\cup M]^{\le\omega}$ define
$$\theta_\infty(A)=\bigcup_{n\in\en} \theta_n(A)\mbox{ and }\psi(A)=\theta_\infty(A)\cap M.$$
Then both $\theta_\infty$ and $\psi$ are $\omega$-monotone mappings.

Let us prove that $\psi$ has the required properties. To this end fix $A\in[D\cup M]^{\le\omega}$.

By construction it is obvious that $A\subset \theta_\infty(A)$, hence $A\cap M\subset \psi(A)$.  Further, $\Phi(A\cap D)\subset\theta_1(A)\cap M\subset\psi(A)$, hence $A\cap D\subset (M\setminus\psi(A))^\perp$.

Since $\psi(A)$ is a countable subset of $M$, we deduce that $(M\setminus\psi(A))^\perp\subset D$ by the definition of a $\Sigma$-subspace.

It remains to prove the last property. We start by showing that

$$\forall x\in\lin_{\qe}\psi(A)\, \forall y\in\lin_{\qe}(M\setminus\psi(A)): \norm{x}\le\norm{x+y}.\eqno{(*)}$$  

So, fix any $x\in\lin_{\qe}\psi(A)$ and $y\in\lin_{\qe}(M\setminus \psi(A))$.
Observe that there is $n\in\en$ such that $x\in\lin_{\qe}(\theta_n(A)\cap M)$. It follows that $\eta(x)\subset\theta_{n+1}(A)$, hence $\Phi(\eta(x))\subset\theta_{n+2}(A)\cap M\subset\psi(A)$. Thus $\eta(x)\subset (M\setminus\psi(A))^\perp$.
Therefore
$$\begin{aligned}
\norm{x}&=\sup \{\abs{x^*(x)}\setsep x^*\in\eta(x)\}
\le \sup \{\abs{x^*(x)}\setsep x^*\in(M\setminus\psi(A))^\perp, \norm{x^*}\le 1\}
\\&=  \sup \{\abs{x^*(x+y)}\setsep x^*\in(M\setminus\psi(A))^\perp, \norm{x^*}\le 1\}
\le\norm{x+y},
\end{aligned}$$
which completes the proof of $(*)$.

Next we are going to show that
$$\forall x^*\in (M\setminus\psi(A))^\perp: \norm{x^*|_{\clin\psi(A)}}=\norm{x^*}.\eqno{(**)}$$

Since the inequality `$\le$' is obvious, it is enough to prove the converse one. Fix any $c<\norm{x^*}$. Then there is $z_0\in B_X$ with $\abs{x^*(z_0)}>c$. Since $M$ is linearly dense, $\lin_{\qe}M$ is norm-dense in $X$, thus there is $z_1\in B_X\cap\lin_{\qe}M$ with $\abs{x^*(z_1)}>c$. Then $z_1$ can be uniquely expressed as $z_1=x+y$ with $x\in\lin_{\qe}\psi(A)$ and $y\in\lin_{\qe}(M\setminus\psi(A))$. By $(*)$ we get $\norm{x}\le\norm{z_1}\le1$. Moreover, $x^*(y)=0$, hence
$$c<\abs{x^*(z_1)}=\abs{x^*(x)}\le\norm{x^*|_{\clin\psi(A)}}.$$
Thus $(**)$ is proved.

The last ingredient is
$$\forall y^*\in (\clin\psi(A))^*\,\exists x^*\in(M\setminus\psi(A))^\perp: x^*|_{\clin \psi(A)}=y^*.\eqno{(***)}$$  

Define first $x^*$ on $\lin_{\qe} M=\lin_{\qe}\psi(A)+\lin_{\qe}(M\setminus\psi(A))$ by
$$x^*(x+y)=y^*(x),\quad x\in\lin_{\qe}\psi(A), y\in\lin_{\qe}(M\setminus\psi(A)).$$
It follows from $(*)$ that  $\lin_{\qe}\psi(A)\cap\lin_{\qe}(M\setminus\psi(A))=\{0\}$,
hence $x^*$ is a well-defined $\qe$-linear functional. Moreover, it also follows from $(*)$ that $\abs{x^*(z)}\le\norm{y^*}$ for each $z\in B_X\cap\lin_{\qe}M$. It follows that $x^*$ can be uniquely extended to an element of $X^*$. It is clear that this extension belongs to $(M\setminus\psi(A))^\perp$ and extends $y^*$.

Finally, putting together $(**)$ and $(***)$ we get that
$$x^*\mapsto x^*|_{\clin\psi(A)}\mbox{ is an isometry of $(M\setminus\psi(A))^\perp$ onto }(\clin\psi(A))^*,$$
which completes the proof.

(3)$\Rightarrow$(4) Let $M$ and $\psi$ be as in the statement of (3). Let us define the mapping $\vartheta$ by the formula
$$\vartheta(A)=\psi(A),\quad A\in[M]^{\le\omega}.$$
It is clear that the mapping $\vartheta$ is $\omega$-monotone. For any $A\in[M]^{\le\omega}$ obviously $A\subset\vartheta(A)$ and the mapping $x^*\mapsto x^*|_{\clin{\vartheta(A)}}$ is a bijection of $(M\setminus\vartheta(A))^\perp$ onto $(\clin\vartheta(A))^*$, due to the properties of $\psi$.

It remains to show the formula for $D$. By the properties of $\psi$ and definition of $\vartheta$ we deduce that
$$D=\bigcup\{(M\setminus\psi(C))^\perp\setsep C\in[M\cup D]^{\le\omega}\}\supset\bigcup\{(M\setminus\vartheta(A))^\perp\setsep A\in[M]^{\le\omega}\}.$$
To prove the converse inclusion fix any $x^*\in D$. Then there is $C\in[M\cup D]^{\le\omega}$ such that $x^*\in(M\setminus\psi(C))^\perp$. Set $A=\psi(C)$. Since $\vartheta(A)\supset A$, we deduce that
$$x^*\in(M\setminus\psi(C))^\perp=(M\setminus A)^\perp\subset(M\setminus\vartheta(A))^\perp.$$
This completes the proof.

(4)$\Rightarrow$(5) Let $M$ and $\vartheta$ be as in the assertion (4). We are going to construct a projectional skeleton. We start by defining the respective index set. Set
$$\Gamma=\{A\in[M]^{\le\omega}\setsep; \vartheta(A)=A\}\cup\{\emptyset\}$$
and consider the partial order on $\Gamma$ given by inclusion. This index set has the following properties:
\begin{itemize}
\item[(i)] $\forall C\in[M]^{\le\omega}\,\exists A\in\Gamma: C\subset A$;
\item[(ii)] $\Gamma$ is up-directed;
\item[(iii)] if $(A_n)_n$ is an increasing sequence in $\Gamma$, then $\bigcup_n A_n\in\Gamma$;
\item[(iv)] $\Gamma$ is closed to taking arbitrary intersections;
\item[(v)] $\Gamma$ is a lattice, i.e., any two-point subset of $\Gamma$ admits a supremum and an infimum in $\Gamma$.
\end{itemize}

Let us now prove these properties:

(i) Fix $C\in[M]^{\le\omega}$. Set $C_1=C$ and define, by induction, $C_{n+1}=\vartheta(C_n)$ for $n\in\en$. Finally, set $A=\bigcup_n C_n$. 
By the properties of $\vartheta$ we deduce that the sequence $(C_n)$ is increasing. Hence $A\subset C$ and, moreover,
$$\vartheta(A)=\bigcup_n\vartheta(C_n)=\bigcup_n C_{n+1}=A,$$
so $A\in\Gamma$.

(ii) Let $A_1,A_2\in \Gamma$. By (i) there is $A\in\Gamma$ such that $A\supset A_1\cup A_2$.

(iii) Since $\vartheta$ is $\omega$-monotone, we have
$$\vartheta\left(\bigcup_n A_n\right)=\bigcup_n\vartheta(A_n)=\bigcup_n A_n.$$

(iv) Let $\Gamma'\subset\Gamma$ be any subset. If $\bigcap\Gamma'=\emptyset$, it is an element of $\Gamma$. Suppose that $A=\bigcap\Gamma'\ne\emptyset$. Then
$$A\subset\vartheta(A)\subset\bigcap_{C\in\Gamma'}\vartheta(C)=\bigcap_{C\in\Gamma'}C=A,$$
thus $A\in\Gamma$.

(v) If $A_1,A_2\in\Gamma$, their infimum is $A_1\wedge A_2=A_1\cap A_2$ (by (iv) the intersection belongs to $\Gamma$) and their supremum is
$$A_1\vee A_2=\bigcap\{C\in\Gamma\setsep C\supset A_1\cup A_2\}.$$
Indeed, the subset of $\Gamma$ on the right-hand side is nonempty by (i) and the intersection belongs to $\Gamma$ by (iv).

\smallskip

This completes the proof of the properties of $\Gamma$. It remains to construct the projections.
To this end we will use the following easy lemma. The lemma was essentially used in \cite{CuFa-Asplund}, but we formulate it explicitly as we will use it also in the following section.

\begin{lemma}\label{L:jednaprojekce}
Let $X$ be a Banach space, $Y\subset X$ a closed subspace, $V\subset X^*$ a weak$^*$-closed subspace such that
the mapping
$$x^*\mapsto x^*|_Y$$
is a bijection of $V$ onto $Y^*$. Then there is a bounded linear projection $P$ on $X$ such that $PX=Y$, $P^*X^*=V$ and $\ker P=V_\perp$.
\end{lemma}

\begin{proof} By the open mapping theorem the above restriction map is an isomorphism of $V$ onto $Y^*$. So, there is some $c>0$ such that $\norm{x^*|_Y}\ge c\norm{x^*}$ for $x^*\in V$.  
It follows that
$$\norm{y+v}\ge c\norm{y},\quad y\in Y, v\in V_\perp.$$
Indeed, take $y\in Y$ and $v\in V_\perp$. Fix $y^*\in Y^*$ such that $\norm{y^*}=1$ and $\abs{y^*(y)}=\norm{y}$. By the assumption there is $x^*\in V$ with $x^*|_Y=y^*$. By the above we get $\norm{x^*}\le\frac1c$, thus $\norm{cx^*}\le1$. Therefore
$$\norm{y+v}\ge\abs{cx^*(y+v)}=c\abs{x^*(y)}=c\abs{y^*(y)}
=c\norm{y}.$$
It follows that $Y\cap V_\perp=\{0\}$ and the projection of $P:Y+V_\perp\to Y$ with kernel $V_\perp$ is bounded (with norm at most $\frac1c$). In particular, $Y+V_\perp$ is closed.
Finally,
$$(Y+V_\perp)^\perp=Y^\perp\cap (V_\perp)^\perp=Y^\perp\cap V=\{0\}.$$
Indeed, we used the assumptions that $V$ is weak$^*$-closed and that the only $x^*\in V$ with $x^*|_Y=0$ is the zero functional. Hence the bipolar theorem shows that $Y+V_\perp=X$. It follows that the projection $P$ is defined on the whole $X$. Moreover, $P^*X^*$ is weak$^*$-closed (as $P^*$ is a weak$^*$-to-weak$^*$-continuous projection), thus
$$P^*X^*=\wscl{P^*X^*}=(\ker P)^\perp=(V_\perp)^\perp=V.$$
This completes the proof.
\end{proof}

Now let us continue by constructing the projectional skeleton. For any $A\in\Gamma$ let $P_A$ be the projection provided by Lemma~\ref{L:jednaprojekce} for the pair of subspaces $Y=\clin A$ and $V=(M\setminus A)^\perp$. Then 
$$P_AX=\clin A, P_A^*X^*=(M\setminus A)^\perp \mbox{ and }\ker P_A=((M\setminus A)^\perp)_\perp=\clin(M\setminus A).$$

Let us continue by showing
\begin{itemize}
\item[$(\circ)$] $\forall A,B\in\Gamma: P_{A\cap B}=P_A P_B$. 
\end{itemize}
Fix $A,B\in\Gamma$. If $A\cap B=\emptyset$, then $B\subset M\setminus A$ and hence
$$P_BX=\clin B\subset\clin(M\setminus A)=\ker P_A,$$
thus $P_AP_B=0$.

Suppose that $A\cap B=C\ne \emptyset$. To show that $P_A P_B=P_C$ it is enough to prove the equality for any $x\in M$:
$$P_AP_B x=\begin{cases}
P_Ax=x=P_Cx&\mbox{ if }x\in A\cap B=C,\\
P_Ax=0=P_Cx&\mbox{ if }x\in B\setminus A=B\setminus C,\\
P_A0=0=P_Cx&\mbox{ if }x\in M\setminus B\subset M\setminus C.
\end{cases}
$$

Now we are ready to prove that $(P_A)_{A\in\Gamma}$ is a commutative projectional skeleton. Firstly, $\Gamma$ is an up-directed partially order set by the above property (ii). Let us check the properties of a projectional skeleton. $P_AX=\clin A$, so it is separable for each $A\in\Gamma$, hence the property (i) is fulfilled. The property (ii) follows from $(\circ)$. To prove the property (iii) fix an increasing sequence $(A_n)$ in $\Gamma$. By the property (iii) of $\Gamma$ the union $A=\bigcup_n A_n$ belongs to $\Gamma$. Then $A$ is clearly the supremum of the sequence $(A_n)$ and, moreover,
$$P_AX=\clin A\supset \bigcup_n\clin A_n=\bigcup_n P_{A_n}X\supset \bigcup_n \lin A_n=\lin A,$$
hence $P_AX=\overline{\bigcup_n P_{A_n}X}$. Further, let us prove the property (iv). To this end fix any $x\in X$. Since $M$ is linearly dense, there is a countable set $C\subset M$ with $x\in\clin C$. By the property (i) of $\Gamma$ there is $A\in\Gamma$ with $A\supset C$. Then 
$$x\in\clin C\subset\clin A=P_AX.$$
Finally, the skeleton is commutative by $(\circ)$.

It remains to show that $D$ is the subspace induced by this skeleton, i.e.,
$$D=\bigcup_{A\in\Gamma}P_A^*X^*=\bigcup_{A\in\Gamma}(M\setminus A)^\perp.$$
The inclusion `$\supset$' follows from the assumption (4), as for any $A\in\Gamma$ we have
$$(M\setminus A)^\perp=(M\setminus\vartheta(A))^\perp\subset D.$$
Conversely, let $x^*\in D$. By (4) there is $C\in[M]^{\le\omega}$ such that $x^*\in(M\setminus\vartheta(C))^\perp$. By the property (i) of $\Gamma$ there is $A\in\Gamma$ with $A\supset\vartheta(C)$. Then
$$x^*\in(M\setminus\vartheta(C))^\perp\subset(M\setminus A)^\perp.$$

(5)$\Rightarrow$(6) This implication will be proved by transfinite induction on the density of $X$. If $X$ is separable, then $D=X^*$ and $X$ admits a countable Markushevich basis, so the statement is obvious. 

Let $\kappa$ be an uncountable cardinal such that the implication holds whenever $\dens X<\kappa$.
Suppose that $\dens X=\kappa$ and (5) is satisfied. 
 Let $(P_s)_{s\in\Gamma}$ be a commutative projectional skeleton inducing $D$.
Since $D$ is $1$-norming, we can without loss of generality assume that it is a $1$-projectional skeleton (up to passing to a closed cofinal subset of $\Gamma$, see Lemma~\ref{L:norma projekci}).
Let  $(P_\alpha)_{\alpha\le \kappa}$ be a PRI on $X$
provided by Proposition~\ref{P:PRI}.
Then the space $(P_{\alpha+1}-P_\alpha)X$, for each $\alpha<\kappa$ admits a commutative $1$-projectional skeleton with the induced subspace
$$D_\alpha^{\alpha+1}=\{x^*|_{(P_{\alpha+1}-P_\alpha)X}\setsep x^*\in D\}.$$
For any $\alpha<\kappa$ there is, due to the induction hypothesis, a Markushevich basis
$(x_{\alpha,j},x^*_{\alpha,j})_{j\in J_\alpha}$ of the space $(P_{\alpha+1}-P_\alpha)X$ such that
$$D_\alpha^{\alpha+1}=\{y^*\in((P_{\alpha+1}-P_\alpha)X)^*\setsep \{j\in J_\alpha\setsep y^*(x_{\alpha,j})\ne0\}\mbox{ is countable}\}.$$
By the proof of \cite[Proposition 6.2.4]{fabian-kniha} the family
$$(x_{\alpha,j},x^*_{\alpha,j}\circ(P_{A_{\alpha+1}}-P_{A_\alpha}))_{j\in J_\alpha,\alpha<\kappa}$$
is a Markushevich basis of $X$.
It remains to show that
$$D=\{x^*\in X^*\setsep \{(\alpha,j)\setsep j\in J_\alpha, \alpha<\kappa \ \&\  x^*(x_{\alpha,j})\ne0\}\mbox{ is countable}\}.$$

$\subset$: Let $x^*\in D$. Then $x^*|_{(P_{\alpha+1}-P_\alpha)X}\in D_\alpha^{\alpha+1}$ for each $\alpha<\kappa$. So, to show that $x^*$ belongs to the set on the right-hand side it is enough to show that the set
$$C=\{\alpha<\kappa\setsep x^*|_{(P_{\alpha+1}-P_\alpha)X}\ne0\}$$
is countable. We start by observing that, by the definition of $D$, there is  $s\in \Gamma$ with $P_s^*x^*=x^*$. Suppose $C$ is uncountable. For each $\alpha\in C$ find $z_\alpha\in (P_{\alpha+1}-P_\alpha)X$ with $x^*(z_\alpha)\ne0$. Since $C$ is uncountable, there is some $\varepsilon>0$ such that
$$C_1=\{\alpha\in C\setsep\abs{x^*(z_\alpha)}\ge\varepsilon\}$$
is uncountable. Then, given $\alpha,\beta\in C_1$ with $\alpha>\beta$ we have
$$\begin{aligned}
2\norm{x^*}\cdot\norm{P_s z_\alpha-P_s z_\beta}&\ge{\abs{x^*((P_{\alpha+1}-P_\alpha)P_s(z_\alpha-z_\beta))}}={\abs{x^*(P_s(P_{\alpha+1}-P_\alpha)(z_\alpha-z_\beta))}}
\\&=\abs{x^*(P_s(z_\alpha))}
={\abs{P_s^*x^*(z_\alpha)}}=\abs{x^*(z_\alpha)}\ge\varepsilon
\end{aligned}$$
So, the set $\{P_sz\alpha\setsep\alpha\in C_1\}$ is an uncountable discrete subset of the separable space $P_sX$, which is a contradiction.

$\supset$: Let $x^*$ belong to the set on the right-hand side. Then the set
$$C=\{\alpha<\kappa\setsep\exists j\in J_\alpha: x^*(x_{\alpha,j})\ne 0\}$$
is countable. Moreover, 
$$x^*|_{(P_{\alpha+1}-P_\alpha)X}\in D_\alpha^{\alpha+1}\mbox{ for }\alpha\in C
\mbox{ and }x^*|_{(P_{\alpha+1}-P_\alpha)X}=0\mbox{ for }\alpha\in[0,\kappa)\setminus C.
$$
So, for any $\alpha\in C$ there is $y_\alpha^*\in D$ such that 
$$y_\alpha^*|_{(P_{\alpha+1}-P_\alpha)X}=x^*|_{(P_{\alpha+1}-P_\alpha)X},$$
Then there is $s_\alpha\in\Gamma$ such that $y_\alpha^*=P_{s_\alpha}^*y_\alpha^*$. Since $C$ is countable, there is $s\in\Gamma$ such that $s\ge s_\alpha$ for $\alpha\in C$. To show that $x^*\in D$ it is enough to prove that $P_s^*x^*=x^*$.

Recall that by the construction of the PRI the projection $P_s$ commutes with each $P_\alpha$ (by Lemma~\ref{L:P_sP_A}(b)). Suppose that $\alpha<\kappa$ and $x\in (P_{\alpha+1}-P_\alpha)X$. Then
$$P_s^*x^*(x)=P_s^*x^*((P_{\alpha+1}-P_\alpha)x)=x^*(P_s(P_{\alpha+1}-P_\alpha)x)
=x^*((P_{\alpha+1}-P_\alpha)P_s x).$$
Hence, if $\alpha\in C$, we get
$$\begin{aligned}
P_s^*x^*(x)&=x^*((P_{\alpha+1}-P_\alpha)P_s x)=y^*_\alpha((P_{\alpha+1}-P_\alpha)P_s x)
=y^*_\alpha(P_s(P_{\alpha+1}-P_\alpha)x)\\&=P_s^*y^*_\alpha((P_{\alpha+1}-P_\alpha)x)=
y^*_\alpha((P_{\alpha+1}-P_\alpha)x)=x^*((P_{\alpha+1}-P_\alpha)x)=x^*(x).
\end{aligned}$$
If $\alpha\notin C$, then
$$
P_s^*x^*(x)=x^*((P_{\alpha+1}-P_\alpha)P_s x)=0=x^*((P_{\alpha+1}-P_\alpha)x)=x^*(x).
$$
Since $\bigcup_{\alpha<\kappa}(P_{\alpha+1}-P_\alpha)X$ is linearly dense, we deduce that $P_s^*x^*=x^*$.

(6)$\Rightarrow$(7) Let $(x_\alpha^*,x_\alpha)_{\alpha\in\Lambda}$ be the Markushevich basis provided by (6). It follows immediately that $D$ is a $\Sigma$-subspace, thus it is weak$^*$-countably closed by the already proved implication (1)$\Rightarrow$(2). Further, since the Markushevich basis is a biorthogonal system, obviously $x_\alpha^*\in D$ for any $\alpha\in\Lambda$. 

It remains to show that the set $H=\{x_\alpha\setsep \alpha\in\Lambda\}\cup\{0\}$ is $\sigma(X,D)$-Lindel\"of. So, let $\U$ be a cover of $H$ consisting of $\sigma(X,D)$-open sets. Then there is $U\in\U$ such that $0\in U$. By the definition of the topology $\sigma(X,D)$ there are $x_1^*,\dots,x_n^*\in D$ and $\varepsilon>0$ such that
$$\{x\in X\setsep \abs{x_j^*(x)}<\varepsilon\mbox{ for }j=1,\dots,n\}\subset U.$$
For each $j\in\{1,\dots,n\}$ the set
$$M_j=\{\alpha\in\Lambda\setsep x_j^*(x_\alpha)\ne 0\}$$
is countable and, moreover, $H\setminus\bigcup_{j=1}^n M_j\subset U$. So, $H\setminus U$ is countable and hence one can find a countable subfamily of $\U$ covering $H$.

(7)$\Rightarrow$(8) Let $(x_\alpha^*,x_\alpha)_{\alpha\in\Lambda}$ be the Markushevich basis provided by (7). Set $H=\{x_\alpha\setsep \alpha\in\Lambda\}\cup\{0\}$ and observe that all the nonzero points of $H$ are isolated. Indeed, let $\alpha\in\Lambda$. Since $x_\alpha^*\in D$, the set 
$$U_\alpha=\{x\in X\setsep x_\alpha^*(x)\ne0\}$$ 
is $\sigma(X,D)$-open and $U_\alpha\cap H=\{x_\alpha\}$.
Since $H$ is $\sigma(X,D)$-Lindel\"of, it follows that for each $\sigma(X,D)$-open neighborhood $U$ of $0$ the set $H\setminus D$ is countable. Therefore $H$ is a canonical continuous image of the space $L_\Lambda$, thus it is primarily Lindel\"of.

(8)$\Rightarrow$(9) Let $(x_\alpha^*,x_\alpha)_{\alpha\in\Lambda}$ be the Markushevich basis provided by (8). Set $H=\{x_\alpha\setsep \alpha\in\Lambda\}\cup\{0\}$. By assumption the set $H$ is primarily Lindel\"of in the topology $\sigma(X,D)$. Recall that primarily Lindel\"of spaces are preserved by taking closed subsets, countable products, continuous images and countable unions (see \cite[Proposition IV.3.4]{archang-kniha}). Further, compact metric spaces are primarily Lindel\"of
(as any nonempty compact metric space is a continuous image of $\{0,1\}^\en$ and the two-point discrete space is clearly primarily Lindel\"of). Therefore $\lin H$ is $\sigma(X,D)$-primarily Lindel\"of as $\lin H=\bigcup_n H_n$, where
$$H_n=\{t_1x_1+\dots+t_nx_n\setsep x_1,\dots, x_n\in H, \abs{t_j}\le n \mbox{ for }j=1,\dots,n\}$$
are $\sigma(X,D)$-primarily Lindel\"of, being a continuous image of
$$H^n\times[-n,n]^n \qquad (H^n\times\{t\in\ce\setsep \abs{t}\le n\}^n\mbox{ in the complex case}).$$
Further, the closed unit ball $B_X$ is $\sigma(X,D)$-closed as $D$ is $1$-norming, hence $B_X\cap\lin H$ is primarily Lindel\"of and thus  the product space
$$Z=(\lin H)\times(B_X\cap \lin H)^\en$$
is primarily Lindel\"of as well. Finally, the mapping $F:Z\to X$ defined by
$$F(x,(x_n)_{n=1}^\infty)=x+\sum_{n=1}^\infty 2^{-n} x_n$$
is well defined (the series converges absolutely in the norm) and maps $Z$ onto $X$ (as $\lin H$ is dense in $X$). So, to complete the proof it is enough to show that $F$ is continuous to the topology $\sigma(X,D)$. To this end it suffices to prove that
$x^*\circ F$ is continuous on $Z$ for each $x^*\in D$. But
$$(x^*\circ F)(x,(x_n)_{n=1}^\infty)=x^*(x)+\sum_{n=1}^\infty 2^{-n} x^*(x_n)=
\lim_{N\to\infty}(x^*(x)+\sum_{n=1}^N 2^{-n} x^*(x_n)),
$$
the partial sums are continuous on $Z$ and the limit is uniform on $Z$.

(9)$\Rightarrow$(1) This implication follows from \cite[Theorem 2.7]{ja-survey}.
\end{proof}

\begin{remark}\label{Remark Sigma}
(a) As remarked above, the content of Theorem~\ref{T:char sigma subspace} is not completely new. More precisely, the equivalence (1)$\Leftrightarrow$(2) is almost trivial, the equivalence (1)$\Leftrightarrow$(5) follows from the proof of \cite[Theorem 27]{kubis-skeleton}, the equivalences (1)$\Leftrightarrow$(6)$\Leftrightarrow$(9) are proved in \cite[Lemma 4.18]{ja-survey}.

The added value of Theorem~\ref{T:char sigma subspace} consists firstly in the assertions (3) and (4), secondly in a detailed analysis of the properties of Markushevich bases in the assertions (7) and (8) and, finally, in providing a proof of (1)$\Leftrightarrow$(5) avoiding the set-theoretical method of elementary submodels.

The assertions (3) and (4) provide another view on projectional skeletons which combine some approaches from \cite{CuFa-Asplund,CuFa-AsplundWCG} with an idea of \cite{cuth-sepdeterm}. For example, a  similar statement to the implication (1)$\Rightarrow$(3) is \cite[Lemma 11]{CuFa-AsplundWCG} where rich families are used instead of $\omega$-monotone mappings. In \cite{cuth-sepdeterm} the author shows the equivalence of separable reduction methods using rich families and $\omega$-monotone mappings. We show that $\omega$-monotone mappings can be used to characterize projectional skeletons as well. Another use of $\omega$-monotone mappings is demonstrated in \cite{CuFa-Asplund} by the use the notion of \emph{Asplund generator} to characterize Asplund spaces.

(b) We point out that the projectional skeleton constructed in the proof of (4)$\Rightarrow$(5) is automatically simple in the sense of \cite[Section 4]{cuka-cejm} (i.e., `indexed by the ranges of projections') and, moreover, its index set is a lattice.

(c) The assertion (6) can be strengthened by requiring that the Markushevich bases in question is moreover strong. 
The proof can be done by transfinite induction exactly in the same way as the proof of (5)$\Rightarrow$(6). Indeed, separable spaces admit a strong Markushevich basis by \cite{Terenzi} (see also \cite[Theorem 1.36]{HMVZ-biortogonal}) and this property is preserved in the induction step as remarked in the proof of \cite[Theorem 5.1]{HMVZ-biortogonal}.

(d) Observe that in the proofs of (6)$\Rightarrow$(7)$\Rightarrow$(8) the Markushevich basis has not been changed. Thus any Markushevich basis with the property from (6) has also the properties from (7) and (8). Moreover, if the basis satisfies the properties from (7), the set $H=\{x_\alpha\setsep\alpha\in\Lambda\}\cup\{0\}$ is $\sigma(X,D)$-closed and its nonzero points are isolated. The latter statement was proved above.  To see the first one, fix any $x\in X\setminus H$. We distinguish the following three cases:
\begin{itemize}
\item[$\bullet$] There are two distinct points $\alpha,\beta\in\Lambda$ such that $x_\alpha^*(x)\ne0\ne x_\beta^*(x)$. Then
$$\{y\in X\setsep x_\alpha^*(y)\ne0\ne x_\beta^*(y)\}$$
is a $\sigma(X,D)$-open set containing $x$ and disjoint with $H$.
\item[$\bullet$] There is exactly one $\alpha\in \Lambda$ such that $x_\alpha^*(x)\ne0$. Then
$x_\alpha^*(x)\ne1$ (otherwise $x=x_\alpha$ as the functionals $x_\beta^*$, $\beta\in\Lambda$ separate points of $X$) and hence
$$\{y\in X\setsep x_\alpha^*(y)\notin\{0,1\}\}$$
is a $\sigma(X,D)$-open set containing $x$ and disjoint with $H$.
\item[$\bullet$] $x_\alpha^*(x)=0$ for each $\alpha\in\Lambda$. Then $x=0\in H$, a contradiction.
\end{itemize}

(e) Let $(x_\alpha,x_\alpha^*)_{\alpha\in\Lambda}$ be a Markushevich basis with the properties from the assertion (6). Then $D$ is a $\Sigma$-subspace, as the set $M=\{x_\alpha\setsep\alpha\in\Lambda\}$ witnesses it. Therefore (1) is satisfied and, going through the proofs of (1)$\Rightarrow$(3)$\Rightarrow$(4)$\Rightarrow$(5) we can construct a commutative projectional skeleton $(P_s)_{s\in\Gamma}$ with induced subspace $D$. Moreover, by the construction, this skeleton has a special behavior on the basis. More precisely,
$$P_s x_\alpha = \begin{cases}
x_\alpha&\mbox{ if }x_\alpha\in P_sX,\\ 0&\mbox{ otherwise}.
\end{cases}\eqno{(\square)}$$
This behavior is specific for the commutative case. Indeed, suppose we have a projectional skeleton with induced subspace $D$ and a Markushevich basis such that $(\square)$ is satisfied. Then $D$ is the $\Sigma$-subspace generated by the set  $M=\{x_\alpha\setsep\alpha\in\Lambda\}$. To see this take any $x^*\in D$. Then there is $s\in\Gamma$ with $P_s^*x^*=x^*$. Then
$$P_s^*x^*(x_\alpha)=x^*(P_s x_\alpha)=0\mbox{ if }x_\alpha\notin P_sX.$$
Therefore
$$\{\alpha\in\Lambda\setsep x^*(x_\alpha)\ne0\}\subset\{\alpha\in\Lambda\setsep x_\alpha\in P_sX\}$$
and this set is countable as it is relatively discrete in the weak topology, hence, a fortiori, in the norm topology, and $P_sX$ is separable. 

Conversely, let $x^*\in X^*$ be such that the set
$$\Lambda_0=\{\alpha\in\Lambda\setsep x^*(x_\alpha)\ne0\}$$
is countable. By the properties of projectional skeletons there is $s\in\Gamma$ with $\{x_\alpha\setsep\alpha\in\Lambda_0\}\subset P_sX$. Then we have
$$P_s^*x^*(x_\alpha)=x^*(P_sx_\alpha)=\begin{cases}
x^*(x_\alpha)&\mbox{ if }x_\alpha\in P_sX,\\
x^*(0)=0=x^*(x_\alpha)&\mbox{ otherwise},
\end{cases}$$
hence $P_s^*x^*=x^*$, so $x^*\in D$.
\end{remark}

We continue by the following theorem. Given a projectional skeleton, it characterizes when the induced subspace is in fact a $\Sigma$-subspace.

\begin{thm}\label{T:komutativni} Let $X$ be a Banach space, $(P_s)_{s\in\Gamma}$ a projectional skeleton on $X$ and let $D$ be the induced subspace. The following assertions are equivalent.
\begin{enumerate}
\item $D$ is a $\Sigma$-subspace of $X^*$.
\item There is a $\sigma$-closed cofinal subset $\Gamma'\subset\Gamma$ such that $P_sP_t=P_tP_s$ for $s,t\in\Gamma'$.
\item There is a $\sigma$-closed cofinal subset $\Gamma'\subset\Gamma$ such that the projection $P_A$ (defined in \eqref{eq:P_A def}) is $\sigma(X,D)$-to-$\sigma(X,D)$ continuous for any directed subset $A\subset\Gamma'$.
\end{enumerate}
\end{thm}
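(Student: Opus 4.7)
I would establish the equivalences via the four implications (2)$\Rightarrow$(3), (2)$\Rightarrow$(1), (3)$\Rightarrow$(2), and (1)$\Rightarrow$(2); the first two are quick consequences of the earlier machinery, while the latter two carry the substantive content. For (2)$\Rightarrow$(3), given a directed $A\subset\Gamma'$, every $P_s$ with $s\in\Gamma'$ commutes with each $P_t$, $t\in A$, so Lemma~\ref{L:P_sP_A}(b) yields $P_A P_s=P_s P_A$ for $s\in\Gamma'$, and the implication (5)$\Rightarrow$(2) of Proposition~\ref{P:sigma(X,D)-spojitost} gives the $\sigma(X,D)$-continuity of $P_A$. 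For (2)$\Rightarrow$(1), the restricted family $(P_s)_{s\in\Gamma'}$ is a commutative $1$-projectional skeleton on $X$ still inducing $D$, so by (5)$\Rightarrow$(1) of Theorem~\ref{T:char sigma subspace} the subspace $D$ is a $\Sigma$-subspace.

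For (3)$\Rightarrow$(2), the implication (2)$\Rightarrow$(5) of Proposition~\ref{P:sigma(X,D)-spojitost} applied with $A=\{s\}$ for each $s\in\Gamma'$ gives a cofinal $\sigma$-closed $\widetilde{\Gamma}_s\subset\Gamma$ on which $P_u P_s=P_s P_u$. A closing-off procedure analogous to the construction in Lemma~\ref{L:A_alpha}, iterating through a dense subset of $X$ and taking suprema of increasing countable chains that successively lie in finite intersections of the $\widetilde{\Gamma}_s$'s (still cofinal $\sigma$-closed), produces a cofinal $\sigma$-closed $\Gamma''\subset\Gamma'$ on which the projections pairwise commute.

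For (1)$\Rightarrow$(2), the plan is to match the given skeleton with a commutative one inducing $D$. By (1)$\Rightarrow$(5) of Theorem~\ref{T:char sigma subspace}, choose a commutative projectional skeleton $(Q_t)_{t\in T}$ on $X$ with the same induced subspace $D$. A back-and-forth argument exploiting the separability of each $P_sX$ and each $Q_tX$, together with $X=\bigcup_s P_sX=\bigcup_t Q_tX$, yields a cofinal $\sigma$-closed $\Gamma'\subset\Gamma$ and an order-preserving assignment $\tau:\Gamma'\to T$ with $P_sX=Q_{\tau(s)}X$ for $s\in\Gamma'$. The critical further refinement is to secure $P_s=Q_{\tau(s)}$ as projections, not merely equality of ranges; for this one must additionally match the dual ranges $P_s^*X^*=Q_{\tau(s)}^*X^*$, arranged by a simultaneous iteration on the weak$^*$-closed subspaces within $D$, using the countable-tightness and weak$^*$-countable closedness of $D$ supplied by Lemma~\ref{L:tightness} together with the explicit $\Sigma$-structure. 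Once $P_s=Q_{\tau(s)}$ on $\Gamma'$, the commutativity of the $Q$-skeleton transfers directly:
$P_s P_{s'}=Q_{\tau(s)}Q_{\tau(s')}=Q_{\tau(s')}Q_{\tau(s)}=P_{s'}P_s$.

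The main obstacle is the dual-range matching step in (1)$\Rightarrow$(2); equality of primal ranges alone does not force $P_s=Q_{\tau(s)}$, and one must carry along a simultaneous back-and-forth on both primal and dual ranges, leveraging the specific properties of $\Sigma$-subspaces of $X^*$ rather than just those shared by all induced subspaces.
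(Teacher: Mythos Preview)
Your implications (2)$\Rightarrow$(3), (2)$\Rightarrow$(1), and (1)$\Rightarrow$(2) are essentially the paper's route. In particular, your back-and-forth argument for (1)$\Rightarrow$(2), matching both the ranges $P_sX=Q_{\tau(s)}X$ and the dual ranges $P_s^*X^*=Q_{\tau(s)}^*X^*$, is exactly the content of Lemma~\ref{L:spectral}; note that this lemma needs only that the two skeletons induce the same subspace $D$, with no special $\Sigma$-structure required, since $P_s^*X^*$ is automatically weak$^*$-separable. The paper then takes $\Gamma_0=\{s:\exists j,\ P_s=Q_j\}$ (merely cofinal) and passes to $(\Gamma_0)_\sigma$, checking commutativity there by transfinite induction via property (iii'); this is cleaner than directly hunting for a $\sigma$-closed $\Gamma'$ together with an order-preserving $\tau$.

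Your argument for (3)$\Rightarrow$(2) has a genuine gap. Applying Proposition~\ref{P:sigma(X,D)-spojitost} with the singleton $A=\{s\}$ uses nothing from hypothesis (3): for any $s\in\Gamma$ the projection $P_s$ satisfies $P_s^*X^*\subset D$ by the very definition of $D$, so $P_s$ is always $\sigma(X,D)$-continuous (cf.\ the Corollary following Proposition~\ref{P:sigma(X,D)-spojitost}). Thus the sets $\widetilde{\Gamma}_s$ you obtain exist for \emph{every} projectional skeleton, commutative or not. The subsequent ``closing-off procedure'' therefore cannot succeed in general: if it did, it would produce, from an arbitrary skeleton, a cofinal $\sigma$-closed $\Gamma''$ on which the projections pairwise commute, and then (2)$\Rightarrow$(1) would force every induced subspace to be a $\Sigma$-subspace --- contradicted, e.g., by $\C([0,\omega_2])$. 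The obstacle is that the condition $\Gamma''\subset\bigcap_{s\in\Gamma''}\widetilde{\Gamma}_s$ is self-referential, and an uncountable intersection of cofinal $\sigma$-closed sets need not be cofinal. The paper avoids this entirely by proving (3)$\Rightarrow$(1) instead: a transfinite induction on $\dens X$ that genuinely exploits (3) for \emph{large} directed sets $A$ (namely the sets $A_\alpha$ arising in the PRI of Proposition~\ref{P:PRI}), together with Proposition~\ref{P:sigma(X,D)-spojitost}(2)$\Rightarrow$(6), to propagate the $\Sigma$-subspace property through the PRI.
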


Note that the assertion (1) can be replaced by any of its equivalents provided by Theorem~\ref{T:char sigma subspace} and the continuity requirement in (3) can be replaced by any of its equivalents from Proposition~\ref{P:sigma(X,D)-spojitost}.

An important tool in the proof of the theorem is the following lemma on uniqueness.

\begin{lemma}\label{L:spectral} Let $X$ be a Banach space and let $(P_s)_{s\in\Gamma}$ and $(Q_j)_{j\in J}$ be two projectional skeletons on $X$ inducing the same subspace $D\subset X^*$. Then for any choice of $s\in\Gamma$ and $j\in J$ there are $s'\in\Gamma$ and $j'\in J$ such that $s'\ge s$, $j'\ge j$ and $P_{s'}=Q_{j'}$.
\end{lemma}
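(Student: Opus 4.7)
The approach is a back-and-forth construction producing increasing sequences $s_0\le s_1\le\cdots$ in $\Gamma$ and $j_0\le j_1\le\cdots$ in $J$ whose suprema yield the matching pair. The preliminary observation used throughout is that $P_s^*X^*$ is weak$^*$-separable for every $s\in\Gamma$: the restriction map $x^*\mapsto x^*|_{P_sX}$, with inverse $y^*\mapsto y^*\circ P_s$, is a weak$^*$-to-weak$^*$ homeomorphism of $P_s^*X^*$ onto $(P_sX)^*$, and the dual of the separable space $P_sX$ has weak$^*$-metrizable closed unit ball, hence is weak$^*$-separable as a whole. The analogous fact holds for the skeleton $(Q_j)_{j\in J}$.

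The key enlarging step reads: given $s\in\Gamma$ and $j\in J$, there is $j^\sharp\in J$ with $j^\sharp\ge j$, $P_sX\subset Q_{j^\sharp}X$ and $P_s^*X^*\subset Q_{j^\sharp}^*X^*$. To prove it, pick a countable norm-dense set $\{x_k\}\subset P_sX$ and a countable weak$^*$-dense set $\{y_k^*\}\subset P_s^*X^*$. By property (iv), every $x_k$ lies in some $Q_{a_k}X$; since $P_s^*X^*\subset D$, every $y_k^*$ lies in some $Q_{b_k}^*X^*$. Using directedness of $J$ together with $\sigma$-completeness (property (iii)), we build an increasing sequence in $J$ dominating $j$ and all of $a_k,b_k$ and let $j^\sharp$ be its supremum. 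Norm-closedness of $Q_{j^\sharp}X$ and weak$^*$-closedness of $Q_{j^\sharp}^*X^*$ then give the desired inclusions. The symmetric statement, with the roles of the two skeletons reversed, holds by the same reasoning.

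Alternating the enlarging step produces sequences with $P_{s_n}X\subset Q_{j_{n+1}}X\subset P_{s_{n+1}}X$ and analogously for the adjoint ranges. Set $s'=\sup_n s_n$ and $j'=\sup_n j_n$. Property (iii) immediately yields
\[
P_{s'}X=\overline{\bigcup_n P_{s_n}X}=\overline{\bigcup_n Q_{j_n}X}=Q_{j'}X.
\]
For the adjoint ranges, the stronger property (iii') gives $P_{s_n}^*x^*\to P_{s'}^*x^*$ in the weak$^*$ topology for every $x^*\in X^*$, so $P_{s'}^*X^*$ is the weak$^*$-closure of $\bigcup_n P_{s_n}^*X^*$; the same holds on the $Q$-side, and consequently $P_{s'}^*X^*=Q_{j'}^*X^*$.

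Finally, for any projection $P$ on $X$ one has $\ker P=(P^*X^*)_\perp$, so $P_{s'}$ and $Q_{j'}$ share both their range and their kernel and are therefore equal. The one delicate point is the compatibility of weak$^*$-closures with $\sigma$-limits at the transition to $s'$ and $j'$; this is exactly what property (iii') is for (available as a standing assumption after the usual passage to a cofinal $\sigma$-closed subset). The rest is routine directedness and countable bookkeeping, in the spirit of the elementary-submodel arguments underlying projectional skeletons.
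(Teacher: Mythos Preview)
Your proof is correct and follows essentially the same back-and-forth construction as the paper's own proof: both build interleaved increasing sequences using separability of $P_sX$ and weak$^*$-separability of $P_s^*X^*$, take suprema, and then match ranges via property (iii), adjoint ranges via property (iii'), and conclude by identifying kernels as $(P^*X^*)_\perp$. The only cosmetic difference is that the paper handles the weak$^*$-separability of $P_s^*X^*$ by factoring $P$ through the inclusion of its range, whereas you phrase it via the restriction homeomorphism $P_s^*X^*\cong (P_sX)^*$; these are the same observation.
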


\begin{proof} First observe that if $P$ is any bounded projection on $X$ with separable range, then $P^*X^*$ is weak$^*$-separable. Indeed, $Y=PX$ is separable and the projection $P$ can be expressed as $P=TQ$, where $T$ is the canonical isometric embedding of $Y$ into $X$ and $Q$ is the projection $P$ considered as an operator $X\to Y$. $Y^*$ is weak$^*$-separable (being the dual to a separable space), $Q^*$ is weak$^*$-to-weak$^*$ continuous (being an adjoint operator), so $Q^*Y^*$ is weak$^*$ separable. Further, $T^*:X^*\to Y^*$ is the restriction operator, so it is onto $Y^*$. It remains to observe that $P^*=Q^*T^*$ and hence $Q^*Y^*=P^*X^*$. 

Secondly, up to passing to cofinal $\sigma$-closed subsets of $\Gamma$ and $J$ we may assume without loss of generality that the projections from both skeletons are uniformly bounded \cite[Proposition 9]{kubis-skeleton} and hence the stronger condition (iii') holds (see the introductory section).
Let us define sequences $(s_n)$ in $\Gamma$ and $(j_n)$ in $J$ inductively as follows:
\begin{itemize}
\item $s_0=s$, $j_0=j$.
\item Given $s_{n-1}$ and $j_{n-1}$ defined, find $s_n\in\Gamma$, $s_n\ge s_{n-1}$ such that
$$P_{s_n}X\supset Q_{j_{n-1}}X\mbox{ and }P^*_{s_n}X^*\supset Q_{j_{n-1}}^*X^*.$$
This is possible by the properties of projectional skeletons, as $Q_{j_{n-1}}X$ is a separable subspace of $X$ and $ Q_{j_{n-1}}^*X^*$ is a weak$^*$-separable subspace of $D$.
\item In the same way, given $s_{n}$ and $j_{n-1}$ defined, find $j_n\in J$, $j_n\ge j_{n-1}$ such that
$$Q_{j_n}X\supset P_{s_{n}}X\mbox{ and }Q^*_{j_n}X^*\supset P_{s_{n}}^*X^*.$$
\end{itemize}

Finally, set $s'=\sup_n s_n$ and $j'=\sup_n j_n$. Then
$$P_{s'}X=\overline{\bigcup_n P_{s_n}X}=\overline{\bigcup_n Q_{j_n}X}=Q_{j'}X$$
due to the property (iii) of projectional skeletons and
$$P_{s'}^*X^*=\wscl{\bigcup_n P_{s_n}^*X^*}=\wscl{\bigcup_n Q_{j_n}^*X^*}=Q_{j'}^*X^*$$
by the property (iii') of projectional skeletons. Indeed, by the property (iii') we have,
given any $x^*\in X^*$ and $x\in X$,
$$P_{s_n}^*x^*(x)=x^*(P_{s_n}x)\to x^*(P_s x)=P_s^*x^*(x),$$
thus $P_{s_n}^*x^*\overset{w^*}{\to}P_s^*x^*$, and similarly for the other skeleton.

So, we have $P_{s'}X=Q_{j'}X$ and $P_{s'}^*X^*=Q_{j'}^*X^*$, so also
$$\ker P_{s'}=(P_{s'}^*X^*)_\perp=(Q_{j'}^*X^*)_\perp=\ker Q_{j'}.$$
Therefore, the projections $P_{s'}$ and $Q_{j'}$ have the same ranges and the same kernels,
thus they are equal.
\end{proof}

\begin{proof}[Proof of Theorem~\ref{T:komutativni}]
Note that $D$ is norming (by \cite[Proposition 9 and Section 4.3]{kubis-skeleton}).
Since the assertions (1)--(3) are not affected by renormings, we may assume without loss of generality that $D$ is $1$-norming. So, up to passing to a cofinal $\sigma$-closed subset of $\Gamma$ (this does not affect the assertions (1)--(3)) we may assume that $(P_s)_{s\in\Gamma}$ is a $1$-projectional skeleton.

(1)$\Rightarrow$(2) Suppose that $D$ is a $\Sigma$-subspace. By Theorem~\ref{T:char sigma subspace} It follows that there is a commutative $1$-projectional skeleton $(Q_j)_{j\in J}$ inducing $D$. Let 
$$\Gamma_0=\{s\in\Gamma\setsep \exists j\in J: P_s=Q_j\}.$$
By Lemma~\ref{L:spectral} we see that $\Gamma_0$ is cofinal in $\Gamma$. Observe that any cofinal set is automatically up-directed. So, it makes sense to define $\Gamma'=(\Gamma_0)_\sigma$ (using the notation from Section 2). Then clearly $\Gamma'$ is a cofinal $\sigma$-closed subset of $\Gamma$. We claim that $P_sP_t=P_tP_s$ whenever $s,t\in\Gamma'$.
To prove that we will use the transfinite construction of $(\Gamma_0)_\sigma$ described in the proof of Proposition~\ref{P:P_AX a ker P_A}(i). Let $\Gamma_\alpha$, $\alpha<\omega_1$, be the respective approximations or  $(\Gamma_0)_\sigma$. We will prove by transfinite induction that
$$\forall\alpha<\omega_1\forall s,t\in\Gamma_\alpha: P_sP_t=P_tP_s.$$
The validity for $\alpha=0$ follows from the definition of $\Gamma_0$ and commutativity of the skeleton $(Q_j)_{j\in J}$. Suppose it holds for some $\alpha<\omega_1$ and suppose $s,t\in \Gamma_{\alpha+1}$. Then there are increasing sequences (possibly constant) $(s_n)$ and $(t_n)$ in $\Gamma_\alpha$ with $s=\sup_n s_n$ and $t=\sup_n t_n$. Then, using the property (iii') of projectional skeletons, we deduce that for any $x\in X$ we have
$$P_sP_t x=\lim_n P_s P_{t_n}x=\lim_n\lim_m P_{s_m}P_{t_n}x=\lim_n\lim_m P_{t_n}P_{s_m}x
=\lim_n P_{t_n}P_s x=P_tP_sx,$$
thus $P_sP_t=P_tP_s$. Since the limit induction step is obvious, the proof is complete.

(2)$\Rightarrow$(3) Assume (2) holds. Let $A\subset\Gamma'$ be any directed subset. By Lemma~\ref{L:P_sP_A}(b) we see that $P_sP_A=P_AP_s$ for $s\in\Gamma'$. Thus, by Proposition~\ref{P:sigma(X,D)-spojitost}(5)$\Rightarrow$(2) we deduce that $P_A$ is $\sigma(X,D)$-to-$\sigma(X,D)$ continuous.

(3)$\Rightarrow$(1) Assume (3) holds. Without loss of generality assume that $\Gamma'=\Gamma$.
We are going to prove that the assertion (6) of Theorem~\ref{T:char sigma subspace} holds. This can be shown by repeating the proof of the implication (5)$\Rightarrow$(6) of Theorem~\ref{T:char sigma subspace} with few differences.

We use again transfinite induction. The first step, the separable case, is exactly the same.
In the induction step we build a PRI $(P_\alpha)_{\alpha\le\kappa}$ using Proposition~\ref{P:PRI}. Observe that any $P_\alpha$ is of the form $P_{A_\alpha}$ for some $A_\alpha\subset\Gamma$ directed. By the assumption the projection $P_A$ is $\sigma(X,D)$-to-$\sigma(X,D)$ continuous. So, by Proposition~\ref{P:sigma(X,D)-spojitost}(2)$\Rightarrow$(6) and Proposition~\ref{P:PRI}(c) the space $(P_{\alpha+1}-P_\alpha)X$ admits a $1$-projectional skeleton with induced subspace $D_\alpha^{\alpha+1}$ of the same form as in the proof of
Theorem~\ref{T:char sigma subspace}.

Finally, to be able to use the transfinite induction, it remains to show that the skeleton on
$(P_{\alpha+1}-P_\alpha)X$ satisfies the assumption of (3) as well. So, recall that the skeleton is of the form
$$P_s|_{(P_{\alpha+1}-P_\alpha)X},\quad s\in\Delta,$$
where $\Delta$ is a suitable cofinal $\sigma$-closed subset of $(A_{\alpha+1})_\sigma$. 

So, fix any directed set $B\subset\Delta$. Recall that $\Delta$ is chosen in such a way that $(P_{\alpha+1}-P_\alpha)X$ is invariant for $P_s$ for any $s\in\Delta$ (see Proposition~\ref{P:P_AX a ker P_A}(iv)).  It follows that $(P_{\alpha+1}-P_\alpha)X$
is invariant for $P_B$ as well. Thus,
$$P_B|_{(P_{\alpha+1}-P_\alpha)X}$$ 
is a projection on $(P_{\alpha+1}-P_\alpha)X$ and it is enough to show that
this projection is $\sigma({(P_{\alpha+1}-P_\alpha)X},D_\alpha^{\alpha+1})$-to-$\sigma({(P_{\alpha+1}-P_\alpha)X},D_\alpha^{\alpha+1})$ continuous. To this end we will check the validity of the property (1) of Proposition~\ref{P:sigma(X,D)-spojitost}. 
So, fix any $x^*\in D_\alpha^{\alpha+1}$. Then there is $y^*\in D$ such that
$y^*|_{{(P_{\alpha+1}-P_\alpha)X}}=x^*$. Fix any $x\in (P_{\alpha+1}-P_\alpha)X$.
Then
$$ \begin{aligned}
(P_B|_{(P_{\alpha+1}-P_\alpha)X})^*x^*(x)&=x^*(P_B|_{(P_{\alpha+1}-P_\alpha)X}x)=
x^*(P_Bx)=y^*(P_Bx)\\&=P_B^*y^*(x)=P_B^*y^*|_{(P_{\alpha+1}-P_\alpha)X}x.
\end{aligned}$$
Indeed, the first equality is just the definition of an adjoint mapping; the second one follows from the fact that $x\in (P_{\alpha+1}-P_\alpha)X$; the third one uses the invariance of $(P_{\alpha+1}-P_\alpha)X$ for $P_B$ and the choice of $y^*$; the fourth one is again the use of the definition of an adjoint mapping; and the last one follows from the fact that $x\in (P_{\alpha+1}-P_\alpha)X$. 

Finally, since  $P_B$ is $\sigma(X,D)$-to-$\sigma(X,D)$ continuous by the very assumption of (3) and $y^*\in D$, Proposition~\ref{P:sigma(X,D)-spojitost} yields that $P_B^*y^*\in D$. It follows that 
$$(P_B|_{(P_{\alpha+1}-P_\alpha)X})^*x^*=P_B^*y^*|_{(P_{\alpha+1}-P_\alpha)X}\in D_{\alpha}^{\alpha+1},$$
which completes the proof of the validity of the condition (1) from Proposition~\ref{P:sigma(X,D)-spojitost}, hence $P_B|_{(P_{\alpha+1}-P_\alpha)X}$ is $\sigma({(P_{\alpha+1}-P_\alpha)X},D_\alpha^{\alpha+1})$-to-$\sigma({(P_{\alpha+1}-P_\alpha)X},D_\alpha^{\alpha+1})$ continuous. This completes the proof.
\end{proof}

\begin{remark} An alternative proof of the implication (3)$\Rightarrow$(1) in Theorem~\ref{T:komutativni} may be done using \cite[Theorem 23]{kubis-skeleton}. Indeed, let us use transfinite induction on $\dens X$. The separable case is obvious, so assume that $\kappa$ is an uncountable cardinal and the statement holds whenever $\dens X<\kappa$. As above, without loss of generality assume $\Gamma'=\Gamma$. We build a PRI $(P_\alpha)_{\alpha\le\kappa}$ using Proposition~\ref{P:PRI}. Observe that any $P_\alpha$ is of the form $P_{A_\alpha}$ for some $A_\alpha\subset\Gamma$ directed. Given $\alpha<\kappa$, the family
$(P_s|_{P_\alpha X})_{s\in (A_\alpha)_\sigma}$ is a $1$-projectional skeleton on $P_\alpha X$ (by Proposition~\ref{P:P_AX a ker P_A}(iii))
and, due to Proposition~\ref{P:sigma(X,D)-spojitost}(2)$\Rightarrow$(6), the respective induced subspace is
$$D_\alpha=\{x^*|_{P_\alpha X}\setsep x^*\in D\}.$$
Moreover, given any directed $B\subset (A_\alpha)_\sigma$, the projection $P_B$ is $\sigma(X,D)$-to-$\sigma(X,D)$ continuous (by the assumptions of (3)), hence $P_B^*(D)\subset D$ by Proposition~\ref{P:sigma(X,D)-spojitost}. Similarly as in the above proof we show that $(P_B|_{P_\alpha}X)^*D_\alpha\subset D_\alpha$ and using Proposition~\ref{P:sigma(X,D)-spojitost} we deduce that $P_B|_{P_\alpha X}$ is $\sigma(P_\alpha X,D_\alpha)$-to-$\sigma(P_\alpha X,D_\alpha)$ continuous. Thus, using the assumption hypothesis, $D_\alpha$ is a $\Sigma$-subspace of $(P_\alpha X)^*$. By \cite[Theorem 23]{kubis-skeleton} we deduce that $D$ is contained in a $\Sigma$-subspace of $X^*$, thus $D$ itself is a $\Sigma$-subspace 
(as $D$ is weak$^*$-countably closed and any $\Sigma$-subspace is weak$^*$-countably tight).
\end{remark}

\begin{cor}\label{C:wld}
Let $X$ be a Banach space with a full projectional skeleton, i.e. having a projectional skeleton whose induced subspace is $X^*$. Then $X^*$ is a $\Sigma$-subspace of itself, i.e., $X$ is weakly Lindel\"of determined.
\end{cor}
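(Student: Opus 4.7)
The plan is to deduce this directly from Theorem~\ref{T:komutativni}. Since the induced subspace $D$ equals $X^*$ by hypothesis, the topology $\sigma(X,D)$ coincides with the weak topology on $X$. Thus for every nonempty directed $A\subset\Gamma$, the projection $P_A$ is a bounded linear operator on $X$ and is therefore automatically weak-to-weak continuous, i.e., $\sigma(X,D)$-to-$\sigma(X,D)$ continuous. Equivalently, condition (1) of Proposition~\ref{P:sigma(X,D)-spojitost} is trivially satisfied because $P_A^*(D)=P_A^*(X^*)\subset X^*=D$.

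Consequently, the hypothesis (3) of Theorem~\ref{T:komutativni} holds with $\Gamma'=\Gamma$. Applying that theorem yields that $D=X^*$ is a $\Sigma$-subspace of $X^*$, which is precisely the definition of $X$ being weakly Lindel\"of determined.

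There is essentially no obstacle here; the whole point is that once $D=X^*$, the $\sigma(X,D)$-continuity requirement that is the crux of Theorem~\ref{T:komutativni} becomes vacuous, and the commutativity/$\Sigma$-subspace characterization does the rest of the work. Alternatively, one could argue via condition (2) of Theorem~\ref{T:komutativni} by invoking the commutativity result for skeletons inducing $\Sigma$-subspaces from Theorem~\ref{T:char sigma subspace}, but the route through (3) is the most economical.
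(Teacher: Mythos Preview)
Your proof is correct and follows essentially the same route as the paper: the paper's argument is precisely that Theorem~\ref{T:komutativni}(3)$\Rightarrow$(1) applies because when $D=X^*$ the topology $\sigma(X,D)$ is the weak topology and bounded linear operators are automatically weak-to-weak continuous.
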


\begin{proof}
This follows immediately from Theorem~\ref{T:komutativni}(3)$\Rightarrow$(1), as the topology $\sigma(X,D)$ now coincides with the weak topology on $X$ and any bounded linear operator is automatically weak-to-weak continuous.
\end{proof}

\begin{remark} There are some natural and widely studied subclasses of weakly Lindel\"of determined spaces, in particular weakly compactly generated spaces and their subspaces. An interesting line of research would be to try to characterize such classed by the existence of a special kind of a projectional skeleton. This problem is addressed in a forthcoming paper
\cite{FM-preprint}.
\end{remark}

\section{Equivalents of a projectional skeleton}\label{S:noncommutative}

In this section we study  characterizations of subspaces induced by a possibly non-commutative projectional skeleton. They are collected in Theorem~\ref{T:char induced} which can be viewed as a non-commutative version of Theorem~\ref{T:char sigma subspace}. However, as we will see, the analogy is not complete, some problems remain open. Before formulating the theorem we give the definitions of two more notions used in the statement or in the proof.

A topological space $T$ is called \emph{monotonically retractable} if there is an assignment
$$A\mapsto (r_A,\N(A)),\qquad A\in[T]^{\le\omega},$$
such that for any $A\in[T]^{\omega}$
\begin{itemize}
\item $r_A$ is a continuous retraction on $T$ such that $A\subset r_A(T)$;
\item $\N(A)$ is a countable network of the retraction $r_A$ (i.e., $\N(A)$ is a countable family of subsets of $T$ and for any open subset $U\subset T$ its preimage $r_A^{-1}(U)$ is the union of a subfamily of $\N(A)$);
\end{itemize}
and, moreover, the mapping $\N$ is $\omega$-monotone.

Further, a topological space $T$ is called \emph{monotonically Sokolov} if there is an assignment
$$\A\mapsto (r_{\A},\N(\A)),\qquad\A\in[\F(T)]^{\le\omega},$$
where $\F(T)$ denotes the family of all the nonempty closed subsets of $T$, such that for any $\A\in[\F(T)]^{\le\omega}$
\begin{itemize}
\item $r_{\A}$ is a continuous retraction on $T$ such that $r_{\A}(F)\subset F$ for $F\in\A$;
\item $\N(\A)$ is a countable outer network of $r_{\A}(T)$  (i.e., $\N(\A)$ is a countable family of subsets of $T$ and for any open subset $U\subset T$ and any $x\in U\cap r_{\A}(T)$ there is $N\in\N(\A)$ such that $x\in N\subset U$);
\end{itemize}
and, moreover, the mapping $\N$ is $\omega$-monotone.

Monotonically retractable spaces were introduced in \cite{RoHe-topproc}, monotonically Sokolov spaces in \cite{RoHe-sokolov}. Monotonically retractable spaces are closely related to retractional skeletons \cite{cuka-mr,RoHe-monotprop}, monotonically Sokolov spaces can be viewed, in a sense, as a non-commutative version of primarily Lindel\"of spaces (cf. the next theorem and the questions in the last section).

\begin{thm}\label{T:char induced}
Let $X$ be a Banach space and $D\subset X^*$ a norming subspace. The following assertions are equivalent.
\begin{enumerate}
\item $D$ is induced by a projectional skeleton in $X$.
\item There is an $\omega$-monotone mapping $\psi:[D\cup X]^{\le\omega}\to[D\cup X]^{\le\omega}$ such that for any $A\in[D\cup X]^{\le\omega}$ the following properties hold:
\begin{itemize}
\item[(i)] $A\subset\psi (A)$,
\item[(ii)] $\wscl{\psi(A)\cap D}\subset D$,
\item[(iii)] $\wscl{\psi(A)\cap D}$ is a linear space,
\item[(iv)] The mapping $x^*\mapsto x^*|_{\clin{\psi(A)\cap X}}$ is a bijection of $\wscl{\psi(A)\cap  D}$ onto  $(\clin{\psi(A)\cap X})^*$.
\end{itemize}
\item There is an $\omega$-monotone mapping $\theta:[X]^{\le\omega}\to[D\cup X]^{\le\omega}$ such that 
 $$D=\bigcup\{\wscl{\theta(A)\cap D}\setsep A\in[X]^{\le \omega}\}$$
and for any $A\in[X]^{\le\omega}$ the following properties hold:
\begin{itemize}
\item[(i)] $A\subset\theta(A)$,
\item[(ii)] $\wscl{\theta(A)\cap D}$ is a linear space,
\item[(iii)] The mapping $x^*\mapsto x^*|_{\clin{\theta(A)\cap X}}$ is a bijection of $\wscl{\theta(A)\cap  D}$ onto  $(\clin{\theta(A)\cap X})^*$.
\end{itemize}
\item $D$ is weak$^*$-countably closed and $(X,\sigma(X,D))$ is monotonically Sokolov.
\item $D$ is weak$^*$-countably closed and there is $M\subset X$ such that $\clin M=X$ and $(M,\sigma(X,D))$ is monotonically Sokolov.
\item  $D$ is weak$^*$-countably closed and $(X,\sigma(X,D))$ is a continuous image of a monotonically Sokolov space.
\end{enumerate}
\end{thm}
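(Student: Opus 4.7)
The plan is to prove the theorem through two cycles: an algebraic cycle $(1)\Rightarrow(2)\Rightarrow(3)\Rightarrow(1)$ modeled on the corresponding part of Theorem~\ref{T:char sigma subspace}, and a topological cycle $(1)\Rightarrow(4)\Rightarrow(5)\Rightarrow(6)\Rightarrow(1)$. Throughout I would, by renorming and passing to a cofinal $\sigma$-closed subset of $\Gamma$ via Lemma~\ref{L:norma projekci}, assume $D$ is $1$-norming and $(P_s)_{s\in\Gamma}$ is $1$-bounded with property (iii').

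For $(1)\Rightarrow(2)$, fix an $\eta$-style countable Hahn-Banach witness $\eta(x)\subset D\cap B_{X^*}$ with $\norm{x}=\sup\{\abs{x^*(x)}\setsep x^*\in\eta(x)\}$. Given countable $A\subset D\cup X$, iterate an $\omega$-monotone "one-step" enlargement $\theta_1$ which adds: a canonical index $s\in\Gamma$ with $A\cap X\subset P_sX$ and $A\cap D\subset P_s^*X^*$ (canonicity is secured by running through countably many witnesses and taking supremums, as in Lemma~\ref{L:A_alpha}); countable norm-dense and weak${}^*$-dense subsets of $P_sX$ and $P_s^*X^*$; and values of $\eta$ applied to rational combinations of the $X$-part. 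Setting $\psi(A)=\bigcup_n\theta_n(A)$ gives $\clin{\psi(A)\cap X}=P_sX$ and $\wscl{\psi(A)\cap D}=P_s^*X^*$, so (iii) is the linearity of $P_s^*X^*$ and (iv) is the restriction isomorphism $P_s^*X^*\cong (P_sX)^*$ already used in Lemma~\ref{L:tightness}. The implication $(2)\Rightarrow(3)$ is essentially a restriction: take $\theta=\psi|_{[X]^{\le\omega}}$ and verify the formula for $D$ using that every $x^*\in D$ belongs to $P_{s_0}^*X^*$ for some $s_0$, so $x^*\in\wscl{\theta(A)\cap D}$ for any countable $A$ which is $\psi$-absorbed past $s_0$.

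For $(3)\Rightarrow(1)$ I use Lemma~\ref{L:jednaprojekce} as the engine. Define
$$\Gamma=\{A\in[X]^{\le\omega}\setsep \theta(A)\cap X\subset\clin A\}\cup\{\emptyset\}$$
partially ordered by inclusion; $\omega$-monotonicity of $\theta$ plus an iteration argument analogous to the proof of $(4)\Rightarrow(5)$ of Theorem~\ref{T:char sigma subspace} shows that $\Gamma$ is up-directed, $\sigma$-closed, and cofinal. For $A\in\Gamma$, the pair $Y_A=\clin A$, $V_A=\wscl{\theta(A)\cap D}$ satisfies the hypothesis of Lemma~\ref{L:jednaprojekce} by (ii) and (iii), producing a bounded projection $P_A$ with $P_AX=Y_A$, $P_A^*X^*=V_A$, $\ker P_A=(V_A)_\perp$. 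For $A\subset B$ in $\Gamma$ monotonicity gives $Y_A\subset Y_B$ and $V_A\subset V_B$, from which $P_AP_B=P_A$ (because $\ker P_B\subset\ker P_A$) and $P_BP_A=P_A$ (because $P_A x\in Y_A\subset Y_B$) follow formally. Property (iii) of a projectional skeleton is handled by the $\omega$-monotonicity of $\theta$, and the identification of the induced subspace with $D$ uses the displayed union formula in (3).

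For the topological cycle, $(1)\Rightarrow(4)$ starts from the observation that $P_s^*(D)\subset D$ (if $y^*=P_t^*z^*$ and $u\ge s,t$, then $P_s^*y^*=P_s^*P_u^*y^*\in P_u^*X^*\subset D$), so each $P_s$ is $\sigma(X,D)$-to-$\sigma(X,D)$ continuous. Given a countable $\A\subset\F(X,\sigma(X,D))$, I assign in an $\omega$-monotone way a countable $\sigma(X,D)$-dense set $F_0\subset F$ for each $F\in\A$ (using the countable tightness of bounded pieces from Lemma~\ref{L:tightness}(a) together with a scaling/truncation argument), choose $s_\A\in\Gamma$ absorbing all these $F_0$'s together with a countable network for the relative topology, and set $r_\A=P_{s_\A}$. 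Continuity of $P_{s_\A}$ then forces $r_\A(F)\subset \overline{F_0}^{\,\sigma(X,D)}=F$; the countable outer network of $r_\A(X)=P_{s_\A}X$ comes from hereditary separability of the latter (as a separable normed space in the weak-$D$-topology). The implications $(4)\Rightarrow(5)\Rightarrow(6)$ are trivial (take $M=X$ and the identity map). For $(6)\Rightarrow(1)$, pull back the monotonically Sokolov assignment through the continuous surjection to build an $\omega$-monotone mapping $\theta$ on countable subsets of $X$ satisfying the conditions of (3), using the weak${}^*$-countable closedness of $D$ to close $\wscl{\theta(A)\cap D}$ into $D$ and the network to identify the restriction bijection.

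The main obstacle I foresee is the step $(1)\Rightarrow(4)$: closed subspaces are stabilized by a cofinal $\sigma$-closed subfamily of the skeleton via Lemma~\ref{L:invariance}(b), but arbitrary $\sigma(X,D)$-closed sets $F$ need not be invariant under any individual $P_s$. The remedy sketched above---approximating $F$ by a countable relatively dense piece $F_0$, absorbing $F_0$ into a large index $s_\A$, and then leveraging continuity to send all of $F$ into $F$---is standard in theory of retractional skeletons, but making it simultaneously $\omega$-monotone over the whole family $\A$ requires a careful canonical choice of $F_0$. A secondary delicate point is the canonical choice of $s_\A\in\Gamma$ as an $\omega$-monotone function of $\A$; I expect to handle this by the same $\eta$-enrichment-and-iteration trick used in the algebraic cycle, so that the two cycles share a common set of bookkeeping lemmas.
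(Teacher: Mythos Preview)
Your algebraic cycle $(1)\Rightarrow(2)\Rightarrow(3)\Rightarrow(1)$ is essentially what the paper does, with only cosmetic differences (for $(3)\Rightarrow(1)$ the paper takes the index set to be all of $[X]^{\le\omega}$ rather than your fixed-point family, but either works).

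The topological cycle has a genuine gap at $(1)\Rightarrow(4)$. Your remedy for the obstacle you correctly flag is to approximate each $F\in\A$ by a countable $\sigma(X,D)$-dense $F_0\subset F$, absorb $F_0$ into $P_{s_\A}X$, and use continuity to conclude $r_\A(F)\subset\overline{F_0}^{\,\sigma(X,D)}=F$. But an arbitrary $\sigma(X,D)$-closed set $F$ need not be $\sigma(X,D)$-separable, so no such $F_0$ exists in general; countable tightness (Lemma~\ref{L:tightness}(a)) controls closures, not density. The paper's construction is different and is the key idea you are missing: start by picking just one point $\alpha(F)$ from each $F\in\A$, build an index $s$ from these, then for each such $s$ add a countable norm-dense subset of $P_sF\setminus F$ (which \emph{is} separable, being contained in the separable space $P_sX$), enlarge the index, and iterate countably many times. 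At the limit index $t$, if some $x\in F$ had $P_tx\notin F$, then since $F$ is closed and $P_{s_n}x\to P_tx$ along the increasing chain, already $P_sx\notin F$ for some $s$ in the construction; but then $x\in P_sF\setminus F$ lies in the closure of a countable set absorbed into $P_tX$, so $P_tx=x\in F$, a contradiction.

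Two smaller issues. The implication $(5)\Rightarrow(6)$ is not trivial: in $(5)$ the set $M$ is \emph{given}, so ``take $M=X$'' is unavailable, and one must argue (exactly as in $(8)\Rightarrow(9)$ of Theorem~\ref{T:char sigma subspace}, using that monotonically Sokolov spaces are stable under closed subsets and countable products) that $X$ is a continuous image of a space built from $M$. And your sketch of $(6)\Rightarrow(1)$ is too vague: retractions on $T$ do not pull back along a surjection $T\to X$ in any evident way. The paper instead passes to the dual: since $\C_p(T)$ is monotonically retractable and $x^*\mapsto x^*\circ F$ embeds $(D\cap B_{X^*},w^*)$ as a closed subset, $D\cap B_{X^*}$ is monotonically retractable and hence carries a full retractional skeleton; known results then transfer this to a retractional skeleton on $B_{X^*}$ inducing $D\cap B_{X^*}$, and finally to a projectional skeleton on $X$ inducing $D$.
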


\begin{proof} Since the assertions are not affected by renorming, we may and shall assume that $D$ is $1$-norming.

(1)$\Rightarrow$(2) Let $(P_s)_{s\in\Gamma}$ be a projectional skeleton on $X$ such that the respective induced subspace in $D$. By Lemma~\ref{L:norma projekci} we can assume without loss of generality that it is a $1$-projectional skeleton. 

Let us fix a mapping $\sigma:D\cup X\to \Gamma$ such that
$P_{\sigma(x)}x=x$ for $x\in X$ and $P_{\sigma(x^*)}^* x^*=x^*$  for $x^*\in D$. Then the set-valued version $\sigma:[D\cup X]^{\le\omega}\to[\Gamma]^{\le\omega}$ (defined by $\sigma(A)=\{\sigma(a)\setsep a\in A\}$) is clearly $\omega$-monotone.

Further, fix a mapping $\phi:\Gamma\times\Gamma\to\Gamma$ such that $\phi(s,t)\ge s$ and  $\phi(s,t)\ge t$ for $s,t\in\Gamma$. For $A\in[\Gamma]^{\le\omega}$ we set
$A_0=A$, $A_n=A_{n-1}\cup\phi(A_{n-1}\times A_{n-1})$ and $\upsilon(A)=\bigcup_n A_n$.
It is clear that $\upsilon:[\Gamma]^{\le\omega}\to[\Gamma]^{\le\omega}$ is an $\omega$-monotone mapping and that $\upsilon(A)$ is an up-directed set containing $A$ for each $A\in[\Gamma]^{\le\omega}$.

We continue by choosing for each $s\in\Gamma$ a countable set $\eta(s)\subset D\cup X$ such that
$$\overline{\eta(s)\cap X}= P_s X\mbox{ and }\wscl{\eta(s)\cap D}=P_s^* X^*.
$$
This choice is possible as $P_s X$ is separable and $P_s^*X^*$ is weak$^*$-separable for each $s\in\Gamma$. Further, the set-valued version $\eta:[\Gamma]^{\le\omega}\to[D\cup X]^{\le\omega}$ is clearly $\omega$-monotone.

Finally, for $A\in[X\cup D]^{\le\omega}$  set 
$$\psi_0(A)= A\cup\eta(\upsilon(\sigma(A))).$$
Clearly $\psi_0$ is an $\omega$-monotone mapping. For $n\in\en$ and $A\in[X\cup D]^{\le\omega}$ set $\psi_n(A)=\psi_0(\psi_{n-1}(A))$ and $\psi(A)=\bigcup_n\psi_n(A)$. 

It is clear that $\psi$ is $\omega$-monotone. The property (i) is obvious, The property (ii) follows from the fact that $D$ is induced by a skeleton and hence weak$^*$-countably closed.

It remains to prove the properties (iii) and (iv). To this end fix $A\in[X\cup D]^{\le\omega}$ and set $C=\upsilon(\sigma(\psi(A)))$. Then $C$ is a countable up-directed subset of $\Gamma$, hence $C$ has a supremum $s\in \Gamma$. We claim that
$$\overline{\psi(A)\cap X}=P_sX\mbox{ and }\wscl{\psi(A)\cap D}=P_s^*X^*.$$
The inclusions `$\subset$' are in both cases obvious. Further, given any $t\in C$ there is some $n\in\en$ with $t\in\upsilon(\sigma(\psi_n(A)))$, thus $\eta(t)\subset \psi_{n+1}(A)\subset\psi(A)$, hence also $\lin_\qe(\eta(t)\cap D)\cup\lin_\qe(\eta(t)\cap X)\subset \psi(A)$. It follows that 
$$\begin{aligned}
P_t X&=\overline{\eta(t)\cap X}\subset\overline{\psi(A)\cap X},
\\
P_t^* X^*&=\wscl{\eta(t)\cap D}\subset\wscl{\psi(A)\cap D}.
\end{aligned}$$
Next observe that
$$P_s X=\overline{\bigcup_{t\in C} P_t X}\mbox{ and }P_s^* X^*=\wscl{\bigcup_{t\in C} P_t^* X^*}.$$
The property (iii) now easily follows.
To prove the property (iv) fix $x^*\in P_s^*X^*$. Clearly $\norm{x^*|_{P_sX}}\le\norm{x^*}$. Conversely, for each $x\in B_X$ we have
$$\abs{x^*(x)}=\abs{P_s^*x^*(x)}=\abs{x^*(P_sx)}\le \norm{x^*|_{P_sX}},$$
so the respective assignment is an isometry, thus it is one-to-one. It is also onto, as for any $y^*\in(P_sX)^*$ we have $x^*=y^*\circ P_s\in X^*$, $P_s^*x^*=x^*$ and $x^*|_{P_sX}=y^*$.

(2)$\Rightarrow$(3) Let $\psi$ be the mapping provided by (2). Further, for each $x\in X$ let $\nu(x)\subset D\cap B_{X^*}$ be a countable set with $\norm{x}=\sup\{\abs{x^*(x)}\setsep x^*\in \nu(x)\}$. It is clear that $\nu$, considered as a set-valued map $[X]^{\le\omega}\to[D]^{\le\omega}$, is $\omega$-monotone. For each $A\in  [X]^{\le\omega}$ set $\theta(A)=\psi(A\cup\nu(A))$. Then $\theta$ is an $\omega$-monotone map. The properties (i)--(iii) follow immediately from the  properties of $\psi$.

It remains to prove the formula for $D$. To this end set
$$D'=\bigcup\{\wscl{\theta(A)\cap D}\setsep A\in[X]^{le\omega}\}.$$
By the property (ii) of $\psi$ we get $D'\subset D$. Further, $D'$ is a linear subspace (by (iii) as $[X]^{\le\omega}$ is directed and $\theta$ is $\omega$-monotone), $D'$ is $1$-norming (as $\nu(x)\subset D'$ for any $x\in X$). $D'$ is also weak$^*$-countably closed. Indeed, let $C=\{x_n^*\setsep n\in\en\}\subset D'$. For each $n$ fix $A_n\in[X]^{\le\omega}$ with $x^*\in\wscl{\theta(A_n)\cap D}$. Set $A=\bigcup_n A_n$. Then clearly $\wscl{C}\subset \wscl{\theta(A)\cap D}\subset D'$.

To show that $D'=D$ it is now sufficient to prove that $D\cap B_{X^*}$ has countable tightness in the weak$^*$-topology. So, fix $x^*\in D\cap B_{X^*}$ and $M\subset D\cap B_{X^*}$ with $X^*\in\wscl M$. We perform the following inductive construction.

We start by setting $A_1=\psi(\{x^*\})$. Given $A_n$ we proceed as follows.
\begin{itemize}
\item Enumerate $A_n\cap X=\{x^n_k\setsep k\in\en\}$.
\item Find $x_n^*\in M$ such that $\abs{x^*(x_k^l)-x_n^*(x_k^l)}\le\frac1n$ for $k,l\le n$.
\item Let $A_{n+1}=\psi(A_n\cup\{x_n^*\})$.
\end{itemize}
Set $A=\bigcup_{n}A_n$. Then $\psi(A)=A$, $x^*\in A$ and $x_n^*\in A$ for $n\in\en$. Further,
the construction yields $x_n^*(x)\to x^*(x)$ for $x\in A\cap X$. Since the sequence $(x_n^*)$ is bounded, we conclude that  $x_n^*(x)\to x^*(x)$ for $x\in\clin(A\cap X)$. We claim that $x_n^*\overset{w^*}{\longrightarrow} x^*$. Indeed, since $(x_n^*)$ is bounded, it has some weak$^*$-cluster point, say $y^*$. Then $y^*\in\wscl{A\cap D}$ and $y^*|_{\clin(A\cap X)}=x^*|_{\clin(A\cap X)}$. Hence $y^*=x^*$ (by the property (iv) of $\psi$). It follows that $x^*$ is the unique cluster point of $(x_n^*)$, so $x_n^*\overset{w^*}{\longrightarrow} x^*$.

(3)$\Rightarrow$(1) Let $\theta$ be the mapping provided by (3). For $A\in [X]^{\le\omega}$ set $D_A=\wscl{D\cap\theta(A)}$ and $X_A=\clin(X\cap \theta(A))$. By the property (iv) and Lemma~\ref{L:jednaprojekce} there is a bounded linear projection $P_A:X\to X$ with $P_A X=X_A$ and $P_A^*X^*=D_A$. If $A,B\in[X]^{\le\omega}$ are such that $A\subset B$, then $X_A\subset X_B$ and $D_A\subset D_B$,
so $\ker P_A=(D_A)_\perp \supset (D_B)_\perp=\ker P_B$. It follows that $P_AP_B=P_BP_A=P_A$. Thus $(P_A)_{A\in[X]^{\le\omega}}$ is a projectional skeleton on $X$. Indeed, the properties (i) and (iv) are obvious, the property (ii) has been just proved and the property (iii) follows from  $\omega$-monotonicity of $\theta$. Moreover, the subspace induced by this skeleton is exactly $D$ by the property (ii) of $\theta$. This completes the proof.

(1)$\Rightarrow$(4)  Firstly, $D$ is  weak$^*$ countably closed being induced by a skeleton.
To prove that $(X,\sigma(X,D))$ is monotonically Sokolov we shall construct the respective mappings using similar ideas as in the proof of (1)$\Rightarrow$(2).

 Let $(P_s)_{s\in\Gamma}$ be a projectional skeleton on $X$ such that the respective induced subspace in $D$. By Lemma~\ref{L:norma projekci} we can assume without loss of generality that it is a $1$-projectional skeleton. Let $\phi:\Gamma\times\Gamma\to \Gamma$ and $\upsilon:[\Gamma]^{\le\omega}\to[\Gamma]^{\le\omega}$ be the mappings defined in the proof of (1)$\Rightarrow$(2). 
 
For any $x\in X$ let $\sigma(x)\in\Gamma$ be such that $P_{\sigma(x)}x=x$ (it is a restriction of the mapping $\sigma$ from (1)$\Rightarrow$(2)).  
Further, for any nonempty $F\subset X$ let $\alpha(F)$ be an element of $F$ and, if $F\subset X$ is a norm-separable subset, let $\beta(F)$ be a countable dense subset of $F$ (note that $\beta(\emptyset)=\emptyset$).

For any $s\in\Gamma$ set
$$\N_0(s)=\left\{U\left(x,\frac1n\right) \setsep x\in \lin\qe(\eta(s)\cap X), n\in\en \right\},
$$
where $U(x,r)$ denotes the open ball centered at $x$ with radius $r$ (in the norm of $X$).
It is clear that $\N_0(s)$ is a countable family of subsets of $X$ and that the set-valued version of $\N_0$, considered as a mapping from $[\Gamma]^{\le\omega}\to[\P(X)]^{\le\omega}$ is $\omega$-monotone.
Let $\F(X)$ denote the family of all the nonempty $\sigma(X,D)$-closed subsets of $X$. Let us define by induction $\omega$-monotone mappings $\phi_n: [\F(X)]^{\le\omega}\to[X]^{\le\omega}$.

Start by setting
$$\phi_1(\A)=\{\alpha(F)\setsep F\in\A\},\qquad \A\in[\F(X)]^{\le\omega}.$$
It is clear that $\phi_1$ is $\omega$-monotone. Further, given an $\omega$-monotone mapping $\phi_n:[\F(X)]^{\le\omega}\to[X]^{\omega}$, we set
$$\Gamma_n(\A)=\upsilon(\sigma(\phi_n(\A)))\mbox{ and }t_n(\A)=\sup\Gamma_n(\A) 
\mbox{ for }\A\in[\F(X)]^{\le\omega}.$$
The mapping $\Gamma_n$ is an $\omega$-monotone mapping $[\F(X)]^{\le\omega}\to[\Gamma]^{\le\omega}$. Moreover, $\Gamma_n(\A)$ is directed, so $t_n(\A)$ is well defined. Further, set
$$\phi_{n+1}(\A)=\bigcup\{\beta(P_s(X))\setsep s\in\Gamma_n(\A)\}
 \cup\bigcup\{\beta(P_s F\setminus F)
\setsep s\in\Gamma_n(\A), F\in \A \},\qquad \A\in[\F(X)]^{\le\omega}.
$$
Note that the range of each $P_{s}$ is norm-separable, hence the formula has a sense. Moreover, the mapping $\phi_{n+1}$ is $\omega$-monotone. It follows that also the mappings
$$\phi(\A)=\bigcup_n\phi_n(\A) \mbox{ and }\Gamma(\A)=\bigcup_n\Gamma_n(\A), \qquad \A\in[\F(X)]^{\le\omega}$$
are $\omega$-monotone. 

Since $\Gamma(\A)$ is a directed countable subset of $\Gamma$, $t(\A)=\sup \Gamma(\A)$ is well defined. Then $r_\A=P_{t(\A)}$ is a bounded linear projection on $X$. Since $r_\A^*X^*\subset D$, $r_\A$ is $\sigma(X,D)$-to-$\sigma(X,D)$ continuous (in fact, even 
$\sigma(X,D)$-to-weak continuous). Moreover, $r_\A(F)\subset F$ for $F\in\A$. Indeed, let
$F\in\A$ and $x\in F$. Assume that $r_\A(x)\notin F$. Then there is some $s\in\Gamma(A)$ such that $P_sx\notin F$. Further, there is some $n\in\en$ with $s\in\Gamma_n(\A)$. But then
$$x\in P_sF\setminus F\subset \overline{\beta(P_sF\setminus F)}\subset \overline {\phi_{n+1}(\A)}
\subset\overline{\phi(A)}=r_{\A} X,$$
so $r_\A x=x\in F$, a contradiction.

Finally, set $\N(\A)=\bigcup_{s\in\Gamma(\A)}\N_0(s)$. Then $\N(\A)$ is an outer network for $r_\A X$ and the assignment $\N$ is $\omega$-monotone. This completes the proof.

(4)$\Rightarrow$(5) This implication is trivial.

(5)$\Rightarrow$(6) The class of monotonically Sokolov spaces is stable to taking closed subsets and countable products \cite[Theorem 3.4(c,d)]{RoHe-sokolov}. Further, compact metric spaces are monotonically Sokolov for trivial reasons. It follows that the class of continuous images of monotonically Sokolov spaces is stable to the same operations and, moreover, to taking  continuous images and countable unions. Indeed, a countable union is a continuous image of a countable topological sum and monotonically Sokolov spaces are obviously stable to taking countable topological sums. 
Therefore the proof can be done by copying the proof of the implication (8)$\Rightarrow$(9) of Theorem~\ref{T:char sigma subspace}.

(6)$\Rightarrow$(1) Fix a monotonically Sokolov space $T$ and a continuous surjection $F:T\to (X,\sigma(X,D))$. By \cite[Theorem 3.5]{RoHe-sokolov} the space $\C_p(T)$ is monotonically retractable. Define the mapping $G:D\to\C_p(T)$ by $G(x^*)=x^*\circ F$. Then $G$ is clearly weak$^*$-to-$\tau_p$ continuous. Since $F$ is onto, $G$ is one-to-one. Further, $D$ is weak$^*$-countably closed, hence $D\cap B_{X^*}$ is weak$^*$-countably compact. It follows that $G(D\cap B_{X^*})$ is closed in $\C_p(T)$ (by \cite[Fact 2.1(h)]{cuka-mr}) and $G|_{D\cap B_{X^*}}$ is a homeomorphic embedding (by \cite[Fact 2.1(h)]{cuka-mr} it is a closed mapping). Hence $D\cap B_{X^*}$ is monotonically retractable and so it admits a full retractional skeleton
by \cite[Theorem 1.1]{cuka-mr} (see \cite[Theorem 4.3]{RoHe-monotprop} for an elementary proof). By \cite[Theorem 3.4(a)]{RoHe-sokolov} the space $(X,\sigma(X,D))$ is Lindel\"of, hence \cite[Lemma 5.3]{cuka-mr} shows that $B_{X^*}$ is the \v{C}ech-Stone compactification of $D\cap B_{X^*}$. Thus $D\cap B_{X^*}$ is induced by a retractional skeleton in $B_{X^*}$ by \cite[Proposition 4.5]{cuka-mr}, so $D$ is induced by a projectional skeleton on $X$ by \cite[Lemma 5.2]{cuka-mr}.
\end{proof}

\begin{remark}\label{remark induced}
(a) The previous theorem can be viewed as a noncommutative analogue of Theorem~\ref{T:char sigma subspace}. But the analogy is not complete and there are some differences. Firstly, noncommutative analogues of the assertion (1) and (2) of Theorem~\ref{T:char sigma subspace} are missing. Indeed, there is up to now no known analogue of (1). As for (2), existence of a projectional generator is a sufficient for condition for the existence of a projectional skeleton, but it is not clear whether it is necessary. Related problems are discussed in the last section.

(b) Assertions (2) and (3) of the previous theorem can be viewed as non-commutative analogues of the assertions (3) and (4) of Theorem~\ref{T:char sigma subspace}. However, there are some important differences. In Theorem~\ref{T:char sigma subspace} there is some linearly dense subset $M\subset X$ and the respective $\omega$-monotone mappings have values in $[M]^{\le\omega}$, while in Theorem~\ref{T:char induced} the values are in $[X\cup D]^{\le\omega}$. This features has implications also for constructing a projectional skeleton from an $\omega$-monotone mapping. While in the commutative case the projections are easily determined by a subset of $M$, in the non-commutative cases we need a pair of subsets -- a subset of $X$ and a subset of $D$. It is not clear, whether the
projections can be determined in a canonical way merely by a subset of $X$ in the non-commutative case as well.

(c) The previous theorem contains no analogue of the  assertions (6)--(8) of Theorem~\ref{T:char sigma subspace}. The reason is that a spaces admitting a projectional skeleton admits a Markushevich basis (by Theorem~\ref{T:M-basis}) but it is not clear whether such a basis has some canonical relationship to the induced subspace. This is illustrated by concrete examples in Section~\ref{S:examples} and the related open problems are discussed in the last section. The only known relationships of a Markushevich basis and a subspace induced by a skeleton are contained in the following two results.

(d) The assertions (4)--(6) of the previous theorem can be viewed as a noncommutative analogue of the assertion (9) of Theorem~\ref{T:char sigma subspace}. Monotonically Sokolov spaces (or, more precisely, their continuous images) serve as a noncommutative analogue of primarily Lindel\"of spaces.
Some more discussion on the relationship of these two classes is contained in the last section.

(e) The equivalence (1)$\Leftrightarrow$(4) from the previous theorem has been already known, it is proved in \cite[Theorem 1.5]{cuka-mr}. This equivalence is refined by adding the conditions (5) and (6). Moreover, the proof of (1)$\Rightarrow$(4) is done directly, unlike in \cite{cuka-mr}.

(f) As a consequence of the Theorem~\ref{T:char induced} we get that Theorem 18 (and hence Corollary 19) of \cite{kubis-skeleton} is true
in spite of the gap in the proof pointed out in the proof of Lemma~\ref{L:tightness} above. Indeed, assume that $D$ is a subspace of $X^*$ induced by a projectional skeleton on $X$. By Theorem~\ref{T:char induced} we get that $(X,\sigma(X,D))$ is monotonically Sokolov,
hence $\C_p(X,\sigma(X,D))$ is monotonically retractable by  \cite[Theorem 3.5]{RoHe-sokolov}, so it has countable tightness by \cite[Fact 2.1(g)]{cuka-mr}. Since $(D,w^*)$ is homeomorphic to a subset of $\C_p(X,\sigma(X,D))$, it has countable tightness as well.
\end{remark}

The next corollary is one of the promised results on the relationship of Markushevich bases and projectional skeletons. It is an immediate consequence of the implication (5)$\Rightarrow$(1) 
of Theorem~\ref{T:char induced}.

\begin{cor}
Let $X$ be a Banach space and $D\subset X^*$ a norming weak$^*$-countably closed subset.
If there is a Markushevich basis $(x_\alpha,x_\alpha^*)_{\alpha\in\Lambda}$ of $X$ such that
the set $\{x_\alpha\setsep \alpha\in \Lambda\}\cup\{0\}$ is monotonically Sokolov in the topology $\sigma(X,D)$, then $D$ is induced by a projectional skeleton on $X$.
\end{cor}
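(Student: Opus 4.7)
The plan is to deduce the corollary by directly invoking the implication (5)$\Rightarrow$(1) of Theorem~\ref{T:char induced}, so the work reduces to checking that the hypotheses of statement (5) are met under the assumptions of the corollary. There should be essentially no obstacle here; the corollary is a straightforward translation of the hypotheses of Theorem~\ref{T:char induced}(5) into the language of Markushevich bases.

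Concretely, I set $M=\{x_\alpha\setsep \alpha\in\Lambda\}\cup\{0\}\subset X$. First, by the very definition of a Markushevich basis we have $\clin\{x_\alpha\setsep \alpha\in\Lambda\}=X$, and adding the point $0$ (which already belongs to $\clin\{x_\alpha\}$) does not affect the closed linear span, so $\clin M=X$. Second, the assumption of the corollary is exactly that $(M,\sigma(X,D))$ is monotonically Sokolov. Third, the hypothesis that $D$ is weak$^*$-countably closed is postulated directly in the corollary. Therefore all three ingredients of Theorem~\ref{T:char induced}(5) are in place.

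Applying the already proved implication (5)$\Rightarrow$(1) of Theorem~\ref{T:char induced}, we conclude that $D$ is induced by a projectional skeleton on $X$, which is exactly the claim. The only step that required any real work was the implication (5)$\Rightarrow$(1) itself inside Theorem~\ref{T:char induced}, which was handled earlier via the chain (5)$\Rightarrow$(6)$\Rightarrow$(1) using the stability properties of monotonically Sokolov spaces and the connection to retractional skeletons through \cite{cuka-mr}; the corollary itself requires nothing beyond quoting that result.
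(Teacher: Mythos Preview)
Your proof is correct and follows exactly the paper's approach: the corollary is stated there as an immediate consequence of the implication (5)$\Rightarrow$(1) of Theorem~\ref{T:char induced}, and you have simply spelled out the verification that the set $M=\{x_\alpha\setsep\alpha\in\Lambda\}\cup\{0\}$ satisfies $\clin M=X$ by the defining property of a Markushevich basis.
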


Let us point out that it is not clear whether the converse implication holds as well. This problem is discussed in more detail in the last section.
The second result is the following improvement of the assertion (3) of Theorem~\ref{T:char induced}.

\begin{prop}
Let $X$ be a Banach space with a Markushevich basis $(x_\alpha,x^*_\alpha)_{\alpha\in\Lambda}$. Assume that $D\subset X^*$ is a norming subspace induced by a projectional skeleton on $X$. Then there is an $\omega$-monotone mapping $\varphi:[\Lambda]^{\le\omega}\to[\Lambda\cup D]^{\le\omega}$ such that
$$D=\bigcup\{\wscl{\varphi(A)\cap D}\setsep A\in[\Lambda]^{\le \omega}\}$$
and for any $A\in[\Lambda]^{\le\omega}$ the following properties hold:
\begin{itemize}
\item[(i)] $A\subset\varphi(A)$,
\item[(ii)] $\wscl{\varphi(A)\cap D}$ is a linear space,
\item[(iii)] The mapping $x^*\mapsto x^*|_{\clin{\{x_\alpha\setsep \alpha\in\theta(A)\cap \Lambda\}}}$ is a bijection of the subspace $\wscl{\theta(A)\cap  D}$ onto the space $(\clin{\{x_\alpha\setsep \alpha\in\theta(A)\cap \Lambda\}})^*$.
\end{itemize}
\end{prop}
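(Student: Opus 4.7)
The plan is to derive $\varphi$ from the $\omega$-monotone map $\theta\colon[X]^{\le\omega}\to[X\cup D]^{\le\omega}$ supplied by Theorem~\ref{T:char induced}(3), transferring its domain to $[\Lambda]^{\le\omega}$ by means of the Markushevich basis. Since $\{x_\alpha\setsep\alpha\in\Lambda\}$ is linearly dense, I would first fix for each $x\in X$ a countable set $\pi(x)\subset\Lambda$ with $x\in\clin\{x_\alpha\setsep\alpha\in\pi(x)\}$, and extend $\pi$ $\omega$-monotonically to a map $[X]^{\le\omega}\to[\Lambda]^{\le\omega}$ by union.

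For $A\in[\Lambda]^{\le\omega}$ I then iterate: put $A_0=A$ and $A_{n+1}=A_n\cup\pi\bigl(\theta(\{x_\alpha\setsep\alpha\in A_n\})\cap X\bigr)$, and write $A_\infty=\bigcup_n A_n$, $X_\infty=\{x_\alpha\setsep\alpha\in A_\infty\}$. By $\omega$-monotonicity of $\theta$ and $\pi$ we get $\theta(X_\infty)\cap X\subset\clin\{x_\alpha\setsep\alpha\in A_\infty\}$, while property~(i) of $\theta$ yields the reverse inclusion $X_\infty\subset\theta(X_\infty)\cap X$; hence $\clin(\theta(X_\infty)\cap X)=\clin\{x_\alpha\setsep\alpha\in A_\infty\}$. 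Defining
$$\varphi(A)=A_\infty\cup\bigl(\theta(X_\infty)\cap D\bigr)\in[\Lambda\cup D]^{\le\omega},$$
properties~(i), (ii), (iii) of the proposition follow directly from the corresponding properties of $\theta$ combined with the above identification of closed linear spans, and the $\omega$-monotonicity of $\varphi$ is inherited from $\theta$ and $\pi$.

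The main obstacle is the covering identity $D=\bigcup\{\wscl{\varphi(A)\cap D}\setsep A\in[\Lambda]^{\le\omega}\}$; one inclusion is immediate. For the converse, given $x^*\in D$, Theorem~\ref{T:char induced}(3) supplies $B\in[X]^{\le\omega}$ with $x^*\in\wscl{\theta(B)\cap D}$, but $B$ need not consist of basis vectors, and so $\theta(B)$ is not a priori contained in $\theta(X_\infty)$ for any choice of $A$. To bridge this, I would choose $\theta$ via the concrete construction in the proofs (1)$\Rightarrow$(2)$\Rightarrow$(3) of Theorem~\ref{T:char induced} so that $\wscl{\theta(C)\cap D}$ and $\clin(\theta(C)\cap X)$ coincide with $P_{s(C)}^{*}X^{*}$ and $P_{s(C)}X$ for a suitable $s(C)$ in the underlying skeleton $(P_s)_{s\in\Gamma}$ inducing $D$. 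Then $Y_A=P_{s(A_\infty)}X$ and $V_A=P_{s(A_\infty)}^{*}X^{*}$, the bijection in~(iii) becomes a direct consequence of Lemma~\ref{L:jednaprojekce}, and coverage reduces to: for each $t\in\Gamma$ find $A\in[\Lambda]^{\le\omega}$ with $P_t^{*}X^{*}\subset P_{s(A_\infty)}^{*}X^{*}$.

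I expect this cofinality step to be the delicate one. Taking $A=\pi(Z)$ for a countable norm-dense $Z\subset P_tX$ readily forces $P_tX\subset P_{s(A_\infty)}X$, but in the non-commutative setting this gives only $P_{s(A_\infty)}P_t=P_t$, not the kernel inclusion $\ker P_{s(A_\infty)}\subset\ker P_t$ equivalent to $P_t^{*}X^{*}\subset P_{s(A_\infty)}^{*}X^{*}$. The fix I would attempt is to enrich the iteration so that at every stage the countable weak$^*$-dense subsets $\eta(s)\subset P_s^{*}X^{*}$ employed in the construction of $\theta$ (for $s$ in the relevant directed set $\Gamma_n$) are absorbed into the $D$-component of $\varphi(A)$; then $\wscl{\varphi(A)\cap D}$ directly contains $P_t^{*}X^{*}$, bypassing any comparability requirement in the non-commutative skeleton order and closing the proof.
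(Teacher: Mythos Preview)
Your construction of $\varphi$ --- iterate $A\mapsto A\cup\pi(\theta(\{x_\alpha\setsep\alpha\in A\})\cap X)$, pass to the union $A_\infty$, and set $\varphi(A)=A_\infty\cup(\theta(X_\infty)\cap D)$ --- is exactly the paper's (which writes $C(x)$ for your $\pi(x)$ and $\zeta_n$ for your $A_n$). Properties (i)--(iii) and $\omega$-monotonicity follow as you indicate.

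The divergence is in the covering identity $D=\bigcup_A\wscl{\varphi(A)\cap D}$. The paper bypasses the skeleton-index gymnastics entirely and argues softly: the right-hand side $D'$ is a linear subspace (a directed union of the linear spaces from~(ii)), is weak$^*$-countably closed (any countable subset of $D'$ lands in a single $\wscl{\varphi(A)\cap D}$ by $\omega$-monotonicity), and separates points of $X$ (by~(iii), since every $x\in X$ lies in $\clin\{x_\alpha\setsep\alpha\in\varphi(A)\cap\Lambda\}$ for a suitable $A$). Hence $D'$ is weak$^*$-dense in $X^*$, and since $(D,w^*)$ has countable tightness --- this is Remark~\ref{remark induced}(f), ultimately resting on the monotone-Sokolov characterization in Theorem~\ref{T:char induced} --- one concludes $D\subset D'$.

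Your hard route through skeleton cofinality is not wrong in spirit, but the proposed fix is incomplete as stated: the sets $\eta(s)$ you would absorb are attached to indices $s$ generated by the iteration starting from $A=\pi(Z)$, and there is no mechanism forcing the given $t$ (or anything $\ge t$ in $\Gamma$) to appear among those indices. Range inclusion $P_tX\subset P_{s(A_\infty)}X$ does not produce this in the non-commutative case, exactly as you diagnosed, and ``enriching the $D$-component'' with $\eta(s)$ for the \emph{wrong} $s$ does not help. To make your approach work you would need to feed a weak$^*$-dense subset of $P_t^*X^*$ into the construction, but the domain of $\varphi$ is $[\Lambda]^{\le\omega}$ and carries no $D$-coordinates to receive it. The paper's soft argument sidesteps all of this.
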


\begin{proof} Let $\theta:[X]^{\le\omega}\to[X\cup D]^{\le\omega}$ be the mapping from Theorem~\ref{T:char induced}(3). We will modify it using the Markushevich basis. To this end we define one more mapping. For any $x\in X$ let $C(x)\subset\Lambda$ be a countable set such that $x\in\clin\{x_\alpha\setsep \alpha\in C(x)\}$. Further, for any $A\in[\Lambda]^{\le\omega}$ we set
$$\zeta_1(A)=A\cup C(\theta(\{x_\alpha\setsep \alpha\in A\})\cap X)$$
and, by induction, define
$$\zeta_{n+1}(A)=\zeta_1(\zeta_n(A)),\qquad n\in\en.$$
Then the mapping
$$\zeta(A)=\bigcup_{n\in\en}\zeta_n(A),\qquad A\in[\Lambda]^{\le\omega}$$
is $\omega$-monotone.
Finally, set
$$\varphi(A)=\zeta(A)\cup(\theta(\{x_\alpha\setsep\alpha\in\zeta(A)\})\cap D), \qquad A\in [\Lambda]^{\le\omega}.$$
It is clear that $\varphi$ is an $\omega$-monotone mapping such that $A\subset \varphi(A)$ and $\wscl{\varphi(A)\cap D}$ is a linear space for each $A\in[\Lambda]^{\le\omega}$. 

To prove the property (iii) of $\varphi$ fix $A\in[\Lambda]^{\le\omega}$. Then 
$$\clin\{x_\alpha\setsep \alpha\in\zeta(A)\}=\clin(\theta(\{x_\alpha\setsep\alpha\in\zeta(A)\})\cap X).$$
Indeed, the inclusion $\subset$ is obvious. To see the converse observe that
$$\begin{aligned}
\theta(\{x_\alpha\setsep\alpha\in\zeta(A)\})\cap X&=\bigcup_n\theta(\{x_\alpha\setsep\alpha\in\zeta_n(A)\})\cap X 
\subset\bigcup_n \clin \{x_\alpha\setsep \alpha\in C(\theta(\{x_\alpha\setsep\alpha\in\zeta_n(A)\})\cap X)\}\\&
\subset\bigcup_n \clin \{x_\alpha\setsep \alpha\in \zeta_{n+1}(A)\} \subset\clin\{x_\alpha\setsep \alpha\in\zeta(A)\}.\end{aligned}
$$
So, the property (iii) of $\varphi$ follows immediately from the property (iii) of $\theta$.

It remains to prove the formula for $D$. The inclusion $\supset$ follows from the properties of $\theta$. One possibility to prove the converse inclusion is to observe that the set on the right-hand side is a weak$^*$-countably closed subspace which separates points of $X$, hence it is weak$^*$-dense.
Since $D$ has countable tightness in the weak$^*$ topology by Remark~\ref{remark induced}(f), the conclusion follows.
\end{proof}

\begin{remark} In the previous proposition no special assumption on the Markushevich basis is needed. Just a mere existence of some Markushevich basis is used. In fact, a similar statement can be formulated for an arbitrary linearly dense subset of $X$ in place of $\{x_\alpha\setsep \alpha\in\Lambda\}$. The proof of such a statement would be essentially the same.

However, the fact that we start with a Markushevich basis can be used to construct a simple projectional skeleton (in the sense of \cite[Section 4]{cuka-mr}) by applying the method of the proof of the implication (3)$\Rightarrow$(1) of Theorem~\ref{T:char induced} to the mapping $\varphi$ in place of $\theta$. Again, the only important thing is the existence of some Markushevich basis (this corresponds to the methods of \cite{cuka-mr}).
\end{remark}

\section{Examples of spaces with a noncommutative projectional skeleton}\label{S:examples}

While $1$-Plichko spaces, i.e., spaces with a commutative $1$-projectional skeleton appear often and naturally in mathematics (see \cite{ja-val-exa,BHK-vN,BHK-vN,triples}), the supply of spaces with a non-commutative skeleton is not so large. Up to now they include spaces of continuous functions on ordinal segments, spaces of continuous functions on certain trees equipped with the coarse-wedge topology and duals to Asplund spaces. And, of course, spaces made by certain standard constructions starting from the mentioned examples. 
In this section we provide an analysis of the three mentioned classes.
We focus on explicit description of projectional skeletons, Markushevich bases and projectional generators on these spaces. The related open problems are discussed in the last section.  We also show the applications of Theorem~\ref{T:komutativni} in these cases.

Since two of these classes are spaces of continuous functions, in the first subsection we recall some notions and facts on retractions on compact spaces. 

\subsection{Retractions on compact spaces} If $K$ is a compact Hausdorff space, $\C(K)$ denotes the space of (real- or complex-valued) continuous functions on $K$ equipped with the supremum norm. Its dual $\C(K)^*$ is, by the Riesz representation theorem, canonically isometric to $\M(K)$, the space of (real- or complex-valued) Radon measures on $K$ equipped with the total variation norm. In the sequel we will identify $\C(K)^*$ with $\M(K)$.

An analogue of projectional skeleton in the realm of compact spaces is the notion of a retractional skeleton introduced in \cite{kubis-michalewski}. We recall that a \emph{retractional skeleton} on a compact Hausdorff space $K$ is a family $(r_s)_{s\in\Gamma}$ of continuous retractions on $K$ indexed by a $\sigma$-complete up-directed partially ordered set satisfying the following conditions.
\begin{itemize}
\item[(i)] $r_s(K)$ is metrizable for each $s\in\Gamma$;
\item[(ii)] $r_s\circ r_t=r_t\circ r_s=r_s$ whenever $s,t\in\Gamma$ are such that $s\le t$;
\item[(iii)] if $(s_n)$ is an increasing sequence in $\Gamma$ and $s=\sup_n s_n$, then $r_s(x)=\lim_n r_{s_n}(x)$ for $x\in K$;
\item[(iv)] $\lim_{s\in \Gamma}r_s(x)=x$ for $x\in K$.
\end{itemize}
Further, the set
$$S=\bigcup_{s\in\Gamma}r_s(K)$$
is said to be \emph{induced by the skeleton}.

A notion related to a $\Sigma$-subspace is that of a dense $\Sigma$-subset. Recall that $A$ is a \emph{$\Sigma$-subset} of a compact space $K$ if there is a homeomorphic injection $h:K\to\er^\Gamma$ such that
$$A=\{x\in K\setsep\{\gamma\in\gamma\setsep h(x)(\gamma)\ne0\}\mbox{ is countable}\}.$$
A compact space having a dense $\Sigma$-subset is called \emph{Valdivia}. If $K$ is even a $\Sigma$-subset of itself, it is called \emph{Corson}. By \cite[Theorem 6.1]{kubis-michalewski}
(more precisely by its proof) a dense subset of $K$ is a $\Sigma$-subset if and only if it is induced by a commutative retractional skeleton.

Next we recall few fact on the relationship of retractions on $K$ with projections on $\C(K)$. We start by the following well-known result.

\begin{lemma}\label{L:compact retrakce} Let $K$ be a compact Hausdorff space and let $r:K\to K$ be a continuous retraction. Define the operator $P:\C(K)\to\C(K)$ by $Pf=f\circ r$, $f\in\C(K)$. Then the following assertions hold.
\begin{itemize}
\item[(a)] $P$ is a linear projection of norm one.
\item[(b)] The mapping
$f\mapsto f|_{r(K)}$ is an isometric isomorphism of $P(\C(K))$ onto $\C(r(K))$. I.e., it is a linear onto isometry, which moreover preserves multiplication and in the complex case also complex conjugation.
\item[(c)] The adjoint projection $P^*$ satisfies the formula
$$P^*(\mu)=r(\mu),\quad \mu\in\M(K)=\C(K)^*,$$
where $r(\mu)$ is the image of the measure $\mu$ under the mapping $r$, i.e.,
$$P^*(\mu)(B)=\mu(r^{-1}(B)),\quad B\subset K\mbox{ Borel}, \mu\in \M(K).$$
Moreover 
$$P^*(\M(K))=\{\mu\in\M(K)\setsep \abs{\mu}(K\setminus r(K))=0\}.$$
\end{itemize}
\end{lemma}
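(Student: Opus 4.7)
The plan is to verify the three assertions in order, each by a routine computation relying on the two defining properties of a retraction, namely that $r$ is continuous and $r\circ r=r$ (equivalently, $r$ fixes every point of $S:=r(K)$, which in turn is closed in $K$ since $K$ is Hausdorff).

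For (a), I would first note that $Pf=f\circ r$ is continuous as a composition of continuous maps, and that $P$ is linear in $f$. Then $P^2 f=(f\circ r)\circ r=f\circ(r\circ r)=f\circ r=Pf$, so $P$ is a projection. The estimate $\norm{Pf}_\infty=\sup_{x\in K}\abs{f(r(x))}\le\norm{f}_\infty$ gives $\norm{P}\le 1$, and evaluating on the constant function $1$ gives equality.

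For (b), the key remark is that $g\in P(\C(K))$ iff $g\circ r=g$, i.e., $g$ is constant on fibres of $r$. The restriction map $\Phi:g\mapsto g|_S$ is clearly linear and preserves products and complex conjugation. It is injective because if $g\in P(\C(K))$ and $g|_S=0$, then for each $x\in K$ we have $g(x)=g(r(x))=0$ as $r(x)\in S$. It is surjective because any $h\in\C(S)$ extends to some $f\in\C(K)$ by the Tietze extension theorem (applicable since $S$ is closed in the normal space $K$), and then $Pf\in P(\C(K))$ satisfies $\Phi(Pf)(y)=f(r(y))=f(y)=h(y)$ for $y\in S$. Isometry follows because for $g=f\circ r\in P(\C(K))$,
$$\norm{g}_\infty=\sup_{x\in K}\abs{f(r(x))}=\sup_{y\in S}\abs{f(y)}=\sup_{y\in S}\abs{g(y)}=\norm{g|_S}_\infty,$$
using $r|_S=\mathrm{id}_S$ in the last step.

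For (c), the formula $P^*\mu=r(\mu)$ is the standard change-of-variables identity for image measures: for any $f\in\C(K)$,
$$(P^*\mu)(f)=\mu(Pf)=\int_K f\circ r\,\dd\mu=\int_K f\,\dd(r(\mu)).$$
To identify the range, note that $r^{-1}(K\setminus S)=\emptyset$ since $r(x)\in S$ for every $x\in K$. If $\nu=P^*\mu=r(\mu)$ then $\abs{\nu}\le r(\abs{\mu})$, and $r(\abs{\mu})(K\setminus S)=\abs{\mu}(r^{-1}(K\setminus S))=0$, proving the inclusion $\subset$. Conversely, if $\abs{\nu}(K\setminus S)=0$, then for any Borel $B\subset K$ the set $r^{-1}(B)\cap S=B\cap S$ (because $r|_S=\mathrm{id}_S$), so
$$r(\nu)(B)=\nu(r^{-1}(B))=\nu(r^{-1}(B)\cap S)=\nu(B\cap S)=\nu(B),$$
which gives $P^*\nu=\nu\in P^*(\M(K))$. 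I do not expect any real obstacle: the only subtle point is the measure-theoretic argument that $\abs{r(\mu)}(K\setminus S)=0$, which is handled cleanly by bounding $\abs{r(\mu)}$ by $r(\abs{\mu})$ and using the emptiness of $r^{-1}(K\setminus S)$.
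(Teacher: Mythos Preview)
Your proof is correct and follows essentially the same route as the paper's. The paper dismisses (a) and (b) as ``well known and obvious'' and for (c) uses the same change-of-variables identity and the same computation for the inclusion $\supset$; for the inclusion $\subset$ the paper argues even more directly by observing that $r(\mu)(B)=\mu(r^{-1}(B))=\mu(\emptyset)=0$ for every Borel $B\subset K\setminus r(K)$, which already forces $\abs{r(\mu)}(K\setminus r(K))=0$ without invoking the bound $\abs{r(\mu)}\le r(\abs{\mu})$. (Incidentally, in (b) Tietze is not needed: $h\circ r$ is already a continuous extension of $h$ lying in $P(\C(K))$.)
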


\begin{proof} The assertions (a) and (b) are well known and obvious. The assertion (c) is also easy and known, let us give a proof for completeness.

Fix $\mu\in \M(K)$. Then $r(\mu)$ is a well-defined Borel measure on $K$. Moreover, $r(\mu)$ is a Radon measure -- this is obvious in case $\mu$ is nonnegative; any real-valued measure is a difference of two non-negative ones and any complex-valued measure is a linear combination of four non-negative ones. So, $r(\mu)\in M(K)$. Moreover, by the rule of integration with respect to the image measure we have, for any $f\in\C(K)$,
$$\int f\di r(\mu)=\int f\circ r\di\mu=\int P(f)\di\mu=\int f\di P^*(\mu),$$
thus $P^*(\mu)=r(\mu)$.

Finally, let us show the last equality. Let $\mu\in M(K)$ and $B\subset K\setminus r(K)$ be a Borel set. Then
$$P^*(\mu)(B)=r(\mu)(B)=\mu(r^{-1}(B))=\mu(\emptyset)=0,$$
so $\abs{P^*(\mu)}(K\setminus r(K))=0$. 

Conversely, assume that $\mu$ belongs to the set on the right-hand side.
Then for any $B\subset K$ Borel we have
$$\begin{aligned}
r(\mu)(B)&=\mu(r^{-1}(B))=\mu(r^{-1}(B\cap r(K)))=\mu(r^{-1}(B\cap r(K))\cap r(K))\\&=\mu(B\cap r(K))=\mu(B),\end{aligned}$$
so $\mu=r(\mu)=P^*(\mu)$.
\end{proof}

\begin{lemma}\label{L:compact-projekce} Let $K$ be a compact Hausdorff space and let $(r_s)_{s\in\Gamma}$ be a net of continuous retractions on $K$ which pointwise converges to a continuous retraction $r$ and, moreover,
$$r_s\circ r_t=r_t\circ r_s=r_s\mbox{ whenever }s\le t.$$
Define the projections 
$$P(f)=f\circ r \mbox{ and }P_s(f)=f\circ r_s, \quad f\in\C(K), s\in\Gamma.$$
Then
$$P_sP_t=P_tP_s=P_s\mbox{ whenever }s\le t$$
and, moreover, the net $(P_s)$ converges to $P$ in the strong operator topology.
\end{lemma}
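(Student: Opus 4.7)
The first assertion is a routine unwinding of the hypothesis: for $f\in\C(K)$ and $s\le t$ one has $P_sP_t f = f\circ r_t\circ r_s = f\circ r_s = P_s f$ using $r_t\circ r_s = r_s$, and symmetrically $P_tP_s f = f\circ r_s\circ r_t = f\circ r_s = P_s f$ using $r_s\circ r_t = r_s$. So the real content of the lemma is the strong operator convergence $P_s f \to Pf$, i.e. $\norm{f\circ r_s - f\circ r}_\infty\to 0$ for each fixed $f\in\C(K)$.

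The plan is to derive the required uniform convergence from a compactness argument, exploiting the monotonicity of the ranges encoded in the hypothesis. First I would record the key geometric fact that $r_s(K)\subset r(K)$ for every $s\in\Gamma$. Indeed, if $y=r_s(x)\in r_s(K)$, then for every $t\in\Gamma$ with $t\ge s$ the identity $r_t\circ r_s=r_s$ gives $r_t(y)=r_s(x)=y$; passing to the pointwise limit along the cofinal set $\{t\ge s\}$ then yields $r(y)=y$, so $y\in r(K)$. In particular, since $r(K)$ is closed (it is the image of a compact set under a continuous map), every cluster point of a net with values in $\bigcup_s r_s(K)$ lies in $r(K)$.

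Now suppose, toward a contradiction, that $f\circ r_s\not\to f\circ r$ uniformly. Then there are $\ep>0$, a cofinal subset of indices, and points $x_s\in K$ with $\betr{f(r_s(x_s))-f(r(x_s))}\ge\ep$. By compactness of $K\times K$ I may pass to a subnet along which $x_s\to x_0$ and $r_s(x_s)\to y$; by continuity of $r$ also $r(x_s)\to r(x_0)$. The crucial step is to verify $y=r(x_0)$, which will contradict the inequality above by continuity of $f$. Fix any $t\in\Gamma$; once $s\ge t$ we have $r_t\circ r_s=r_t$, hence $r_t(r_s(x_s))=r_t(x_s)$. Taking limits along the subnet and using continuity of $r_t$ on both sides yields $r_t(y)=r_t(x_0)$. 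Letting $t$ range over $\Gamma$ and invoking the pointwise convergence $r_t\to r$ gives $r(y)=r(x_0)$. Since $y\in r(K)$ by the preceding paragraph, $r$ fixes $y$, so $y=r(y)=r(x_0)$, which is the desired contradiction.

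The only mild obstacle is the double limit appearing in the last step: one must first fix $t$ and let $s$ move along the subnet to obtain $r_t(y)=r_t(x_0)$, and only then let $t$ range over $\Gamma$ to pass to $r$. No topological input beyond compactness of $K$ and continuity of each retraction is used, so the argument extends verbatim to arbitrary compact Hausdorff $K$ (no metrizability assumption on the ranges is needed here).
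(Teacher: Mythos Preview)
Your proof is correct, but it takes a genuinely different route from the paper's. The paper argues via Stone--Weierstrass: it shows that the union $A=\bigcup_{s\in\Gamma}P_s(\C(K))$ is a subalgebra of $P(\C(K))$ containing constants, closed under conjugation, and separating points of $r(K)$, hence norm-dense in $P(\C(K))$; a standard $\varepsilon$-approximation (picking $g\in A$ with $\norm{Pf-g}<\varepsilon$ and using $P_tg=g$ for large $t$) then finishes. Your argument bypasses Stone--Weierstrass entirely and works directly on points, exploiting compactness of $K\times K$, the inclusion $r_s(K)\subset r(K)$, and the identity $r_t\circ r_s=r_t$ for $s\ge t$ to trap the limit $y$ of $r_s(x_s)$ as $r(x_0)$.

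Your approach is more elementary and self-contained for this particular lemma, and it makes transparent that nothing beyond compactness and the retraction algebra is needed. The paper's approach, on the other hand, isolates the structural fact that $\bigcup_s P_s(\C(K))$ is dense in $P(\C(K))$, which is of independent interest and is implicitly reused elsewhere (e.g.\ in verifying property~(iii) of projectional skeletons in Proposition~\ref{P:rs->ps}). One minor stylistic point: your phrase ``a cofinal subset of indices'' is slightly loose, since a cofinal subset of a directed set need not itself be directed; but you correctly pass to a genuine subnet immediately afterwards, and the cofinality property of subnets is exactly what makes ``eventually $s\ge t$'' work, so the argument goes through.
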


\begin{proof} The equalities $P_sP_t=P_tP_s=P_s$ for $s\le t$ are obvious. Further, it is clear that for any $f\in \C(K)$ and any $x\in X$
$$P_s(f)(x)=f(r_s(x))\to f(r(x))=P(f)(x),$$
So $P_s(f)\to P(f)$ pointwise. It remains to show that this can be strengthened to the norm convergence. 

To this end set 
$$A=\bigcup_{s\in\Gamma}P_s(\C(K)).
$$
By Lemma~\ref{L:compact retrakce}(b) we know that each $P_s(\C(K))$ is an algebra containing constant functions and stable to complex conjugation in the complex case. Since $A$ is a directed union of such algebras, it is an algebra with the same properties. Further, it is a subalgebra of $P(\C(K))$ and it separates points of $r(K)$. Indeed, if $x,y\in r(K)$ are different, then there is $s\in\Gamma$ with $r_s(x)\ne r_s(y)$. By the Urysohn lemma there is $g\in\C(r_s(K))$ with $g(r_s(x))\ne g(r_s(y))$. Then $g\circ r_s\in A$ and separates $x$ and $y$. So, the Stone-Weierstrass theorem (together with Lemma~\ref{L:compact retrakce}(b) applied to $r$) we see that $A$ is norm dense in $P(\C(K))$. 

To complete the proof fix $f\in\C(K)$ and $\varepsilon>0$. By the previous paragraph there is $g\in A$ with $\norm{Pf-g}<\varepsilon$. Fix some $s\in\Gamma$ with $g=P_s(g)$. Then for each $t\in\Gamma$, $t\ge s$ we have
$$\norm{Pf-P_tf}\le\norm{Pf-g}+\norm{g-P_tf}
<\varepsilon+\norm{P_t(g-Pf)}\le\varepsilon+\norm{g-Pf}<2\varepsilon,$$
where we used the equalities $P_t g=P_tP_s g= P_sg=g$  and $P_tP=P$.
This completes the proof.
\end{proof}

The first part of the assertion (a) of the following proposition is stated in \cite[Proposition 28]{kubis-skeleton}. It is claimed there that the assertion is clear. We add an easy proof using Lemma~\ref{L:compact-projekce}.

\begin{prop}\label{P:rs->ps} Let $K$ be a compact Hausdorff space and let $(r_s)_{s\in\Gamma}$ be a retractional skeleton on $K$. Denote by $S$ the respective induced subset of $K$.

Define $P_s(f)=f\circ r_s$ for $f\in\C(K)$ and $s\in\Gamma$. Then the following hold.
\begin{itemize}
\item[(a)] $(P_s)_{s\in\Gamma}$ is a $1$-projectional skeleton on $\C(K)$ and the respective induced subspace is
$$D=\{\mu\in\M(K)\setsep \spt\mu\mbox{ is a separable subset of }S\}.$$
\item[(b)] If the skeleton $(r_s)_{s\in\Gamma}$ is commutative, then so is the skeleton $(P_s)_{s\in\Gamma}$.
\item[(c)] If $D$ is a $\Sigma$-subspace, then there is a cofinal $\sigma$-closed subset $\Gamma'\subset \Gamma$ such that $r_s\circ r_t=r_t\circ r_s$ for $s,t\in\Gamma'$. So, in particular, $S$ is induced by a commutative retractional skeleton on $K$, hence it is a $\Sigma$-subspace of $K$.
\end{itemize}
\end{prop}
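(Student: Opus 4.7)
The plan is to verify the four axioms of a projectional skeleton for $(P_s)_{s\in\Gamma}$ by translating them from the retractional skeleton axioms via Lemmas~\ref{L:compact retrakce} and \ref{L:compact-projekce}, then identify the induced subspace explicitly, and finally deduce (b) and (c) by transporting commutativity back and forth between the $r_s$ and the $P_s$.

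For (a), I would argue as follows. Property (i) is immediate from Lemma~\ref{L:compact retrakce}(b), since $P_s\C(K)$ is isometric to $\C(r_s(K))$ and $r_s(K)$ is metrizable. Property (ii) drops out of the computation $P_sP_tf=f\circ(r_t\circ r_s)$, which transfers $r_sr_t=r_tr_s=r_s$ from $s\le t$ to $P_sP_t=P_tP_s=P_s$. Property (iii), together with its strengthening (iii'), follows from Lemma~\ref{L:compact-projekce} applied to an arbitrary increasing $(s_n)$ with supremum $s$: the net $(r_{s_n})$ converges pointwise to $r_s$ by axiom (iii) of a retractional skeleton, hence $P_{s_n}f\to P_sf$ in norm. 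Property (iv) is the delicate point: I plan to apply Lemma~\ref{L:compact-projekce} once more with $r$ taken to be the identity retraction on $K$ (to which $(r_s)$ converges pointwise by axiom (iv) of a retractional skeleton), obtaining $P_sf\to f$ in norm for every $f\in\C(K)$. This only places $f$ in $\overline{\bigcup_s P_s\C(K)}$; to upgrade to $f\in P_t\C(K)$ for a single $t$, I would pick $(t_n)$ with $\|P_{t_n}f-f\|\to 0$, dominate it by an increasing sequence $(s_n)\subset\Gamma$ using up-directedness and $\sigma$-completeness, set $s=\sup_n s_n$, and use $P_sP_{t_n}=P_{t_n}$ to conclude $P_sf=\lim_n P_{t_n}f=f$.

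To identify the induced subspace, I would invoke Lemma~\ref{L:compact retrakce}(c) to get $P_s^*\M(K)=\{\mu:\spt\mu\subset r_s(K)\}$, hence $D=\{\mu\in\M(K):\spt\mu\subset r_s(K)\text{ for some }s\in\Gamma\}$. The inclusion of $D$ into $\{\mu\in\M(K):\spt\mu\text{ is a separable subset of }S\}$ is immediate, since each $r_s(K)$ is metrizable and contained in $S$. For the converse, given $\mu$ with $\spt\mu$ separable and contained in $S$, I would choose a countable dense $A\subset\spt\mu$, for each $a\in A$ some $s_a\in\Gamma$ with $r_{s_a}(a)=a$, and then dominate $\{s_a:a\in A\}$ by a single $s\in\Gamma$ via the same bootstrap used for (iv); the identity $r_s\circ r_{s_a}=r_{s_a}$ for $s\ge s_a$ forces $r_s(a)=a$ for every $a\in A$, and closedness of $r_s(K)$ upgrades this to $\spt\mu=\overline{A}\subset r_s(K)$.

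Part (b) is then an immediate consequence of the formula $P_sP_tf=f\circ(r_t\circ r_s)$ and its transpose. For (c) I plan to invoke Theorem~\ref{T:komutativni}: if $D$ is a $\Sigma$-subspace, it yields a cofinal $\sigma$-closed $\Gamma'\subset\Gamma$ with $P_sP_t=P_tP_s$ for $s,t\in\Gamma'$, and since $\C(K)$ separates points of $K$ the equality of the operators translates back to $r_sr_t=r_tr_s$ on $\Gamma'$. To finish, I would observe $S=\bigcup_{s\in\Gamma'}r_s(K)$ using cofinality of $\Gamma'$ and the relation $r_tr_s=r_s$ for $s\le t$, so that $S$ is induced by the commutative retractional skeleton $(r_s)_{s\in\Gamma'}$, and then quote the characterisation of $\Sigma$-subsets via commutative retractional skeletons recalled just before Lemma~\ref{L:compact retrakce}. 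The main obstacle I foresee is the verification of axiom (iv) of a projectional skeleton, where one needs the single-index trick to pass from density of $\bigcup_s P_s\C(K)$ to the non-closure formulation demanded by the definition; once that is in place, everything else is routine translation between retractions and their induced composition operators.
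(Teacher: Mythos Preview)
Your proposal is correct and follows essentially the same approach as the paper's proof: axioms (i)--(iii) via Lemmas~\ref{L:compact retrakce} and~\ref{L:compact-projekce}, axiom (iv) by extracting from the net convergence $P_sf\to f$ a sequence and passing to a supremum in $\Gamma$, the description of $D$ via Lemma~\ref{L:compact retrakce}(c) and a countable-dense-set argument, and part (c) via Theorem~\ref{T:komutativni}. The only cosmetic difference is that the paper selects the approximating sequence in (iv) to be increasing from the outset (possible since $\Gamma$ is directed) and then uses (iii') to conclude $P_{s_n}f\to P_sf$, whereas you first pick an arbitrary sequence and then dominate; both arguments are equally valid.
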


\begin{proof}
(a) Let us check the properties (i)--(iv) of projectional skeletons. Given $s\in\Gamma$, $P_s(\C(K))$ is isometric to $\C(r_s(K))$ (by Lemma~\ref{L:compact retrakce}(b)), so it is separable (as $r_s(K)$) is metrizable. Hence the property (i) is fulfilled. The properties (ii) and (iii)  (in fact (iii')) follow from the respective properties of a retractional skeleton using Lemma~\ref{L:compact-projekce}. Further, by the property (iv) or retractional skeletons and Lemma~\ref{L:compact-projekce} it follows that 
$$f=\lim_{s\in\Gamma} P_s f,\quad f\in\C(K).$$
So, given $f\in\C(K)$ one can find an increasing sequence $(s_n)$ in $\Gamma$ such that $\norm{P_{s_n}f-f}<\frac1n$.  Let $s=\sup_n s_n$. Since $P_{s_n}f\to P_s f$, necessarily $P_sf=f$. This completes the proof of the property (iv) of projectional skeletons. By Lemma~\ref{L:compact retrakce}(a) it is even a $1$-projectional skeleton.

Let $D$ be the subspace of $\M(K)$ induced by the skeleton. If $\mu\in D$, then there is $s\in \Gamma$ with $P_s^*\mu=\mu$. By Lemma~\ref{L:compact retrakce}(c) the support of $\mu$ is contained in $r_s(K)$, so it is a separable subset of $S$. Conversely, suppose that $\spt\mu$ is a separable subset of $S$. Fix a countable dense set $C\subset \spt\mu$. Then there is $s\in\Gamma$ such that $r_s(x)=x$ for $x\in C$. It follows that $\spt\mu\subset r_s(K)$, thus by Lemma~\ref{L:compact retrakce}(c) we deduce $\mu\in P_s^*\M(K)\subset D$. This completes the proof of the formula for $D$.

The assertion (b) is obvious, the assertion (c) follows from Theorem~\ref{T:komutativni} (and \cite[Theorem 6.1]{kubis-michalewski}).
\end{proof}

\subsection{Spaces of continuous functions on ordinal segments}

Let $\eta$ be an ordinal. Then  the ordinal segment $[0,\eta]$ equipped with the order topology is a compact Hausdorff space. Thus $\C([0,\eta])$ is a Banach space and its dual $\C([0,\eta])^*$ is canonically isometric to $\M([0,\eta])$. Since $[0,\eta]$ is scattered, any Radon measure on $[0,\eta]$ is countably supported, so $\M([0,\eta])$ is isometric to the Banach space $\ell^1([0,\eta])$. Anyway, we will use the measure notation since it reflects the topological structure of $[0,\eta]$. Moreover, by $I(\eta)$ we will denote the set of all the isolated ordinals from $[0,\eta]$.
 
The following lemma describes natural retractions on $[0,\eta]$.

\begin{lemma}\label{L:retrakce-ord} Let $A\subset[0,\eta]$ be a closed subset containing $0$.
The following assertions are equivalent.
\begin{enumerate}
\item Any isolated point of $A$ is an isolated ordinal.
\item $A\cap I(\eta)$ is a dense subset of $A$.
\item The mapping $r_A$ defined by
$$r_A(x)=\max ([0,x]\cap A),\quad x\in [0,\eta],$$
is a continuous retraction of $[0,\eta]$ onto $A$.
\end{enumerate}
\end{lemma}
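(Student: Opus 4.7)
My plan is to establish (2)$\Rightarrow$(1), (1)$\Rightarrow$(2), and (1)$\Leftrightarrow$(3). Two facts about the order topology on $[0,\eta]$ will be used throughout: $I(\eta)$ coincides with the set of isolated points of $[0,\eta]$ (namely $0$ and the successor ordinals $\le\eta$), and a point of $[0,\eta]$ is a limit point of a subset $B$ exactly when it is approached from below by elements of $B$, since for a limit ordinal $\lambda$ the point $\lambda+1$ is excluded from every basic neighborhood $(b,\lambda+1)$ of $\lambda$.

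The implication (2)$\Rightarrow$(1) is immediate: if $a$ is isolated in $A$ then $\{a\}$ is relatively open in $A$, so density of $A\cap I(\eta)$ in $A$ forces $a\in I(\eta)$. For (1)$\Rightarrow$(2) I would argue by contradiction. Suppose there is an open $U\subset[0,\eta]$ with $U\cap A\ne\emptyset$ and $U\cap A\cap I(\eta)=\emptyset$, and let $c=\min(U\cap A)$, which exists by well-ordering. Then $c\notin I(\eta)$, so $c$ is a limit ordinal, and by (1) $c$ is not isolated in $A$, hence a limit point of $A$. By the observation above, $A\cap(b,c)\ne\emptyset$ for every $b<c$. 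Since $U$ is a neighborhood of $c$, any sufficiently small basic interval $(b,c')\subset U$ with $c'>c$ will contain such points, contradicting the minimality of $c$.

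For (1)$\Leftrightarrow$(3), first note $r_A$ is well-defined (each $[0,x]\cap A$ has a maximum as a nonempty compact subset of a well-ordered space) and fixes $A$ pointwise. Continuity at $x\notin A$ is automatic: if $a=r_A(x)<x$, closedness of $A$ produces $c>x$ with $A\cap(a,c)=\emptyset$, and on the neighborhood $(a,c)$ of $x$ the map $r_A$ is constantly equal to $a$. Continuity at $x\in I(\eta)\cap A$ is automatic too, since $x$ is isolated. The only delicate case is $x\in A\setminus I(\eta)$, where $x$ is a limit ordinal and $r_A(x)=x$. I would verify directly that $r_A$ is continuous at $x$ if and only if $x$ is a limit point of $A$: if $A\cap(b,x)\ne\emptyset$ for every $b<x$, then a basic neighborhood $(b,c)$ of $x=r_A(x)$ has preimage containing $(a',c)$ for any $a'\in A\cap(b,x)$, since $r_A(y)\in[a',y]\cap A$ for $y\in(a',c)$; conversely, if $A\cap(b,x)=\emptyset$ for some $b<x$, then on $(b,x)$ the map $r_A$ takes the constant value $\max([0,b]\cap A)\le b$, so $r_A^{-1}((b,c))$ misses $(b,x)$ and cannot be a neighborhood of $x$. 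Putting these together, $r_A$ is continuous everywhere exactly when every $x\in A\setminus I(\eta)$ is a limit point of $A$, i.e., exactly when (1) holds.

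The main obstacle is the continuity analysis at limit ordinals in $A$, where one must exploit that a limit ordinal is only approached from below; this is the bridge between the combinatorial condition on isolated points and the topological continuity of $r_A$. Once this is correctly handled, the three implications fit together cleanly.
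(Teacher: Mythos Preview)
Your proof is correct and follows essentially the same approach as the paper. The paper organizes the argument as a cycle $(1)\Rightarrow(2)\Rightarrow(3)\Rightarrow(1)$, whereas you prove $(1)\Leftrightarrow(2)$ and $(1)\Leftrightarrow(3)$ separately; the core ideas---taking $\min(U\cap A)$ for the equivalence of (1) and (2), and the case analysis of continuity at $x\notin A$, $x\in A\cap I(\eta)$, and $x\in A\setminus I(\eta)$ for the equivalence with (3)---are the same. One small difference: in proving continuity at a limit ordinal $x\in A$, the paper works from (2) and picks the auxiliary point in $A\cap I(\eta)$, while you work from (1) and only need a point of $A$ below $x$; both work equally well here.
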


\begin{proof} Let us first remark that the mapping $r_A$ from (3) is well defined as $A$ is closed and contains $0$. Clearly, it is a retraction of $[0,\eta]$ onto $A$, so the key content of (3) is the continuity of $r_A$.

\smallskip

(1)$\Rightarrow$(2) Let $U\subset [0,\eta]$ be an open set such that $U\cap A\ne\emptyset$. Set $x=\min (U\cap A)$. Then $[0,x]\cap U$ is an open set and
$\{x\}=[0,x]\cap U\cap A$,
thus $x$ is an isolated point of $A$. By (1) it is an isolated ordinal. Thus $U\cap A$ intersects $I(\eta)$, which completes the proof of (2).

(2)$\Rightarrow$(3) As remarked above, $r_A$ is a well-defined retraction of $[0,\eta]$ onto $A$. Hence, we are going to prove it is continuous. It is clearly continuous at each isolated ordinal. So, assume $x\in [0,\eta]$ is a limit ordinal and let us show that $r_A$ is continuous at $x$.

Let $U$ be any neighborhood of $r_A(x)$. By the definition of the order topology there is some $y<r_A(x)$ such that $(y,r_{A}(x)]\subset U$. Since $r_A(x)\in A$ and $(y,r_A(x)]$ is an open neighborhood of $r_A(x)$,  there is some $z\in(y,r_A(x)]\cap A\cap I(\eta)$. Then $[z,x]$ is an open neighborhood of $x$ and 
$$r_A([z,x])\subset[z,r_A(x)]\subset U.$$

(3)$\Rightarrow$(1) Let us proceed by contraposition. Assume (1) fails, hence there is an isolated point $x\in A$ which is a limit ordinal. Then
$y=\sup (A\cap[0,x))<x$
and $y\in A$ as $A$ is closed. Thus $r_A(x)=x$ and $r_A(z)=y$ for $z\in[y,x)$, which shows that $r_A$ is not continuous at $x$.
\end{proof}

The family of subsets of $[0,\eta]$ satisfying the equivalent conditions of the previous lemma is very important for the study of retractions on $[0,\eta]$. Therefore we denote it by $\A(\eta)$. I.e., we set
\begin{equation}\label{eq:A(eta)}
\A(\eta)=\{A\subset[0,\eta]\setsep A\mbox{ is closed}, 0\in A\ \&\ I(\eta)\cap A\mbox{ is dense in }A \}.\end{equation}
It is clear that the family $\A(\eta)$ is closed to taking finite unions, so it is up-directed by inclusion. We continue by investigating its properties. The following lemma is trivial.

\begin{lemma}\label{L:kompatibilita-ord} Let $A,B\in\A(\eta)$ be such that $A\subset B$. Then $r_A\circ r_B=r_B\circ r_A=r_A$.
\end{lemma}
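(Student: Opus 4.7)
The plan is to verify both equalities directly from the explicit formula $r_C(x)=\max([0,x]\cap C)$ given in Lemma~\ref{L:retrakce-ord}, exploiting nothing more than the inclusion $A\subset B$ and the fact that $r_A$ and $r_B$ are retractions onto $A$ and $B$ respectively.

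First I would handle the equality $r_B\circ r_A=r_A$, which is the easier of the two. Fix $x\in[0,\eta]$. Since $r_A$ is a retraction onto $A$ and $A\subset B$, we have $r_A(x)\in A\subset B$, so $r_A(x)$ is an element of $B$ lying in $[0,r_A(x)]$. By the definition of $r_B$ this immediately forces $r_B(r_A(x))=r_A(x)$, since $r_A(x)$ is the largest element of the set $[0,r_A(x)]\cap B$.

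Next I would verify $r_A\circ r_B=r_A$. Fix $x\in[0,\eta]$ and set $y=r_B(x)$. On the one hand, $y\le x$ gives $[0,y]\cap A\subset[0,x]\cap A$, so $r_A(y)\le r_A(x)$. For the reverse inequality, observe that $r_A(x)\in A\subset B$ and $r_A(x)\le x$; hence $r_A(x)$ belongs to $[0,x]\cap B$, so by the definition of $r_B$ we get $r_A(x)\le\max([0,x]\cap B)=y$. Consequently $r_A(x)\in[0,y]\cap A$, which yields $r_A(x)\le\max([0,y]\cap A)=r_A(y)$, finishing the argument.

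There is no real obstacle here; the whole lemma is just an unravelling of the formula $r_C(x)=\max([0,x]\cap C)$ together with the observation that, under $A\subset B$, elements of $A$ are automatically fixed points of $r_B$ and no $B$-element strictly between $r_A(x)$ and $x$ can affect the $A$-maximum. This is presumably why the author calls the lemma trivial and omits the proof.
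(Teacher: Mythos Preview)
Your proof is correct. The paper does not give a proof of this lemma at all --- it is stated and then immediately followed by the next lemma, so the author treats it as obvious; your direct verification from the formula $r_C(x)=\max([0,x]\cap C)$ is exactly the argument one would supply if asked to spell it out.
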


The following lemma establishes a continuity-like property of the family $\A(\eta)$.

\begin{lemma}\label{L:spojitost-ord} Let $\A'\subset\A(\eta)$ be a nonempty subset up-directed by  inclusion. Then $$B=\overline{\bigcup\A'}\in\A(\eta)$$ and, moreover,
$$r_B(x)=\lim_{A\in\A'}r_A(x),\qquad x\in [0,\eta].$$
\end{lemma}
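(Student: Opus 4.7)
My plan is to verify the two claims separately. First, to show $B\in\A(\eta)$: closedness of $B$ is immediate from the definition as a closure, and $0\in B$ since $\A'$ is nonempty and $0\in A$ for every $A\in\A'$. For the density of $I(\eta)\cap B$ in $B$ (condition (2) of Lemma~\ref{L:retrakce-ord}), I would take any open $U\subset[0,\eta]$ with $U\cap B\ne\emptyset$; since $\bigcup\A'$ is dense in $B$, $U$ meets some $A\in\A'$, and then the density of $I(\eta)\cap A$ in $A$ would yield $U\cap A\cap I(\eta)\ne\emptyset$, hence $U\cap B\cap I(\eta)\ne\emptyset$.

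For the limit formula, I would fix $x\in[0,\eta]$ and set $y=r_B(x)=\max([0,x]\cap B)$. Since $A\subset B$ for each $A\in\A'$, we have $r_A(x)\le y$, and the net $(r_A(x))_{A\in\A'}$ is monotone nondecreasing in $A$ (because $A_1\subset A_2$ forces $[0,x]\cap A_1\subset[0,x]\cap A_2$). Thus it suffices to show that for each basic open neighborhood $V$ of $y$ in $[0,\eta]$ there is $A_0\in\A'$ with $r_A(x)\in V$ for all $A\in\A'$ with $A\supset A_0$. I would split into two cases. If $y$ is $0$ or a successor ordinal, then $\{y\}$ is open in $[0,\eta]$; since $y\in B=\overline{\bigcup\A'}$, density forces $y\in A_0$ for some $A_0\in\A'$, and then $r_A(x)=y$ for every $A\supset A_0$. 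If $y$ is a nonzero limit ordinal, then every open neighborhood of $y$ contains a set of the form $(a,y]$ with $a<y$, which is itself open in $[0,\eta]$ (being $(a,y+1)$ when $y<\eta$, and $(a,\eta]$ when $y=\eta$). Since $y\in(a,y]\cap B$, density of $\bigcup\A'$ in $B$ yields $z\in(a,y]\cap A_0$ for some $A_0\in\A'$; then for any $A\supset A_0$ we have $z\in[0,x]\cap A$, so $a<z\le r_A(x)\le y$, which places $r_A(x)$ in $(a,y]\subset V$.

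The argument is essentially routine bookkeeping; the only mild subtlety is the case split according to whether $y$ is isolated in $[0,\eta]$ or a limit ordinal, which is dictated entirely by the structure of basic neighborhoods in the order topology. No substantive obstacle is anticipated.
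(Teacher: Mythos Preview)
Your proof is correct and follows essentially the same approach as the paper's: verify $B\in\A(\eta)$ (which the paper dismisses as clear), then for the limit formula use density of $\bigcup\A'$ in $B$ to find a point of some $A_0$ in a half-open interval below $r_B(x)$ and bound $r_A(x)$ between that point and $r_B(x)$ for all $A\supset A_0$. Your explicit case split (isolated versus limit $r_B(x)$) is slightly more careful than the paper's uniform treatment, which tacitly assumes $r_B(x)>0$; this is a harmless edge case but you handle it cleanly.
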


\begin{proof}
It is clear that $B\in \A(\eta)$. It remains to prove the equality. To this end fix any $x\in [0,\eta]$ and any $U$, a neighborhood of $r_B(x)$. By the definition of the order topology there is $y<r_B(x)$ such that $(y,r_B(x)]\subset U$. Since $r_B(x)\in B$ and $(y,r_B(x)]$ is an open neighborhood of $r_B(x)$, there is some
$z\in(y,r_B(x)]\cap\bigcup\A'$. Fix $A_0\in A'$ with $z\in A_0$. Then for any $A\in\A'$ such that $A\supset A_0$ we have 
$$z\in A\cap[0,x]\subset B\cap [0,x]\subset[0,r_B(x)]$$ and hence
$$r_A(x)\in[z,r_B(x)]\subset(y,r_B(x)]\subset U,$$
which completes the proof.
\end{proof}

Now we are ready to describe a retractional skeleton on $[0,\eta]$. In case $\eta$ is a cardinal number, this was done in \cite[Example 6.4]{kubis-michalewski}. The proof for general ordinals is the same.

\begin{prop}\label{P:ordinal-rs} Let $\A_\omega(\eta)$ denote the family of all the countable sets from $\A(\eta)$. Then $(r_A)_{A\in\A_\omega(\eta)}$ is a retractional skeleton on $[0,\eta]$. 
Moreover, the subset of $[0,\eta]$ induced by the skeleton is
$$S(\eta)=\bigcup_{A\in\A_\omega(\eta)}r_A([0,\eta])=\{x\in[0,\eta]\setsep x\mbox{ is isolated or limit with countable cofinality}\}.$$
\end{prop}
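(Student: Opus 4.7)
The plan is to verify directly the four axioms of a retractional skeleton for $\A_\omega(\eta)$ ordered by inclusion, leaning on the preparatory Lemmas~\ref{L:retrakce-ord}--\ref{L:spojitost-ord}, and then identify the induced set by a short cofinality argument. To begin, $\A_\omega(\eta)$ is up-directed, since it is obviously closed under finite unions (both the defining conditions of $\A(\eta)$ and countability are preserved). Property~(i) is immediate: $r_A([0,\eta])=A$ is a countable compact Hausdorff space, hence metrizable. Property~(ii), the compatibility $r_A\circ r_B=r_B\circ r_A=r_A$ for $A\subset B$, is exactly Lemma~\ref{L:kompatibilita-ord}.

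For property~(iii), given an increasing sequence $(A_n)$ in $\A_\omega(\eta)$, I set $A=\overline{\bigcup_n A_n}$. Lemma~\ref{L:spojitost-ord} immediately supplies $A\in\A(\eta)$ together with the convergence $r_A(x)=\lim_n r_{A_n}(x)$. The point requiring verification is that $A$ is countable, and more generally that the closure of any countable subset $B\subset[0,\eta]$ is countable: enumerating $B$ in its natural well-ordering as $(b_\alpha)_{\alpha<\lambda}$ with $\lambda<\omega_1$, the closure of $B$ consists of $B$ together with the ordinals $\sup\{b_\alpha\setsep\alpha<\mu\}$ for limit $\mu\le\lambda$, which form a countable set. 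Since any $B\in\A_\omega(\eta)$ containing all the $A_n$ is closed and hence contains $A$, the set $A$ is the supremum of $(A_n)$ in $\A_\omega(\eta)$.

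For property~(iv), fix $x\in[0,\eta]$ and a neighborhood $U$ of $x$. If $x$ is isolated (including $x=0$), then $x\in I(\eta)\cup\{0\}$ and $A_0=\{0,x\}\in\A_\omega(\eta)$ witnesses $r_A(x)=x\in U$ for every $A\supset A_0$. If $x$ is a limit ordinal, pick $\beta<x$ with $(\beta,x]\subset U$ and choose any $\gamma\in I(\eta)$ with $\beta<\gamma<x$ (such $\gamma$ exists because an ordinal strictly between $\beta$ and $x$ always has a successor strictly less than $x$). Then $A_0=\{0,\gamma\}\in\A_\omega(\eta)$, and for every $A\supset A_0$ in $\A_\omega(\eta)$ we have $\gamma\le r_A(x)\le x$, so $r_A(x)\in(\beta,x]\subset U$.

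It remains to identify the induced subset $\bigcup_{A\in\A_\omega(\eta)}r_A([0,\eta])=\bigcup_{A\in\A_\omega(\eta)}A$ with $S(\eta)$. For the inclusion $\supset$: if $x$ is isolated then $\{0,x\}\in\A_\omega(\eta)$; if $x$ is a limit ordinal of countable cofinality, pick successor ordinals $x_n\nearrow x$ and let $A=\{0,x\}\cup\{x_n\setsep n\in\en\}$, which is closed, countable, contains $0$, and has $I(\eta)\cap A$ dense in $A$, so $A\in\A_\omega(\eta)$ and $x\in A$. For the converse inclusion, suppose $x$ is a limit ordinal with $x\in A$ for some $A\in\A_\omega(\eta)$; since $x\notin I(\eta)$ but $I(\eta)\cap A$ is dense in $A$, the set $I(\eta)\cap A\cap[0,x)$ must be cofinal in $[0,x)$, forcing $\cf(x)\le\card A\le\aleph_0$. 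The main obstacle is this last cofinality argument combined with the countability of closures in step~(iii); everything else is routine given the preparatory lemmas.
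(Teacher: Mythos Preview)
Your proof is correct and follows essentially the same approach as the paper. The only notable difference is in property~(iv): the paper applies Lemma~\ref{L:spojitost-ord} directly to $\A'=\A_\omega(\eta)$ (observing that $\bigcup\A_\omega(\eta)\supset I(\eta)$ is dense, so the limit retraction is the identity), whereas you give a direct neighborhood argument; both are equally routine.
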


\begin{proof}
It is clear that $\A_\omega(\eta)$ is closed to taking finite unions, so it is up-directed by inclusion. Each $r_A$ is a continuous retraction by Lemma~\ref{L:retrakce-ord}. Let us prove the properties (i)--(iv) of a retractional skeleton.

We have $r_A([0,\eta])=A$, which is a countable compact, hence metrizable. This proves the property (i). The property (ii) follows from Lemma~\ref{L:kompatibilita-ord}, the property (iii) from Lemma~\ref{L:spojitost-ord} (using the fact that the closure of a countable set of ordinals is countable). The property (iv) follows from Lemma~\ref{L:spojitost-ord} applied to $\A'=\A_\omega(\eta)$ as clearly $\bigcup\A_\omega(\eta)$ is dense in $[0,\eta]$ (it contains all the isolated ordinals).

Finally, the subset induced by the skeleton is
$$S(\eta)=\bigcup_{A\in\A_\omega(\eta)}r_A([0,\eta])=\bigcup\A_\omega(\eta).$$
Then $S(\eta)$ contains no ordinal of uncountable cofinality. Indeed suppose that there is some $A\in\A_\omega(\eta)$ containing some $x$ of uncountable cofinality.
Since $A$ is countable, $x$ is an isolated point of $A$, so by the definition of $\A(\eta)$ it must be an isolated ordinal, which is a contradiction.

Conversely, if $x\in[0,\eta]$ is an isolated ordinal, then
$\{0,x\}\in\A_\omega(\eta)$, hence $x\in S(\eta)$. Finally, assume that $x$ is a limit ordinal of countable cofinality. Then there is a strictly increasing sequence $(x_n)$ of ordinals with supremum $x$. Then
$$\{0,x\}\cup\{x_n+1\setsep n\in\en\}\in\A_\omega(\eta),$$
hence $x\in S(\eta)$.
\end{proof}

Let us continue by investigation of the associated projections on $\C([0,\eta])$.
For $A\in\A(\eta)$ we define the projection $P_A$ on $\C([0,\eta])$
by
$$P_A(f)=f\circ r_A,\quad f\in\C([0,\eta]).$$
By Lemma~\ref{L:compact retrakce} we know that it is a norm-one projection.

\begin{prop}\label{P:ord-ps} \ 
\begin{itemize}
\item[(a)] $(P_A)_{A\in\A_\omega(\eta)}$ is a $1$-projectional skeleton on $\C([0,\eta])$. The respective induced subspace of the dual is
$$D(\eta)=\{\mu\in\M([0,\eta])\setsep \mu(\{x\})=0\mbox{ for each }x\in[0,\eta]\mbox{ with uncountable cofinality}\}.$$
\item[(b)] Let $\A'\subset\A_\omega(\eta)$ be up-directed. Then the projection $P_{\A'}$ defined by \eqref{eq:P_A def} coincides with the projection $P_{\overline{\bigcup\A'}}$ defined above.
\end{itemize}
\end{prop}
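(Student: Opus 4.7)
The plan for part (a) is to invoke Proposition~\ref{P:rs->ps}(a) applied to the retractional skeleton $(r_A)_{A\in\A_\omega(\eta)}$ from Proposition~\ref{P:ordinal-rs}. This immediately gives that $(P_A)_{A\in\A_\omega(\eta)}$ is a $1$-projectional skeleton on $\C([0,\eta])$ whose induced subspace is
$$\widetilde D(\eta)=\{\mu\in\M([0,\eta])\setsep \spt\mu\mbox{ is a separable subset of }S(\eta)\},$$
where $S(\eta)$ is the set of ordinals in $[0,\eta]$ that are either isolated or limit of countable cofinality. So the only real work is to check that $\widetilde D(\eta)=D(\eta)$.

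For this I would exploit the fact that $[0,\eta]$ is scattered, hence every Radon measure on it is purely atomic: one can write $\mu=\sum_{n}a_n\delta_{x_n}$ with $\{x_n\}$ countable and $\sum|a_n|=\norm\mu$, so $\spt\mu=\overline{\{x_n\setsep a_n\ne0\}}$ is always separable (the set of atoms being dense in it). Thus the content of $\mu\in\widetilde D(\eta)$ is just $\spt\mu\subset S(\eta)$. The elementary observation that for any countable set $C\subset[0,\eta]$ each point of $\overline C\setminus C$ is a limit of a sequence from $C$, hence of countable cofinality, shows that $\overline C\subset S(\eta)$ if and only if $C\subset S(\eta)$. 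Applying this to the atom set of $\mu$ yields that $\spt\mu\subset S(\eta)$ is equivalent to $\mu$ having no atom at any point of uncountable cofinality, which is exactly $\mu\in D(\eta)$.

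For part (b), let $B=\overline{\bigcup\A'}$. By Lemma~\ref{L:spojitost-ord} one has $B\in\A(\eta)$ and $r_B(x)=\lim_{A\in\A'}r_A(x)$ for every $x\in[0,\eta]$, while Lemma~\ref{L:kompatibilita-ord} supplies the compatibility $r_A\circ r_{A'}=r_{A'}\circ r_A=r_A$ whenever $A\subset A'$ in $\A'$. This puts us exactly in the setting of Lemma~\ref{L:compact-projekce}, which upgrades the pointwise convergence of retractions to strong operator convergence of the induced projections: $P_A f\to P_B f$ in norm for every $f\in\C([0,\eta])$. Since by \eqref{eq:P_A def} the projection $P_{\A'}$ is defined as the pointwise (hence norm) limit $\lim_{A\in\A'}P_A f$, this forces $P_{\A'}=P_B=P_{\overline{\bigcup\A'}}$.

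The main obstacle is quite mild and lies solely in part (a) — namely, in verifying that scatteredness reduces the abstract support condition coming from Proposition~\ref{P:rs->ps} to the concrete atomic condition defining $D(\eta)$; everything else is a direct application of the machinery already developed.
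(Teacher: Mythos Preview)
Your proof is correct and follows essentially the same route as the paper: part~(a) is deduced from Proposition~\ref{P:ordinal-rs} together with Proposition~\ref{P:rs->ps}, and part~(b) from Lemma~\ref{L:spojitost-ord} combined with Lemma~\ref{L:compact-projekce}. The only difference is that the paper regards the identification of the induced subspace from Proposition~\ref{P:rs->ps}(a) with the atomic description $D(\eta)$ as immediate, whereas you spell out the scatteredness argument showing that for a countable atom set $C\subset S(\eta)$ one has $\overline{C}\subset S(\eta)$; this is a useful clarification but not a departure in approach.
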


\begin{proof}
The assertion (a) follows immediately from Proposition~\ref{P:ordinal-rs} and Proposition~\ref{P:rs->ps}; the assertion 
(b) follows from Lemma~\ref{L:spojitost-ord} and Lemma~\ref{L:compact-projekce}.
\end{proof}

Next we are going to characterize ordinals $\eta$ for which $\C([0,\eta])$ is $1$-Plichko. This is not a new result (see the comments in the proof) but we wish to provide a proof using Theorem~\ref{T:komutativni}. To this end we first need to characterize $\sigma(\C([0,\eta]),D(\eta))$-continuity of the projection $P_A^*$. This is done in the following easy lemma.

\begin{lemma}\label{L:ord-sigma(X,D) spoj}
Let $A\in\A(\eta)$. Then the projection $P_A^*$ is $\sigma(\C([0,\eta]),D(\eta))$-to-$\sigma(\C([0,\eta]),D(\eta))$ continuous if and only if 
$$\forall x\in A: (x<\eta \ \&\ \cf(x)\ge\omega_1) \Rightarrow x+1\in A.$$
\end{lemma}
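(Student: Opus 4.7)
The plan is to apply Proposition~\ref{P:sigma(X,D)-spojitost} together with the concrete description of $P_A^*$ given by Lemma~\ref{L:compact retrakce}(c), reducing the statement to a direct computation with atomic measures and fibers of $r_A$.

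First I would invoke the equivalence $(1)\Leftrightarrow(2)$ of Proposition~\ref{P:sigma(X,D)-spojitost}, which rewrites the desired $\sigma(\C([0,\eta]),D(\eta))$-continuity of $P_A$ as the invariance $P_A^*(D(\eta))\subset D(\eta)$. Lemma~\ref{L:compact retrakce}(c) then identifies $P_A^*$ as push-forward of measures, so that for every $\mu\in\M([0,\eta])$ and every $x\in[0,\eta]$,
$$P_A^*\mu(\{x\})=\mu(r_A^{-1}(\{x\})).$$
Since $[0,\eta]$ is scattered, every $\mu\in\M([0,\eta])$ is purely atomic, so $P_A^*\mu\in D(\eta)$ exactly when the displayed value vanishes for every $x$ with $\cf(x)\ge\omega_1$.

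Next I would describe the fiber $r_A^{-1}(\{x\})$ for $x\in A$. Since any set of ordinals is well-ordered, either $x=\max A$ (in which case $r_A^{-1}(\{x\})=[x,\eta]$), or there is an immediate $A$-successor $y=\min(A\cap(x,\eta])$ of $x$ and $r_A^{-1}(\{x\})=[x,y)$. The crucial observation is that for $x<\eta$, the point $x+1$ lies in $r_A^{-1}(\{x\})$ if and only if $x+1\notin A$; thus the question of whether $\mu(r_A^{-1}(\{x\}))$ can be forced nonzero by some $\mu\in D(\eta)$ reduces to whether $x+1\in A$.

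Finally, I would carry out both directions. For the forward implication, assuming the displayed condition, for $x\in A$ with $x<\eta$ and $\cf(x)\ge\omega_1$ the hypothesis $x+1\in A$ forces $r_A^{-1}(\{x\})=\{x\}$, so $P_A^*\mu(\{x\})=\mu(\{x\})=0$ for $\mu\in D(\eta)$; the cases $x\notin A$ and $x=\eta$ are immediate. For the converse, given $x\in A$ with $x<\eta$, $\cf(x)\ge\omega_1$ and $x+1\notin A$, the isolated ordinal $x+1$ sits in $r_A^{-1}(\{x\})$, and the Dirac measure $\delta_{x+1}\in D(\eta)$ then satisfies $P_A^*\delta_{x+1}(\{x\})=1$, violating the required invariance. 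The computation is essentially routine once the two cited results are invoked; the only subtlety is the case analysis for the fiber $r_A^{-1}(\{x\})$, but all boundary cases lie outside the scope $x<\eta$ of the condition, so the argument stays uniform.
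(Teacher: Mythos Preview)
Your proof is correct and follows essentially the same route as the paper: both reduce via Proposition~\ref{P:sigma(X,D)-spojitost} to the invariance $P_A^*(D(\eta))\subset D(\eta)$, then exploit that $P_A^*$ is pushforward by $r_A$ and test with Dirac measures. The paper phrases the intermediate step as the equivalent condition $r_A(S(\eta))\subset S(\eta)$, while you analyze the fibers $r_A^{-1}(\{x\})$ directly; these are dual formulations of the same computation. One minor point: the paper also cites Proposition~\ref{P:ord-ps}(b) to justify that Proposition~\ref{P:sigma(X,D)-spojitost} applies to $P_A$ for a general $A\in\A(\eta)$ (not just $A\in\A_\omega(\eta)$), though the $(1)\Leftrightarrow(2)$ equivalence you use holds for any bounded linear map by elementary duality, so your argument stands as written.
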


\begin{proof}
Using Proposition~\ref{P:ord-ps}(b) and Proposition~\ref{P:sigma(X,D)-spojitost} we see that $P_A^*$ is  $\sigma(\C([0,\eta])$-to-$\sigma(\C([0,\eta])$ continuous if and only if $P_A^*(D(\eta))\subset D(\eta)$. This is in turn equivalent to the inclusion $r_A(S(\eta))\subset S(\eta)$. Indeed, if $P_A^*(D(\eta))\subset D(\eta)$,
then, in particular, $P_A^*(\delta_x)\in D(\eta)$ for each $x\in S(\eta)$. Since
$P_A^*(\delta_x)=\delta_{r_A(x)}$, necessarily $r_A(x)\in S(\eta)$. Conversely, assume that $r_A(S(\eta))\subset S(\eta)$. Let $\mu\in D$. Then
$\mu$ is supported by a countable set $C\subset S(\eta)$. Since clearly $P_A^*(\mu)=r_A(\mu)$ is supported by $r_A(C)$, which is a countable subset of $S(\eta)$,
we deduce that $P_A^*(\mu)\in D(\eta)$.

Finally, it follows from the definition of $r_A$ and from the description of $S(\eta)$ that the inclusion $r_A(S(\eta))\subset S(\eta)$ is equivalent to the condition given in the statement.
\end{proof}

Now we are ready to present the promised characterization of $1$-Plichko spaces of the form $\C([0,\eta])$. 

\begin{thm} The following assertions are equivalent.
\begin{enumerate}
\item $\C([0,\eta])$ is $1$-Plichko.
\item $D(\eta)$ is a $\Sigma$-subspace.
\item $\eta<\omega_2$.
\end{enumerate}
\end{thm}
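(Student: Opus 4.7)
The plan is structured around the three implications $(1)\Leftrightarrow(2)$, $(3)\Rightarrow(2)$, and $(2)\Rightarrow(3)$.

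For $(1)\Leftrightarrow(2)$, I would invoke uniqueness of induced subspaces. By Proposition~\ref{P:ord-ps}(a), $D(\eta)$ is a $1$-norming, weak$^*$-countably closed subspace induced by a $1$-projectional skeleton on $\C([0,\eta])$. If $D(\eta)$ is already a $\Sigma$-subspace, then $\C([0,\eta])$ is $1$-Plichko by definition. Conversely, assuming $\C([0,\eta])$ is $1$-Plichko, any witnessing $1$-norming $\Sigma$-subspace $E$ is itself induced by a commutative $1$-projectional skeleton (Theorem~\ref{T:char sigma subspace}). Two subspaces both induced by projectional skeletons must coincide by Lemma~\ref{L:tightness}(b), hence $D(\eta)=E$ is a $\Sigma$-subspace.

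For $(3)\Rightarrow(2)$, suppose $\eta<\omega_2$. I would show that $[0,\eta]$ is a Valdivia compact with dense $\Sigma$-subset $S(\eta)$, then transfer via Proposition~\ref{P:rs->ps}(b). The key point is that for $\eta<\omega_2$, every point $x<\eta$ satisfies $\cf(x)\le\omega_1$, so the points of uncountable cofinality all have cofinality exactly $\omega_1$, and there are at most $\omega_1$ of them. The Valdivia embedding $h:[0,\eta]\to\{0,1\}^\Lambda$ is built in pieces: for each point $\alpha$ of cofinality $\omega_1$, use a family of clopen-indicator coordinates supported on intervals $[\beta+1,\alpha]$ for $\beta<\alpha$ that collectively give $\alpha$ an uncountable support while each point below $\alpha$ collects only countably many active coordinates; standard successor-indicator coordinates handle the rest. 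This produces a homeomorphic embedding with $S(\eta)=h^{-1}(\Sigma(\Lambda))$, i.e.\ $[0,\eta]$ is Valdivia. The commutative retractional skeleton inducing $S(\eta)$ (\cite[Theorem 6.1]{kubis-michalewski}) then yields, via Proposition~\ref{P:rs->ps}(b), a commutative $1$-projectional skeleton on $\C([0,\eta])$ whose induced subspace coincides with $D(\eta)$ by Lemma~\ref{L:tightness}(b), so $D(\eta)$ is a $\Sigma$-subspace.

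For $(2)\Rightarrow(3)$, I argue by contraposition: if $\eta\ge\omega_2$, then $D(\eta)$ is not a $\Sigma$-subspace. Assuming the contrary, Proposition~\ref{P:rs->ps}(c) delivers a commutative retractional skeleton on $[0,\eta]$ inducing $S(\eta)$, so $[0,\eta]$ is Valdivia. The retraction $x\mapsto\min(x,\omega_2)$ of $[0,\eta]$ onto $[0,\omega_2]$ transports the commutative retractional skeleton forward, so it suffices to contradict the assumption that $[0,\omega_2]$ is Valdivia with dense $\Sigma$-subset $S(\omega_2)$. The set $E=\{\alpha<\omega_2:\cf(\alpha)=\omega_1\}$ is stationary in $\omega_2$ and disjoint from $S(\omega_2)$; in any purported embedding $h:[0,\omega_2]\to\er^\Lambda$ with $S(\omega_2)=h^{-1}(\Sigma(\Lambda))$, each $h(\alpha)$ for $\alpha\in E$ has uncountable support $T_\alpha\subset\Lambda$. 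A Fodor-style pressing-down argument on the $T_\alpha$ (choosing in a regressive way a coordinate witnessing uncountable support on a cofinal $\omega_1$-sequence approaching $\alpha$ from $S(\omega_2)$) extracts a stationary $E'\subset E$ on which the supports share a common coordinate $\gamma_0$; continuity of $h$ along an approaching sequence together with the injective-separation property of $h$ then yields the required contradiction.

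The main obstacle is the combinatorial pressing-down step in $(2)\Rightarrow(3)$, where the cardinal threshold $\omega_2$ enters decisively through the stationarity of the cofinality-$\omega_1$ ordinals in $\omega_2$; the remaining steps are largely an assembly of Propositions~\ref{P:rs->ps} and~\ref{P:ord-ps} with the uniqueness provided by Lemma~\ref{L:tightness}(b).
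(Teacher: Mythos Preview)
Your implication $(1)\Rightarrow(2)$ has a genuine gap. You invoke Lemma~\ref{L:tightness}(b) to conclude that any two subspaces induced by projectional skeletons coincide, but that lemma requires the intersection $D_1\cap D_2$ to be norming, which you never verify. And this hypothesis is essential: induced subspaces are \emph{not} unique in general (see Remark~\ref{remark-Asplund}(3), where $\ell^1(K)^*$ admits many distinct $1$-norming $\Sigma$-subspaces). The paper closes this gap by observing that any $1$-norming $\Sigma$-subspace $D'$ must contain $\delta_x$ for every isolated $x\in[0,\eta]$ (apply the norming condition to $\chi_{\{x\}}$ and use weak$^*$ countable compactness of $D'\cap B_{X^*}$), so that $D(\eta)\cap D'$ is $1$-norming and Lemma~\ref{L:tightness}(b) applies.

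For $(2)\Leftrightarrow(3)$ your route is quite different from the paper's intended one. You reduce to the topological statement that $[0,\eta]$ is Valdivia with $\Sigma$-subset $S(\eta)$ if and only if $\eta<\omega_2$; this is essentially the classical path the paper cites to \cite{ja-survey}. The paper's own proof, by contrast, is designed to showcase Theorem~\ref{T:komutativni}: for $(3)\Rightarrow(2)$ it simply exhibits a linearly ordered (hence commuting) cofinal $\sigma$-closed subfamily of $\A_\omega(\eta)$ via a bijection $\xi:I(\omega_1)\to I(\eta)$; for $(2)\Rightarrow(3)$ it shows that for $\eta\ge\omega_2$, inside \emph{any} cofinal $\sigma$-closed $\A'\subset\A_\omega(\eta)$ one can build an increasing $\omega_1$-chain whose union $A$ has $\max(A\cap[0,\omega_2))$ of uncountable cofinality, so $P_A$ fails the continuity criterion of Lemma~\ref{L:ord-sigma(X,D) spoj} and Theorem~\ref{T:komutativni} rules out $D(\eta)$ being a $\Sigma$-subspace. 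This avoids any direct combinatorics on embeddings into $\er^\Lambda$. Your Fodor sketch, as written, is too vague to stand on its own: the pressing-down lemma applies to regressive ordinal-valued functions on stationary subsets of $\omega_2$, whereas your ``coordinates'' $\gamma\in\Lambda$ are not ordinals, and you do not explain how the regressive function is actually defined or why a common coordinate on a stationary set produces a contradiction with continuity and injectivity of $h$.
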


\begin{proof}
The implication (2)$\Rightarrow$(1) follows immediately from the respective definitions. 
The implication (1)$\Rightarrow$(3) follows from  \cite[Theorem 5.3 and Example 1.10(ii)]{ja-survey}. The implication (3)$\Rightarrow$(2) follows from \cite[Proposition 3.7(ii) and Proposition 5.1]{ja-survey} using moreover \cite[Lemmata 1.6 and 1.7]{ja-survey}. So the proof is complete. However, we will give an alternative proof using Theorem~\ref{T:komutativni}.

\smallskip

(1)$\Rightarrow$(2) Suppose $\C([0,\eta])$ is $1$-Plichko. Let $D'$ be a $1$-norming $\Sigma$-subspace. Since $D'$ is $1$-norming and $D'\cap B_{\C([0,\eta])^*}$ is weak$^*$-countably compact, we have
$$\norm{f}=\max\{\abs{x^*(f)}\setsep x^*\in D',\norm{x^*}\le 1\}.$$
So, applying to characteristic functions of isolated points, we deduce that $D'$ contains $\delta_x$ for any isolated point $x\in [0,\eta]$. So, $D(\eta)\cap D'$ is $1$-norming, thus $D(\eta)=D'$ (by Lemma~\ref{L:tightness}(b)).

(2)$\Rightarrow$(3) We will use Theorem~\ref{T:komutativni}. Assume that $\eta\ge\omega_2$ and let $\A'\subset\A_\omega(\eta)$ be a cofinal $\sigma$-closed subset. For any $\alpha<\omega_1$ let us choose some $A_\alpha\in A'$ by the following procedure.
\begin{itemize}
\item[(i)] Let $A_0\in\A'$ be arbitrary.
\item[(ii)] Assume that $\alpha<\omega_1$ and $A_\alpha$ has been chosen. Find $A_{\alpha+1}\in \A'$ such that 
$A_{\alpha+1}\supset A_\alpha\cup\{\max(A_\alpha\cap[0,\omega_2))+1\}$. This is possible as $\A'$ is cofinal.
\item[(iii)] Assume that $\lambda<\omega_1$ is limit and $A_\alpha$, $\alpha<\lambda$, have been chosen. Set
$$A_\lambda=\overline{\bigcup_{\alpha<\lambda}A_\alpha}.$$
Then $A_\lambda\in\A'$, as $\A'$ is $\sigma$-closed. 
\end{itemize}
Then 
$\A_0=\{A_\alpha\setsep \alpha<\omega_1\}$ is a directed subset of $\A'$
(in fact, it is linearly ordered). Set $A=\overline{\bigcup\A_0}$. Then 
$\max(A\cap[0,\omega_2))$ has uncountable cofinality, thus $P_A^*$ is not 
$\sigma(\C([0,\eta]),D(\eta))$-to-$\sigma(\C([0,\eta]),D(\eta))$ continuous by Lemma~\ref{L:ord-sigma(X,D) spoj}. Hence, $D(\eta)$ is not a $\Sigma$-subspace by Theorem~\ref{T:komutativni}.

(3)$\Rightarrow$(2)  We will use again Theorem~\ref{T:komutativni}. If $\eta\le\omega_1$, then the family $\A_\omega$ itself witnesses that $D$ is a $\Sigma$-subspace (using Theorem~\ref{T:komutativni}). If $\eta>\omega_1$, then the whole family does not work, we need to restrict to a cofinal $\sigma$-closed subfamily. 

To this end fix a bijection $\xi: I(\omega_1)\to I(\eta)$ such that $\xi(0)=0$ and set
$$A_\alpha=\overline{\xi(I(\omega_1)\cap [0,\alpha])},\qquad \alpha<\omega_1.$$
Then $(A_\alpha)_{\alpha<\omega_1}$ is a strictly increasing transfinite sequence in $\A_\omega$. Moreover, the family $\{A_\alpha\setsep\alpha<\omega_1\}$ is clearly a cofinal $\sigma$-closed subset of $\A_\omega$. Since it is linearly ordered, the respective projections commute, hence the assertion (2) of Theorem~\ref{T:komutativni} is fulfilled.

(Note that the validity of the assertion (3) of Theorem~\ref{T:komutativni}
is in this case also obvious due to the characterization from Lemma~\ref{L:ord-sigma(X,D) spoj}.)
\end{proof}

We continue by describing a canonical Markushevich basis and a projectional generator on $\C([0,\eta])$.

\begin{prop}\label{P:ord Mbaze ocasni} \ 
\begin{itemize}
\item[(a)] The family $(g_\alpha,\nu_\alpha)_{\alpha\in I(\eta)}$, where
$$\nu_\alpha=\chi_{[\alpha,\eta]}, \qquad \nu_\alpha=\begin{cases}
\delta_0&\mbox{ if }\alpha=0,\\
\delta_{\alpha}-\delta_{\alpha-1} &\mbox{ if }\alpha\ge 1,
\end{cases}
$$
is a strong Markushevich basis of $\C([0,\eta])$.
\item[(b)] Let $H=\{0\}\cup\{g_\alpha\setsep \alpha\in I(\eta)\}$. Then $H$ is $\sigma(\C([0,\eta]),D(\eta))$-closed. The accumulation points of $H$ are elements $g_\alpha$, where $\alpha\in I(\eta)$ is such that $\alpha-1$ has uncountable cofinality; and, in case $\eta$ is a limit ordinal of uncountable cofinality, the zero function.
\item[(c)] $P_A(H)\subset H$ for each $A\in\A_\omega$. More precisely, for any $A\in\A_\omega$ we have
$$ P_A(0)=0, \quad P_A(g_\alpha)=\begin{cases}
g_\alpha & \mbox{ if }\alpha\in A,\\
g_\beta & \mbox{ if }[\alpha,\eta]\cap A\ne\emptyset\mbox{ and }\beta=\min[\alpha,\eta]\cap A,\\
0 & \mbox{ if }[\alpha,\eta]\cap A=\emptyset.
\end{cases}$$
\item[(d)] For any $x\in[0,\eta]$ limit of countable cofinality choose a countable set $C(x)\subset I(\eta)$  with supremum $x$. For any $\mu\in D(\eta)$ define
$$\begin{aligned}
\Phi(\mu)=\{g_x\setsep & x\in I(\eta), \mu(\{x\})\ne0\}\\&\cup\bigcup
\big\{ \{g_\alpha\setsep \alpha\in C(x)\}\setsep
x\in[0,\eta]\mbox{ limit with countable cofinality},\mu(\{x\})\ne0\big\}.\end{aligned}$$
Then $(D(\eta),\Phi)$ is a projectional generator.
\end{itemize}
\end{prop}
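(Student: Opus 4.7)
Parts (a) and (c) reduce to direct computations. For (c), substitute into $P_A(g_\alpha)=g_\alpha\circ r_A=\chi_{[\alpha,\eta]}\circ r_A$ and use $r_A(x)=\max([0,x]\cap A)$; the three formulas correspond to the cases $\alpha\in A$, $\alpha\notin A$ but $[\alpha,\eta]\cap A\neq\emptyset$, and $[\alpha,\eta]\cap A=\emptyset$. For (a), biorthogonality $\nu_\alpha(g_\beta)=\delta_{\alpha\beta}$ is immediate by calculation; linear density follows because the span contains every characteristic function of a clopen interval $[\alpha,\beta)$ with isolated $\alpha,\beta$, and these uniformly approximate any $f\in\C([0,\eta])$; separation of $\{\nu_\alpha\}$ follows by transfinite induction on the least $\alpha$ with $f(\alpha)\neq 0$. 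For strongness, one first shows that $S(f):=\{\alpha\in I(\eta)\setsep\nu_\alpha(f)\neq 0\}$ is countable for every $f\in\C([0,\eta])$ (transfinite induction on $\eta$ via the classical fact that continuous functions on ordinals of uncountable cofinality are eventually constant). Setting $A_0:=\overline{S(f)\cup\{0\}}\in\A_\omega(\eta)$, a transfinite induction on $x\in[0,\eta]$ (using that $\nu_\beta(f)=0$ for isolated $\beta$ in $(r_{A_0}(x),x]$) gives $f=P_{A_0}f$, placing $f$ in the separable subspace $P_{A_0}\C([0,\eta])$; approximation there by clopen step functions expresses $f$ as a norm-limit of finite combinations of $g_\alpha$, $\alpha\in S(f)$.

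For (b), any $\sigma(X,D(\eta))$-limit of a net of $g_{\beta_i}$'s must be $\{0,1\}$-valued on the dense set $S(\eta)$ (test against $\delta_x\in D(\eta)$ for $x\in S(\eta)$), hence on all of $[0,\eta]$ by continuity, hence the characteristic function of a clopen subset of $[0,\eta]$; testing further with $\delta_x$ and $\nu_\gamma$ restricts this clopen set to $[\alpha,\eta]$ for some $\alpha\in I(\eta)$ or to $\emptyset$, giving that $H$ is $\sigma(X,D(\eta))$-closed. The accumulation-point identification is a case analysis on the cofinality of $\alpha-1$: $g_\alpha$ (for $\alpha=\gamma+1\in I(\eta)\setminus\{0\}$) is isolated in $H$ whenever $\gamma$ is $0$, isolated, or a limit of countable cofinality (where $\delta_\gamma$ or $\nu_\alpha$ separates $g_\alpha$ from all other elements of $H$), and is an accumulation point precisely when $\gamma$ has uncountable cofinality, witnessed by $g_{\gamma_i+1}\to g_\alpha$ for isolated $\gamma_i\nearrow\gamma$, using $\mu(\{\gamma\})=0$ for every $\mu\in D(\eta)$. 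Analogously, $0$ is an accumulation point of $H$ iff $\eta$ is a limit of uncountable cofinality.

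For (d), $\Phi$ is countably-valued by construction, so the substance is the condition $\Phi(A)^\perp\cap\wscl{A}=\{0\}$ whenever $\overline A\subset D(\eta)$ is linear (WLOG $A$ itself is linear). Assume $\nu\in\Phi(A)^\perp\cap\wscl{A}$ is nonzero; since $[0,\eta]$ is scattered, $\nu$ is atomic with countable support $Y$. The plan is: for each isolated atom $y\in Y$, weak$^*$-approximation of $\nu$ against $\chi_{\{y\}}\in\C([0,\eta])$ yields $\mu\in A$ with $\mu(\{y\})\neq 0$, so $g_y\in\Phi(\mu)\subset\Phi(A)$ and hence $\nu(g_y)=\nu([y,\eta])=0$. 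For each countable-cofinality limit atom $y\in Y$, a simultaneous weak$^*$-approximation using several $g_\alpha$ (for $\alpha\in C(y)$ close to $y$) and $g_{y+1}$, combined with the identity $\mu(\{y\})=\mu([\alpha,\eta])-\mu([\alpha,y))-\mu(g_{y+1})$ and approximations at adjacent $\alpha$'s controlling $\mu([\alpha,y))$, produces $\mu\in A$ with $\mu(\{y\})\neq 0$, placing $\{g_\alpha\setsep\alpha\in C(y)\}\subset\Phi(A)$. Finally, pick $y_0\in Y$ with $\nu(\{y_0\})\neq 0$ that is maximal in $\spt\nu$ in an appropriate sense (take $\max Y$ if it exists in $Y$; otherwise use a Cantor--Bendixson recursion on the countable set $Y$ to select an atom isolated in $Y$ from above and reduce to a clopen neighborhood in which $y_0$ is the unique atom of $\nu$): the hypothesis $\nu\in\Phi(A)^\perp$ then forces $\nu([y_0,\eta])=\nu(\{y_0\})=0$, a contradiction.

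The main obstacle is the possible presence of atoms of $\nu$ at limits of uncountable cofinality, which can occur because $\wscl{A}$ need not lie in $D(\eta)$. Such atoms $y$ satisfy $\mu(\{y\})=0$ for every $\mu\in A\subset D(\eta)$, so the direct extraction of $\mu(\{y\})\neq 0$ described above fails. The remedy: approximants $\mu\in A$ close to $\nu$ in weak$^*$ must have isolated atoms $z\in\spt\mu$ clustering at $y$ from below, since $\mu([\alpha,\eta])\approx\nu([\alpha,\eta])\neq 0$ for $\alpha$ close to $y$, while $\mu((y,\eta])\approx 0$ when $y$ is chosen maximal in $\spt\nu$; this produces arbitrarily many $g_z\in\Phi(\mu)\subset\Phi(A)$ with $z\nearrow y$. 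Combining $\nu(g_z)=0$ with $\nu(g_z)\to\nu(\{y\})\neq 0$ then yields the required contradiction, completing the verification of the projectional generator condition.
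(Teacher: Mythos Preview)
Your treatments of (a), (b), (c) are acceptable. For (a) you take a genuinely different route than the paper: you first prove that $S(f)=\{\alpha\in I(\eta):\nu_\alpha(f)\neq0\}$ is countable, set $A_0=\overline{S(f)\cup\{0\}}\in\A_\omega(\eta)$, and deduce $f=P_{A_0}f$ by transfinite induction. The paper instead uses a direct Hahn--Banach argument (take $\mu\in M^\perp$ and show $\mu(f)=0$ via a well-ordering argument on the set $J=\{\alpha:\int_{[0,\alpha]}f\,d\mu=0,\ \mu((\alpha,\eta])=0\}$), never needing the countability of $S(f)$. Both work; yours is perhaps more intuitive, while the paper's argument generalizes more directly to the tree case.

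Your argument for (d), however, has a genuine gap in what you call step~2. You claim that for a countable-cofinality limit atom $y$ of $\nu$, one can produce $\mu\in A$ with $\mu(\{y\})\neq0$ by simultaneous weak$^*$-approximation on finitely many $g_\alpha$ and $g_{y+1}$, with ``approximations at adjacent $\alpha$'s controlling $\mu([\alpha,y))$''. This is false: the function $\chi_{[\alpha,y)}$ is not continuous when $y$ is a limit ordinal, so no finite collection of weak$^*$ conditions controls $\mu([\alpha,y))$. Concretely, take $\eta=\omega$, $A=\operatorname{span}_{\qe}\{\delta_n:n<\omega\}\subset D(\eta)$, and $\nu=\delta_\omega\in\wscl A$; then $y=\omega$ is a countable-cofinality limit atom of $\nu$, yet every $\mu\in A$ satisfies $\mu(\{\omega\})=0$. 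Your desired conclusion $\{g_\alpha:\alpha\in C(y)\}\subset\Phi(A)$ can also fail (take $C(\omega)=\{2n:n\ge0\}$ and $A=\operatorname{span}_\qe\{\delta_{2n+1}:n\ge0\}$), so the gap is not merely in the justification.

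The paper sidesteps this entirely by never analyzing the atoms of $\nu$. Instead: pick $\alpha\in I(\eta)$ with $\nu([\alpha,\eta])\neq0$, set $\gamma=\min\{\beta\in[\alpha,\eta]:\nu([\alpha,\beta])\neq0\}$, and use that $[\alpha,\gamma]$ is clopen to find $\mu\in A$ with $\mu([\alpha,\gamma])\neq0$. Now let $\zeta$ be the \emph{smallest atom of $\mu$} in $[\alpha,\gamma]$. Since $\mu\in A\subset D(\eta)$, automatically $\zeta\in S(\eta)$, so $\zeta$ is either isolated (giving $g_\zeta\in\Phi(A)$, hence $\nu([\zeta,\eta])=0$) or a countable-cofinality limit (giving $g_\beta\in\Phi(A)$ for $\beta\in C(\zeta)\cap(\alpha,\zeta)$, hence $\nu([\beta,\eta])=0$). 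Either way one obtains $\nu([\alpha,\zeta-1])\neq0$ (resp.\ $\nu([\alpha,\beta-1])\neq0$) with $\zeta-1<\gamma$ (resp.\ $\beta-1<\gamma$), contradicting the minimality of $\gamma$. The point is that the passage to an element of $A$ happens \emph{before} one looks at individual atoms, so the uncountable-cofinality obstacle you identified never arises and no ``maximal atom'' selection is needed.
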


\begin{proof}
(a) Let us check the properties defining a Markushevich basis. The first property -- biorthogonality -- is obvious.  Let us continue by the third property, i.e., let us show that the family $(\nu_\alpha)_{\alpha\in I(\eta)}$ separates points of $\C([0,\eta])$. To this end fix any $f\in\C([0,\eta])$ and assume $\nu_\alpha(f)=0$  for each $\alpha\in I(\eta)$. Then $f(0)=0$ and $f(\alpha)=f(\alpha+1)$ for any $\alpha<\eta$. So, using moreover continuity of $f$, we see that $f$ is a constant zero function.

The second property follows from the stronger property defining strong Markushevich bases. Fix $f\in\C([0,\eta])$. Set
$$A=\{\alpha\in I(\eta)\setsep \nu_\alpha(f)\ne0 \}, \ 
M=\{g_\alpha \setsep \alpha\in A\}.$$
The proof will be complete if we show that $f\in\clin A$. This will be done by the Hahn-Banach theorem. Let $\mu\in\M([0,\eta])$ be such that $\mu|_M=0$. We will show that $\mu(f)=0$ as well.
If $f=0$, the conclusion is obvious. So, suppose that $f$ is not the constant  zero function and set 
$$J=\left\{\alpha\in[0,\eta]\setsep \int_{[0,\alpha]}f\di\mu=0\ \&\ \mu((\alpha,\eta])=0\right\}.$$
Our aim is to show that $\eta\in J$. The first step is to show that $J\ne\emptyset$ as
$$\beta=\max\{\alpha\in[0,\eta]\setsep f|_{[0,\alpha]}=f(0)\}\in J.$$
$\beta$ is well defined as $f$ is continuous. If $f(0)=0$, then $\beta<\eta$ and $\beta+1\in A$, so $g_{\beta+1}\in M$. Thus $\beta\in J$.

If $f(0)\ne0$, then $0\in A$, so $\mu([0,\eta])=0$. If $\beta=\eta$, this implies that $\beta\in J$.
If $\beta<\eta$, then $\beta+1\in A$, so $\mu([\beta+1,\eta])=0$. It follows that also $\mu([0,\beta])=0$. Since $f$ is constant on $[0,\beta]$, clearly $\beta\in J$.

Next we set $\gamma=\sup J$. We distinguish two cases:

\begin{itemize}
\item[(i)] $\gamma\in J$. If $\gamma=\eta$, the proof is finished. So, assume $\gamma<\eta$.
Set 
$$\beta=\max\{\alpha\in(\gamma,\eta]\setsep f|_{[\gamma+1,\beta]}=f(\gamma+1)\}.$$
$\beta$ is well defined as $f$ is continuous. Then $\mu((\beta,\eta])=0$ (this is trivial if $\beta=\eta$, in case $\beta<\eta$ we use that $\beta+1\in A$). Hence $\mu((\gamma,\beta])=0$, so clearly $\beta\in J$, a contradiction with the choice of $\gamma$. 

\item[(ii)] $\gamma\notin J$. Then $\gamma$ is limit. Moreover,
$$\int_{[0,\gamma)}f=0\mbox{ and }\mu([\gamma,\eta])=0.$$
Indeed, if $\gamma$ has countable cofinality, it follows from the $\sigma$-additivity of $\mu$. Assume $\gamma$ has uncountable cofinality. Since $\mu$ is countably supported, there is $\gamma'<\gamma$ such that $\mu|_{(\gamma',\gamma)}=0$. Since there is $\alpha\in J\cap (\gamma',\gamma)$, the above equalities follow.

Now we can proceed similarly as in the case (i). Set 
$$\beta=\max\{\alpha\in[\gamma,\eta]\setsep f|_{[\gamma,\beta]}=f(\gamma)\}.$$
Again, $\mu((\beta,\eta])=0$, hence $\mu([\gamma,\beta])=0$. It follows that $\beta\in J$, a contradiction.
\end{itemize}

(b) Before proceeding to the proof of (b) we will prove a lemma comparing two topologies on $\C([0,\eta])$. It will be used also later. Recall that $\tau_p(S(\eta))$ is the topology of pointwise convergence on $S(\eta)$.

\begin{lemma}\label{L:ord dve topologie}
The topology $\tau_p(S(\eta))$ is weaker than the topology $\sigma(\C([0,\eta]),D(\eta))$. On bounded subsets of $\C([0,\eta])$ the two topologies coincide. Moreover, the norm-closed unit ball of $\C([0,\eta])$ is $\tau_p(S(\eta))$-closed.
\end{lemma}

\begin{proof}
The first statement is obvious and the third one follows from the density of $S(\eta)$ in $[0,\eta]$. To show the second statement fix
a bounded set $M\subset\C([0,\eta])$ and consider the identity mapping
$$\iota:(M,\tau_p(S(\eta)))\to (M,\sigma(\C([0,\eta]),D(\eta)).$$
To show tat $\iota$ is continuous, it is enough to show that $\mu\circ\iota$ is $\tau_p(S)$-continuous for each $\mu\in D(\eta)$. So, fix $\mu\in D(\eta)$. Then $\mu=\sum_{j=1}^\infty c_j\delta_{x_j}$ for some points $x_j\in S(\eta)$ and a summable sequence of scalars $(c_j)$. Then for each $f\in M$ we have
$$\mu\circ\iota (f)=\sum_{j=1}^\infty c_j f(x_j).$$
Now, the partial sums of this series are $\tau_p(S(\eta))$-continuous and the series converges uniformly on $M$ (as $M$ is bounded and the sequence $(c_j)$ is summable). This completes the proof.
\end{proof}

Let us continue the proof of the assertion (b). By Lemma~\ref{L:ord dve topologie} we can work with the topology $\tau_p(S(\eta))$. First observe that $H$ is $\tau_p(S)$-closed. Indeed, a continuous function belongs to $H$ if and only if it is non-decreasing and attains only values $0$ and $1$. Since $S(\eta)$ is dense in $[0,\eta]$ we have
$$H=\{f\in\C([0,\eta])\setsep \forall x\in S(\eta): f(x)\in\{0,1\}\ \&\ 
\forall x,y\in S(\eta): x<y\Rightarrow f(x)\le f(y)\}.$$
This formula obviously implies that $H$ is $\tau_p(S(\eta))$-closed.

Further, let us describe the accumulation points of $H$. 
The function $g_0$ is an isolated point of $H$ as 
$$\{g_0\}=H\cap \{f\in\C([0,1])\setsep f(0)\ne0\}.$$
If $\alpha$ is an isolated ordinal or a limit ordinal with countable cofinality, then both $\alpha$ and $\alpha+1$ belong to $S(\eta)$ and
$$U=\{f\in\C([0,\eta])\setsep f(\alpha)\ne 1\ \&\ f(\alpha+1)\ne0\}$$
is a  $\tau_p(S(\eta))$-open set with $U\cap H=\{g_{\alpha+1}\}$, so $g_{\alpha+1}$ is an isolated point of $H$.

If $\alpha$ has uncountable cofinality, then $g_{\alpha+1}$ is the
$\tau_p(S)$-limit of the net $(g_\beta)_{\beta\in I(\eta)\cap[0,\alpha]}$.
 
Finally, it is clear that $0$ is an isolated point of $H$ if and only if $\eta\in S(\eta)$. 

(c) This assertion is obvious.

(d) Since any measure on $[0,\eta]$ is countably supported, $\Phi$ is clearly countably valued. Further, take any $M\subset D$ and assume that there is some
$$\mu\in\clin^{w^*} M\cap \Phi(M)^\perp\setminus\{0\}.$$
Since $\mu\ne 0$, by (a) there is some $\alpha\in I(\eta)$ such that $\mu(g_\alpha)\ne0$, i.e., $\mu([\alpha,\eta])\ne0$. Fix such $\alpha$ and let 
$$\gamma=\min\{\beta\in[\alpha,\eta]\setsep \mu([\alpha,\beta]\ne0\}.$$
Since $[\alpha,\gamma]$ is a clopen set and $\mu\in\clin^{w^*}M$, there is some $\nu\in M$ with $\nu([\alpha,\gamma])\ne0$. Further, let $\zeta\in[\alpha,\gamma]$ be the smallest element with $\nu(\{\zeta\})\ne0$. Since $\nu\in D$, necessarily $\zeta\in S(\eta)$. There are two possibilities.

Either $\zeta\in I(\eta)$. Then $g_\zeta\in \Phi(M)$ and so $\mu([\zeta,\eta])=0$. It follows that $\zeta>\alpha$ and, moreover, $\mu([\alpha,\zeta-1])=\mu([\alpha,\eta])-\mu([\zeta,\eta])\ne0$. Since $\zeta-1\in[\alpha,\gamma)$, it is a contradiction with the choice of $\gamma$.

Or $\zeta$ is limit. Then $\zeta>\alpha$ and so there is some $\beta\in(\alpha,\zeta)\cap C(\zeta)$.
It follows that $g_\beta\in \Phi(M)$, hence $\mu([\beta,\eta])=0$. Similarly as in the first case we deduce $\mu([\alpha,\beta-1])\ne0$, a contradiction with the choice of $\gamma$.

This completes the proof.
\end{proof}

Note that the Markushevich bases from the preceding proposition satisfies the properties from Theorem~\ref{T:char sigma subspace} if and only if $\eta\le\omega_1$. However, $D$ is a $\Sigma$-subspace if and only if $\eta<\omega_2$. Therefore for $\eta\in(\omega_1,\omega_2)$ there should be another Markushevich basis satisfying the respective properties. In fact, the Markushevich basis from the previous proposition coincides with the Markushevich basis canonically constructed using Theorem~\ref{T:M-basis} if and only if $\eta$ is a cardinal number. Next we are going to describe such a  Markushevich basis for general $\eta$.

Assume $\eta$ is infinite and let $\kappa=\card\eta$.  Note that $\card I(\eta)=\card I(\kappa)=\kappa$. Fix a bijection $\xi:I(\kappa)\to I(\eta)$ satisfying $\xi(0)=0$ and set
$$A_\alpha=\overline{\xi([0,\alpha]\cap I(\kappa))},\quad \alpha\le\kappa.$$
Then $A_0=\{0\}$, $A_\kappa=[0,\eta]$, the family $(A_\alpha)_{\alpha\le\kappa}$ is strictly increasing, and $A_\lambda=\overline{\bigcup_{\alpha<\lambda} A_\alpha}$ if $\lambda\le\kappa$ is limit. Moreover, $A_\alpha\in\A$ for each $\alpha\le\kappa$. Therefore this family generates a PRI on $\C([0,\eta])$ and we will describe the Markushevich basis provided by this PRI.

To define the basis We will use the following two auxiliary functions:
$$\left.\begin{array}{l}
z(x,\alpha)=\max\{y\in [x,\eta]\setsep [x,y]\cap A_\alpha=\emptyset\},\\
p(x,\alpha)=\max A_\alpha\cap [0,x),
\end{array}\right\} \mbox{ for }
 \alpha<\kappa,\ x\in[0,\eta]\setminus A_\alpha.$$
The following lemma summarizes basic properties of this function.

\begin{lemma}\label{L:z(x,alpha)} \ 
\begin{itemize}
\item[(i)] The functions $z(\cdot,\cdot)$ and $p(\cdot,\cdot)$ are well defined.
\item[(ii)] If $\alpha<\kappa$, $x\in[0,\eta]\setminus A_\alpha$ and $y\in(p(x,\alpha),z(x,\alpha)]$, then $z(y,\alpha)=z(x,\alpha)$ and $p(y,\alpha)=p(x,\alpha)$.
\item[(iii)] If $\alpha<\kappa$, $x\in[0,\eta]\setminus A_\alpha$ and $\beta\le\alpha$, then $z(x,\beta)\ge z(x,\alpha)$ and $p(x,\beta)\le p(x,\alpha)$.
\end{itemize}
\end{lemma}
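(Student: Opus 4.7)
My plan is to settle the three parts in order, leaning on the defining property of the family $\A(\eta)$ (from Lemma~\ref{L:retrakce-ord}) and on standard features of the order topology on ordinals. Throughout, I would first record the obvious fact that $\beta\le\alpha$ implies $A_\beta\subset A_\alpha$ (closures of nested subsets of $I(\kappa)$), and that $x\neq 0$ whenever $x\notin A_\alpha$ since $0\in A_\alpha$.

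For (i), existence of $p(x,\alpha)$ is the routine argument: the set $A_\alpha\cap[0,x)$ is nonempty (it contains $0$), its supremum $p^\ast$ lies in $A_\alpha$ by closedness, and $p^\ast=x$ would force $x\in A_\alpha$, a contradiction; hence $p^\ast<x$ is the required maximum. The nontrivial half is $z(x,\alpha)$. If $A_\alpha\cap(x,\eta]=\emptyset$, then $z(x,\alpha)=\eta$. Otherwise let $y^\ast=\min A_\alpha\cap(x,\eta]$ (minimum of a nonempty set of ordinals). By construction $(p(x,\alpha),y^\ast)\cap A_\alpha=\emptyset$, so $y^\ast$ is left-isolated in $A_\alpha$. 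In the order topology on ordinals every nonzero ordinal has a neighborhood base of intervals of the form $(a,y^\ast+1)$ (there is no right-accumulation), so left-isolation already makes $y^\ast$ an isolated point of $A_\alpha$. Since $A_\alpha\in\A(\eta)$, Lemma~\ref{L:retrakce-ord} forces $y^\ast\in I(\eta)$, i.e.\ $y^\ast=w+1$ for some $w$. Then $\{y\in[x,\eta]\setsep [x,y]\cap A_\alpha=\emptyset\}=[x,w]$, whose maximum $w$ is $z(x,\alpha)$.

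For (ii), the combination of the ``left gap'' $(p(x,\alpha),x)\cap A_\alpha=\emptyset$ with the ``right gap'' $[x,z(x,\alpha)]\cap A_\alpha=\emptyset$ gives the key equality $(p(x,\alpha),z(x,\alpha)]\cap A_\alpha=\emptyset$. In particular any $y$ in this interval satisfies $y\notin A_\alpha$, so both quantities $p(y,\alpha)$ and $z(y,\alpha)$ are defined by~(i). Now $A_\alpha\cap[0,y)=A_\alpha\cap[0,p(x,\alpha)]$, whose maximum is $p(x,\alpha)$; and (running through the two cases of~(i) again) the minimum of $A_\alpha\cap(y,\eta]$ coincides with $y^\ast=\min A_\alpha\cap(x,\eta]$ when the latter exists, and both sets are empty otherwise. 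Hence $z(y,\alpha)=z(x,\alpha)$.

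Part (iii) is immediate from $A_\beta\subset A_\alpha$: the inclusion transfers to $A_\beta\cap[0,x)\subset A_\alpha\cap[0,x)$, giving $p(x,\beta)\le p(x,\alpha)$ as maxima of nested sets; and $[x,z(x,\alpha)]\cap A_\beta\subset[x,z(x,\alpha)]\cap A_\alpha=\emptyset$ exhibits $z(x,\alpha)$ as an element of the set defining $z(x,\beta)$, so $z(x,\beta)\ge z(x,\alpha)$. The only genuine obstacle is the well-definedness of $z(x,\alpha)$ in~(i), which is not purely formal: it uses the defining property of $\A(\eta)$ together with the fact that the order topology on $[0,\eta]$ has no right-accumulation at non-limit steps. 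Everything else is bookkeeping with the two gaps around $x$.
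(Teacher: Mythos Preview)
Your proof is correct and follows essentially the same line as the paper's. The paper's argument for (i) is nearly identical---identify $y^\ast=\min A_\alpha\cap[x,\eta]$, observe that the gap $[x,y^\ast)$ makes $y^\ast$ isolated in $A_\alpha$, invoke $A_\alpha\in\A(\eta)$ to conclude $y^\ast$ is a successor ordinal, and set $z(x,\alpha)=y^\ast-1$---while the paper declares (ii) ``obvious'' and proves (iii) by the same inclusion argument you give. Your only additions are making explicit the reason left-isolation in $A_\alpha$ forces full isolation (no right-accumulation in the order topology on ordinals), which the paper uses tacitly, and spelling out the bookkeeping for (ii); one cosmetic slip is that the nested sets whose closures give $A_\beta\subset A_\alpha$ are subsets of $I(\eta)$ (the range of $\xi$) rather than $I(\kappa)$, but this is immaterial.
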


\begin{proof}
(i) Fix $\alpha<\kappa$ and $x\in [0,\eta]\setminus A_\alpha$.
$A_\alpha\cap[0,x]$ is nonempty (it contains $0$) and it is a closed set. So, it has a maximum. Since $x\notin A_\alpha$, the maximum is strictly less than $x$. This shows that $p(x,\alpha)$ is well defined.

Let us continue by looking at $z(x,\alpha)$.
If $A_\alpha\cap[x,\eta]=\emptyset$, then $z(x,\alpha)=\eta$. If $A_\alpha\cap [x,\eta]\ne\emptyset$, let $y=\min A_\alpha\cap [x,\eta]$. Since $x\notin A_\alpha$, we deduce $y>x$. So, $[x,y)\cap A_\alpha=\emptyset$, hence $y$ is an isolated point of $A_\alpha$. Since $A_\alpha\in\A$, $y$ is an isolated ordinal and $z(x,\alpha)=y-1$.

The assertion (ii) is obvious.

(iii) Since $A_\beta\subset A_\alpha$, necessarily
$$A_\beta\cap [x,z(x,\alpha)]\subset A_\alpha\cap [x,z(x,\alpha)]=\emptyset,$$
hence by the definition of the function $z(\cdot,\cdot)$ we deduce  $z(x,\beta)\ge z(x,\alpha)$. Moreover, $p(x,\beta)\le p(x,\alpha)$ as 
$$A_\beta\cap[0,x)\subset A_\alpha\cap[0,x).$$
\end{proof}

Using the functions $z(\cdot,\cdot)$ and $p(\cdot,\cdot)$ we are going to define a Markushevich basis $(f_\alpha,\mu_\alpha)_{\alpha\in I(\kappa)}$.
$$\begin{aligned} f_0&=1, \quad \mu_0=\delta_0, \\
f_\alpha&=\chi_{[\xi(\alpha),z(\xi(\alpha),\alpha-1)]},%\mbox{ for }\alpha\ge1,\\
\quad \mu_\alpha=\delta_{\xi(\alpha)}
 - \delta_{p(\xi(\alpha),\alpha-1)}\mbox{\quad for }\alpha\ge1.\end{aligned}
$$
It follows from Lemma~\ref{L:z(x,alpha)}(i) that $(f_\alpha,\mu_\alpha)$ is a well-defined family in $\C([0,\eta])\times \M([0,\eta])$. Further properties are given in the following proposition.

\begin{prop}\label{P:ord-Mbaze z PRI} \ 
\begin{itemize}
\item[(a)] The above-defined family $(f_\alpha,\mu_\alpha)_{\alpha\in I(\kappa)}$ is a strong Markushevich basis of $\C([0,\eta])$. 
\item[(b)] If $\eta<\omega_2$, then the Markushevich basis from (a) satisfies the properties from the assertions (6)--(8) of Theorem~\ref{T:char sigma subspace}, as well as the properties described in Remark~\ref{Remark Sigma}(d,e).
\item[(c)] Set $H=\{f_\alpha\setsep \alpha\in I(\kappa)\}\cup\{0\}$. Then the following assertions are fulfilled:
\begin{itemize}
\item[(c-i)] If $\eta$ or $\kappa$ has uncountable cofinality, then $0$ is a $\sigma(\C([0,\eta]),D)$-accumulation point of $H$.
\item[(c-ii)] The nonzero $\sigma(\C([0,\eta]),D)$-accumulation points of $H$ are exactly the elements $\chi_{[a+1,d]}$, where $a<\eta$ has uncountable cofinality,
$$\begin{aligned}
\theta&=\liminf_{x\in[0,a)\cap I(\eta)} \xi^{-1}(x)<\xi^{-1}(a+1),\\
d&=\min\{z(a,\gamma)\setsep \gamma<\theta \}.\end{aligned}$$
\item[(c-iii)] The $\sigma(\C([0,\eta]),D)$-closure of $H$ equals to the union of $H$ and the set of accumulation points described in (c-ii).
\end{itemize}
\end{itemize}
\end{prop}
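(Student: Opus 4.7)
For part (a), my approach is to identify the family $(f_\alpha,\mu_\alpha)_{\alpha\in I(\kappa)}$ as the canonical Markushevich basis manufactured from the PRI $(P_{A_\alpha})_{\alpha\le\kappa}$. Since $\xi(\alpha) \in I(\eta)$ is isolated in $[0,\eta]$, the set $A_{\alpha-1}\cup\{\xi(\alpha)\}$ is already closed, so $A_\alpha = A_{\alpha-1} \cup \{\xi(\alpha)\}$ for every $\alpha \in I(\kappa)$ with $\alpha \geq 1$. Using the explicit formula in Lemma~\ref{L:retrakce-ord} together with the definitions of $p(\cdot,\cdot)$ and $z(\cdot,\cdot)$, a straightforward case analysis shows that the increment $P_{A_\alpha}-P_{A_{\alpha-1}}$ is the rank-one projection $g\mapsto\mu_\alpha(g)\,f_\alpha$, and similarly $P_{A_0}g = \mu_0(g)\,f_0$. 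The four defining properties of a strong Markushevich basis then follow by transfinite induction along the PRI, exactly as in the construction of \cite[Proposition~6.2.4]{fabian-kniha}: biorthogonality and separation come from the formula for the increments; linear density from $P_{A_\kappa}=I$ combined with property (iii) of the PRI at limit ordinals; and strongness from the fact that $P_{A_\alpha}f$ lies in the closed linear span of $\{f_\beta:\beta\le\alpha,\ \mu_\beta(f)\ne0\}$ at each step.

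For part (b), when $\eta < \omega_2$ we have $\kappa = \card\eta \leq \omega_1$, so each $A_\alpha$ with $\alpha<\kappa$ is countable and the family $(A_\alpha)_{\alpha<\kappa}$ is linearly ordered by inclusion. Lemma~\ref{L:kompatibilita-ord} then forces the associated projections to pairwise commute, and $(P_{A_\alpha})_{\alpha<\kappa}$ is a commutative $1$-projectional skeleton on $\C([0,\eta])$ with induced subspace $D(\eta)$. I verify that the Markushevich basis from (a) satisfies condition $(\square)$ of Remark~\ref{Remark Sigma}(e) for this skeleton: if $\alpha\le\beta$, then $f_\alpha\in P_{A_\beta}\C([0,\eta])$ (by part (a)), so $P_{A_\beta}f_\alpha = f_\alpha$; if $\alpha>\beta$, then $r_{A_\beta}$ takes values in $A_\beta \subset A_{\alpha-1}$, which is disjoint from the support $[\xi(\alpha),z(\xi(\alpha),\alpha-1)]$ of $f_\alpha$ by construction, so $P_{A_\beta}f_\alpha = 0$. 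By Remark~\ref{Remark Sigma}(e) the characterization (6) of Theorem~\ref{T:char sigma subspace} holds with $D = D(\eta)$; Remark~\ref{Remark Sigma}(d) then yields (7) and (8) with the same basis.

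For part (c), Lemma~\ref{L:ord dve topologie} reduces the whole analysis to the topology $\tau_p(S(\eta))$, since $H\subset B_{\C([0,\eta])}$. Each $f_\alpha$ is a characteristic function of a closed interval, and any $\tau_p(S(\eta))$-limit from $H$ is a continuous $\{0,1\}$-valued function whose value on the dense set $S(\eta)$ determines it, hence itself the characteristic function of some closed interval (possibly empty). For (c-i), if $\kappa$ has uncountable cofinality, I would take a strictly increasing net $(\alpha_\gamma)$ cofinal in $I(\kappa)$; each $x\in S(\eta)$ is either isolated (so $x=\xi(\alpha_0)$ for some $\alpha_0$, hence $x\in A_{\alpha_\gamma-1}$ eventually) or a limit with countable cofinality (so a countable cofinal sequence of isolated ordinals in $[0,x)$ is enumerated by $\xi$ within a countable initial segment of $I(\kappa)$, forcing $x\in A_{\alpha_\gamma-1}$ eventually), and in either case $f_{\alpha_\gamma}(x)=0$ eventually; when $\eta$ has uncountable cofinality one argues analogously by choosing $\alpha$'s with $\xi(\alpha)$ cofinal in $\eta$. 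For (c-ii), an accumulating net $(f_{\alpha_\gamma})$ with $\xi(\alpha_\gamma)$ stabilizing at $a+1$ from inside produces the limit $\chi_{[a+1,d]}$, where $d$ is the eventual value of $z(\xi(\alpha_\gamma),\alpha_\gamma-1)$; stabilization at $a+1$ forces $a$ to have uncountable cofinality (otherwise a countable cofinal sequence below $a$ would be eventually enumerated and push $a+1$ outside the gap), and using Lemma~\ref{L:z(x,alpha)}(ii) one obtains $d = \min\{z(a,\gamma):\gamma<\theta\}$ with $\theta = \liminf_{x\in[0,a)\cap I(\eta)} \xi^{-1}(x)$ capturing the latest stage at which $a+1$ still has $a$ as its predecessor in $A_\gamma$. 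Statement (c-iii) then collects (c-i), (c-ii) and the observation that no other continuous function can arise as such a limit.

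The main obstacle is (c-ii): extracting the precise combinatorial formulas for $\theta$ and $d$ from an arbitrary accumulating net requires a careful diagonal argument tracking how $\xi^{-1}$ interacts with the nested structure $(A_\alpha)$, in parallel with the cofinality assumption on $a$. Elsewhere, the PRI framework of Section~2 and the pointwise-topology reduction from Lemma~\ref{L:ord dve topologie} handle matters essentially mechanically.
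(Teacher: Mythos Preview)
Your arguments for (a), (b), and (c-i) are correct and match the paper's approach closely. For (a) you identify the basis as the one manufactured from the PRI via \cite[Proposition~6.2.4]{fabian-kniha}, exactly as the paper does; your increment computation $(P_{A_\alpha}-P_{A_{\alpha-1}})g=\mu_\alpha(g)f_\alpha$ is the same as the paper's. For (b) your route via condition $(\square)$ of Remark~\ref{Remark Sigma}(e) is a clean reformulation of what the paper does directly: the paper checks that $P_{A_\alpha}^*\mu(f_\beta)=0$ for $\alpha<\beta$, which is precisely the content of $(\square)$ in one direction, and the containment $D(\eta)\subset\{\Sigma\text{-subspace generated by }M\}$ follows. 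One small imprecision: the skeleton you use is $(P_{A_\gamma})_{\gamma<\kappa}$ indexed by all ordinals below $\kappa=\omega_1$, not just isolated ones, so your ``if $\alpha\le\beta$ / $\alpha>\beta$'' should allow $\beta$ to range over $[0,\kappa)$; your verification goes through unchanged since $\gamma<\alpha$ still gives $A_\gamma\subset A_{\alpha-1}$.

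For (c-ii) your sketch has a genuine gap, and the paper's strategy is different and more tractable. You propose to start from an arbitrary accumulating net and extract the parameters $a$, $\theta$, $d$; but you never justify why the left endpoints $\xi(\alpha_\gamma)$ must ``stabilize at $a+1$ from inside'' for some $a$ of uncountable cofinality, nor how the condition $\theta<\xi^{-1}(a+1)$ emerges, nor why other values of $b\ne d$ are excluded. The paper instead works forward from the target: it first confines all $\tau_p(S(\eta))$-limits to the explicit closed set $F$ of characteristic functions of clopen intervals, then for each candidate $\chi_{[a+1,b]}$ runs a three-way case split (Case~1: left endpoint $0$; Case~2: $a\in S(\eta)$; Case~3: $\cf(a)$ uncountable). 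In Case~3 it \emph{constructs} a specific net $(f_{\xi^{-1}(y_\gamma)})_{\gamma<\theta}$, where $y_\gamma\in(z_\gamma,a)\cap I(\eta)$ is the point realising $\min\{\xi^{-1}(x):x\in(z_\gamma,a)\cap I(\eta)\}$, shows this net $\tau_p(S(\eta))$-converges to $\chi_{[a+1,d]}$ when $\theta<\alpha$, and in every other subcase ($\theta>\alpha$, or $b<d$, or $b>d$) exhibits an explicit isolating neighborhood. This case analysis is what pins down the exact formula for $d$ and the inequality on $\theta$; your diagonal-extraction idea would have to reproduce all of this, and as written it does not.
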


\begin{proof}
(a) Observe that $(P_{A_\alpha})_{\alpha\le\kappa}$ is a PRI on $\C([0,\eta])$ (more precisely, it satisfies all the properties of a PRI except that $P_{A_0}$ is not the zero projection, but a one-dimensional projection -- but this difference does not affect the applications). We will show that the family
 $(f_\alpha,\mu_\alpha)_{\alpha\in I(\kappa)}$ is the Markushevich basis resulting from this PRI in the sense of \cite[Proposition 6.2.4]{fabian-kniha}.

To this end observe that $P_{A_0}\C([0,\eta])=\lin\{f_0\}$ and $(P_{A_{\alpha+1}}-P_{A_\alpha})\C([0,\eta])=\lin\{f_{\alpha+1}\}$ for $\alpha<\kappa$.
The first equality is obvious, as $P_{A_0}f$ it the constant function equal to $f(0)$ for each $f\in \C([0,\eta])$. To show the second case fix $\alpha<\kappa$. Since $A_{\alpha+1}=A_\alpha\cup\{\xi(\alpha+1)\}$, for any $f\in\C([0,\eta])$ we get
$$
\begin{aligned}
x\in[0,\xi(\alpha+1)) & \Rightarrow P_{A_{\alpha+1}}f(x)=P_{A_\alpha}f(x),\\
x\in[\xi(\alpha+1),\zeta(\xi(\alpha+1),\alpha)]&\Rightarrow P_{A_{\alpha+1}}f(x)=f(\xi(\alpha+1)), P_{A_\alpha}f(x)=f(p(\xi(\alpha+1),\alpha)), \\
x\in(\zeta(\xi(\alpha+1),\alpha),\eta] & \Rightarrow P_{A_{\alpha+1}}f(x)=P_{A_\alpha}f(x);
\end{aligned}
$$
hence
$$(P_{A_{\alpha+1}}-P_{A_\alpha})f=(f(\xi(\alpha+1)-f(p(\xi(\alpha+1),\alpha)) f_{\alpha+1}.$$
This completes the proof of the second case. Moreover, the computation shows that $(f_\alpha,\mu_\alpha)_{\alpha\in I(\kappa)}$ is exactly the Markushevich basis provided by \cite[Proposition 6.2.4]{fabian-kniha}. Since the bases of the respective one-dimensional spaces are strong, the resulting basis is also strong
(as remarked in \cite[Theorem 5.1]{HMVZ-biortogonal}).

(b) By Remark~\ref{Remark Sigma}(d) it is enough to show that the Markushevich basis satisfies the property from the assertion (6) of Theorem~\ref{T:char sigma subspace}. If $\eta<\omega_1$, then the Markushevich basis is countable and the statement is trivial. If $\omega_1\le\eta<\omega_2$, then $\kappa=\omega_1$, hence 
the family $\{A_\alpha\setsep\alpha<\omega_1\}$ is a $\sigma$-closed cofinal subset of $\A_\omega$. Thus
$$D=\bigcup_{\alpha<\omega_1}P_{A_\alpha}^*\M([0,\eta]).$$
Since for any $\alpha<\beta<\omega_1$ and any $\mu\in\M([0,\eta])$ we have
$$P_{A_\alpha}^*\mu(f_\beta)=\mu(P_{A_\alpha}f_\beta)=0,$$
we deduce that the $\Sigma$-subspace induced by $M=\{f_\alpha\setsep \alpha\in I(\omega_1)\}$ contains $D$, thus it is equal to $D$ .

(c) By Lemma~\ref{L:ord dve topologie} we may work with the topology $\tau_p(S)$. 
Set 
$$F=\{0\}\cup\{\chi_{[\xi(\alpha),y]}\setsep \alpha\in I(\kappa), y\in[\xi(\alpha),\eta]\}.$$
Then clearly $H\subset F$. Moreover, $F$ is $\tau_p(S(\eta))$-closed. Indeed, as $S(\eta)$ is dense in $[0,\eta]$, we have
$$ \begin{aligned}
 F=\Big\{f\in\C([0,\eta]\setsep (\forall x\in S(\eta): f(x)\in\{0,1\})\ \&\ \Big(\forall & x,y,z\in S(\eta), x<y<z : \\
& (f(x)=1\ \&\ f(y)=0 \Rightarrow f(z)=0)\  \\ & \&\ (f(y)=0\ \&\ f(z)=1\Rightarrow f(x)=0)\Big)
\Big\}.\end{aligned}
$$

Now let us analyze the $\tau_p(S(\eta))$-accumulation points of $H$.

We start by proving (c-i).
 Let $U$ be a $\tau_p(S(\eta))$-neighborhood of $0$. It follows that there is a finite set $C\subset S(\eta)$ such that 
$$\{f\in\C([0,\eta]\setsep f|_C=0\}\subset U.$$
If $\eta$ has uncountable cofinality, then $\max C<\eta$. So, we can find $x\in I(\eta)$ such that $x>\max C$. Then clearly $f_{\xi^{-1}(x)}\in U\cap (H\setminus\{0\})$.
Next assume that $\kappa$ has uncountable cofinality. For each $x\in C$ limit choose a countable set $B(x)\subset I(\eta)$ with supremum $x$. Then
$$\sup(\xi^{-1}(C\cap I(\eta))\cup \bigcup_{x\in C\setminus I(\eta)} \xi^{-1}(B(x)))<\kappa.$$
So, fix some $\alpha\in I(\kappa)$ strictly greater than the left-hand side. Then $C\subset A_{\alpha-1}$, hence $f_\alpha\in U\cap(H\setminus \{0\})$. This completes the proof of (c-i).

\smallskip

Let us continue by proving (c-ii). By the above any accumulation point of $H$ is a characteristic function of a clopen interval in $[0,\eta]$. We distinguish three cases of such intervals.

{\tt Case 1:} $f=\chi_{[0,x]}$, where $x\in[0,\eta]$. Then $f\in \overline{H}^{\tau_p(S(\eta))}$ if and only if $x=\eta$ (i.e., $f=1$). In this case $f$ is an isolated point of $H$. Indeed, $f\in U$ and $U\cap H=\{f_0\}$ if
$$U=\{g\in\C([0,\eta])\setsep g(0)\ne 0\}\cap H.$$

{\tt Case 2:} $f=\chi_{[a+1,b]}$, where $a\in S(\eta)$ and $b\ge a+1$. Then  $f\in \overline{H}^{\tau_p(S)}$ if and only if $f\in H$. In this case $f$ is an isolated point of $H$. Indeed, if we set
$$U=\{g\in\C([0,\eta])\setsep g(a)\ne 1,g(a+1)\ne0\},$$
then $f\in U$ and $U\cap H=\{f_{\xi^{-1}(a+1)}\}$.

{\tt Case 3:} $f=\chi_{[a+1,b]}$, where $a\in[0,\eta]$ is an ordinal with uncountable cofinality and $b\ge a+1$. Let us define $\theta$ and $d$ as in the statement of (c-ii) and
 set
$$\alpha=\xi^{-1}(a+1),\quad c=z(a+1,\alpha-1).$$
Note that the definition of liminf together with the fact that ordinals are well ordered yield
$$\theta=\sup_{x<a}\ \min_{y\in I(\eta)\cap(x,a)}\xi^{-1}(y).$$
Since $\xi^{-1}$ is one-to-one,  $\theta$ is  necessarily a limit ordinal.
It follows that the following construction can be performed.

For any $\gamma<\theta$ let $z_\gamma<a$ be the smallest ordinal such that
$$\min\{\xi^{-1}(x)\setsep x\in(z_\gamma,a)\cap I(\eta)\}\in(\gamma,\theta)$$
and let $y_\gamma\in(z_\gamma,a)\cap I(\eta)$ be the (unique) element where the minimum is attained. Observe that both assignments $\gamma\mapsto z_\gamma$ and $\gamma\mapsto y_\gamma$ are nondecreasing. Indeed, let $\gamma_1\le\gamma_2<\theta$.
Then
$$\min\{\xi^{-1}(x)\setsep x\in(z_{\gamma_2},a)\cap I(\eta)\}\in(\gamma_2,\theta)\subset(\gamma_1,\theta),$$
so $z_{\gamma_1}\le z_{\gamma_2}$ by the minimality of $z_{\gamma_1}$. Further, if $y_{\gamma_1}> z_{\gamma_2}$, then obviously $y_{\gamma_2}=y_{\gamma_1}$. If the converse inequality holds, then
$y_{\gamma_2}>z_{\gamma_2}\ge y_{\gamma_1}$.
We further get $\xi^{-1}(y_{\gamma_1})\le \xi^{-1}(y_{\gamma_2})$ (as $y_{\gamma_2}\in(z_{\gamma_1},a)$). The next thing to observe is that $\sup_{\gamma<\theta}z_\gamma=a$. Indeed, denote the supremum by $u$ and assume that $u<a$. Then
$$
\theta>\min\{\xi^{-1}(x)\setsep x\in(u,a)\cap I(\eta)\}
\ge \sup_{\gamma<\theta}\min\{\xi^{-1}(x)\setsep x\in(z_\gamma,a)\cap I(\eta)\}
\ge \sup_{\gamma<\theta}\gamma=\theta,$$
a contradiction.

We continue by looking at the net $(f_{\xi^{-1}(y_\gamma)})_{\gamma<\theta}$.
Recall that $f_{\xi^{-1}(y_\gamma)}=\chi_{[y_\gamma,d_\gamma]}$, where 
$$d_\gamma=z(y_\gamma,\xi^{-1}(y_\gamma)-1).$$
Since  
$$\min\{\xi^{-1}(x)\setsep x\in [y_\gamma,a)\cap I(\eta)\}=\xi^{-1}(y_\gamma),$$
we deduce $[y_\gamma,a)\cap A_{\xi^{-1}(y_\gamma)-1}=\emptyset$. Further, $A_{\xi^{-1}(y_\gamma)-1}\in \A$, hence $a\notin A_{\xi^{-1}(y_\gamma)-1}$ as well.
It follows that $d_\gamma\ge a$. Moreover, if $\gamma_1\le\gamma_2$, then by the above $\xi^{-1}(y_{\gamma_1})\le \xi^{-1}(y_{\gamma_2})$ and hence $d_{\gamma_1}\ge d_{\gamma_2}$. It follows that $(d_\gamma)$ is eventually constant. In fact, it is eventually equal to $d$ (by the definition of $d$).

Now we are ready to distinguish the following two cases (recall that $\theta$ is limit and $\alpha$ isolated, thus $\theta\ne\alpha$).

\begin{description}
\item[$\theta>\alpha$\ ] Given $\gamma\in(\alpha,\theta)$, we have $\xi^{-1}(y_\gamma)>\gamma>\alpha$, hence
$a+1=\xi(\alpha)\in A_{\alpha}\subset A_{\xi^{-1}(y_\gamma)-1}$, thus $d_\gamma=a$.  
It follows that $d=a$ and that the net $(f_{\xi^{-1}(y_\gamma)})_{\gamma<\theta}$  $\tau_p(S(\eta))$-converges to $0$.  

Further, $f\in\overline{H}^{\tau_p(S)}$ if and only if $f\in H$ (i.e., $b=c$) and in this case $f$ is an isolated point of $H$. 
Indeed, fix some $\gamma\in(\alpha,\theta)$. Then
$$U=\{g\in\C([0,\eta]\setsep g(a+1)\ne0,g(y_\gamma)\ne 1\}$$
is a $\tau_p(S(\eta))$-neighborhood of $f$ not containing $0$. It is enough to show that $U\cap H=\{f_\alpha\}$. So, assume $f_\beta\in U$. Then $f_\beta(a+1)=1$ and $f_\beta(y_\gamma)=0$. It follows that 
$$y_\gamma<\xi(\beta)\le a+1\le z(\xi(\beta),\beta-1).$$
If $\xi(\beta)=a+1$, then $\beta=\alpha$. Otherwise $\xi(\beta)\in(y(\gamma),a)$. It follows that 
$$\beta=\xi^{-1}(\xi(\beta))>\xi^{-1}(y(\gamma))>\gamma>\alpha.$$
So, $a+1=\xi(\alpha)\in A_{\beta-1}$, a contradiction.

\item[$\theta<\alpha$\ ] In this case, given $\gamma<\theta$, we have $d_\gamma\ge a+1$ and hence $d_\gamma\ge c$ as well. It follows $d\ge c$. Further, the net $(f_{\xi^{-1}(y_\gamma)})_{\gamma<\theta}$ $\tau_p(S(\eta))$-converges to $\chi_{[a+1,d]}$  (note that $a\notin S(\eta)$). So, $\chi_{[a+1,d]}$ is an accumulation point of $H$.

It remains to show that $f$ is not an accumulation point of $H$ provided $d\ne b$. We distinguish two cases:
\begin{description}
\item[$b<d$\ ] Let 
$$U=\{g\in\C([0,\eta]\setsep g(a+1)\ne0, g(b+1)\ne 1\}.$$
Then $U$ is a $\tau_p(S(\eta))$-neighborhood of $f$ not containing $0$. Assume that
$f_\beta\in U$ for some $\beta\in I(\kappa)$. Then $f_\beta(a+1)=1$ and $f_\eta(b+1)=0$.
Hence 
$$\xi(\beta)\le a+1\le z(\xi(\beta),\beta-1)<b+1.$$
If $\xi(\beta)=a+1$, then $\beta=\alpha$ (which can happen only if $b=c$). 

So, assume $\xi(\beta)<a$. We have
$$\beta-1<\min\{\xi^{-1}(x)\setsep x\in[\xi(\beta),a)\cap I(\eta)\}<\theta.$$
Indeed, the first inequality follows from the fact that $[\xi(\beta),a+1]\cap A_{\beta-1}=\emptyset$, the second one from the definition of $\theta$. It follows that $\beta<\theta$ (recall that $\theta$ is limit). But then
$$d\le z(\xi(\beta),\beta-1)\le b,$$
a contradiction.
Therefore, $U\cap H$ contains $f_\alpha$ if $f=f_\alpha$ and is empty otherwise.

\item[$b>d$\ ] Recall that $\theta<\alpha$ and $d\ge c$. Let
$$U=\{g\in\C([0,\eta]\setsep g(a+1)\ne0, g(d+1)\ne 0\}.$$
Then $U$ is a $\tau_p(S(\eta))$-neighborhood of $f$ not containing $0$. Assume that
$f_\beta\in U$ for some $\beta\in I(\kappa)$. Then $f_\beta(a+1)=f_\beta(c+1)=f_\beta(d+1)=1$.
 Hence
$$\xi(\beta)\le a+1\le c\le d<b<z(\xi(\beta),\beta-1).$$
 If $\xi(\beta)=a+1$, then $\beta=\alpha$, which cannot happen as $b>d\ge c$. 
So, $\xi(\beta)<a$. As in the previous case we can show that $\beta<\theta$. So, we can find $\gamma\in(\beta,\theta)$ such that $y_\gamma\in(\xi(\beta),a)$ and $d_\gamma=d$. Let us consider a smaller neighborhood
$$V=\{g\in U\setsep g(y_\gamma)\ne1\}.$$
We claim that $V\cap H=\emptyset$. Indeed, let $f_\zeta\in V$ for some $\zeta\in I(\eta)$. Then $\xi(\zeta)>y_\gamma$ and $\xi(d+1)=1$ and so

$$d+1\le z(\xi(\zeta),\zeta-1)=z(a+1,\zeta-1)\le z(a+1,\gamma-1)=d,$$
a contradiction.
\end{description}
\end{description}
This completes the proof of the assertion (c-ii). The assertion (c-iii) is then obvious.
 \end{proof}

The assertion (c) of the previous proposition indicates that the properties of a Markushevich basis
constructed from a PRI may depend on the concrete choice of PRI (i.e., on the choice of the mapping $\xi$). The next example provides a strong evidence of this dependence.

\begin{example}\label{E:Mbaze divna} Assume that $\eta$ is a cardinal number (i.e., $\kappa=\eta$).
\begin{itemize}
\item[(a)] If $\xi: I(\eta)\to I(\eta)$ is the identity mapping, then the Markushevich basis $(f_\alpha,\mu_\alpha)_{\alpha\in I(\eta)}$ coincides with the  Markushevich basis $(g_\alpha,\nu_\alpha)_{\alpha\in I(\eta)}$ from Proposition~\ref{P:ord Mbaze ocasni}.
\item[(b)] Assume moreover $\eta\ge\omega_2$. Define a bijection $\xi:I(\eta)\to I(\eta)$ by the formula
$$\xi(\alpha)=\begin{cases}
\lambda+2 & \mbox{ if $\alpha=\lambda+1$ where $\lambda$ has uncountable cofinality},\\
\lambda+1 & \mbox{ if $\alpha=\lambda+2$ where $\lambda$ has uncountable cofinality},\\
\alpha &\mbox{ otherwise}.
\end{cases}$$
Let $(f_\alpha,\mu_\alpha)_{\alpha\in I(\eta)}$ be the Markushevich basis defined above and let $H=\{f_\alpha\setsep \alpha\in I(\eta)\}\cup\{0\}$. 
Then all the nonzero elements of $H$ are $\sigma(\C([0,\eta]),D(\eta))$-isolated points of $H$, $H$ is not $\sigma(\C([0,\eta]),D(\eta))$-Lindel\"of and there is $A_0\in \A_\omega$ such that $P_A(H)\not\subset H$ whenever $A\in\A_\omega$ and $A\supset A_0$. 
\end{itemize}
\end{example}

\begin{proof} The assertion (a) is obvious. Let us prove the assertion (b).
Observe that in this case we have
$$f_\alpha=\begin{cases}
\chi_{[\lambda+2,\eta]} &\mbox{ if $\alpha=\lambda+1$ where $\lambda$ has uncountable cofinality}, \\
\chi_{\{\lambda+1\}}&\mbox{ if $\alpha=\lambda+2$ where $\lambda$ has uncountable cofinality}, \\
\chi_{[\alpha,\eta]} &\mbox{ for other }\alpha\in I(\eta).
\end{cases}$$
The fact that all the nonzero elements of $H$ are isolated points of $H$ follows from Proposition~\ref{P:ord-Mbaze z PRI}(c), but it can be easily seen directly. Indeed, 
if $\lambda<\eta$ has uncountable cofinality, set
$$\begin{aligned}
U_{\lambda+1}&=\{g\in \C([0,\eta])\setsep g(\lambda+1)\ne 1,g(\lambda+2)\ne0 \},\\
U_{\lambda+2}&=\{g\in \C([0,\eta])\setsep g(\lambda+1)\ne 0,g(\lambda+2)\ne1 \}.
\end{aligned}$$
Further, let
$$\begin{aligned}
U_0&=\{g\in\C([0,\eta]\setsep g(0)\ne0\},\\
U_\alpha&=\{g\in C([0,\eta])\setsep g(\alpha)\ne0,g(\alpha-1)\ne1\}\mbox{ for other }\alpha\in I(\eta).
\end{aligned}$$
Then, for any $\alpha\in I(\eta)$, $U_\alpha$ is a $\sigma(\C([0,\eta]),D(\eta))$-open neighborhood of $f_\alpha$ such that $U_\alpha\cap H=\{f_\alpha\}$.

Moreover, set
$$V=\{g\in\C([0,\eta])\setsep g(\omega_1+1)\ne 1\}.$$
Then $V$ is  a $\sigma(\C([0,\eta]),D)$-open neighborhood of $0$ and
$$V\cap H=\{0,f_{\omega_1+1}\}\cup\{f_\alpha\setsep \alpha\in I(\eta)\cap [\omega_1+3,\eta]\}.$$
It follows that
$$\{V\}\cup\{U_\alpha\setsep \alpha\in I(\eta)\}$$
is a  $\sigma(\C([0,\eta]),D(\eta))$-open cover of $H$ without a countable subcover,
hence $H$ is not Lindel\"of in the topology $\sigma(\C([0,\eta]),D(\eta))$.

Moreover, let $A\in\A_\omega$ be such that $\omega_1+1\in A$. Then $P_A(H)\not\subset H$. Indeed, let $\alpha=\max A\cap[0,\omega_1]$ Then $\alpha<\omega_1$. Further,
$P_A(f_{\alpha+1})=\chi_{[\omega_1+1,\eta]}\notin H$.
\end{proof}

The assertion (ii) of the previous example shows that the topological properties of a Markushevich basis constructed from a PRI can be very bad. Related problems are discussed in the last section.

\subsection{Continuous functions on trees}  

Further examples of Banach spaces with a non-commutative retractional skeleton are spaces
of continuous functions on certain trees equipped with the coarse wedge topology studied for example in \cite{somaglia-trees,somaglia-treti}. Let us start by recalling the basic setting.

A \emph{tree} is a partially ordered set $(T,\le)$ such that for any $t\in T$ the set $\{s\in T
\setsep s<t\}$ is well ordered. A tree $(T,\le)$ is called \emph{rooted} if it has a unique minimal element (called the \emph{root} of $T$ and usually denoted by $0$). $T$ is called \emph{chain complete} if any chain in $T$ (i.e., any totally ordered subset of $T$) has a supremum (i.e., the smallest upper bound). By a tree we will always mean a rooted chain-complete tree. 

Let $(T,\le)$ be a tree. For any $t\in T$ we set
$$\begin{aligned}
\widehat{t}&=\{s\in T\setsep s\le t\},\\
V_t&=\{s\in T\setsep s\ge t\},\\
\htt(t,T)&=\mbox{ the order type of }\{s\in T\setsep s<t\},\\
\cf(t)&=\cf(\htt(t,T)) \mbox{\quad (the cofinality of $t$)},\\
\ims(T)&=\{s\in T\setsep s>t \mbox{ and (the order interval) }(t,s)=\emptyset\}
\\&=\{s\in T\setsep s\ge t\ \&\ \htt(s,T)=\htt(t,T)+1\}\\& (=\mbox{the set of all the immediate successors of }t).
\end{aligned}$$
Further, for any ordinal $\alpha$ we denote 
$$\Lev_\alpha(T)=\{t\in T\setsep \htt(t,T)=\alpha\}$$
and the \emph{height} of $T$ is defined by
$$\htt(T)=\min\{\alpha\setsep \Lev_\alpha(T)=\emptyset\}.$$
We will further need the following two important subsets of $T$.
$$\begin{aligned}
I(T)&=\bigcup\{\Lev_\alpha(T)\setsep \alpha<\htt(T) \mbox{ isolated}\}
\\&=\{x\in T\setsep \htt(x,T)\mbox{ is an isolated ordinal}\},\\
S(T)&=\bigcup\{\Lev_\alpha(T)\setsep \alpha<\htt(T), \cf\alpha \mbox{ is at most countable}\}
\\&=\{x\in T\setsep\cf(x)\mbox{ is at most countable}\}.
\end{aligned}$$
The \emph{coarse wedge topology} on a tree $T$ is the topology on $T$ whose subbase is
is the family 
$$ V_t, T\setminus V_t,\quad t\in T, \htt(t,T)\mbox{ is an isolated ordinal}.$$
It is easy to check that a neighborhood basis of $t\in T$ is the family
$$W_t^F=V_t\setminus \bigcup_{u\in F}V_u, \quad F\subset\ims(t)\mbox{ finite}$$
in case $\htt(t,T)$ is an isolated ordinal; and the family
$$W_s^F=V_s\setminus \bigcup_{u\in F}V_u, \quad s<t, \htt(s,T) \mbox{ is an isolated ordinal}, F\subset\ims(t)\mbox{ finite}$$
in case $\htt(t,T)$ is a limit ordinal.

Any tree equipped with the coarse wedge topology is a compact Hausdorff space \cite[Corollary 3.5]{nyikos}. This topology is one of many topologies studied on trees, see \cite{nyikos}. It also coincides with the path topology considered in \cite[pp. 288--289]{todorcevic-handbook} or
\cite{todorcevic-aronszajn}. Let us explain it a bit. If $T$ is a tree (not necessarily rooted or chain complete), we can consider its path space, i.e., the set of all the initial totally ordered segments embedded as characteristic functions to the product space $\{0,1\}^T$. In this way we obtain a compact Hausdorff space. Moreover, it is easy to check that the class of path spaces of arbitrary trees canonically coincides with the class of rooted chain complete trees equipped with the coarse wedge topology.

Any ordinal segment $[0,\eta]$ is a special case of a tree with the coarse wedge topology.
Therefore the results of the previous section can be viewed as a special case of the results in the current section. We will see that the situation of trees is more complicated. Let us start by the following analogue of Lemma~\ref{L:retrakce-ord}.

\begin{lemma}\label{L:trees retrakce} Let $T$ be a tree equipped with a coarse wedge topology. Let $A\subset T$ be a closed set containing $0$. The following are equivalent.
\begin{enumerate}
\item The mapping $r_A:T\to T$ defined by
$$r_A(t)=\max(A\cap \widehat{t}),\quad t\in T,$$
is a continuous retraction of $T$ onto $A$.
\item For each $x\in A$ on a limit level (i.e., such that $\htt(x,T)$ is a limit ordinal) we have
$$x=\sup\{y\in A\setsep y<x\}.$$
\item For each $x\in A$ we have
$$x=\sup\widehat{x}\cap I(T).$$
\end{enumerate}
\end{lemma}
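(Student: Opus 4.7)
The plan is to prove the cycle $(1) \Rightarrow (2) \Rightarrow (3) \Rightarrow (1)$. As a preliminary step, $r_A$ is always a well-defined retraction of $T$ onto $A$ under the standing hypotheses: for $t \in T$ the chain $A \cap \widehat{t}$ is nonempty (it contains $0$), bounded above by $t$, hence has a supremum in $T$ by chain-completeness; this supremum lies in $\widehat{t}$ (since $t$ is an upper bound) and in $A$ (since $A$ is closed and increasing chains in $T$ converge to their supremum in the coarse wedge topology, because every basic neighborhood $V_s \setminus \bigcup_{u \in F} V_u$ of the sup, with $s$ at an isolated level strictly below, eventually traps the chain by the cofinality of successor ordinals in any limit ordinal).

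For $(1) \Rightarrow (2)$ I would argue by contraposition. If (2) fails, pick $x \in A$ at a limit level with $y_0 := \sup\{y \in A : y < x\} < x$; by closedness $y_0 \in A$. Choose a net $(t_\alpha)$ on $\widehat{x}$ lying in $(y_0, x)$ and converging to $x$ (available because isolated levels are cofinal in $\htt(x, T)$). For each $\alpha$, every element of $A \cap \widehat{t_\alpha}$ lies on $\widehat{x}$ strictly below $x$, hence is $\le y_0$; combined with $y_0 \le t_\alpha$ this forces $r_A(t_\alpha) = y_0 \ne x = r_A(x)$, contradicting continuity of $r_A$ at $x$.

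For $(2) \Rightarrow (3)$, read as saying that $A \cap I(T) \cap \widehat{x}$ is cofinal in $\widehat{x}$ below $x$ for each $x \in A$ at a limit level, I would use transfinite descent. Given $y' < x$ in $\widehat{x}$, (2) supplies $y_0 \in A$ with $y' < y_0 < x$; if $y_0 \in I(T)$ we are done, otherwise $y_0$ is at a limit level and (2) applied to $y_0$ yields $y_1 \in A$ with $y' < y_1 < y_0$. The heights of the successive $y_n$'s, while they remain at limit levels, form a strictly decreasing sequence of ordinals, so by well-foundedness the process must terminate with some $y_n \in A \cap I(T) \cap (y', x)$.

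For $(3) \Rightarrow (1)$ I would verify continuity of $r_A$ at each $t \in T$ in two cases. If $t \in A$, then $r_A(t) = t$; given a basic neighborhood $W = V_s \setminus \bigcup_{u \in F} V_u$ of $t$ (with $s = t$ when $t \in I(T)$, and otherwise $s < t$ at an isolated level), (3) supplies some $s' \in A \cap I(T) \cap \widehat{t}$ with $s' \ge s$, and then $U = V_{s'} \setminus \bigcup_{u \in F} V_u$ is a basic neighborhood of $t$ on which $s' \in A \cap \widehat{t'}$ forces $r_A(t') \ge s' \ge s$, while $t' \not\ge u$ forces $r_A(t') \not\ge u$, so $r_A(U) \subset W$. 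If $t \notin A$, the closedness of $A$ furnishes a basic neighborhood $O = V_{s_0} \setminus \bigcup_{v \in F_0} V_v$ of $t$ with $O \cap A = \emptyset$; a short chase then shows that for every $t' \in O$ one has $A \cap \widehat{t'} = A \cap (\widehat{s_0} \setminus \{s_0\})$, a set independent of $t'$, so $r_A$ is constant on $O$ and trivially continuous at $t$. The hard part is the correct reading of (3): literally $\sup(\widehat{x} \cap I(T)) = x$ is automatic in any chain-complete tree by the cofinality of successor levels, so the intended content must be that the supremum is taken over $A$-elements; and exploiting the closedness of $A$ in the last case is essential to obtain the neighborhood on which $r_A$ degenerates.
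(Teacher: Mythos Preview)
Your proof is correct and follows the same cyclic scheme $(1)\Rightarrow(2)\Rightarrow(3)\Rightarrow(1)$ as the paper, with only cosmetic differences: the paper reduces $(1)\Rightarrow(2)$ to the previously proved ordinal lemma (restricting to $\widehat{x}$) while you give the direct contrapositive argument, and for $(2)\Rightarrow(3)$ the paper takes $z=\min((y,x)\cap A)$ and observes that $(2)$ forces $z\in I(T)$, whereas you run a descending iteration on heights; your treatment of $(3)\Rightarrow(1)$ merges the paper's two ``$x\in A$'' cases into one. Your reading of condition (3) is also correct: as the paper's proof of $(2)\Rightarrow(3)$ shows (it produces $z\in(y,x)\cap A\cap I(T)$), the intended statement is $x=\sup(\widehat{x}\cap A\cap I(T))$, and your argument uses exactly this.
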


\begin{proof} First observe that the mapping $r_A$ is a well-defined retraction of $T$ onto $A$ -- this follows easily from the assumptions that $A$ is closed and contains $0$.
Hence the point of the assertion (1) is the continuity of $r_A$.

(1)$\Rightarrow$(2) Assume $r_A$ is continuous and $x\in A$ is on a limit level. Note that $\widehat{x}$ is order isomorphic and homeomorphic to an ordinal segment and $r_A|_{\widehat{x}}$ coincides with the mapping $r_{A\cap \widehat{x}}$ from Lemma~\ref{L:retrakce-ord}. Hence we can conclude by Lemma~\ref{L:retrakce-ord}(3)$\Rightarrow$(1). 

(2)$\Rightarrow$(3) Fix any $x\in A$. If $x\in I(T)$, the the equality trivially holds -- $x$ is even maximum of the set on the right-hand side.

So, assume that $x$ is on a limit level. Fix an arbitrary $y<x$. By (2) we know that the order interval $(y,x)$ intersects $A$, hence we can define $z=\min((y,x)\cap A)$ (recall that initial segments of $T$ are well ordered). Since $z\in A$ and $(z,y)\cap A=\emptyset$, another use of (2) yields that $z\in I(T)$. This completes the proof.

(3)$\Rightarrow$(1) We will show that $r_A$ is continuous at each point. So, fix any $x\in T$. There are the following possibilities:

Case 1: $x\notin A$. Since $A$ is closed, there is a basic neighborhood $W_y^F$ of $x$ (recall that $y\le x$ is on an isolated level and $F\subset\ims(x)$ is finite) such that $W_y^F\cap A=\emptyset$. Then $r_A$ is constant on $W_y^F$, so it is continuous at $x$.

Case 2: $x\in A\cap I(T)$. Then $r_A(x)=x$. So, fix any open set $U$ containing $x$. By the definition of the topology there is a finite set $F\subset \ims(x)$ such that $W_x^F\subset U$. Since $r_A(W_x^F)\subset W_x^F\subset U$, the proof of continuity at $x$ is complete. 

Case 3: $x\in A$, $x$ on a limit level. Again, $r_A(x)=x$. Fix any open set $U$ containing $x$. By the definition of the topology there is some $y<x$ on an isolated level and a finite set $F\subset \ims(x)$ such that $W_y^F\subset U$. By (3) there is some $z\in (y,x)\cap A\cap I(T)$. Then $W_z^F$ is a neighborhood of $x$ and $r_A(W_z^F)\subset W_z^F\subset U$.
\end{proof}

Let $\A^0=\A^0(T)$ denote the family of all the closed subsets of $T$  containing $0$ and satisfying the equivalent assertions of Lemma~\ref{L:trees retrakce}. Then we have the following analogue of Lemma~\ref{L:kompatibilita-ord}.

\begin{lemma}\label{L:trees compatibility}
Let $A,B\in \A^0$ be such that $A\subset B$. Then $r_A\circ r_B=r_B\circ r_A=r_A$.
\end{lemma}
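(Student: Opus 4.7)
The statement is an analogue of Lemma~\ref{L:kompatibilita-ord} (the ordinal case) and the proof follows the same pattern: it is a direct unwinding of the definition $r_A(t) = \max(A \cap \widehat{t})$. The plan is to verify the two equalities $r_B \circ r_A = r_A$ and $r_A \circ r_B = r_A$ pointwise, with no use of the topological structure of~$T$ — only the tree-order structure and the inclusion $A \subset B$ are needed. In particular, the assertions (2) and (3) of Lemma~\ref{L:trees retrakce} will not play any role here; they are only needed to ensure that $r_A$ and $r_B$ are continuous, which is already built into the assumption $A, B \in \A^0$.

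First I would fix $t \in T$ and prove $r_B(r_A(t)) = r_A(t)$. Observe that $r_A(t) \in A$, so $r_A(t) \in B$ by the hypothesis $A \subset B$. Since $r_A(t) \in \widehat{r_A(t)}$ and $r_A(t)$ is obviously the maximum of $\widehat{r_A(t)}$, we conclude $r_A(t) = \max(B \cap \widehat{r_A(t)}) = r_B(r_A(t))$.

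Next I would show $r_A(r_B(t)) = r_A(t)$. Set $s = r_B(t)$, so $s \le t$ and hence $\widehat{s} \subset \widehat{t}$. This inclusion yields $A \cap \widehat{s} \subset A \cap \widehat{t}$, so $r_A(s) \le r_A(t)$. For the reverse inequality, note that $r_A(t) \in A \subset B$ and $r_A(t) \le t$, so $r_A(t) \in B \cap \widehat{t}$, which gives $r_A(t) \le \max(B \cap \widehat{t}) = r_B(t) = s$. Therefore $r_A(t) \in A \cap \widehat{s}$, so $r_A(t) \le \max(A \cap \widehat{s}) = r_A(s)$. Combining the two inequalities gives the claim.

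There is essentially no obstacle: the argument is a two-line calculation in each direction, and does not even require the set-theoretic machinery of the equivalent conditions in Lemma~\ref{L:trees retrakce}. The only thing worth stating explicitly is that $r_A, r_B$ are well defined — i.e., that $A \cap \widehat{t}$ and $B \cap \widehat{t}$ have maxima for every $t \in T$ — but this is already recorded at the beginning of the proof of Lemma~\ref{L:trees retrakce} and follows from $A$, $B$ being closed and containing $0$ (the initial segment $\widehat{t}$ is well-ordered, hence any nonempty closed subset of it has a maximum).
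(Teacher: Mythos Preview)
Your proof is correct and is exactly what the paper intends: it explicitly notes that one can either ``copy the argument of Lemma~\ref{L:kompatibilita-ord}'' or apply that lemma to the initial segments $\widehat{t}$, and your direct order-theoretic verification is the first of these two options spelled out in full. The paper gives no further details beyond that remark, so there is nothing to compare.
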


The proof is easy -- either one can copy the argument of Lemma~\ref{L:kompatibilita-ord} or one can apply this lemma to the initial segments of $T$.

Here we reached the limits of easy analogies. An analogue of Lemma~\ref{L:spojitost-ord}
fails for the family $\A^0$. It is witnessed by the following example.

\begin{example} Let 
$$T=[0,\omega]\cup\{\omega+1\}\times\en.$$
Assume that the order on $[0,\omega]$ coincides with the standard ordinal order and  $\{\omega+1\}\times\en$ is the set $\ims(\omega)$ of immediate successors of $\omega$. Then $T$ is a tree. Moreover, the sets
$$A_n=\{0,(\omega+1,1),\dots,(\omega+1,n)\}$$
belong to $\A^0$ and form an increasing sequence, while the set
$$\overline{\bigcup_{n\in\en}A_n}=\{0,\omega\}\cup\{\omega+1\}\times \en$$
does not belong to $\A^0$. Moreover, the sequence clearly has no supremum in $\A^0$. 
\end{example}

So, to get analogous results as for ordinal segment one should proceed more carefully.
Firstly, while any ordinal segment admits a retractional skeleton, for trees it is not the case. We recall the following result of J.~Somaglia (see \cite[Theorem 3.1]{somaglia-trees} and \cite[Theorem 4.2]{somaglia-treti}).

\begin{prop}\label{P:trees-skeleton-jacopo} Let $T$ be a tree. The following assertions are equivalent
\begin{enumerate}
\item $T$ admits a retractional skeleton.
\item $\C(T)$ admits a $1$-projectional skeleton.
\item $\ims(x)$ is finite for each $x\in T$ with $\cf(x)$ uncountable.
\end{enumerate}
\end{prop}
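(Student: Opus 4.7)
The plan is to prove (1)$\Rightarrow$(2) directly, establish (3)$\Rightarrow$(1) by an explicit construction, derive (1)$\Rightarrow$(3) by contradiction, and close the cycle with (2)$\Rightarrow$(1). The easy direction (1)$\Rightarrow$(2) is immediate from Proposition~\ref{P:rs->ps}(a): composing with a retractional skeleton on $T$ yields a $1$-projectional skeleton on $\C(T)$.

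For (3)$\Rightarrow$(1), I would construct a retractional skeleton indexed by the family $\Gamma$ of all countable ``saturated'' subsets $A\subset T$: those $A$ containing $0$ which lie in $\A^0(T)$ (so Lemma~\ref{L:trees retrakce} furnishes a continuous retraction $r_A$), and which satisfy $\ims(x)\subset A$ whenever $x\in A$ has $\cf(x)$ uncountable. Assumption~(3) is precisely what keeps saturation inside $[T]^{\le\omega}$: only finitely many immediate successors must be adjoined at each offending $x$. Ordered by inclusion, $\Gamma$ is up-directed (any two elements have a common countable saturated upper bound, obtained by iteratively adding missing finite fans of successors, countable cofinal chains of isolated-level predecessors at limit points, and closing under pointwise suprema) and $\sigma$-closed (the closure of an increasing countable union of saturated sets is again in $\A^0(T)$ thanks to~(3)). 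The family $(r_A)_{A\in\Gamma}$ then verifies the axioms of a retractional skeleton via Lemmas~\ref{L:trees retrakce} and~\ref{L:trees compatibility} together with the tree analogue of Lemma~\ref{L:spojitost-ord}, whose proof again uses~(3) to control behavior at points of uncountable cofinality.

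For (1)$\Rightarrow$(3), I would proceed by contradiction. Assume $x\in T$ has $\cf(x)$ uncountable and $\ims(x)$ infinite, and let $(r_s)_{s\in\Gamma}$ be a retractional skeleton on $T$. Pick distinct $y_n\in\ims(x)$; using density of $\bigcup_s r_s(T)$ together with the open neighborhood $V_{y_n}$ of $y_n$, one can find $s_n\in\Gamma$ with $y_n\in r_{s_n}(T)$ and set $s=\sup_n s_n$, so that $y_n\in r_s(T)$ and $r_s(y_n)=y_n$ for every $n$. Since $\{y_n\}$ accumulates at $x$ in the coarse wedge topology (any basic neighborhood of $x$ omits only finitely many), continuity of $r_s$ forces $r_s(x)=x$, whence $x\in r_s(T)$. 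But $r_s(T)$ is metrizable, so $r_s(T)\cap\widehat x$, being a metrizable compact totally ordered space with maximum $x$, must have countable cofinality below $x$. Iterating this across a cofinal $\sigma$-closed subfamily of $\Gamma$ while tracking the uncountable cofinality of $\widehat x$, I would conclude that the net $(r_{s'}|_{\widehat x})$ cannot converge pointwise to the identity at some limit ordinal in $\widehat x$, contradicting axiom~(iv) of a retractional skeleton. For (2)$\Rightarrow$(1), the same obstruction propagates through the description of the induced subspace $D\subset\M(T)$ given in Proposition~\ref{P:rs->ps}(a), yielding (2)$\Rightarrow$(3), which then feeds into the construction above to produce a retractional skeleton on $T$.

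The principal obstacle is the implication (1)$\Rightarrow$(3): a single retraction $r_s$ can easily accommodate countably many $y_n$'s together with $x$ in its metrizable range, so the contradiction must exploit the $\sigma$-directed structure of $\Gamma$ together with the uncountable cofinality of $\widehat x$ simultaneously. Arranging these two features to collide cleanly---producing a failure of the convergence $r_s\to\mathrm{id}$ at some point of $\widehat x$---is the technical heart of the argument.
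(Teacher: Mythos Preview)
The paper does not prove this proposition; it quotes it from Somaglia's work (Theorem~3.1 of \cite{somaglia-trees} and Theorem~4.2 of \cite{somaglia-treti}). So there is no in-paper argument to compare against, and your proposal is an attempt at an independent proof. Two of your implications have genuine gaps.

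\textbf{The gap in (3)$\Rightarrow$(1).} Your index family $\Gamma$ of ``saturated'' countable sets --- those $A\in\A^0(T)$ with $\ims(x)\subset A$ whenever $x\in A$ has uncountable cofinality --- is \emph{not} $\sigma$-closed. The paper's own Example immediately following Lemma~\ref{L:trees compatibility} shows exactly this failure: take $T=[0,\omega]\cup\{\omega+1\}\times\en$ with $\ims(\omega)=\{\omega+1\}\times\en$. Condition~(3) holds vacuously (no point has uncountable cofinality), and the sets $A_n=\{0,(\omega+1,1),\dots,(\omega+1,n)\}$ are saturated in your sense, yet $\overline{\bigcup_n A_n}=\{0,\omega\}\cup\{\omega+1\}\times\en\notin\A^0(T)$ because $\omega$ is not the supremum of its predecessors in this set. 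Your appeal to ``thanks to~(3)'' does not help: the obstruction has nothing to do with uncountable cofinality. What is missing is closure under the meet operation $\wedge$; this is why the paper (for its own subsequent construction) works with the family $\A(T)$ of $\wedge$-closed members of $\A^0(T)$ and proves the delicate Lemma~\ref{L:trees continuity}(iii) to obtain $\sigma$-closedness. Your outline of how to build common upper bounds (``closing under pointwise suprema'') hints at some closure operation, but it is not built into your definition of $\Gamma$, and without it the skeleton axioms fail.

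\textbf{The gap in (1)$\Rightarrow$(3).} You correctly show that if $\cf(x)$ is uncountable and $\ims(x)$ is infinite, then $x$ lies in some metrizable retract $r_s(T)$. But the step from ``$r_s(T)\cap\widehat{x}$ has countable cofinality below $x$'' to a contradiction with axiom~(iv) is not an argument --- it is a hope. A single metrizable retract can contain $x$ as an isolated point of $r_s(T)\cap\widehat{x}$ without any inconsistency, and ``iterating across a cofinal $\sigma$-closed subfamily'' does not by itself force a failure of $r_{s'}\to\mathrm{id}$ anywhere. You yourself flag this as ``the technical heart,'' but the proposal supplies no mechanism for the collision you describe. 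Somaglia's actual argument for this direction is substantially more involved; you would need either to reproduce it or to find a genuinely new route.

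Your (1)$\Rightarrow$(2) is correct, and the overall architecture is reasonable, but as written the two hard implications are not established.
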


A retractional skeleton constructed in \cite{somaglia-trees} is formed by the retractions
$r_A$ where $A$ runs through a carefully chosen subfamily of $\A^0$. Using similar ideas we present a simplified more canonical approach. We start by restricting ourselves to a special case. The following lemma shows that this can be done without loss of generality.

\begin{lemma}\label{L:trees-r1} Let $T$ be a tree such that
\begin{equation*}
\ims(x)\mbox{ is finite for each $x\in T$ with $\cf(x)$ uncountable.}
\end{equation*}
Then there is a new partial order $\preceq$ on $T$ satisfying the following properties:
\begin{itemize}
\item $\preceq$ is finer than $\le$, i.e., $x\preceq y$ whenever $x,y\in T$ satisfy $x\le y$.
\item $I(T,\preceq)=I(T,\le)$, $S(T,\preceq)=S(T,\le)$.
\item The coarse wedge topologies on $T$ defined by $\le$ and by $\preceq$ coincide.
\item $\ims_{\preceq}(x)$ contains at most one point for each $x\in T$ with $\cf(x)$ uncountable.
\end{itemize}
\end{lemma}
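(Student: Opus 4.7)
The plan is to linearize the immediate successors at every uncountable-cofinality node. Using the hypothesis, for each $x \in T$ with $\cf(x)$ uncountable I fix an enumeration $\ims(x) = \{y^x_1, \dots, y^x_{n(x)}\}$ with $n(x) < \omega$, and define $\preceq$ as the reflexive-transitive closure of $\le$ together with the pairs $(y^x_{j-1}, y^x_j)$ for all such $x$ and all $2 \le j \le n(x)$. By construction $\preceq$ refines $\le$, and since within each $\ims(x)$ the adjoined relations form a linear order with least element $y^x_1$, one immediately gets $\ims_{\preceq}(x) \subseteq \{y^x_1\}$ for every $x$ with $\cf(x)$ uncountable, giving the fourth bullet of the lemma.

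Antisymmetry (and hence partial-orderness of $\preceq$) follows by noting that every generating step either strictly increases the $\le$-height or stays inside one $\ims(x)$; a putative $\preceq$-cycle would therefore be forced into a single $\ims(x)$, contradicting linearity of the enumeration. For every $t \in T$, the initial segment $\widehat{t}^{\preceq}$ equals $\widehat{t}^{\le}$ enlarged by inserting, for each uncountable-cofinality $x \in \widehat{t}^{\le}$ whose $\le$-successor on the path to $t$ is some $y^x_{j(x)}$ with $j(x) \ge 2$, the finite block $\{y^x_1, \dots, y^x_{j(x)-1}\}$ between $x$ and $y^x_{j(x)}$; this enlargement is manifestly well-ordered, hence $\preceq$ is a tree order. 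Chain-completeness of $(T, \preceq)$ is then obtained by showing that any $\preceq$-chain, after discarding the sibling-insertions coming from the uncountable-cofinality ancestors it traverses, reduces to an $\le$-chain whose $\le$-supremum (existing by chain-completeness of $(T, \le)$) coincides with its $\preceq$-supremum.

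The identities $I(T, \preceq) = I(T, \le)$ and $S(T, \preceq) = S(T, \le)$ rest on the ordinal-arithmetic point that, for every $t \in T$, $\htt(t, (T, \preceq)) = \htt(t, (T, \le))$ whenever the right-hand side is a limit ordinal, and otherwise the two heights differ only by a natural number. Indeed, each inserted finite block lies between $x$ and $y^x_{j(x)}$, while at a limit level of $t$ the $\le$-interval strictly above $y^x_{j(x)}$ inside $\widehat{t}^{\le}$ is always an infinite ordinal, so the identity $k + \gamma = \gamma$ (valid for $k < \omega$ and $\gamma$ infinite) absorbs each insertion. A finite shift preserves both being isolated and the cofinality of a limit ordinal, so both $I(T)$ and $S(T)$ are unchanged.

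Finally, because $I(T)$ is common to both orders, the two coarse wedge topologies share the same subbase indexing. I would verify their coincidence directly at the level of neighborhood bases: a uniform argument shows that each new basic open at any $t$ is contained in an old basic open at $t$ and vice versa, using the identity $V_{y^x_j}^{\preceq} = \bigcup_{k \ge j} V_{y^x_k}^{\le}$ for each uncountable-cofinality node $x$ and the resulting simple containments $V_u^{\preceq} \supseteq V_u^{\le}$. The main obstacle I expect is the chain-completeness verification together with the careful bookkeeping needed to confirm that the old and new neighborhood bases generate identical filters; both tasks rely on tracking exactly where the inserted finite blocks sit inside the relevant well-ordered initial segments.
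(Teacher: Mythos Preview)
Your proposal is correct and uses essentially the same idea as the paper: enumerate $\ims(x)=\{y^x_1,\dots,y^x_{n(x)}\}$ at each uncountable-cofinality $x$ and linearize it so that $\ims_\preceq(x)\subseteq\{y^x_1\}$. The paper's concrete order differs slightly from yours---there every element of $\bigcup_j \ims_\le(y^x_j)$ is pushed above $y^x_{n(x)}$, whereas in your transitive closure $\ims_\le(y^x_j)$ stays attached at $y^x_j$ and only $y^x_{j+1}$ is added as a new successor---but both variants work, and the verifications you outline (the description of $\widehat{t}^{\preceq}$ as $\widehat{t}^{\le}$ with finite blocks inserted, the resulting finite height shift, and the identity $V_{y^x_j}^{\preceq}=\bigcup_{k\ge j}V_{y^x_k}^{\le}$ giving coincidence of the subbases) are all correct.
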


\begin{proof} For any $z\in T$ with $\cf(z)$ uncountable and $\ims(z)\ne\emptyset$ set $N(z)=\card\ims(z)$ and fix an enumeration $\ims(z)=\{z[1],z[2],\dots,z[N(z)]\}$.
Then the new order which does the job may be defined by
$$\begin{aligned}
x\prec y \overset{def}{\equiv} & x<y, \mbox{ or}\\& \exists z\in T, \cf(z)\mbox{ uncountable }
\exists j,k\le N(z): j<k\ \&\ x=z[j]\ \&\ y=z[k], \mbox{ or }\\&\exists z\in T, \cf(z)\mbox{ uncountable }\exists j,k\le N(z): x=z[j] \& z[k]<y.
\end{aligned}$$
In other words, for each $z\in T$ with $\cf(z)$ uncountable and $\ims(z)\ne\emptyset$ we set
$$z\prec z[1]\prec z[2]\prec \dots\prec z[N(z)]$$
and
$$\ims_{\preceq} z[(N(z)]=\bigcup_{j=1}^{N(z)}\ims_\le z[j],$$
preserving the relations of the remaining points.
\end{proof}

In the sequel by an \emph{$r$-tree} we will mean a tree satisfying the condition (3) from Proposition~\ref{P:trees-skeleton-jacopo}, i.e., a tree having a retractional skeleton. Further, an \emph{$r_1$-tree} will be a tree satisfying the stronger condition from  Lemma~\ref{L:trees-r1}, i.e., such that
$$\ims(x)\mbox{ contains at most one point for each $x\in T$ with $\cf(x)$ uncountable}.$$
It is clear that $r_1$-trees form a subclass of $r$-trees. However, Lemma~\ref{L:trees-r1} says in particular, that any $r$-tree is homeomorphic to some $r_1$-tree. Thus dealing with $r_1$-trees instead of $r$-trees does not result in loosing generality. 

We  will need certain topological properties of trees. To investigate them we will use the following function.
Let $T$ be a tree. For $s,t\in T$ set
$$s\wedge t=\max \widehat{s}\cap\widehat{t}.$$
This is a well-defined element of $T$ -- note that $\widehat{s}\cap\widehat{t}$ is nonempty (it contains $0$), closed and well ordered. The following lemma summarizes several properties of this operation.

\begin{lemma}\label{L:trees wedge} Let $T$ be a tree.
\begin{itemize}
\item[(a)] The mapping $(s,t)\mapsto s\wedge t$ is continuous as a mapping $T\times T\to T$.
\item[(b)] Let $A\subset T$ and $x\in \overline{A}$. Then 
$$x=\sup\{s\wedge t\setsep s,t\in A\ \&\ s\wedge t\le x\}.$$
\item[(c)] Let $A\subset T$. Then the set
$$\{s\wedge t\setsep s,t\in A\}$$
is invariant for the operation $\wedge$.
\item[(d)] If $A\subset T$ is invariant for the operation $\wedge$, then so is $\overline{A}$.
\end{itemize}
\end{lemma}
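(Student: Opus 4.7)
The plan is to address the four assertions in order, doing the bulk of the case-analysis in parts (a) and (b) and then deducing (c) and (d) by short, essentially formal arguments.

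For (a), I would verify continuity of $\wedge:T\times T\to T$ pointwise, using the neighborhood bases $W_s^F=V_s\setminus\bigcup_{u\in F}V_u$ introduced earlier in the section. Fix $(s_0,t_0)\in T\times T$, write $x:=s_0\wedge t_0$, and fix a basic neighborhood $W_s^F$ of $x$ (so $s\le x$ lies on an isolated level, with $s=x$ when $x$ itself is on an isolated level, and $F\subset\ims(x)$ is finite). If $s_0=t_0=x$, I take $W_s^F$ itself as the neighborhood on both factors; for $a,b\in W_s^F$ both are $\ge s$, so $a\wedge b\ge s$, and $a\wedge b\in V_u$ for $u\in F$ would force $a\in V_u$, a contradiction. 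If $s_0=x$ while $t_0>x$, the successor $t'$ of $x$ in $\widehat{t_0}$ lies in $\ims(x)$ and $t_0\in V_{t'}$; taking $W_s^{F\cup\{t'\}}$ as a neighborhood of $s_0$ and $V_{t'}$ as one of $t_0$, the same computation (together with the fact that $a\in W_s^{F\cup\{t'\}}$ and $b\in V_{t'}$ prevents $a\wedge b\ge t'$) places the wedge in $W_s^F$. Finally, if $s_0,t_0>x$, the successors $u_1,u_2$ of $x$ in $\widehat{s_0}$ and $\widehat{t_0}$ are distinct elements of $\ims(x)$, and on $V_{u_1}\times V_{u_2}$ the wedge is identically equal to $x$.

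For (b), the case $x\in A$ is trivial via $s=t=x$. Assume therefore $x\notin A$. I plan to show that for every $y<x$ on an isolated level there exist $s,t\in A$ with $y\le s\wedge t\le x$. Since the isolated levels are cofinal in $\htt(x,T)$, and the set on the right-hand side of the claimed identity is bounded above by $x$, this yields the desired equality (the edge case $x=0$ is handled separately by a short argument exploiting that $A$ then meets infinitely many branches of $\ims(0)$). To produce such a pair, I use that $V_y$ is an open neighborhood of $x$, so $V_y\cap A\ne\emptyset$; pick $s$ in this intersection. If $s\le x$ then $s=s\wedge s\in[y,x]$ already works. Otherwise $z:=s\wedge x\in[y,x)$, the successor $z'\in\ims(z)$ of $z$ in $\widehat{x}$ satisfies $z'\le x$, and $V_{z'}$ is an open neighborhood of $x$; pick $t\in V_{z'}\cap A$. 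The successor of $z$ in $\widehat{s}$ is a member of $\ims(z)\setminus\{z'\}$, hence incomparable with $z'$, forcing $s\wedge t=z\in[y,x)$.

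For (c), I invoke the standard ultrametric property of trees: for any finite subset the greatest common predecessor coincides with the wedge of some two of its points. Given $s_1,t_1,s_2,t_2\in A$, the element $(s_1\wedge t_1)\wedge(s_2\wedge t_2)$ is precisely the greatest common predecessor of $\{s_1,t_1,s_2,t_2\}$ and therefore equals $a\wedge b$ for some $a,b$ in this four-element set, placing it in $\{s\wedge t\setsep s,t\in A\}$. For (d), once (a) is in hand the argument is formal: $T\times T$ is compact Hausdorff, so $\overline{A\times A}=\overline{A}\times\overline{A}$, and continuity of $\wedge$ together with the invariance assumption gives $\wedge(\overline{A}\times\overline{A})\subset\overline{\wedge(A\times A)}\subset\overline{A}$.

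The main obstacle I anticipate is the case-splitting in (a), which must be combined with whether $x$ sits on an isolated or on a limit level (affecting the shape of $W_s^F$ and the role of the parameter $s$). A related delicate point appears in (b) when the successor $z'$ equals $x$ itself, i.e.\ when $x$ lies on an isolated level: one must verify that the successor of $z$ in $\widehat{s}$ belongs to $\ims(z)\setminus\{z'\}$, which is forced precisely by $s\wedge x=z<x$.
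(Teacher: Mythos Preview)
Your treatments of (a), (c) and (d) are fine; in fact your argument for (d) via continuity of $\wedge$ and $\overline{A\times A}=\overline{A}\times\overline{A}$ is cleaner than the paper's, which goes through (b).

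There is, however, a genuine gap in (b). Your plan is to show that for every $y<x$ on an isolated level there exist $s,t\in A$ with $y\le s\wedge t\le x$, and then to conclude $\sup=x$ by cofinality. This fails when $x$ itself lies on an isolated level: knowing only that $s\wedge t$ can be found in $[y,x]$ for each such $y$ gives at best $\sup\ge x^-$, not $\sup=x$; and if $\htt(x,T)=\alpha+1$ with $\alpha$ limit, the isolated levels below $\htt(x,T)$ are not even cofinal in it. One must show directly that $x$ is realised as some $s\wedge t$, which is what the paper does in its Case~1 by starting from the open neighborhood $V_x$ rather than from $V_y$ with $y<x$.

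A second, related issue: in your ``Otherwise'' branch you set $z:=s\wedge x$ and assert $z\in[y,x)$, then take $z'$ to be the successor of $z$ in $\widehat{x}$. But nothing prevents $s>x$, in which case $z=x$ and there is no such successor. Your closing remark that ``$s\wedge x=z<x$'' is ``forced'' is exactly the unjustified step. The paper avoids this by taking the successor of $s\wedge x$ in $\widehat{s}$ (which always exists once $s\not\le x$), rather than in $\widehat{x}$; this works uniformly whether $s>x$ or $s\perp x$. Either split off the isolated-level case and argue as the paper does, or redirect your construction to use the successor towards $s$.
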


\begin{proof}
(a) Fix any pair $(s,t)\in T\times T$. We distinguish three possibilities.

Case 1: $s$ and $t$ are incomparable. Then $s\wedge t<s$ and $s\wedge t<t$. So, we can fix
$s',t'\in\ims(s\wedge t)$ such that $s'\le s$ and $t'\le t$. Then $s'\ne t'$, $V_{s'}$ is a neighborhood of $s$, $V_{t'}$ is a neighborhood of $t$ and $u\wedge v=s\wedge t$ whenever $u\in V_{s'}$ and $v\in V_{t'}$.

Case 2: $s=t$. Let $W_x^F$ be a basic neighborhood of $s\wedge t=s=t$ (i.e., $x\le s\wedge t$ is on an isolated level and $F\subset\ims(s\wedge t)$ is finite). Then $W_x^F$ is also a neighborhood both of $s$ and of $t$ and $u\wedge v\in W_x^F$ whenever $u,v\in W_x^F$.

Case 3: $s$ and $t$ are comparable but different. Without loss of generality $s<t$. Then $s\wedge t=s$.  Let $W_x^F$ be a basic neighborhood of $s\wedge t=s$ (i.e., $x\le s$ is on an isolated level and $F\subset\ims(s)$ is finite). Let $y\in\ims(s)$ be such that $y\le t$. Then $V_y$ is a neighborhood of $t$, $W_x^{F\cup\{y\}}$ a neighborhood of $s$ and $u\wedge v\in W_x^F$ whenever $u\in W_x^{F\cup\{y\}}$ and $v\in V_y$.

(b) Assume $x\in\overline{A}$. If $x\in A$, the conclusion is obvious. Thus suppose $x\in \overline{A}\setminus A$. We distinguish two cases:

Case 1: $x$ is on an isolated level. Then $V_x$ is a neighborhood of $x$, so there is some $s\in V_x\cap A$. Since $x\notin A$, we get $s>x$. Fix 
$y\in \ims(x)$ with $y\le s$. Then $W_x^{\{y\}}$ is a neighborhood of $x$, hence there is some $t\in V_x^{\{y\}}\cap A$. Clearly $s\wedge t=x$.

Case 2: $x$ is on a limit level. Fix any $y<x$. Let $z\in\ims(y)$ be such that $z\le x$.
Then $z<x$ and $V_z$ is a neighborhood of $x$, thus we can find some $s\in V_z\cap A$.
If $s\le x$, the proof is complete (as $s\wedge s=s\in(z,x]$). So, assume $s\not\le x$, i.e., $s\wedge x<s$. Let $u\in \ims(s\wedge x)$ be such that $u\le s$. Then $u\not\le x$, thus $W_z^{\{u\}}$ is a neighborhood of $x$, so there is some $t\in W_z^{\{u\}}\cap A$. Then
$$y<z\le s\wedge t=u\wedge t<u,\mbox{ so }s\wedge t\le s\wedge x\le x.$$

(c) Denote the set from the statement by $\widetilde A$. Assume $a,b,c,d\in A$. We need to show that $(a\wedge b)\wedge(c\wedge d)\in\widetilde A$. Consider the three elements
$$a\wedge b, a\wedge c, a\wedge d.$$
They are contained in $\widehat{a}$ which is a linearly ordered set, so one of them should be smaller than the others. 

If the smallest one is $a\wedge b$, then $a\wedge b\le c$ and $a\wedge b\le d$, so $a\wedge b\le c\wedge d$, so $(a\wedge b)\wedge(c\wedge d)=a\wedge b\in\widetilde{A}$.

Assume that the smallest one is $a\wedge c$. Then 
$$\left.\begin{array}{c}
a\wedge c\le a\wedge b\\
\left.\begin{array}{c}
a\wedge c \le a\wedge d\le d\\
a\wedge c\le c\end{array}\right\}\Rightarrow a\wedge c\le c\wedge d
\end{array}\right\}\Rightarrow a\wedge c\le (a\wedge b)\wedge (c\wedge d).
$$
Moreover, let $x\in\ims(a\wedge c)$ be such that $x\le a$. Then $x\le a\wedge b$, thus
$x\in\widehat{a\wedge b}$. On the other hand, $x\notin \widehat{c}$, thus $x\notin\widehat{c\wedge d}$. It follows that  $(a\wedge b)\wedge (c\wedge d)=a\wedge c\in\widetilde{A}$.

The case when the smallest one is $a\wedge d$ is analogous to the previous one (just interchange the role of $c$ and $d$).

(d) Assume $A$ is invariant for $\wedge$. Let  $x,y\in\overline{A}$. If they are comparable, then  $x\wedge y\in\{x,y\}\subset \overline{A}$. So, assume $x$ and $y$ are incomparable, i.e. $x\wedge y<x$ and $x\wedge y<y$. By (b) there are $a,b,c,d\in A$ such that $x\wedge y< a\wedge b\le x$ and $x\wedge y<c\wedge d\le y$. It follows that
$$x\wedge y=(a\wedge b)\wedge(c\wedge d)\in A$$
as $A$ is invariant for $\wedge$.
\end{proof}

\begin{lemma}\label{L:trees vaha}
Let $T$ be an infinite tree. Then $w(T)=\dens(T)=\card I(T)$.
\end{lemma}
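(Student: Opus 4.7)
The plan is to establish the chain $\card I(T)\le\dens(T)\le w(T)\le\card I(T)$, the middle inequality $\dens(T)\le w(T)$ being valid in every topological space. First, I note $\card I(T)\ge\aleph_0$: if some element of $T$ lies at a limit level $\lambda$, then the isolated-level ancestors of this element already give $\card\lambda\ge\aleph_0$ points in $I(T)$; otherwise every element of $T$ lies at an isolated level, so $I(T)=T$ and this is infinite by hypothesis. The upper bound $w(T)\le\card I(T)$ is then immediate: $\{V_t,T\setminus V_t:t\in I(T)\}$ is by definition a subbase of cardinality $\card I(T)$, so the base of its finite intersections has cardinality $\card I(T)^{<\omega}=\card I(T)$.

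The substantive inequality is $\card I(T)\le\dens(T)$. Fix a dense set $D\subset T$; I would construct an injection $\sigma:I(T)\to D\sqcup D^2$. For $t\in I(T)\cap D$, place $\sigma(t)=t$ in the first summand. For $t\in I(T)\setminus D$: since $t$ lies on an isolated level, the set $\{t\}$ is open iff $\ims(t)$ is finite, and $t\notin D$ forces $t$ to be non-isolated in $T$, so $\ims(t)$ is infinite. Pick distinct $s_1^t,s_2^t\in\ims(t)$; density of $D$ in the nonempty open sets $V_{s_1^t}$ and $V_{s_2^t}$ yields $d(t)\in V_{s_1^t}\cap D$ and $d'(t)\in V_{s_2^t}\cap D$, and we set $\sigma(t)=(d(t),d'(t))$ in the second summand.

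To verify injectivity, the two summands of $D\sqcup D^2$ are disjoint, and $\sigma$ is clearly injective on $I(T)\cap D$. For $I(T)\setminus D$, suppose $t_1\ne t_2$ satisfy $\sigma(t_1)=\sigma(t_2)$. The equality $d(t_1)=d(t_2)=:d$ places $d$ in $V_{t_1}\cap V_{t_2}$, so $t_1,t_2\le d$ are comparable; take $t_1<t_2$. The unique ancestor of $d$ at level $\htt(t_1,T)+1$ is an element of $\ims(t_1)$ lying below $d$, so it equals $s_1^{t_1}$; being also the unique ancestor of $t_2$ at that level (as $t_2\le d$), it satisfies $s_1^{t_1}\le t_2$. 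Its sibling $s_2^{t_1}$ is therefore incomparable with $t_2$, and $d'(t_1)\in V_{s_2^{t_1}}$ forces $d'(t_1)\not\ge t_2$; but $d'(t_2)\ge s_2^{t_2}>t_2$, contradicting $d'(t_1)=d'(t_2)$.

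Hence $\sigma$ is injective, yielding $\card I(T)\le\card(D\sqcup D^2)=\card D$ (the last equality uses $\card I(T)\ge\aleph_0$, forcing $D$ to be infinite). Taking the infimum over dense $D$ gives $\card I(T)\le\dens(T)$, completing the chain. The main obstacle is the injectivity argument for $\sigma$ on $I(T)\setminus D$, which leverages the tree-theoretic fact that ancestors are unique at each level together with the observation that non-isolated members of $I(T)$ must have infinite $\ims$.
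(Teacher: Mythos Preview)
Your proof is correct and follows the same overall chain $\card I(T)\le\dens(T)\le w(T)\le\card I(T)$ as the paper. The only substantive step, $\card I(T)\le\dens(T)$, is handled by the same underlying idea: every $t\in I(T)$ can be written as the meet of two elements of a dense set. The paper expresses this via the $\wedge$ operation and the already-established Lemma~\ref{L:trees wedge}(b,c), showing directly that $\widetilde{A}=\{s\wedge t:s,t\in A\}\supset I(T)$; you instead build an explicit injection $I(T)\to D\sqcup D^2$ by hand. Your construction in fact satisfies $t=d(t)\wedge d'(t)$ (since $d(t)\ge s_1^t$ and $d'(t)\ge s_2^t$ lie above distinct immediate successors of $t$), so injectivity is immediate once you observe this; your longer verification via comparability of ancestors is correct but unnecessary. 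The paper's route is shorter because the $\wedge$-lemma is already in place, while yours is self-contained and avoids that dependency.
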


\begin{proof}
Recall that $w(T)$ denotes the weight and $\dens(T)$ the density of $T$. The inequality $\dens(T)\le w(T)$ holds in any topological space. Further, $w(T)\le \card I(T)$
by the definition of the coarse wedge topology.
Let us prove the remaining  inequality. Assume $A$ is a dense subset of $T$. Let 
$$\widetilde{A}=\{x\wedge y\setsep x,y\in A\}.$$
Then $\widetilde{A}$ is invariant for $\wedge$ (by Lemma~\ref{L:trees wedge}(c)), clearly it has the same cardinality as $A$ and it is a dense subset of $T$. It follows from Lemma~\ref{L:trees wedge}(b) that $\widetilde{A}\supset I(T)$. 
\end{proof}

\begin{prop}\label{P:tree monolithic}
Any tree is a monolithic space, i.e., the weight and density coincide for each its subset.
\end{prop}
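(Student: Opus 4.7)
Since $\dens(Y)\le w(Y)$ holds in every topological space, it suffices to show $w(Y)\le\dens(Y)$; assuming $\kappa:=\dens(Y)$ is infinite (the finite case is trivial), I would fix a dense set $A\subset Y$ with $\card A=\kappa$ and $0\in A$. The strategy is to engulf $Y$ in a compact subset $Z$ of $T$ that is itself a tree of density at most~$\kappa$, and then appeal to Lemma~\ref{L:trees vaha}.

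The candidate is $Z:=\overline{\widetilde A}$, where $\widetilde A=\{a\wedge b\setsep a,b\in A\}$. By Lemma~\ref{L:trees wedge}(c,d), both $\widetilde A$ and $Z$ are $\wedge$-closed, and clearly $\card\widetilde A=\kappa$ and $Y\subset\overline A\subset Z$. To see that $Z$, with the inherited order, is a tree, one checks that $Z$ is chain-complete: any chain $C\subset Z$ is well-ordered; let $s=\sup_T C$. Either $s\in C$, or $\htt(s,T)$ must be a limit ordinal (otherwise the immediate $T$-predecessor of $s$ would be the maximum of $C$), in which case the standard basic coarse-wedge neighborhoods $W_{s'}^F$ of $s$ eventually contain $C$, since $C$ is cofinal in $\widehat s^T\setminus\{s\}$. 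Either way $C$ converges in $T$ to $s$, so $s\in Z$; rootedness at $0$ is clear.

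The heart of the argument is to identify the subspace topology $\tau^{\mathrm{sub}}_Z$ on $Z$ inherited from $T$ with the intrinsic coarse-wedge topology $\tau^{\mathrm{cw}}_Z$ of the tree~$Z$. Both are compact Hausdorff, so it suffices to show $\tau^{\mathrm{cw}}_Z\subset\tau^{\mathrm{sub}}_Z$. For a subbasic open $V_s^Z$ with $s\in I(Z)\cap I(T)$ this is trivial, since $V_s^Z=V_s^T\cap Z$. For $s\in I(Z)\setminus I(T)$, let $s_0$ be the immediate $Z$-predecessor of $s$ (which exists because $\htt(s,Z)$ is isolated) and let $s_1$ be the unique element of $\ims_T(s_0)\cap\widehat s^T$; then $s_1\in I(T)$ and I claim $V_{s_1}^T\cap Z=V_s^Z$. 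The inclusion $\supset$ is immediate; for $\subset$ take $z\in Z$ with $z\ge_T s_1$. If $z$ were $T$-incomparable with $s$, then $w:=z\wedge_T s$ would belong to $Z$ by $\wedge$-closure and would satisfy on the one hand $w\le_Z s_0$ (as $w<_Z s$ and $s_0$ is the immediate $Z$-predecessor of $s$), and on the other hand $s_1\le_T w$ (since $s_1\le_T s$ and $s_1\le_T z$), contradicting $s_1>_T s_0$. The possibility $z\in[s_1,s)_T$ is excluded because $[s_0,s]\cap Z=\{s_0,s\}$. The complements $Z\setminus V_s^Z$ are handled symmetrically.

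Once the topologies match, Lemma~\ref{L:trees vaha} applied to the tree $Z$ yields $w(Z)=\dens(Z)\le\card\widetilde A=\kappa$, whence $w(Y)\le w(Z)\le\kappa=\dens(Y)$, completing the proof. The main obstacle is the topology-identification step, and specifically the case $s\in I(Z)\setminus I(T)$, which is where the full $\wedge$-closure of $Z$ (and not merely its closedness in $T$) is genuinely used.
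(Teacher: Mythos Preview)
Your proof is correct and follows essentially the same strategy as the paper's: form $Z=\overline{\widetilde A}$ with $\widetilde A=\{a\wedge b:a,b\in A\}$, recognize $Z$ as a tree whose subspace topology coincides with its intrinsic coarse-wedge topology, and apply Lemma~\ref{L:trees vaha}. The only difference is that the paper outsources the topology-identification step to \cite[Lemma~2.1]{somaglia-treti}, whereas you supply a direct argument for it; your treatment of the delicate case $s\in I(Z)\setminus I(T)$ via $\wedge$-closure is exactly where that external lemma does its work.
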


\begin{proof}
Let $T$ be a tree and $A\subset T$ any its infinite subset (for finite sets the statement is trivial). Let $\kappa=\dens(A)$. Let
$$\widetilde{A}=\{x\wedge y\setsep x,y\in A\}.$$
By Lemma~\ref{L:trees wedge}(a) we see that $\widetilde{A}$ is a continuous image of $A\times A$, hence $\dens\widetilde{A}\le\kappa$. By Lemma~\ref{L:trees wedge}(c) $\widetilde{A}$ is invariant for $\wedge$, hence $F=\overline{\widetilde{A}}$ is also invariant for $\wedge$ by Lemma~\ref{L:trees wedge}(d). Clearly $\dens F\le\kappa$. Further, $F$ with the inherited order is a tree and the subspace topology coincides with the coarse wedge topology of $F$ by \cite[Lemma 2.1]{somaglia-treti}. By Lemma~\ref{L:trees vaha} we get that $w(F)=\dens(F)\le \kappa$. Hence $w(A)\le \kappa=\dens A$. Since the converse inequality holds always this completes the proof.
\end{proof}

To present a canonical retractional skeleton on an $r_1$-tree, we introduce for any such $T$ the following family.
$$\A=\A(T)=\{A\in\A^0(T)\setsep x\wedge y\in A\mbox{ whenever }x,y\in A\}$$

\begin{lemma}\label{L:trees continuity} 
Let $T$ be an $r_1$-tree. Then the following hold:
\begin{itemize}
\item[(i)] For any $A\in\A^0$ there is $B\in\A$ with $B\supset A$ and $w(B)\le\max\{w(A),\aleph_0\}$.
\item[(ii)] For any $A,B\in\A$  there is $C\in\A$ such that $C\supset A\cup B$ and $ w(C)\le\max\{w(A),w(B),\aleph_0\}$.
\item[(iii)] If $\A'\subset\A$ is up-directed by inclusion, then $B=\overline{\bigcup\A'}\in\A$ and, moreover,
$$r_B(x)=\lim_{A\in\A'}r_A(x), \quad x\in T.$$
\end{itemize}
\end{lemma}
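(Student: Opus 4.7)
Plan: Before attacking the three parts, I record the key consequence of the $r_1$ hypothesis. If $s,t\in T$ are incomparable, then $\ims(s\wedge t)$ contains two distinct points (one below $s$, one below $t$), so the $r_1$ assumption forces $\cf(s\wedge t)$ to be at most countable. Consequently every $\wedge$ of incomparable elements either sits on an isolated level or has countable cofinality, and therefore admits a countable cofinal subset of $\widehat{s\wedge t}\cap I(T)$.

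For (i), starting from $A\in\A^0$, fix (using Proposition~\ref{P:tree monolithic}) a dense subset $D\subset A$ with $|D|\le\max\{w(A),\aleph_0\}=:\kappa$ and build $B_\omega$ by an $\omega$-step inductive enlargement of $D$: at each step, adjoin all pairwise $\wedge$'s of already-included elements and, for every newly-produced element $x$ on a limit level, a fixed countable cofinal set in $I(T)\cap\widehat{x}$ (which exists by the observation above). Then $|B_\omega|\le\kappa$ and $B_\omega$ is $\wedge$-closed. Setting $B=\overline{B_\omega}$, Lemma~\ref{L:trees wedge}(d) gives $B$ is $\wedge$-closed, $A=\overline{D}\subset B$, and $w(B)\le|B_\omega|\le\kappa$. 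Condition (2) of Lemma~\ref{L:trees retrakce} for $B$ must be checked for $x\in B$ on a limit level: if $x\in B_\omega$, the pre-added cofinal sequence witnesses it; if $x\in B\setminus B_\omega$, Lemma~\ref{L:trees wedge}(b) combined with $\wedge$-closedness of $B_\omega$ gives $x=\sup\{u\in B_\omega : u<x\}$. The main obstacle in (i) is exactly that the topological closure can introduce new limit-level elements; the pre-addition of cofinal sequences enabled by the $r_1$-hypothesis is what prevents this from breaking condition (2).

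For (ii), observe that $A\cup B$ is closed, contains $0$, and each limit-level element is already approximated from below in whichever of $A,B$ it came from, so $A\cup B\in\A^0$. Applying (i) to $A\cup B$ produces $C\in\A$ with $C\supset A\cup B$ and $w(C)\le\max\{w(A\cup B),\aleph_0\}\le\max\{w(A),w(B),\aleph_0\}$.

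For (iii), first I establish $B\in\A$. The union $\bigcup\A'$ is $\wedge$-closed by directedness of $\A'$, hence so is $B=\overline{\bigcup\A'}$ by Lemma~\ref{L:trees wedge}(d). Condition (2) is verified as in (i): split $x\in B$ on a limit level into $x\in\bigcup\A'$ (then some $A\in\A'$ witnesses it) and $x\notin\bigcup\A'$ (then Lemma~\ref{L:trees wedge}(b) plus $\wedge$-closedness yields $x=\sup\{u\in\bigcup\A' : u<x\}$). For the convergence, note that $A\mapsto r_A(x)$ is monotone non-decreasing on $\A'$ and bounded above in the chain $\widehat{x}$ by $y:=r_B(x)$, so $z^*:=\sup_{A\in\A'}r_A(x)$ exists in $\widehat{x}$ with $z^*\le y$. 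Showing $z^*=y$ splits into: if $y\in\bigcup\A'$ then eventually $r_A(x)=y$; if $y\notin\bigcup\A'$ then, as above, $y=\sup\{u\in\bigcup\A' : u<y\}\le z^*$. Finally, a monotone net in the chain $\widehat{x}$ with supremum $z^*$ converges to $z^*$ in the coarse wedge topology: if $z^*$ is on an isolated level the supremum must be attained (otherwise the sup would be a predecessor), and if $z^*$ is on a limit level the basic neighborhoods $W_u^F$ with $u<z^*$ and $F\subset\ims(z^*)$ eventually contain all $r_A(x)$. The chief subtlety is the identification $z^*=y$ in the case $y\notin\bigcup\A'$, where $\wedge$-closedness of $\bigcup\A'$ is indispensable.
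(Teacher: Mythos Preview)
Your argument is sound in outline and follows the same strategy as the paper: enlarge iteratively under $\wedge$ and countable cofinal witnesses, then close. There is, however, one case your verification of (i) does not cover. You split $x\in B$ on a limit level into $x\in B_\omega$ (handled by a ``pre-added cofinal sequence'') and $x\in B\setminus B_\omega$ (handled by Lemma~\ref{L:trees wedge}(b)). But the cofinal sequences are only adjoined for elements produced as $\wedge$'s of incomparables---your parenthetical ``which exists by the observation above'' makes this explicit---so an element $x\in D$ already sitting on a limit level receives no such sequence, and if $\cf(x)$ is uncountable none can exist. The fix is one line: such $x$ lies in $D\subset A\subset B$ with $A\in\A^0$, so $x=\sup\{y\in A:y<x\}\le\sup\{y\in B:y<x\}$. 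The paper instead starts from $A$ itself and takes closures at each stage; its Case~3 then argues, via the $r_1$ hypothesis, that an uncountable-cofinality element cannot first enter at stage $n\ge1$, reducing to the same appeal to $A\in\A^0$. Your single-closure-at-the-end variant sidesteps that induction but must still invoke $A\in\A^0$ for the original elements of $D$.

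Your treatment of (iii) via the monotone supremum $z^*=\sup_{A\in\A'}r_A(x)$ followed by the identification $z^*=r_B(x)$ is a tidy alternative to the paper's direct neighbourhood argument; both rely on $\wedge$-closedness of $\bigcup\A'$ and are of comparable length.
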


\begin{proof}
(i) By Proposition~\ref{P:tree monolithic} the weight and density coincide for subsets of trees. So, we can work with densities and, moreover, the density of a subset is not larger than the density of the original set.

Similarly as in \cite{somaglia-trees} choose for any $x\in S(T)\setminus I(T)$  a countable set 
$\phi(x)\subset \widehat{x}\cap I(T)$ with supremum $x$. Now we are ready to provide a proof of (i).

Fix any $A\in\A^0$ and let $\kappa=\max\{w(A),\aleph_0\}$. Let us define by induction the following sequences of sets.
Set $A_0=A$. If $n\in\en$ is given and $A_{n-1}$ is defined we set
$$\begin{aligned}
B_n&=\{x\wedge y,\setsep x,y\in A_{n-1}\},\\
C_n&=B_n\cup\bigcup \{\phi(x)\setsep x\in B_n\cap (S(T)\setminus I(T)), x>\sup\{y<x\setsep y\in B_n\} \},\\
A_n&=\overline{C_n}.
\end{aligned}
$$
It is clear that $A_n$ is closed for each $n\in\en\cup\{0\}$. Further, $B_n$ is closed as well by Lemma~\ref{L:trees wedge}(a). We continue by showing that all the sets $A_n$, $B_n$ and $C_n$ have density at most $\kappa$.

$A_0=A$ has density at most $\kappa$ by the definition of $\kappa$. 
So, assume that $\dens A_{n-1}\le \kappa$. It follows from Lemma~\ref{L:trees wedge}(a) that $B_n$ is a closed set of density at most $\kappa$. Further, by Lemma~\ref{L:trees wedge}(c) it is invariant for $\wedge$, so by  \cite[Lemma 2.1]{somaglia-treti} the topology on $B_{n}$ coincides with the coarse wedge topology generated by the restricted order. Hence, by Lemma~\ref{L:trees vaha}
$\card I(B_n)\le\kappa$.
Further, clearly
$$\{x\in B_n\cap (S(T)\setminus I(T))\setsep  x>\sup\{y<x\setsep y\in B_n\}\}\subset I(B_n),$$  
hence $\card(C_n\setminus B_n)\le\kappa$. So, $\dens A_n\le \kappa$. 

We set $B=\overline{\bigcup_n A_n}$. Then $B$ is a closed set of density at most $\kappa$.
Let us show that $B\in\A$. Clearly $0\in B$. Further, $B$ is closed to the operation $\wedge$. Indeed, 
by construction $B=\overline{\bigcup_n B_n}$, each $B_n$ is closed to $\wedge$ and the sequence $(B_n)$ is increasing, so we can use Lemma~\ref{L:trees wedge}(d).
It remains to show that $B\in \A^0$. To this end fix any $x\in B\setminus I(T)$ and any $y<x$. 
We need to find $z\in(y,x)\cap B$. Let us distinguish three cases:

Case 1: $x\notin \bigcup_n A_n$. By Lemma~\ref{L:trees wedge}(b) there are $n\in\en$ and $a,b\in A_n$ such that $y<a\wedge b\le x$. Since $a\wedge b\in B_{n+1}\subset A_{n+1}$ we get $a\wedge b<x$. Thus $a\wedge b\in (y,x)\cap B$.

Case 2: $x\in A_n\cap S(T)$ for some $n\in\en$. If $(y,x)\cap A_n=\emptyset$, then $\phi(x)\subset C_{n+1}\subset B$. So, any $z\in\phi(x)\cap (y,x)$ does the job.

Case 3: $x\in A_n\setminus S(T)$ for some $n\in\en$. If $x\in A_0$, the conclusion follows from the assumption $A_0=A\in\A^0$. So, assume $x\notin A_0$. Then there is some $n\in\en$ with $x\in A_n\setminus A_{n-1}$. By Lemma~\ref{L:trees wedge}(b) there are $a,b\in C_n$ such that $y<a\wedge b\le x$. Then $a\wedge b\in B_{n+1}\subset B$. So, it is enough to show that $a\wedge b<x$. Assume that $a\wedge b=x$. Since $\cf(x)$ is uncountable, the assumption that $T$ is an $r_1$-tree implies that $a=x$ or $b=x$, so $x\in C_n$. But $C_n\setminus B_n\subset I(T)$, so $x\in B_n$. Hence $x=c\wedge d$ for some $c,d\in A_{n-1}$.
Using again that $T$ is an $r_1$-tree we deduce that $x=c$ or $x=d$, thus $x\in A_{n-1}$, 
a contradiction.

(ii) This assertion follows from (i) as $A\cup B\in\A^0$.

(iii) Let $\A'\subset\A$ be up-directed by inclusion and $B=\overline{\bigcup \A'}$.

Let us show that $B\in\A$. Clearly $B$ is closed and $0\in B$. Further, each $A\in\A'$ is invariant for $\wedge$ (as $\A'\subset\A$), hence $\bigcup\A'$ is invariant for $\wedge$ (as $\A'$ is up-directed).So, by Lemma~\ref{L:trees wedge}(d) $B$ is invariant for $\wedge$ as well. It remains to show that $B\in\A^0$. So, fix $x\in B$ on a limit level and any $y<x$. We shall prove that there is some $z\in (y,x)\cap B$.

If $x\in\bigcup\A'$, i.e. $x\in A$ for some $A\in\A'$, then there is $z\in(y,x)\cap A\subset (y,x)\cap B$ as $A\in\A\subset\A^0$. 

So, assume $x\in B\setminus \bigcup\A'$. By Lemma~\ref{L:trees wedge}(b) there are $a,b\in \bigcup\A'$ such that $y<a\wedge b\le x$. Since $\A'$ is up-directed, there is some $A\in\A'$ such that $a,b\in A$. Since $A\in\A$, we deduce that $a\wedge b\in A$, so $a\wedge b<x$.
Hence $a\wedge b\in (y,x)\cap B$.

This completes the proof that $B\in\A$. It remains to prove the limit formula for $r_B$.
So, take any $x\in T$.  Let us distinguish the following two cases:

Case 1: $r_B(x)\in A$ for some $A\in\A'$. Then for any $A'\in\A'$ with $A'\supset A$ we have $r_{A'}(x)=r_A(x)=r_B(x)$. This proves the convergence.

Case 2: $y=r_B(x)\notin\bigcup\A'$. Then $y$ is on a limit level of $T$. Indeed, assume that $y\in I(T)$. By Lemma~\ref{L:trees wedge}(b) there are $a,b\in \bigcup \A'$ with $a\wedge b=y$. Since $\A'$ is directed, there is $A\in\A'$ with $a,b\in A$. Then $y=a\wedge b\in A$, a contradiction. 

Let $U$ be an open set containing $y$.
Then there are $z<y$ on an isolated level of $T$ and a finite set $F\subset \ims(y)$ such that $W_z^F\subset U$. In case $y$ has uncountable cofinality, we may and shall assume that $F=\ims(y)$.
Since $y\in\overline{W_z^F\cap \bigcup \A'}$, there are $a,b\in W_z^F\bigcup \A'$ such that $z<a\wedge b\le y$ (by Lemma~\ref{L:trees wedge}(b)). Since $\A'$ is up-directed, there is $A\in\A'$ such that $a,b\in A$,
hence $a\wedge b\in A$. It follows that $a\wedge b<y$. Thus for any $A'\in\A'$ with $A'\supset A$ we have $a\wedge b\in\widehat{x}\cap A'\subset\widehat{y}$, hence
$r_{A'}(x)\in [a\wedge b,y]\subset W_z^F\subset U$.
\end{proof}

Let $\A_\omega=\A_\omega(T)$ denote the family of all the separable sets from $\A$. Then we get the following result.

\begin{prop}\label{P:trees retracni skeleton}
Let $T$ be an $r_1$-tree. Then $(r_A)_{A\in\A_\omega}$ is a retractional skeleton on $T$. The induced subset is $S(T)$.
\end{prop}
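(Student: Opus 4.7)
The plan is to verify the four axioms of a retractional skeleton for the family $(r_A)_{A\in\A_\omega}$ indexed by inclusion, and then identify the induced subset as $S(T)$.

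First, the family $\A_\omega$ is up-directed: for $A,B\in\A_\omega$, Lemma~\ref{L:trees continuity}(ii) gives $C\in\A$ with $C\supset A\cup B$ and $w(C)\le\aleph_0$, and by Proposition~\ref{P:tree monolithic} we then have $\dens C=w(C)\le\aleph_0$, so $C\in\A_\omega$. For axiom (iii) of a retractional skeleton, given an increasing sequence $(A_n)$ in $\A_\omega$, Lemma~\ref{L:trees continuity}(iii) yields $A:=\overline{\bigcup_n A_n}\in\A$, separable as the closure of a countable union of separable sets, so $A\in\A_\omega$; this is the supremum of $(A_n)$ in $\A_\omega$, and the same lemma provides $r_A(x)=\lim_n r_{A_n}(x)$ for every $x\in T$. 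For axiom (i), $r_A(T)=A$ is compact and $\wedge$-invariant, so the subspace topology coincides with the coarse wedge topology of $A$ viewed as a tree by \cite[Lemma 2.1]{somaglia-treti}; then Lemma~\ref{L:trees vaha} gives $w(A)=\dens A\le\aleph_0$, whence $A$ is metrizable. Axiom (ii) is Lemma~\ref{L:trees compatibility}.

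The crux is a sublemma: every $y\in S(T)$ lies in some $A\in\A_\omega$. To prove it I first construct a separable $B\in\A^0$ containing $y$, then invoke Lemma~\ref{L:trees continuity}(i) to enlarge $B$ to a separable element of $\A$. If $y\in I(T)\cup\{0\}$ I take $B=\{0,y\}$; otherwise $y$ is on a limit level with countable cofinality, so I pick a strictly increasing sequence $(y_n)$ in $\widehat{y}\cap I(T)$ with $\sup_n y_n=y$ (cofinality of $\widehat{y}\cap I(T)$ in $\widehat{y}$ is immediate, as isolated ordinals are cofinal in every limit ordinal) and set $B=\{0,y\}\cup\{y_n:n\in\omega\}$. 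In either case $B$ is closed in $T$ and verifies condition (3) of Lemma~\ref{L:trees retrakce}, so $B\in\A^0$. Granted the sublemma, axiom (iv) splits on $x\in T$: if $x\in S(T)$ take $A_0\ni x$ provided by the sublemma, so $r_A(x)=x$ for every $A\supset A_0$; if $\cf(x)$ is uncountable, any neighborhood of $x$ contains a basic set $W_z^F=V_z\setminus\bigcup_{u\in F}V_u$ with $z<x$ at an isolated level and $F\subset\ims(x)$ finite, and $z\in I(T)\subset S(T)$ yields $A_0\ni z$ via the sublemma. Then for $A\supset A_0$ the inclusion $z\in A\cap\widehat{x}$ forces $r_A(x)\ge z$, while $r_A(x)\le x<u$ for each $u\in F$ excludes the sets $V_u$, so $r_A(x)\in[z,x]\subset W_z^F$.

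The induced subset is $\bigcup_{A\in\A_\omega}r_A(T)=\bigcup_{A\in\A_\omega}A$, and I claim it equals $S(T)$. The inclusion $S(T)\subset\bigcup_A A$ is the sublemma. Conversely, any $A\in\A_\omega$ has a countable dense subset $D$, and Lemma~\ref{L:trees wedge}(b) expresses every $y\in A$ as $y=\sup\{s\wedge t:s,t\in D,\ s\wedge t\le y\}$, exhibiting $y$ as the supremum of countably many elements of $T$ below it, forcing $\cf(y)\le\aleph_0$ and thus $y\in S(T)$. The main obstacle is axiom (iv), specifically the sublemma producing $A_0\in\A_\omega$ containing a prescribed $y\in S(T)$; this relies essentially on the enlargement step in Lemma~\ref{L:trees continuity}(i), whose proof uses that $T$ is an $r_1$-tree.
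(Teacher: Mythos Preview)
Your proof is correct and follows essentially the same route as the paper's, with two minor differences and one small imprecision worth noting.

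For axiom (iv) the paper avoids your case split: since $\bigcup\A_\omega\supset I(T)$ is dense in $T$, Lemma~\ref{L:trees continuity}(iii) applied with $\A'=\A_\omega$ gives $\overline{\bigcup\A_\omega}=T$ and hence $\lim_{A\in\A_\omega}r_A(x)=r_T(x)=x$ in one stroke. In your sublemma the detour through $\A^0$ and Lemma~\ref{L:trees continuity}(i) is unnecessary: the sets $\{0,y\}$ and $\{0,y\}\cup\{y_n:n\in\omega\}$ are chains, hence already $\wedge$-invariant, so they lie in $\A_\omega$ directly (this is what the paper does).

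The imprecision is in your argument for $\bigcup\A_\omega\subset S(T)$. Lemma~\ref{L:trees wedge}(b) gives $y=\sup\{s\wedge t:s,t\in D,\ s\wedge t\le y\}$, but this set may contain $y$ itself (e.g.\ if $y\in D$), so you have not yet exhibited $y$ as a supremum of points \emph{strictly below} it. The paper argues differently: if $y\in A\in\A$ had uncountable cofinality, then by the defining condition of $\A^0$ the chain $\widehat{y}\cap A$ is cofinal in $\widehat{y}$, hence uncountable and non-separable in its order topology; Proposition~\ref{P:tree monolithic} then makes $A$ non-separable, contradicting $A\in\A_\omega$. Your version is easily repaired---such $y$ is not isolated in $A$ (again by the $\A^0$ condition), so one may choose $D$ with $y\notin D$, and then the $r_1$-tree hypothesis (at most one immediate successor above $y$) rules out $y=s\wedge t$ for $s,t\in D\setminus\{y\}$.
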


\begin{proof}
By Lemma~\ref{L:trees continuity}(ii) $\A_\omega$ is up-directed. If $A\in\A_\omega$, then $r_A(T)=A$, so it is metrizable by Proposition~\ref{P:tree monolithic}, hence the property (i) of retractional skeletons is satisfied. The property (ii) follows from Lemma~\ref{L:trees compatibility}, the property (iii) from Lemma~\ref{L:trees continuity}(iii).

Further, 
$$\bigcup_{A\in\A_\omega} r_A(T)=\bigcup\A_\omega =\{x\in T\setsep \cf(x)\mbox{ is at most countable}\}=S(T).$$
Indeed, the first equality follows from the fact that $r_A(T)=A$ for each $A\in\A_\omega$. Let us prove the second one.

$\subset$: Let $A\in\A_\omega$. Assume that there is some $x\in A$ with uncountable cofinality. Since $A\in\A$, the set  $\widehat{x}\cap A$ is uncountable. Since this set is well ordered and the inherited topology coincides with the order topology, it is not separable. Since separability is hereditary for subsets of $T$ by Proposition~\ref{P:tree monolithic}, $A$ is not separable, which is a contradiction.

$\supset$ If $x\in I(T)$, Then  $\{0,x\}\in\A_\omega$. 
If $x\in S(T)\setminus I(T)$, we can find an increasing sequence $(y_n)$ of elements from $I(T)$ with supremum $x$. Then $\{0,x\}\cup\{y_n\setsep y\in\en\}\in\A_\omega$.

In particular, $\bigcup\A'$ is dense in $T$, hence the property (iv) of retractional skeletons follows from Lemma~\ref{L:trees continuity}(iii). Therefore $(r_A)_{A\in\A_\omega}$ is a retractional skeleton on $T$. The formula for the induced subset follows from the above argument.
\end{proof}

For any $A\in\A$ let 
$$P_A(f)=f\circ r_A,\qquad f\in\C(T).$$
Then we get the following

\begin{prop}\label{P:trees projekcni skeleton}
Let $T$ be an $r_1$-tree. Then $(P_A)_{A\in\A_\omega}$ is a $1$-projectional skeleton on $\C(T)$. The induced subspace of the dual is
$$\begin{aligned}
D(T)&=\{\mu\in \M(T)\setsep \spt\mu\subset S\}\\& =\{\mu\in \M(T)\setsep \mu(\{x\})
=0\mbox{ whenever $\cf(x)$ is uncountable}\}.\end{aligned} $$
\end{prop}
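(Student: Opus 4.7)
My plan is to mirror the proof of Proposition~\ref{P:ord-ps}(a) by combining Proposition~\ref{P:trees retracni skeleton} with Proposition~\ref{P:rs->ps}(a), and then reconcile the resulting description of the induced subspace with the two explicit formulations in the statement.

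The first step is automatic. By Proposition~\ref{P:trees retracni skeleton}, $(r_A)_{A\in\A_\omega}$ is a retractional skeleton on $T$ with induced subset $S(T)$. Proposition~\ref{P:rs->ps}(a) then gives at once that $(P_A)_{A\in\A_\omega}$ is a $1$-projectional skeleton on $\C(T)$ whose induced subspace is
\[
D_0=\{\mu\in\M(T)\setsep\spt\mu\text{ is a separable subset of }S(T)\}.
\]
It remains to prove $D_0=\{\mu\setsep\spt\mu\subset S(T)\}=\{\mu\setsep\mu(\{x\})=0\text{ for }\cf(x)\text{ uncountable}\}$.

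The equality of the two rightmost sets will be handled pointwise. The forward inclusion is trivial: for $x\notin S(T)$ the singleton $\{x\}$ is disjoint from $\spt\mu$, so $\mu(\{x\})=0$. For the converse, fix $x$ with $\cf(x)$ uncountable and $\mu(\{x\})=0$; equivalently $|\mu|(\{x\})=0$, since $|\mu(\{x\})|=|\mu|(\{x\})$ for complex Radon measures. Using that $T$ is an $r_1$-tree, $\ims(x)$ contains at most one element $y$, so basic open neighborhoods of $x$ are $V_z$ or $V_z\setminus V_y$ with $z<x$ in $I(T)$. Outer regularity of $|\mu|$ produces, for each $n\in\en$, some $z_n\in I(T)\cap\widehat{x}$ with $|\mu|(V_{z_n}\setminus V_y)<1/n$. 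As $\cf(x)$ is uncountable, the supremum of the countable set $\{z_n\}$ lies strictly below $x$, so one can pick $z\in I(T)$ with $z_n\le z<x$ for all $n$. Then $V_z\setminus V_y$ (or just $V_z$ if $\ims(x)=\emptyset$) is an open neighborhood of $x$ with $|\mu|(V_z\setminus V_y)\le\inf_n|\mu|(V_{z_n}\setminus V_y)=0$, so $x\notin\spt\mu$.

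The main obstacle is the equality $D_0=\{\mu\setsep\spt\mu\subset S(T)\}$, whose nontrivial direction amounts to showing $\mu\in D_0$ whenever $\spt\mu\subset S(T)$. My plan is to construct an increasing sequence $(A_n)\subset\A_\omega$ such that $|\mu|(T\setminus\overline{\bigcup_n A_n})=0$; then $r_{A_n}(\mu)\to\mu$ in the weak$^*$ topology by dominated convergence combined with Lemma~\ref{L:trees continuity}(iii), and the weak$^*$-countable closedness of $D_0$ from Lemma~\ref{L:tightness}(a) forces $\mu\in D_0$. Producing such a sequence $(A_n)$ is the technical crux of the argument: it requires exploiting the coarse wedge topology at each point $x\in\spt\mu\subset S(T)$ together with the regularity of the finite Radon measure $|\mu|$, using the fact that $\widehat{x}\cap I(T)$ is cofinal to $x$ with countable cofinality to localize $|\mu|$-mass inside countably many elements of $\A_\omega$, in the spirit of the enlargement procedure from Lemma~\ref{L:trees continuity}(i).
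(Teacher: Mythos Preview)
Your first step and your treatment of the second equality are essentially the paper's argument. The paper also derives the skeleton from Proposition~\ref{P:trees retracni skeleton} via Proposition~\ref{P:rs->ps}(a), and proves that $\mu(\{x\})=0$ for $\cf(x)$ uncountable implies $x\notin\spt\mu$ by the same outer-regularity trick (the paper first splits $\mu=\mu_d+\mu_c$ into discrete and continuous parts, but your direct argument via $|\mu|$ is just as good and arguably cleaner).

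The real divergence is in the first equality $D_0=\{\mu\setsep\spt\mu\subset S(T)\}$. The paper disposes of this in one line by invoking \cite[Proposition~2.5]{somaglia-treti}: every Radon measure on a tree with the coarse wedge topology has separable support. Once you know that, $\spt\mu\subset S(T)$ with $\spt\mu$ separable lets you cover a countable dense subset of $\spt\mu$ by a single $A\in\A_\omega$ (using Lemma~\ref{L:trees continuity}(ii) to amalgamate), and then $\spt\mu\subset A$ since $A$ is closed. Your limiting scheme via $(A_n)$ and weak$^*$-countable closedness is aiming at the same target, but the ``technical crux'' you describe---producing $(A_n)$ with $|\mu|(T\setminus\overline{\bigcup_n A_n})=0$---is not carried out, and doing it from scratch essentially amounts to reproving the separability of $\spt\mu$. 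Note also that the countable-tightness half of Lemma~\ref{L:tightness}(a) only applies to points already known to lie in $D_0$, so you cannot shortcut the construction that way. I would recommend simply citing the separability result as the paper does; it turns the hardest part of your outline into a triviality.
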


\begin{proof}
$(P_A)_{A\in\A_\omega}$ is a $1$-projectional skeleton by Proposition~\ref{P:trees retracni skeleton} and Proposition~\ref{P:rs->ps}(a). The latter result also yields that the induced subspace is
$$D(T)=\{\mu\in\M(T)\setsep \spt\mu \mbox{ is a separable subset of }S(T)\}.$$
By \cite[Proposition 2.5]{somaglia-treti} any $\mu\in\M(T)$ has separable support, which proves the first equality. 
Let us show the second one. The inclusion $\subset$ is obvious, let us prove the converse one.

I.e., assume that $\mu\in\M(T)$ is such that $\mu(\{x\})=0$ whenever $\cf(x)$ is uncountable.
Let $\mu=\mu_d+\mu_c$, where $\mu_d$ is a discrete measure and $\mu_c$ is a continuous measure. Let $C=\{x\in T\setsep \mu(\{x\})\ne 0\}$. Then $C$ is a countable subset of $S(T)$,
thus $\overline{C}\subset S(T)$. Since $\spt \mu_d=\overline{C}$, we deduce $\spt\mu_d\subset S(T)$.

It remains to prove that $\spt\mu_c\subset S(T)$ as well. Assume that $x\in T$ with $\cf(x)$ uncountable. We know that $\mu_c(\{x\})=0$, hence also $\abs{\mu_c}(\{x\})=0$. Since $\abs{\mu_c}$ is regular, there is a sequence $(y_n)$ in $\widehat{x}\cap I(T)$ such that $\abs{\mu_c}(W_{y_n}^{\ims F})<\frac1n$. Let $y=\sup_n y_n$. This supremum exists as $(y_n)$ belongs to the well-ordered set $\widehat{x}$, Moreover, $y<x$ as $\cf(x)$ is uncountable. Let $z\in\ims(y)$ be such that $z\le x$. Then $W_z^{\ims(x)}$ is a neighborhood of $x$ such that $\abs{\mu_c}(W_z^{\ims(x)})=0$. Thus $x\notin \spt\mu_c$.
\end{proof}

Let us now provide a Markushevich basis of $\C(T)$ which is a generalization of the canonical Markushevich basis of $\C([0,\eta])$. Note that by the following proposition $\C(T)$ admits a strong Markushevich basis for an arbitrary $T$, projectional skeleton is not required.
We will further discuss its properties in case $T$ is an $r_1$-tree.
We start by defining the respective basis.

For $x\in I(T)$ let $g_x=\chi_{V_x}$. Then $g_x\in\C(T)$. Further,
set
$$\nu_0=\delta_0,\quad \nu_x=\delta_x-\delta_{x^-}, x\in I(T)\setminus\{0\},$$
where $x^-$ denotes the immediate predecessor of $x$.

\begin{prop}\label{P:trees Mbaze ocasni} Let $T$ be a tree.
\begin{itemize}
\item[(a)] The family $(g_x,\nu_x)_{x\in I(T)}$ is a strong Markushevich basis in $\C(T)$.
\item[(b)] Assume $T$ is an $r_1$-tree. Let $H=\{g_x\setsep x\in I(T)\}\cup\{0\}$. Then the following hold.
\begin{itemize}
\item[(b-i)] $H$ is $\sigma(\C(T),D(T))$-closed and $P_A(H)\subset H$ for each $A\in\A_\omega$.
\item[(b-ii)] Nonzero $\sigma(\C(T),D(T))$-accumulation points of $H$ are exactly functions
$$g_x,\quad x\in I(T)\setminus\{0\}, \cf(x^-) \mbox{uncountable}.$$
\item[(b-iii)] $0$ is a $\sigma(\C(T),D(T))$-accumulation point of $H$ if and only if the set of all the maximal elements of $T$ is either infinite or contains an element on a limit level of uncountable cofinality.
\item[(b-iv)] $P_A(H)\subset H$ for each $A\in\A_\omega(T)$.
\end{itemize}
\end{itemize}
\end{prop}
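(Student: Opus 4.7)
The plan is to verify the three conditions defining a (strong) Markushevich basis for (a) and to analyze $H$ via the $r_1$-tree structure for (b). For (a), biorthogonality $\nu_x(g_y)=\delta_{xy}$ is a direct computation: total orderedness of $\widehat{x}$ forces $y\le x^-$ when $x\ne y$ and $x\ge y$, so the two indicators in $\nu_x(g_y)=[x\ge y]-[x^-\ge y]$ cancel, while incomparable $x,y$ give both indicators zero. Point-separation is a transfinite induction on $\htt(t)$ starting from $f(0)=\nu_0(f)=0$; isolated levels propagate via $f(t)=f(t^-)$, and limit levels via continuity using that $\widehat{t}\cap I(T)$ is cofinal in $\widehat{t}$ with supremum $t$ (Lemma~\ref{L:trees retrakce} applied to $A=T$). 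The strong linear density condition $f\in\clin\{g_x:\nu_x(f)\ne 0\}$ is the main nontrivial step, and I plan to adapt the Hahn--Banach transfinite argument of Proposition~\ref{P:ord Mbaze ocasni}(a): for a $\mu\in\M(T)$ vanishing on $\{g_x:\nu_x(f)\ne 0\}$, a chain-complete induction using the clopen decomposition $V_u=\{u\}\sqcup\bigsqcup_{v\in\ims(u)}V_v$ to glue across branchings will yield $\mu(f)=0$, with the careful handling of branching at limit-level points being the main obstacle of (a).

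For (b-i) and (b-iv), the inclusion $P_A(H)\subset H$ is direct: given $A\in\A$ and $x\in I(T)$, if $A\cap V_x\ne\emptyset$ then by $\wedge$-invariance of $A$ and well-ordering of each $\widehat{a}$ the set $A\cap V_x$ has a minimum $y$, which lies in $I(T)$ because a limit-level $y$ would, via $A\in\A^0$ and the total ordering of $\widehat{y}\ni x$, admit elements of $A$ in $[x,y)$, contradicting minimality; then $P_Ag_x=g_y$ (or $P_Ag_x=0$ if $A\cap V_x=\emptyset$). For $\sigma(\C(T),D(T))$-closedness of $H$, since $H\subset B_{\C(T)}$ and the ball is $\sigma$-closed, any $\sigma$-limit $f$ of $(g_{x_\alpha})$ lies in $B_{\C(T)}$; pointwise convergence on the dense set $I(T)\subset S(T)$ combined with continuity of $f$ forces $f$ to take values in $\{0,1\}$ on $T$ and $\{f=1\}$ to be upward-closed clopen. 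The crucial step is uniqueness of the antichain of minimal elements of $\{f=1\}$: two incomparable minima $b_1,b_2\in I(T)$ would force $\lvert\ims(b_1\wedge b_2)\rvert\ge 2$, which by the $r_1$-tree hypothesis places $b_1\wedge b_2\in S(T)$; the eventual relations $x_\alpha\le b_1,b_2$ then force $x_\alpha\le b_1\wedge b_2$, so $g_{x_\alpha}(b_1\wedge b_2)=1$, contradicting $f(b_1\wedge b_2)=0$ at the $S(T)$-point $b_1\wedge b_2$.

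For (b-ii), the positive direction uses the cofinal net $(y_\alpha)$ in $\widehat{x^-}\cap I(T)$: the $r_1$-tree property gives $\ims(x^-)=\{x\}$, so $V_{y_\alpha}\searrow V_{x^-}=V_x\cup\{x^-\}$, and by continuity from above of Radon measures on decreasing nets of closed sets in compact spaces, combined with $\mu(\{x^-\})=0$ for $\mu\in D(T)$, we obtain $\mu(V_{y_\alpha})\to\mu(V_x)$, so $g_{y_\alpha}\to g_x$ in $\sigma$. The negative direction (for $x=0$ or $\cf(x^-)\le\omega$) places the relevant point in $S(T)$, and pointwise convergence there combined with $x_\alpha\le x$ forces $x_\alpha=x$ eventually since $\widehat{x}=\widehat{x^-}\cup\{x\}$. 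For (b-iii), the ``only if'' direction is by compactness: any clopen $V_{x_\alpha}$ with $x_\alpha\in I(T)$ contains some maximal element (extend any of its points to a maximal chain in $V_{x_\alpha}$ via upward closure and chain completeness), so a finite $M\subset S(T)$ yields by pigeonhole some $m$ with $g_{x_\alpha}(m)=1$ cofinally, preventing $\sigma$-convergence to $0$. For the ``if'' direction, if some $m\in M$ has $\cf(m)$ uncountable the (b-ii) construction gives $\mu(V_{y_\alpha})\to\mu(\{m\})=0$; if $\lvert M\cap I(T)\rvert$ is infinite, distinct singletons $V_{x_n}=\{x_n\}$ work by summability of atoms of $\mu\in D(T)$; the remaining subcase ($\lvert M\rvert=\infty$ with $M\cap I(T)$ finite and all remaining maxes of countable cofinality) is the principal obstacle and I plan to address it by finding an accumulation point of $M$ in $T$ and diagonalizing to produce pairwise disjoint clopen neighborhoods $V_{x_n}$ with $x_n\in I(T)$, again invoking summability of $\mu$ on disjoint clopens.
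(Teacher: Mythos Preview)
Your plan is essentially correct and tracks the paper's proof closely, with a few tactical substitutions worth noting.

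For (b) you use net characterizations of closure and isolation where the paper exhibits explicit neighborhoods (e.g., $\{f:f(x^-)\ne1,\ f(x)\ne0\}$ to isolate $g_x$) and an explicit pointwise formula for $H$ (namely $H=\{f: f(S(T))\subset\{0,1\}\text{ and }f(x\wedge y)=f(x)f(y)\text{ whenever }x\wedge y\in S(T)\}$). Your net arguments are valid; in (b-ii) your appeal to ``continuity from above for decreasing nets of closed sets'' is justifiable via outer regularity plus compactness, but the paper bypasses this by using that $\spt\mu$ is a compact subset of $S(T)$ (Proposition~\ref{P:trees projekcni skeleton}), so that eventually $\mu(V_{y_\alpha})=\mu(V_x)$ exactly, not merely in the limit. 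In (b-iii) your subcase split on $M\cap I(T)$ is unnecessary: the paper handles ``$M$ infinite'' in one stroke by inductively picking $x_n\in M$ and $y_n\in I(T)$ with the $V_{y_n}$ pairwise disjoint and $M\setminus\bigcup_{j\le n}V_{y_j}$ still infinite, then invoking $g_{y_n}\to0$ weakly by dominated convergence. This is cleaner than your diagonalization around an accumulation point.

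The one place where your plan is genuinely thin is the strong-density step in (a). The paper's transfinite scheme builds closed downward-closed sets $T_\alpha$ by adjoining, at each step, the maximal ``$f$-constant'' intervals above the minimal elements $R_\alpha$ of $T\setminus\bigcup_{\beta<\alpha}T_\beta$, and proves $\int_{T_\alpha}f\,d\mu=0$ by induction. The isolated step is exactly your decomposition $V_u=\{u\}\sqcup\bigsqcup_{v\in\ims(u)}V_v$ together with $\tau$-additivity \eqref{eq:tauadditive}. The limit step, however, is not routine: one must show $\int_{T_\alpha\setminus T_\alpha^0}f\,d\mu=0$, and the paper does this by proving that the antichain $R_\alpha$ is $r_H(\mu)$-null (for $H=T_\alpha^0\cup R_\alpha\in\A$), via a $\sigma$-disjoint basis argument. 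Finally, one needs that $\spt\mu$ is separable (from \cite{somaglia-treti}) to get $\spt\mu\subset T_\alpha$ for some $\alpha<\omega_1$. Your phrase ``careful handling of branching at limit-level points'' does not yet capture this; be prepared for the image-measure argument at limit stages.
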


\begin{proof}
(a) It is clear that $(g_x,\nu_x)_{x\in I(T)}$ is a biorthogonal system, i.e., the first property of Markushevich bases is fulfilled. Let us continue by the third one, i.e., by showing that the measures $\nu_x$, $x\in I(T)$ separate points of $\C(T)$. To this end fix $f\in\C(T)\setminus\{0\}$. There is some $y\in T$ with $f(y)\ne 0$. Recall that $\widehat{y}$ is well ordered, so we can take the smallest $x\in\widehat{y}$ with $f(x)\ne0$. Since $f$ is continuous, necessarily $x\in I(T)$. Moreover, clearly $\nu_x(f)\ne0$.

To finish the proof we will need the following property of measures on $T$:
\begin{equation}\label{eq:tauadditive}
\begin{aligned}
\forall\nu\in\M(T)\,\forall C\subset I(T)&\mbox{ consisting of mutually incomparable elements}: \\&\nu\left(\bigcup_{x\in C}V_x\right)=\sum_{x\in C}\nu(V_x)\quad\mbox{(the series converges absolutely).}\end{aligned}
\end{equation}
Indeed, since $C$ consist of mutually incomparable elements on isolated levels, the family $V_x$, $x\in C$, is a disjoint family of open sets. Therefore the equality follows from $\tau$-additivity of Radon measures.

The second property of Markushevich bases follows from the stronger property defining strong Markushevich bases. Fix $f\in\C(T)$. Set 
$$A=\{x\in I(T)\setsep \nu_x(f)\ne 0\},\quad M=\{g_x\setsep x\in A\}.$$
The proof will be complete if we show $f\in\clin M$. To this end we will use the Hahn-Banach theorem.
So, fix any $\mu\in\M(T)$ such that $\mu|_M=0$. We are going to show that $\mu(f)=0$.
If $f=0$, the assertion is trivial, so suppose $f\ne 0$. If $f$ is constant, then $f=f(0)\ne0$, thus $0\in A$ and $\chi_{V_0}=1\in M$. Therefore
$$\mu(f)=f(0)\mu(V_0)=0.$$
So, assume $f$ is not constant. We will construct by transfinite induction subsets $T_\alpha\subset T$ and $R_\alpha\subset T$ as follows.

Set $T_0=\emptyset$.

Assume that $\alpha>0$ and that we have constructed $T_\beta$ for $\beta<\alpha$. Assume moreover that $(T_\beta)_{\beta<\alpha}$ is an increasing transfinite sequence of closed sets which are also downward closed (i.e., $\widehat{x}\subset T_\beta$ whenever $x\in T_\beta$ for $\beta<\alpha$). We define $R_\alpha$ to be the set of all the minimal elements from $T\setminus \bigcup_{\beta<\alpha} T_\beta$. Note that $R_\alpha$ consists of mutually incomparable elements of $T$ and $R_1=\{0\}$.
Set
$$T_{\alpha}=\bigcup_{\beta<\alpha}T_\beta\cup\bigcup_{x\in R_\alpha}\{ y\in V_x\setsep f\mbox{ is constant on }[x,y]\}.$$
It is clear that $T_{\alpha}\supset \bigcup_{\beta<\alpha}T_\beta\cup R_\alpha$ and it is downward closed. Further, it is also a closed subset of $T$. Indeed, fix any $y\in T\setminus T_{\alpha}$.
Then, in particular, $y\in T\setminus \bigcup_{\beta<\alpha} T_\beta$, thus there is $x\in R_\alpha$ with $x\le y$.
Since $x\in T_{\alpha}$, necessarily $x<y$. Further, $f$ is not constant on $[x,y]$. Let $z\in[x,y]$ the the smallest element with $f(z)\ne f(x)$. Then $z>x$ and, by continuity of $f$, it is on an isolated level. So, $V_z$ is an open subset of $T$. Further, clearly $y\in V_z\subset T\setminus T_{\alpha}$. Hence $T\setminus T_{\alpha}$ is open, so $T_{\alpha}$ is closed.

Observe that 
$$T=\bigcup_{\alpha<\omega_1}T_\alpha.$$
Indeed, assume that $T\setminus \bigcup_{\alpha<\omega_1}T_\alpha\ne\emptyset$.
So, fix a minimal $x\in T\setminus \bigcup_{\alpha<\omega_1}T_\alpha$. For each $\alpha\in[1,\omega_1)$ let $x_\alpha$ be the unique element of $\widehat{x}\cap R_\alpha$.
By construction the net $(x_\alpha)$ is strictly increasing and has supremum $x$. It follows that $\cf(x)=\aleph_1$. But $f$, being continuous on $\widehat{x}$, is constant on $[y,x]$ for some $y<x$. Let $\alpha<\omega_1$ be such that $x_\alpha>y$. Then $f(x_{\alpha+1})=f(x_\alpha)$, a contradiction.

Further, for each $\alpha\in[1,\omega_1)$ isolated we have $R_\alpha\subset A$, so $\chi_{V_x}=g_x\in M$ whenever $x\in R_\alpha$. Therefore
$$\forall \alpha\in[1,\omega_1)\mbox{ isolated }\forall x\in R_\alpha: \mu(V_x)=0,$$
in particular 
$$\mu(T\setminus T_\alpha)=\mu(\bigcup_{x\in R_\alpha} V_x)=\sum_{x\in R_\alpha} \mu(V_x)=0$$
for any isolated $\alpha\in[1,\omega_1)$ isolated (by \eqref{eq:tauadditive}).

We further claim that $\int_{T_\alpha}f\di\mu=0$ for each $\alpha<\omega_1$. 

Let us start by proving it for $\alpha=1$. Note that $f=f(0)$ on $T_1$. So, if $f(0)=0$, the integral is zero by trivial reasons. Assume $f(0)\ne0$. Then $0\in A$, hence $1=g_0\in M$, so 
$\mu(T)=0$. It follows that
$$\int_{T_1} f\di\mu=f(0)\mu(T_1)=f(0)(\mu(T)-\mu(T\setminus T_1))=0.$$

Next assume that $\alpha\in[1,\omega_1)$ and $\int_{T_\alpha}f\di\mu=0$. Then
$$\begin{aligned}
\int_{T_{\alpha+1}\setminus T_\alpha} f\di\mu&=\sum_{x\in R_{\alpha+1}} \int_{V_x\cap T_{\alpha+1}} f\di\mu
=\sum_{x\in R_{\alpha+1}} f(x)\cdot\mu(V_x\cap T_{\alpha+1})
\\&=\sum_{x\in R_{\alpha+1}} f(x)\cdot\mu\left(V_x\setminus\bigcup_{y\in V_x\cap R_{\alpha+2}}V_y\right)
\\&=\sum_{x\in R_{\alpha+1}} f(x)\left(\mu(V_x) - \sum_{y\in V_x\cap R_{\alpha+2}}\mu(V_y)\right)=0\end{aligned}
$$
Thus $\int_{T_{\alpha+1}}f\di\mu=0$.

Finally, assume $\alpha<\omega_1$ is limit and $\int_{T_\beta}f\di\mu=0$ for each $\beta\in[1,\alpha)$. 
Let $T_\alpha^0=\bigcup_{\beta<\alpha}T_\beta$. Then $T_\alpha^0$ is an $F_\sigma$ set and $\int_{T_\alpha^0}f\di\mu=0$ by sigma-additivity of $\mu$. 

If $R_\alpha=\emptyset$, i.e., $T_\alpha^0=T$, the proof is completed. So, assume $R_\alpha\ne\emptyset$. Note that $R_\alpha$ is a Borel set, as $H=T_\alpha^0\cup R_\alpha$ is closed.
Moreover, since $H$ is also downward closed, we have $H\in\A$, thus the retraction $r_H$ is continuous. We claim that the set $R_\alpha$ is $r_H(\mu)$-null, that is
$$\forall B\subset R_\alpha\mbox{ Borel}:r_H(\mu)(B)=0.$$
Indeed, let $\B$ denote the set of all the Borel subsets of $R_\alpha$ which have $r_H(\mu)$-measure zero. Observe that $V_y\cap R_\alpha\in\B$ for any $y\in \bigcup_{\beta<\alpha}R_{\beta+1}$. Indeed, let $y\in R_{\beta+1}$ for some $\beta<\alpha$. Then
$$r_H(\mu)(V_y\cap R_{\alpha})=\mu(r_H^{-1}(V_y\cap R_\alpha))=
\mu\left(\bigcup_{x\in V_y\cap R_\alpha} V_x\right) =\mu\left(\bigcap_{\gamma\in (\beta,\alpha)}
\bigcup_{z\in V_y\cap R_{\gamma+1}} V_z\right)=0.
$$
The first two equalities follow from definitions. The third one follows from the equality of the respective sets, which we are going to prove.

$\subset$: Assume $x\in V_y\in R_\alpha$. Let $\gamma\in(\beta,\alpha)$ be arbitrary. Since $y\in R_{\beta+1}\subset T_\gamma$ and $x\notin T_\gamma$, there is (a unique) $z\in R_{\gamma+1}$ with $z\in(y,x)$. Then $z\in V_y\cap R_{\gamma+1}$ and $V_x\subset V_z$.

$\supset$: Assume that for each $\gamma\in(\beta,\alpha)$ there is some $z_\gamma\in V_y\cap R_{\gamma+1}$ with $u\in V_{z_\gamma}$. Then $(z_\gamma)_{\gamma\in(\beta,\alpha)}$ is an increasing net in $(y,u)$. Denote its supremum by $x$. Then $u\ge x$, so $u\in V_x$. Moreover, $x\in R_\alpha$ by construction. The last equality follows from \eqref{eq:tauadditive}.

Further, the sets of the form $R_\alpha\cap V_y$, $y\in \bigcup_{\beta<\alpha} R_{\beta+1}$, form a basis of the topology of $R_\alpha$. This basis is $\sigma$-disjoint and closed to finite intersections. It follows that each open set belong to $\B$, thus $\B$ contains all Borel sets.
Thus
$$\begin{aligned}
0&=\int_{R_\alpha} f\di r_H(\mu) 
= \int_{{T\setminus T_\alpha^0}} f\circ r_H\di\mu
= \int_{T_{\alpha}\setminus T_\alpha^0} f\circ r_H\di\mu + \int_{{T\setminus T_\alpha}} f\circ r_H\di\mu  
\\&=\int_{{T_{\alpha}\setminus T_\alpha^0}}f\di\mu
+ \sum_{x\in R_{\alpha+1}}f(r_H(x))\mu(V_x)
= \int_{{T_{\alpha}\setminus T_\alpha^0}}f\di\mu,\end{aligned}$$
hence $\int_{T_\alpha}f\di\mu=0$, completing the induction argument.

Finally, since $(T_\alpha)_{\alpha<\omega_1}$ is an increasing transfinite sequence of closed sets covering $T$ and $\spt\mu$ is separable (see \cite[Proposition 2.5]{somaglia-treti}), there is some $\alpha<\omega_1$ such that $\spt\mu\subset T_\alpha$. It follows that $\int_T f\di\mu=0$ which completes the proof.

(b) Assume that $T$ is an $r_1$-tree. To prove (b-i) we observe that
$$ H=\{f\in\C(T)\setsep \forall x\in S(T): f(x)\in\{0,1\} 
\& \forall x,y\in S(T): x\wedge y\in S(T)\Rightarrow f(x\wedge y)=f(x)\cdot f(y)\}.
$$
Indeed, the inclusion $\subset$ is obvious. To prove the converse one fix any $f$ in the set on the right-hand side. Since $S(T)$ is dense, $f$ attains only the values $0$ and $1$, so $f=\chi_A$ for a clopen set $A\subset T$. Given $x\in A$, we have $f(x)=1$. By continuity of $f$ we can find some $y\in\widehat{x}\cap I(T)$ with $f(y)=1$. So, we get $V_y\cap S(T)\subset A$. Since $S(T)$ is dense, we deduce $V_y\subset A$. It follows that $A$ is covered by sets $V_y$, $y\in A\cap I(T)$. By compactness we can find a finite subcover. Moreover, this subcover can be disjoint (as any two sets of the form $V_y$ are either disjoint or one of them contains the other). We claim that this cover contains only one set. Indeed, given any two points $y,z\in A\cap I(T)$ such that the sets $V_y$ and $V_z$ belong to the subcover and are disjoint, we deduce that $y$ and $z$ are incomparable, thus $y\wedge z\in S(T)$ (as $T$ is an $r_1$-tree), so $y\wedge z\in A$. It follows that there is some $x\in I(T)\cap A$ such that $V_z$ belongs to the subcover and $y\wedge z\in V_x$. But then $V_y\cup V_z\subset V_x$, a contradiction with the assumption that the subcover is disjoint.

So, the equality is proved. Finally, it is clear that the set of the right-hand side is $\tau_p(S(T))$-closed and hence, a fortiori, $\sigma(\C(T),D(T))$-closed.

Let us continue by proving the assertion (b-ii). Let $x\in I(T)$. If $x=0$, then $g_x$ is an isolated point of $H$ as
$$\{f\in\C(T)\setsep f(0)\ne0\}\cap H=\{g_0\}.$$
If $x\ne 0$ and $x^-\in S(T)$,
then $g_x$ is again an isolated point of $H$ as
$$\{f\in\C(T)\setsep f(x)\ne0\ \&\ f(x^-)\ne1\}\cap H=\{g_x\}.$$
Finally, assume that $x\in I(T)\setminus\{0\}$ and $x^-$ has uncountable cofinality. We are going to prove 
$$g_y \xrightarrow[y\in \widehat{x^-}\cap I(T)]{\sigma(\C(T),D(T))} g_x.$$
To this end take any $\mu\in D(T)$. Then $\spt\mu$ is a compact subset of $S(T)$.
In particular, $x^-\notin\spt\mu$. Thus there is some $y_0\in\widehat{x}\cap I(T)$ such that
$W_{y_0}^{\{x\}}\cap \spt\mu=\emptyset$ (recall that $\ims(x^-)=\{x\}$). Then for each $y\in (y_0,x)\cap I(T)$ we have
$$\int g_y\di\mu=\mu(V_y)=\mu(V_x)=\int g_x\di\mu. 
$$
Hence, the convergence is proved, so $g_x$ is an accumulation point of $H$ and the proof of (b-ii) is completed.

Let us look at (b-iii). Denote by $M$ the set of all the maximal elements of $T$. If there is some $x\in M\setminus S(T)$, then in the same way as above we prove that
$$g_y \xrightarrow[y\in \widehat{x}\cap I(T)]{\sigma(\C(T),D(T))} 0,$$
so $0$ is an accumulation point of $H$.

Next assume that $M$ is infinite. We will construct by induction elements $x_n\in M$ and $y_n\in I(T)$ such that the following conditions are fulfilled for each $n\in\en$.
\begin{itemize}
\item $y_n\le x_n$;
\item $y_n>\max\{y_j\wedge x_n\setsep 1\le j<n\}$;
\item $M\setminus \bigcup_{j=1}^n V_{y_j}$ is infinite.
\end{itemize}

We start by fixing two distinct points $a,b\in M$. Since they are incomparable, $a\wedge b< a$ and $a\wedge b<b$. So, we can fix $c,d\in\ims(a\wedge b)$ with $c\le a$ and $d\le b$. Then $V_c\cap V_d=\emptyset$, hence at least one of the sets $M\setminus V_c$, $M\setminus V_d$ is infinite.
Without loss of generality assume the first case occurs. Then set $x_1=a$ and $y_1=c$ and all the conditions are fulfilled for $n=1$.

Further, assume that $n\in\en$ and $x_j$ and $y_j$ are given for $j\le n$ such that the conditions are fulfilled for $j\le n$. Fix two distinct points $a,b\in M\setminus\bigcup_{j=1}^n V_{y_j}$ (this is possible as the respective set is infinite). Fix $c,d\in I(T)$ such that
$$\max\{a\wedge b,a\wedge y_1,\dots,a\wedge y_n\}<c\le a,\
\max\{a\wedge b,b\wedge y_1,\dots,b\wedge y_n\}<d\le b.$$
Then $V_c$ and $V_d$ are disjoint, hence at least one of the sets
$M\setminus (\bigcup_{j=1}^n V_{y_j}\cup V_c)$, $M\setminus (\bigcup_{j=1}^n V_{y_j}\cup V_d)$
is infinite. Assume without loss of generality that the first case occurs. Then we can set $x_{n+1}=a$ and $y_{n+1}=c$.

Therefore, the construction can be performed. Note that the sets $V_{y_n}$, $n\in\en$, are pairwise disjoint, hence
$$g_{y_n}=\chi_{V_{y_n}}\to 0\mbox{ pointwise on }T,$$
hence also $g_{y_n}\to 0$ weakly in $\C(T)$ (by Lebesgue dominated convergence theorem), hence, a fortiori, $g_{y_n}\to 0$ in $\sigma(\C(T),D(T))$. It follows that $0$ is a $\sigma(\C(T),D(T))$-accumulation point of $H$. This completes the proof of the `if part' of (b-iii).

To prove the `only if part' assume that $M$ is finite and $M\subset S(T)$. Then 
$$U=\{f\in\C(T)\setsep \abs{f(x)}<1\mbox{ for }x\in M\}$$
is a $\sigma(\C(T),D(T))$-neighborhood of $0$ and $U\cap H=\{0\}$. Thus $0$ is an isolated point of $H$.

It remains to prove the assertion (b-iv). So, fix $A\in\A_\omega(T)$. Then, of course, $P_A0=0$.
Further, clearly $P_Ag_x=0$ if $A\cap V_x=\emptyset$. So, assume that $A\cap V_x\ne\emptyset$. Since $A$ is closed and stable to the operation $\wedge$, it follows that the set $A\cap V_x$ admits a minimum, say $y$. Then $P_Ag_x=g_y$.
\end{proof}

The next proposition provides a construction of a projectional generator in the spaces $\C(T)$.

\begin{prop}\label{P:trees PG}
Let $T$ be an $r_1$-tree.
\begin{itemize}
\item For any $\mu\in D(T)$ there is a countable set $C(\mu)\subset I(T)$ such that 
$$\forall x,y\in\spt\mu: x\wedge y=\sup(C(\mu)\cap\widehat{x\wedge y}).$$ 
\item For each $\mu\in D(T)$ set $$\Phi(\mu)=\{g_x\setsep x\in C(\mu)\}.$$  Then the pair $(D(T),\Phi)$ is a projectional generator on $\C(T)$.
\end{itemize} 
\end{prop}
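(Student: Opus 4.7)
The first bullet is constructive. Take a countable dense $D_0\subset\spt\mu$ and set $D_1=\{s\wedge t:s,t\in D_0\}$, which is countable and closed under~$\wedge$ by Lemma~\ref{L:trees wedge}(c). The $r_1$-tree hypothesis forces $D_1\subset S(T)$: if the wedge $z=s\wedge t$ had uncountable cofinality with $s,t$ incomparable, two distinct immediate successors of $z$ would sit below $s$ and below $t$, contradicting the $r_1$-condition. For each $z\in D_1\setminus I(T)$ the countable cofinality of $z$ lets us fix a countable $K(z)\subset I(T)\cap\widehat{z}$ with $\sup K(z)=z$; set $C(\mu)=(D_1\cap I(T))\cup\bigcup_{z\in D_1\setminus I(T)} K(z)\subset I(T)$. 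For the sup property, given $x,y\in\spt\mu$, Lemma~\ref{L:trees wedge}(b) applied to $D_0$ gives $x\wedge y=\sup\{s\wedge t:s,t\in D_0,\ s\wedge t\le x\wedge y\}$; each such $s\wedge t\in D_1\cap\widehat{x\wedge y}$ either lies in $C(\mu)$ directly (when $s\wedge t\in I(T)$) or is the supremum of $K(s\wedge t)\subset C(\mu)\cap\widehat{s\wedge t}$. In both cases $s\wedge t\le\sup(C(\mu)\cap\widehat{x\wedge y})$, which combined with the Lemma gives the required equality.

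For the projectional generator property, $D(T)$ is $1$-norming because it is induced by a $1$-projectional skeleton (Proposition~\ref{P:trees projekcni skeleton}), and $\Phi$ is obviously countably-valued. Fix a linear subspace $A\subset D(T)$, assume $\nu\in\Phi(A)^\perp\cap\wscl{A}\setminus\{0\}$, and aim for a contradiction. Writing $\nu$ as a weak$^*$-limit of a net $(\mu_\alpha)$ in $A$ and setting $F=\bigcup_{\mu\in A}\spt\mu$, a standard Urysohn--Riesz argument shows $\spt\nu\subset\overline{F}$. The orthogonality hypothesis reads $\nu(V_y)=0$ for $y\in C(\mu)$, for every $\mu\in A$. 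Combined with the sup property of $C(\mu)$, the countable cofinality or isolated height of points of $S(T)$, and $\sigma$-additivity applied to $V_{y_n}\downarrow V_z$ along a cofinal sequence in $C(\mu)\cap\widehat{z}$, this upgrades to $\nu(V_z)=0$ for every $z\in\spt\mu$ and, via the same argument applied to wedges, for every $z=x\wedge y$ with $x,y\in\spt\mu$.

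To propagate the vanishing across different $\mu$'s I use linearity of $A$ via a standard genericity argument: given $x\in\spt\mu_1$ and $y\in\spt\mu_2$, for all but countably many scalars $t$ (bad values come from local cancellation on a countable neighborhood basis) the combination $\mu^*=\mu_1+t\mu_2\in A$ has both $x,y\in\spt\mu^*$, so $x\wedge y$ lies in $F'':=\bigcup_{\mu\in A}\{s\wedge t:s,t\in\spt\mu\}$ and $\nu(V_{x\wedge y})=0$; iterating shows that $F''$ is stable under all wedges of $F$-elements. Applying Lemma~\ref{L:trees wedge}(b) to $F''$, every $z\in\overline{F''}\supset\spt\nu$ satisfies $z=\sup(F''\cap\widehat{z})$; along an increasing net $y_\alpha\uparrow z$ in $F''\cap\widehat{z}$ the closed sets $V_{y_\alpha}$ decrease to $V_z$, and outer regularity of the finite Radon measure $\nu$ forces $\nu(V_z)=\lim_\alpha\nu(V_{y_\alpha})=0$, even in case of uncountable cofinality. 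Hence $\nu(V_z)=0$ for every $z\in\spt\nu$.

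By the Markushevich basis of Proposition~\ref{P:trees Mbaze ocasni}(a) there is $x_0\in I(T)$ with $\nu(V_{x_0})\ne 0$, so the compact set $V_{x_0}\cap\spt\nu$ is nonempty. For each $z\in V_{x_0}\cap\spt\nu$ I pick $w\in F''\cap[x_0,z]$ (which exists from the sup representation of $z$ together with $z\ge x_0$), write $w$ as a wedge within some single $\spt\mu^*$ for $\mu^*\in A$, and use the sup property of $C(\mu^*)$ to find $r(z)\in C(\mu^*)\cap[x_0,w]\subset I(T)$. Then $V_{r(z)}$ is clopen, contains $z$, sits inside $V_{x_0}$, and has $\nu$-measure zero. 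Compactness extracts a finite subcover $V_{x_0}\cap\spt\nu\subset V_{r_1}\cup\dots\cup V_{r_k}$; passing to the minimal elements $m_1,\dots,m_\ell$ of $\{r_1,\dots,r_k\}$ in the tree order yields an antichain, and since any two incomparable tree elements $u,v$ satisfy $V_u\cap V_v=\emptyset$ the sets $V_{m_i}$ are pairwise disjoint and still cover $V_{x_0}\cap\spt\nu$. Summing, $\nu(V_{x_0})=\nu(V_{x_0}\cap\spt\nu)=\sum_i\nu(V_{m_i})=0$, contradicting $\nu(V_{x_0})\ne 0$. The main obstacle is the linearity-driven wedge-extension step, which upgrades $\nu(V_y)=0$ from single-$\mu$ wedges to arbitrary wedges inside $\overline{F}$; once this is in place, the $r_1$-condition supplies enough $I(T)$-approximations in $\overline{F''}$ to close out the compactness argument.
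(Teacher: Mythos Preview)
Your construction of $C(\mu)$ in the first bullet is the same as the paper's. Your appeal to Lemma~\ref{L:trees wedge}(b) at the point $x\wedge y$ is slightly incomplete: the lemma needs $x\wedge y\in\overline{D_0}=\spt\mu$, which can fail for incomparable $x,y$. The paper handles this by a short case split, showing in the incomparable case that $x\wedge y$ actually lies in $D_1$ itself (apply the lemma once to $x$ and once to $y$ to get $a\wedge b\in(x\wedge y,x]$ and $c\wedge d\in(x\wedge y,y]$ with $a,b,c,d\in D_0$, then use Lemma~\ref{L:trees wedge}(c)). This is a cosmetic patch.

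Your second-bullet strategy is genuinely different from the paper's. The paper runs an \emph{infinite descent}: from $\nu(V_x)\ne0$ it repeatedly uses only the hypothesis $\nu\in\wscl{\lin M}$ to locate some $\mu\in M$ nonzero on a shrinking clopen piece of $V_x$, picks a support point there, and wedges down to produce a strictly decreasing sequence in the well-ordered chain $\widehat{z_1}\cap V_x$ --- an immediate contradiction. No structural hypothesis on $M$ beyond the weak$^*$-closure is needed. Your route instead tries to prove $\nu(V_z)=0$ on a wedge-closed set containing $\spt\nu$ and then finishes by a compactness cover of $V_{x_0}\cap\spt\nu$.

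There is a genuine gap in your propagation step. You assume $A$ is a \emph{linear} subspace so that $\mu_1+t\mu_2\in A$, but the paper's definition of projectional generator demands the conclusion for every $A\subset D(T)$ with $\overline A$ linear. For such $A$ you only get $\mu_1+t\mu_2\in\overline A$, and then $\Phi(\mu_1+t\mu_2)\not\subset\Phi(A)$ in general, so you cannot conclude $\nu\perp\Phi(\mu_1+t\mu_2)$ and hence cannot deduce $\nu(V_{x\wedge y})=0$ for $x\in\spt\mu_1$, $y\in\spt\mu_2$ from different $\mu_i\in A$. Since $\Phi(A)^\perp\supsetneq\Phi(\lin A)^\perp$ in general, verifying the condition for linear $A$ does not yield it for all $A$ with $\overline A$ linear. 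The paper's descent argument sidesteps this entirely because it never combines measures; at each stage it merely extracts a single $\mu\in M$ via the weak$^*$-closure.
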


\begin{proof} Let $\mu\in D(T)$. Then $\spt\mu$ is a separable subset of $S(\mu)$.  
Let $C_0(\mu)$ be a countable dense subset of $\spt\mu$. Set $C_1(\mu)=\{x\wedge y\setsep x,y\in C_0(\mu)\}$. Then $C_1(\mu)$ is countable and it is contained in $S(T)$ (as $T$ is an $r_1$-tree). So, we can find a countable subset $C(\mu)\subset I(T)$ such that 
 $$x=\sup(C(\mu)\cap\widehat{x})\mbox{ for }x\in C_1(\mu).$$
Let us show that $C(\mu)$ has the property. Let $x,y\in\spt\mu$. We distinguish the following possibilities:

Case 1: $x$ and $y$ are comparable. Without loss of generality $x\le y$, i.e., $x\wedge y=x$.
This case splits into two subcases:

(a) $x\in I(T)$: By Lemma~\ref{L:trees wedge}(b) there are $a,b\in C_0(\mu)$ such that $a\wedge b=x$. Hence $x\in C_1(\mu)$, so necessarily $x\in C(\mu)$.

(b) $x\notin I(T)$: Fix any $z<x$. By Lemma~\ref{L:trees wedge}(b) there are $a,b\in C_0(\mu)$ such that $z<a\wedge b\le x$. Then $a\wedge b\in C_1(\mu)$, so there is some $y\in C(\mu)\cap(z,a\wedge b)\subset C(\mu)\cap (z,x)$.

Case 2: $x$ and $y$ are incomparable. Then $x\wedge y<x$ and $x\wedge y<y$.  By Lemma~\ref{L:trees wedge}(b) there are $a,b,c,d\in C_0(\mu)$ such that $x\wedge y<a\wedge b\le x$ and $x\wedge y<c\wedge d\le y$. It follows that $(a\wedge b)\wedge (c\wedge d)=x\wedge y$.
By Lemma~\ref{L:trees wedge}(c) we get $x\wedge y\in C_1(\mu)$, thus the conclusion follows.

\smallskip

Hence we can define the mapping $\Phi$. Let us observe that
\begin{equation}
\label{eq:nulovost}
\forall\mu\in D(T)\,\forall\nu\in\Phi(\mu)^\perp\,\forall x,y\in \spt\mu:\nu(V_{x\wedge y})=0.\end{equation}
Indeed, fix any $\mu\in D(T)$ and $\nu\in\Phi(\mu)^\perp$. Then $\nu(V_x)=0$ for each $x\in C(\mu)$. If $x,y\in\spt\mu$, by the choice of $C(\mu)$ there is an increasing sequence $(z_n)$ in $C(\mu)$ with supremum $x\wedge y$. Then
$$\nu(V_{x\wedge y})=\nu(\bigcap_{n}V_{z_n})=\lim_n\nu(V_{z_n})=0.$$

Let us continue by showing that $(D(T),\Phi)$ is a projectional generator. Fix any $M\subset D(T)$ and any measure $\nu\in \clin^{w^*}{M}\cap \Phi(D)^\perp$. Assume that $\nu\ne 0$. It follows that there is some $x\in I(T)$ with $\nu(g_x)\ne 0$.

We perform the following inductive construction.

We have $\nu(g_x)\ne0$. Thus there is some $\mu_1\in M$ with $\mu_1(V_x)=\mu_1(g_x)\ne 0$. Since $V_x$ is clopen, necessarily $\spt\nu_1\cap V_x\ne\emptyset$. Fix $z_1\in \spt\nu\cap V_x$. 

Now assume we have some $z_k\in V_x$ such that $z_k\in\{u\wedge v\setsep u,v\in\spt\mu\}$.
By \eqref{eq:nulovost} we have $\nu(V_{z_k})=0$, so $\nu(V_x\setminus V_{z_k})\ne 0$. In particular, $z_k>x$.  Since $V_x$ is clopen and $V_{z_k}$ is closed, it follows that $\spt\nu\cap (V_x\setminus V_{z_k})\ne\emptyset$. So, fix an element $y_k\in \spt\nu\cap (V_x\setminus V_{z_k})$ and set $z_{k+1}=y_k\wedge z_k$. Then  $z_{k+1}<z_k$, $z_{k+1}\in V_x$ and, moreover,
by Lemma~\ref{L:trees wedge}(c) we deduce $z_{k+1}\in\{u\wedge v\setsep u,v\in\spt\mu\}$.

This completes the induction. So, we have constructed in $V_x$ an infinite decreasing sequence,
which is impossible. This contradiction completes the proof.
\end{proof}

\begin{remark} 
(a) In the several preceding statements we deal with $r_1$-trees, but they admit variants for $r$-trees. One possibility is to use Lemma~\ref{L:trees-r1} to transfer the results. Another possibility is to define a more technical variant of the families $\A(T)$ and $\A_\omega(T)$.

(b) If $T$ is an $r$-tree which is not an $r_1$-tree, then the set $H$ from Proposition~\ref{P:trees Mbaze ocasni}(b) is is not $\sigma(\C(T),D(T))$-closed. It can be shown that its nonzero accumulation points are exactly the characteristic functions of the sets
$$\bigcup_{y\in \ims(x)}V_y,\quad x\in T\setminus S(T).$$

(c) The above-defined Markushevich basis satisfies the properties from Theorem~\ref{T:char sigma subspace}(6,7) if and only if $\htt(T)\le\omega_1+1$ (i.e., $\Lev_{\omega_1+1}(T)=\emptyset$, in other words $\ims(x)=\emptyset$ whenever $\cf(x)$ is uncountable). However, $D(T)$ is a $\Sigma$-subspace in more cases, see \cite[Theorem 3.2]{somaglia-treti}. It follows that, at least in some cases, the canonical Markushevich basis cannot be constructed using a PRI.
\end{remark}

Let us now look at the question when $\C(T)$ is $1$-Plichko. First observe that the following equivalences follow from the results of \cite{somaglia-trees,somaglia-treti}.

\begin{prop}\label{P:trees v=1P} Let $T$ be a tree. The following assertions are equivalent.
\begin{enumerate}
\item $T$ is a Valdivia compact space.
\item $\C(T)$ is $1$-Plichko.
\item $S(T)$ is a $\Sigma$-subset of $T$.
\item $D(T)$ is a $\Sigma$-subspace of $\C(T)^*$.
\end{enumerate}
\end{prop}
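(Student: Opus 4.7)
My plan is to reduce to the $r_1$-tree case, establish (3)$\Leftrightarrow$(4) via Proposition~\ref{P:rs->ps} applied to the canonical skeletons of Propositions~\ref{P:trees retracni skeleton} and~\ref{P:trees projekcni skeleton}, observe that (3)$\Rightarrow$(1) and (4)$\Rightarrow$(2) are trivial, and reduce the remaining direction to a uniqueness statement about the dense $\Sigma$-subset of $T$.

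First, each of (1)--(4) forces $T$ to admit a retractional skeleton: for (1), (3) this is \cite[Theorem 6.1]{kubis-michalewski}; for (2), (4) it follows from Proposition~\ref{P:rs->ps}(a) applied to the commutative $1$-projectional skeleton supplied by Theorem~\ref{T:char sigma subspace}. Proposition~\ref{P:trees-skeleton-jacopo} then forces $T$ to be an $r$-tree, and Lemma~\ref{L:trees-r1} supplies a new partial order realizing $T$ as an $r_1$-tree while preserving the coarse wedge topology, $I(T)$, $S(T)$ and $D(T)$; the four conditions are invariant under this refinement. I would then invoke Propositions~\ref{P:trees retracni skeleton} and~\ref{P:trees projekcni skeleton} for the canonical retractional skeleton on $T$ inducing $S(T)$ and the companion $1$-projectional skeleton on $\C(T)$ inducing $D(T)$. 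The equivalence (3)$\Leftrightarrow$(4) then follows: by Proposition~\ref{P:rs->ps}(c), if $D(T)$ is a $\Sigma$-subspace the canonical retractional skeleton admits a cofinal $\sigma$-closed commutative subskeleton (still inducing $S(T)$), whence $S(T)$ is a $\Sigma$-subset by \cite[Theorem 6.1]{kubis-michalewski}; conversely, if $S(T)$ is a $\Sigma$-subset, the same reference supplies a commutative retractional skeleton inducing $S(T)$, which via Proposition~\ref{P:rs->ps}(a,b) transfers to a commutative $1$-projectional skeleton on $\C(T)$ with induced subspace $\{\mu\in\M(T):\spt\mu\text{ is a separable subset of }S(T)\}=D(T)$ (using separability of supports from \cite[Proposition 2.5]{somaglia-treti}), so $D(T)$ is a $\Sigma$-subspace by Theorem~\ref{T:char sigma subspace}.

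The implications (3)$\Rightarrow$(1) and (4)$\Rightarrow$(2) are immediate: $S(T)\supset I(T)$ is dense in $T$, and $D(T)$ contains the norming family $\{\delta_x:x\in S(T)\}$, so $D(T)$ is $1$-norming. The content of (1)$\Rightarrow$(3), which combined with (3)$\Leftrightarrow$(4) also yields (2)$\Rightarrow$(4), is the uniqueness statement that any dense $\Sigma$-subset $A$ of $T$ equals $S(T)$. For $S(T)\subset A$, I would combine density of $A$ with the countable closedness of $A$ intrinsic to $\Sigma$-subsets (closures in $T$ of countable subsets of $A$ remain in $A$). For $x\in I(T)$ with $\ims(x)$ finite, the basic open set $V_x\setminus\bigcup_{y\in\ims(x)}V_y$ equals $\{x\}$, so density alone forces $x\in A$; for $x\in I(T)$ with $\ims(x)$ infinite, picking distinct $y_1,y_2,\dots\in\ims(x)$ and $a_n\in V_{y_n}\cap A$ by density yields a sequence converging to $x$ in the coarse wedge topology, so $x\in A$ by countable closedness. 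Points of $S(T)\setminus I(T)$ then follow by taking countable chains in $I(T)\cap A$ whose suprema reach $x$.

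The main obstacle is the reverse inclusion $A\subset S(T)$: countable closedness alone does not suffice, as the countably closed subset $[0,\omega_2)$ is dense in $[0,\omega_2]$ yet the latter is not Valdivia. The key additional input is that in an $r_1$-tree no nontrivial sequence converges to a point $x$ with $\cf(\htt(x,T))$ uncountable (one checks all three cases: chains in $\widehat{x}$ cannot reach $x$ since $\cf(\htt(x,T))$ is uncountable, successors above $x$ are trapped in the single $V_y$ with $y\in\ims(x)$, and points incomparable with $x$ are bounded away by their meet); combined with metrizability of the ranges of the commutative retractional skeleton inducing $A$, this forces any such $x$ to lie outside $A$. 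This structural analysis is precisely what is carried out in \cite{somaglia-trees,somaglia-treti} and is what the proposition ultimately invokes.
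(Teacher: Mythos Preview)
Your cycle runs $(3)\Leftrightarrow(4)$, $(3)\Rightarrow(1)$, $(4)\Rightarrow(2)$, $(1)\Rightarrow(3)$, which leaves $(2)$ with no exit. The clause asserting that the uniqueness behind $(1)\Rightarrow(3)$ ``also yields $(2)\Rightarrow(4)$'' is a non sequitur: uniqueness of dense $\Sigma$-subsets of $T$ says nothing a priori about $1$-norming $\Sigma$-subspaces of $\C(T)^*$. One could try to show directly that any $1$-norming $\Sigma$-subspace $D'$ coincides with $D(T)$ via Lemma~\ref{L:tightness}(b), but that requires $D'\cap D(T)$ to be norming. In the ordinal case (see the proof of the analogous theorem for $\C([0,\eta])$ in the paper) this is obtained by showing $\delta_x\in D'$ for every isolated ordinal $x$, evaluating the norming condition at $\chi_{\{x\}}\in\C([0,\eta])$; for a general tree, points of $I(T)$ are typically \emph{not} isolated in $T$, so $\chi_{\{x\}}\notin\C(T)$ and that argument is blocked. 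The paper closes the cycle by citing \cite[Theorem 4.1]{somaglia-treti} for the full equivalence $(1)\Leftrightarrow(2)$, which does use tree-specific structure; you need either that citation or a genuine replacement argument here.

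Two smaller remarks. Your appeal to Proposition~\ref{P:rs->ps}(a) in the opening paragraph points the wrong way (that result builds projectional skeletons \emph{from} retractional ones); what actually forces $T$ to be an $r$-tree under (2) or (4) is Proposition~\ref{P:trees-skeleton-jacopo}, once Theorem~\ref{T:char sigma subspace} has produced a $1$-projectional skeleton on $\C(T)$. On the other hand, your handling of $(3)\Leftrightarrow(4)$ through Proposition~\ref{P:rs->ps}(b,c) is a clean internal alternative to the paper's bare citation of \cite[Proposition 5.1]{ja-survey}, and your sketch of $(1)\Rightarrow(3)$---with the honest deferral of the hard inclusion $A\subset S(T)$ to \cite{somaglia-trees,somaglia-treti}---matches the paper's own reliance on those sources.
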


\begin{proof}
The implications (4)$\Rightarrow$(3)$\Rightarrow$(1) are obvious. The equivalence (1)$\Leftrightarrow$(2) is proved in \cite[Theorem 4.1]{somaglia-treti}. The implication (1)$\Rightarrow$(3) is easy and follows from the proof of \cite[Proposition 3.2]{somaglia-trees}. Finally, the implication (3)$\Rightarrow$(4) follows from \cite[Proposition 5.1]{ja-survey}.
\end{proof}

A partial characterization of Valdivia trees is given in \cite[Theorem 3.2]{somaglia-treti}, a complete characterization is still missing. We will provide an alternative proof of the assertion (1) of the quoted theorem. The original proof is done by a clever transfinite induction. We are going to present a short proof using Theorem~\ref{T:komutativni} (the transfinite induction is hidden therein). The statement we are going to prove is the content of the following proposition.

\begin{prop}\label{P:Valdiviatrees}
Let $T$ be an $r$-tree such that $\htt(T)<\omega_2$ and the set
$$R(T)=\{x\in T\setsep \cf(x)\mbox{ is uncountable and }\ims(x)\ne0\}$$
can be expressed as the union of $\omega_1$-many relatively discrete sets.
Then $T$ is Valdivia.
\end{prop}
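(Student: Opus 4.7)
The plan is to combine Proposition~\ref{P:trees v=1P} with Theorem~\ref{T:komutativni}. By Lemma~\ref{L:trees-r1} I first reduce to the case when $T$ is an $r_1$-tree, which preserves the hypothesis on $R(T)$ since the topology, $I(T)$, and $S(T)$ are unchanged. By Proposition~\ref{P:trees v=1P}, showing $T$ is Valdivia reduces to showing that $D(T)\subset \C(T)^*$ is a $\Sigma$-subspace. Since $D(T)$ is induced by the $1$-projectional skeleton $(P_A)_{A\in\A_\omega(T)}$ of Proposition~\ref{P:trees projekcni skeleton} and $P_A f = f\circ r_A$, Theorem~\ref{T:komutativni}(1)$\Leftrightarrow$(2) will reduce the task to producing a cofinal, $\sigma$-closed subfamily $\A^*\subset\A_\omega(T)$ on which the retractions $r_A$ commute pairwise; commutativity of the retractions transfers to commutativity of the associated projections.

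The technical heart will be a commutativity criterion for retractions. Expanding $r_A(x)=\max(A\cap\widehat{x})$ and using that $\widehat{x}$ is totally ordered, I would verify that $r_A$ and $r_B$ commute if and only if $\min(r_A(x),r_B(x))\in A\cap B$ for every $x\in T$. If commutativity fails at $x$, then with $a:=r_A(x)<b:=r_B(x)$ and $a\notin B$, the element $b^*:=\min(B\cap(a,b])$ must sit on an isolated level of $T$ (else $B\in\A$ would force $b^*=\sup(B\cap\widehat{b^*})\le a$), and its predecessor $(b^*)^-$ is either equal to $a$ or lies in $(a,b^*)$. Since $A\in\A_\omega(T)$ forces $A\subset S(T)$ (any $z\in A$ at a limit level satisfies $z=\sup(\widehat z\cap A)$ taken over a countable set, forcing $\cf z$ countable), the element $a$ always lies in $S(T)$; it is only $(b^*)^-$ that can possibly belong to the ``bad'' set $R(T)$.

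To suppress these obstructions I plan to build $\A^*$ via a saturation driven by the decomposition $R(T)=\bigcup_{\alpha<\omega_1}R_\alpha$. For each $\alpha<\omega_1$ and $x\in R_\alpha$ I fix once and for all a basic neighborhood witness $W_{y_x}^{\ims(x)}$ with $y_x\in I(T)\cap\widehat{x}$ and $W_{y_x}^{\ims(x)}\cap R_\alpha=\{x\}$. Then $\A^*$ will be the set of $A\in\A_\omega(T)$ such that for every $z\in A\cap I(T)\setminus\{0\}$: condition (s1), if $z^-\in S(T)$ then $z^-\in A$; and condition (s2), if $z^-\in R(T)$ then $y_{z^-}\in A$, $\ims(z^-)\subset A$, and $A\cap(y_{z^-},z^-)\cap I(T)$ contains a sequence with supremum $z^-$. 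Cofinality of $\A^*$ in $\A_\omega(T)$ will follow by iteratively applying (s1), (s2), and the $\A$-closure of Lemma~\ref{L:trees continuity}(i), starting from any $A_0\in\A_\omega(T)$; each step remains countable because $\ims$ is finite in an $r_1$-tree and each countable $A$ meets only countably many of the $R_\alpha$. The hypothesis $\htt(T)<\omega_2$ controls the transfinite descent through predecessor chains. For $\sigma$-closedness: if $(A_n)\subset\A^*$ is increasing and $A_\infty:=\overline{\bigcup A_n}$, then by Lemma~\ref{L:trees wedge}(b) every isolated point of $A_\infty$ already lies in some $A_n$, so (s1) and (s2) pass to the limit, and $A_\infty\in\A_\omega(T)$ by Lemma~\ref{L:trees continuity}(iii).

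The main obstacle, and final step, will be verifying commutativity on $\A^*$. If $(b^*)^-\in S(T)$, (s1) applied to $b^*\in B$ forces $(b^*)^-\in B$, contradicting either $a\notin B$ (case $(b^*)^-=a$) or $(a,b^*)\cap B=\emptyset$ (case $(b^*)^-\in(a,b^*)$). If $(b^*)^-\in R(T)$, (s2) applied to $b^*\in B$ forces $\ims((b^*)^-)\subset B$ and $y_{(b^*)^-}\in B$; the matching saturation on $A$, triggered by the common witness $y_{(b^*)^-}$ via the relatively discrete structure of $R_\alpha$, forces $\ims((b^*)^-)\subset A$, contradicting $(a,x]\cap A=\emptyset$. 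The delicate point I expect will demand the most care is this last synchronization step: the relatively discrete property of each $R_\alpha$ is exactly what makes the witness $y_{(b^*)^-}$ uniquely attached to $(b^*)^-$, so that saturation conditions across different members of $\A^*$ can be compared at the same structural data. With this in hand, Theorem~\ref{T:komutativni} together with Proposition~\ref{P:trees v=1P} delivers the result.
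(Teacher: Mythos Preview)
Your overall strategy---reduce to an $r_1$-tree and verify one of the equivalent conditions in Theorem~\ref{T:komutativni}---is sound, and your commutativity criterion $\min(r_A(x),r_B(x))\in A\cap B$ is correct. However, there is a fatal error in condition (s2). You require that $A\cap(y_{z^-},z^-)\cap I(T)$ contain a sequence with supremum $z^-$, but $z^-\in R(T)$ means $\cf(z^-)$ is uncountable, so no countable set can have supremum $z^-$. Since every $A\in\A_\omega(T)$ is countable, (s2) is unsatisfiable whenever it applies. Consequently your $\A^*$ contains only those $A$ that avoid all immediate successors of points of $R(T)$, and such a family cannot be cofinal in $\A_\omega(T)$ once $R(T)\neq\emptyset$. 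Your ``synchronization'' fallback does not work either: (s2) for $A$ would be triggered only by some $z\in A$ with $z^-=(b^*)^-$, i.e.\ $z=b^*$ in an $r_1$-tree, and nothing guarantees $b^*\in A$.

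The deeper issue is that your conditions (s1), (s2) are purely local at each $z\in A$, and neither the hypothesis $\htt(T)<\omega_2$ nor the relatively discrete decomposition of $R(T)$ is genuinely used. Both are essential---the conclusion already fails for $T=[0,\omega_2]$. The paper's proof addresses this by going through condition~(3) of Theorem~\ref{T:komutativni} rather than~(2): it fixes a bijection $\xi\colon I(\omega_1)\to I(\htt(T))$ (here $\htt(T)<\omega_2$ is used) and defines $\A'$ by requiring that once $A$ reaches level $\xi(\alpha)$, it must absorb $\ims(x)$ for every $x\in R_\beta$, $\beta\le\alpha$, whose basic neighborhood $W_{z(x)}^{\ims(x)}$ meets $A$. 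The disjointness of these neighborhoods within each $R_\alpha$ (the relatively discrete hypothesis) keeps each saturation step countable, and the $\xi$-indexing ensures cofinality. One then checks, via Lemma~\ref{L:trees P_A spoj}, that $P_A$ is $\sigma(\C(T),D(T))$-continuous for every directed union $A=\overline{\bigcup\A''}$ with $\A''\subset\A'$. If you want to push through pairwise commutativity directly, you would need an analogous level-indexing device; local conditions at individual points of $A$ will not suffice.
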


We will use the following lemma characterizing  $\sigma(\C(T),D(T))$-to-$\sigma(\C(T),D(T))$ continuity of projections $P_A$. Assume that $T$ is an $r_1$-tree.
Then for any $\A'\subset \A_\omega(T)$ up-directed we have $A=\overline{\bigcup\A'}\in\A$ and the projection $P_{\A'}$ from \eqref{eq:P_A def} coincides with the projection $P_A$ (due to Lemma~\ref{L:trees compatibility} and Lemma~\ref{L:compact-projekce}).

\begin{lemma}\label{L:trees P_A spoj} Let $T$ be an $r_1$-tree and $A\in\A(T)$. The following are equivalent.
\begin{enumerate}
\item $P_A$ is $\sigma(\C(T),D(T))$-to-$\sigma(\C(T),D(T))$ continuous.
\item $r_A(S(T))\subset S(T)$.
\item $\ims(x)\subset A$ whenever $x\in A$ and $\cf(x)$ is uncountable.
\end{enumerate}
\end{lemma}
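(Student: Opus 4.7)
The plan is to prove the cyclic chain $(1)\Leftrightarrow(2)$ and $(2)\Leftrightarrow(3)$, with both steps being rather short given the machinery already set up.

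For $(1)\Leftrightarrow(2)$, I would use the standard adjoint characterization: for any bounded operator $T$ on $X$, $T$ is $\sigma(X,D)$-to-$\sigma(X,D)$ continuous iff $T^*(D)\subset D$ (this is exactly the argument $(1)\Leftrightarrow(2)$ of Proposition~\ref{P:sigma(X,D)-spojitost} read at the level of an individual projection, since $x^*\circ T=T^*x^*$). Applied to $P_A$ and combined with the image-measure formula $P_A^*\mu=r_A(\mu)$ from Lemma~\ref{L:compact retrakce}(c) and the description $D(T)=\{\mu\in\M(T)\setsep\spt\mu\subset S(T)\}$ from Proposition~\ref{P:trees projekcni skeleton}, condition (1) becomes ``$\spt r_A(\mu)\subset S(T)$ for every $\mu\in D(T)$''. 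The ``only if'' direction reduces to (2) by testing against Dirac measures $\delta_x$ for $x\in S(T)$, for which $P_A^*\delta_x=\delta_{r_A(x)}$; the ``if'' direction follows because $\spt r_A(\mu)\subset r_A(\spt\mu)\subset r_A(S(T))\subset S(T)$.

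For $(2)\Rightarrow(3)$, fix $x\in A$ with $\cf(x)$ uncountable. The $r_1$-property says $\ims(x)$ has at most one element, so it suffices to show that the unique $y\in\ims(x)$ (if any) lies in $A$. Since $y\in I(T)\subset S(T)$, assumption (2) forces $r_A(y)\in S(T)$; but $r_A(y)=\max(A\cap\widehat y)$ satisfies $x\le r_A(y)\le y$, so $r_A(y)\in\{x,y\}$. Since $x\notin S(T)$, we must have $r_A(y)=y$, i.e., $y\in A$.

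For $(3)\Rightarrow(2)$, take $x\in S(T)$ and suppose for contradiction that $r_A(x)\notin S(T)$, so $\cf(r_A(x))$ is uncountable and in particular $r_A(x)<x$. Since $\widehat x$ is well ordered, the set $\{z\in\widehat x\setsep z>r_A(x)\}$ has a minimum $y$; a quick check using $\widehat y\subset\widehat x$ shows that no element of $T$ lies strictly between $r_A(x)$ and $y$, so $y\in\ims(r_A(x))$. By (3) we get $y\in A\cap\widehat x$, contradicting the maximality of $r_A(x)=\max(A\cap\widehat x)$. The only mildly delicate point in the whole argument is this verification that an immediate successor in the well-ordered initial segment $\widehat x$ is also an immediate successor in $T$; everything else is formal bookkeeping from the previously established lemmas.
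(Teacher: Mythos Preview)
Your proof is correct and follows the same route as the paper. The paper handles $(1)\Leftrightarrow(2)$ exactly as you do (via $P_A^*(D(T))\subset D(T)$, the image-measure formula from Lemma~\ref{L:compact retrakce}(c), and Dirac measures for one direction, the inclusion $\spt r_A(\mu)\subset r_A(\spt\mu)$ for the other), and then dismisses $(2)\Leftrightarrow(3)$ as ``obvious'' --- your write-up simply spells out that obvious equivalence carefully, including the point that an immediate successor inside the chain $\widehat x$ is an immediate successor in $T$.
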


\begin{proof}
(1)$\Rightarrow$(2) Assume $P_A$ is $\sigma(\C(T),D)$-to-$\sigma(\C(T),D)$ continuous.
By Proposition~\ref{P:sigma(X,D)-spojitost} we get $P_A^*(D)\subset D$. Note that
$P_A^*(\mu)=r(\mu)$ by Lemma~\ref{L:compact retrakce}(c), in particular $P_A^*(\delta_x)=\delta_{r_A(x)}$ for any $x\in T$. It follows that $r_A(S)\subset S$.

(2)$\Rightarrow$(1) Assume $r_A(S)\subset S$. We claim that $P_A^*(D)\subset D$. To show that
fix $\mu\in D$. Let $F=\spt\mu$. Then $F$ is a compact separable subset of $D$. Thus $r_A(F)$ is also a compact separable subset of $D$. Moreover, $\spt P_A^*\mu\subset r_A(F)$. Indeed,
if $B\subset T\setminus r_A(F)$ is any Borel set, then
$$P_A^*\mu(B)=r_A(\mu)(B)=\mu(r_A^{-1}(B))=\mu(\emptyset)=0.$$
So, we have proved that $P_A^*(D)\subset D$. The  $\sigma(\C(T),D)$-to-$\sigma(\C(T),D)$ continuity of $P_A$ now follows from Proposition~\ref{P:sigma(X,D)-spojitost}.

The equivalence (2)$\Leftrightarrow$(3) is obvious.
\end{proof}

\begin{proof}[Proof of Proposition~\ref{P:Valdiviatrees}.] If $R(T)=\emptyset$, then $P_A$ is $\sigma(\C(T),D(T))$-to-$\sigma(\C(T),D(T))$ continuous for each $A\in\A(T)$ (by Lemma~\ref{L:trees P_A spoj}). It follows  by Theorem~\ref{T:komutativni} that $D(T)$ is a $\Sigma$-subspace, hence $T$ is Valdivia by Proposition~\ref{P:trees v=1P}.

Assume that $R(T)\ne\emptyset$. Let $\eta=\htt(T)$. Then $\eta>\omega_1+1$. Moreover, by the assumption $\eta<\omega_2$, thus $\card\eta=\aleph_1$. So, we can fix a bijection $\xi:I(\omega_1)\to I(\eta)$.
By Lemma~\ref{L:trees-r1} we can assume that $T$ is an $r_1$-tree. Fix a disjoint decomposition
$$R(T)=\bigcup_{\alpha<\omega_1}R_\alpha,$$
where each $R_\alpha$ is relatively discrete. 

Fix $\alpha<\omega_1$. For any $x\in R_\alpha$ there is a neighborhood $U$ of $x$ with $U\cap R_\alpha=\{x\}$. We can choose such a basic neighborhood, so there is $z(x)\in I(T)\cap \widehat{x}$ such that $W_{z(x)}^{\ims(x)}\cap R_\alpha=\{x\}$. Observe that the family 
 $W_{z(x)}^{\ims(x)}$, $x\in R_\alpha$ is disjoint. Indeed, let $x,y\in R_\alpha$ be two distinct points. If the points $z(x)$ and $z(y)$ are incomparable, then even $V_{z(x)}$ and $V_{z(y)}$ are disjoint. Assume that $z(x)$ and $z(y)$  are comparable, without loss of generality $z(x)\le z(y)$. Since $y\notin W_{z(x)}^{\ims(x)}$, necessarily $y>x$. Further, $x\notin W_{z(y)}^{\ims(y)}$, thus $z(y)>x$. Hence  $W_{z(x)}^{\ims(x)} \cap W_{z(y)}^{\ims(y)}=\emptyset$.

Let us define a subfamily of $\A_\omega(T)$ by the formula
\begin{multline*}
\A'= \{A\in\A(\omega) \setsep \forall \alpha\in I(\omega_1): 
A\cap\Lev_{\xi(\alpha)}(T)\ne\emptyset\\ \Rightarrow \forall\beta\le\alpha\,\forall x\in R_\beta:
(A\cap W_{z(x)}^{\ims(x)}\ne\emptyset \Rightarrow \ims(x)\subset A)\}
\end{multline*}

Let us show that $\A'$ is a cofinal and $\sigma$-closed subfamily of $\A_\omega(T)$.

Let $(A_n)$ be an increasing sequence in $\A'$. We will show that $A=\overline{\bigcup_n A_n}\in\A'$. 
Clearly we have $A\in\A_\omega(T)$. Further,  fix any $\alpha\in I(\omega_1)$ such that $A\cap \Lev_{\xi(\alpha)}(T)\ne\emptyset$, $\beta\le\alpha$ and $x\in R_\beta$ such that
$A\cap W_{z(x)}^{\ims(x)}\ne\emptyset$. 
Since $W_{z(x)}^{\ims(x)}$ is an open set, there is some $m\in \en$ with $A_m\cap W_{z(x)}^{\ims(x)}\ne\emptyset$.
Further, choose some $y\in A\cap\Lev_{\xi(\alpha)}(T)$. Since $\xi(\alpha)$ is an isolated ordinal, Lemma~\ref{L:trees wedge}(b) yields $a,b\in\bigcup_n A_n$ with $y=a\wedge b$. Since the sequence $(A_n)$ is increasing, there is some $n\in\en$ with $a,b\in A_n$. Then $y=a\wedge b\in A_n$ as well. So, $\Lev_{\xi(\alpha)}(T)\cap A_k\ne\emptyset$ for $k\ge n$. 
It follow that $\ims(x)\subset A_k$ for $k\ge\max\{m,n\}$. This  shows that $A\in\A'$ which completes the proof that $\A'$ is $\sigma$-closed.

Let us continue by showing that $\A'$ is cofinal. To this end fix any $A_0\in\A_\omega(T)$.
Given $A_{n-1}\in\A_\omega(T)$ for some $n\in\en$ we perform the following construction.
\begin{itemize}
\item Set $J_n=\{\alpha\in I(\omega_1)\setsep A_{n-1}\cap \Lev_{\xi(\alpha)}(T)\ne\emptyset\}$. Then $J_n$ is countable (by Lemma~\ref{L:trees vaha}).
\item For any $\beta\le\sup J_n$ let
$$M_\beta^n=\{x\in R_\beta\setsep W_{z(x)}^{\ims(x)}\cap A_{n-1}\ne\emptyset\}.$$
Then $M_\beta^n$ is countable
\item Choose $A_n\in\A_\omega(T)$ such that $A_n\supset A_{n-1}\cup\bigcup \{\ims(x)\setsep x\in\bigcup_{\beta\le\sup J_n}M_\beta^n\}$.
\end{itemize}
Finally, let $A=\overline{\bigcup_n A_n}$. Then $A\in\A'$. Indeed, clearly $A\in\A_\omega(T)$. Moreover, let  $\alpha\in I(\omega_1)$ with $A\cap \Lev_{\xi(\alpha)}(T)\ne\emptyset$, $\beta\le\alpha$ and $x\in R_\beta$ such that $A\cap W_{z(x)}^{\ims(x)}\ne\emptyset$. 
As above there are some $m,n\in\en$ such that $A_m\cap W_{z(x)}^{\ims(x)}\ne\emptyset$ and
$A_n\cap \Lev_{\xi(\alpha)}(T)\ne\emptyset$. Let $k\ge\max\{m,n\}$. Then $\beta\le\sup J_k$
and $x\in M_\beta^k$. It follows that $\ims x\subset A_{k+1}\subset A$.

Finally, let $\A''\subset\A'$ be any directed subfamily. Set $A=\overline{\bigcup\A''}$. Let us show that $P_A$ is $\sigma(\C(T),D(T))$-to-$\sigma(\C(T),D(T))$-continuous using Lemma~\ref{L:trees P_A spoj}. So, fix $x\in R(T)\cap A$. Fix $\beta<\omega_1$ with $x\in R_\beta$. Set
$$\gamma=\sup\xi([0,\beta+1]\cap I(\omega_1))\cap [0,\htt(x))$$ and find some $z\in[z(x),x)\cap I(T)$ with $\htt(z)>\gamma$. Then $V_z^{\ims(x)}$ is a neighborhood of $x$, thus $x\in\overline{V_z^{\ims(x)}\cap\bigcup\A''}$. By Lemma~\ref{L:trees wedge}(b) there are $a,b\in\bigcup\A''$ with $z<a\wedge b\le x$. Since $\A''$ is directed, there is some $B\in\A''$ with $a,b\in B$. Then $a\wedge b\in B$, so $a\wedge b<x$. Further, since $B\in\A(T)$, there is some
$c\in B\cap(z,a\wedge b]\cap I(T)$. Let $\alpha=\xi^{-1}(\htt(c))$. Then $\alpha\ge\gamma>\beta$
and $W_{z(x)}^{\ims(x)}\cap B\ne\emptyset$. It follows that $\ims(x)\subset B\subset A$.

This completes the proof (using Theorem~\ref{T:komutativni}).
\end{proof}

\subsection{Duals of Asplund spaces}

The third class of spaces having a possibly non-commutative projectional skeleton is the class of duals of Asplund spaces. Asplund spaces can be even characterized in this way. These characterizations are summarized in the following theorem.

\begin{thm}\label{T:asplund}
Let $X$ be a Banach space. The following assertions are equivalent.
\begin{enumerate}
\item $X$ is Asplund, i.e., $Y^*$ is separable for each $Y\subset X$ separable.
\item There is a projectional generator on $X^*$ of the form $(X,\Phi)$ (i.e., its domain is $X$).
\item There is an $\omega$-monotone mapping $\psi:[X]^{\le\omega}\to[X\cup X^*]^{\le\omega}$ such that 
 $$\bigcup \{\psi(C)\cap X^*\setsep C\in[X]^{\le\omega}\}$$ is dense in $X^*$ and, moreover, 
for each $C\in[X]^{\le\omega}$ we have
\begin{itemize}
\item $\psi(C)\supset C$;
\item $\overline{\psi(C)\cap X}$ and $\overline{\psi(C)\cap X^*}$ are linear subspaces;
\item the mapping  $x^*\mapsto x^*|_{\overline{\psi(C)\cap X}}$ maps $\overline{\psi(C)\cap X^*}$ isometrically onto $(\overline{\psi(C)\cap X})^*$;
\end{itemize}
\item There is an $\omega$-monotone mapping $G:[X]^{\le\omega}\to[X^*]^{\le\omega}$ such that
\begin{itemize}
\item $\bigcup \{G(C)\setsep C\in[X]^{\le\omega}\}$ is dense in $X^*$;
\item For each $C\in [X]^{\le\omega}$ the mapping $x^*\mapsto x^*|_{\clin C}$ maps $\overline{G(C)}$ onto $(\clin C)^*$.
\end{itemize}
\item There is a projectional skeleton on $X^*$ such that the induced subspace contains $X$.
\end{enumerate}
\end{thm}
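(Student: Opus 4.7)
\begin{proofspec*}[Proof plan for Theorem~\ref{T:asplund}]
I would prove the theorem in the cycle (1)$\Rightarrow$(2)$\Rightarrow$(3)$\Rightarrow$(4)$\Rightarrow$(5)$\Rightarrow$(1), reusing as much machinery as possible from Theorems~\ref{T:char sigma subspace} and~\ref{T:char induced}. The implication (1)$\Rightarrow$(2) is classical: the Asplund property supplies, for each $x^*\in X^*$, a countable set $\Phi(x^*)\subset X$ obtained via a standard fragmentation/selection argument so that $\Phi(A)^\perp\cap\wscl{\lin A}=\{0\}$ whenever $\overline{A}$ is a subspace of $X^*$. This is exactly the original construction of projectional generators on duals of Asplund spaces, so I would reference it rather than redo it.

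The step (2)$\Rightarrow$(3) runs in parallel to the proof of (1)$\Rightarrow$(3) of Theorem~\ref{T:char sigma subspace}, with $D$ replaced by $X^*$ and $M$ replaced by $X$. Fix a countable norming mapping $\eta:X\to[X^*]^{\le\omega}$. Define a base step $\psi_1(A)=A\cup\Phi(A\cap X^*)\cup\eta(\lin_{\qe}(A\cap X))$ for $A\in[X\cup X^*]^{\le\omega}$, iterate countably many times, and set $\psi(A)=\bigcup_n\psi_n(A)$. The verification of property (iv) proceeds through the same inequalities $(*)$, $(**)$, $(***)$ as in the proof of Theorem~\ref{T:char sigma subspace}, with the projectional generator playing the role of the $\Sigma$-subspace structure. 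Then (3)$\Rightarrow$(4) is a direct reduction: set $G(C)=\psi(C)\cap X^*$; $\omega$-monotonicity and the isometric restriction property pass from $\psi$ to $G$ verbatim, and density of $\bigcup_C G(C)$ in $X^*$ is a consequence of the norming iteration built into $\psi_1$.

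The key step is (4)$\Rightarrow$(5). For $C\in[X]^{\le\omega}$ I would apply Lemma~\ref{L:jednaprojekce} with ambient Banach space $X^*$, taking $Y'=\overline{G(C)}\subset X^*$ and $V'=(\clin C)^{\perp\perp}\subset X^{**}$. The latter is weak$^*$-closed and, under the canonical identification $V'\cong(\clin C)^{**}$, the restriction map $V'\to (Y')^*$ is the Banach-space adjoint of the bijective restriction $\overline{G(C)}\to(\clin C)^*$ provided by (4), hence itself a bijection. Lemma~\ref{L:jednaprojekce} then yields a bounded projection $P_C$ on $X^*$ with range $\overline{G(C)}$ and kernel $(\clin C)^\perp$. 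The axioms of a projectional skeleton follow cleanly: for $C\subset D$, the inclusions $(\clin D)^\perp\subset(\clin C)^\perp$ and $\overline{G(C)}\subset\overline{G(D)}$ force $P_CP_D=P_DP_C=P_C$; increasing sequences have suprema by $\omega$-monotonicity of $G$; the range equality for suprema follows from $\overline{G(\bigcup C_n)}=\overline{\bigcup G(C_n)}$; covering of $X^*$ follows from density of $\bigcup G(C)$ combined with a diagonalization through $\omega$-monotonicity. Finally, $X$ is contained in the induced subspace because $x\in (\clin\{x\})^{\perp\perp}=P_{\{x\}}^*(X^{**})$.

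For (5)$\Rightarrow$(1), I would proceed by transfinite induction on $\dens X$. In the separable base case, choose a countable dense $\{x_n\}\subset X$; property (iv) together with $X\subset D$ and $\sigma$-completeness of $\Gamma$ produces $s\in\Gamma$ with $P_s^*x_n=x_n$ for all $n$, hence $P_s^*y=y$ for every $y\in X$. For any separable $Y\subset X$, the computation $(P_sx^*)(y)=(P_s^*y)(x^*)=y(x^*)=x^*(y)$ shows $P_s(X^*)|_Y=Y^*$, so $Y^*$ is a quotient of the separable space $P_s(X^*)$ and thus separable. In the inductive step, build a PRI on $X^*$ via Proposition~\ref{P:PRI}; since $D\supset X$, the topology $\sigma(X^*,D)$ is finer than the weak$^*$ topology, and Proposition~\ref{P:sigma(X,D)-spojitost} combined with the construction of the PRI via $P_A$'s (which are $\sigma(X^*,D)$-continuous on a suitable cofinal subfamily) shows that each PRI projection is weak$^*$-continuous, hence the adjoint of a projection on $X$. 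This dualizes to a PRI on $X$ whose blocks carry inherited skeletons on their duals containing the corresponding restrictions of $X$, so each block is Asplund by induction, and Asplundness of $X$ follows.

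The main obstacle I anticipate is (4)$\Rightarrow$(5): correctly pinpointing $V'=(\clin C)^{\perp\perp}$ in $X^{**}$ (rather than the naive $\wscl{G(C)}$) so that Lemma~\ref{L:jednaprojekce} applies, and then using the resulting description $\ker P_C=(\clin C)^\perp$ to verify commutativity of the projections and weak$^*$-continuity in the induction step. A secondary subtlety is in the base case of (5)$\Rightarrow$(1), where the surjectivity of $P_s(X^*)\to Y^*$ relies crucially on $P_s^*$ acting as the identity on $Y\subset X\subset D$, which must be extracted from the hypothesis $X\subset D$ via $\sigma$-completeness.
\end{proofspec*}
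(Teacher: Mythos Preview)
Your cycle matches the paper's, but there are substantive gaps. In (1)$\Rightarrow$(2) and (2)$\Rightarrow$(3) you have the direction of $\Phi$ reversed: a projectional generator \emph{on $X^*$} with domain $X\subset X^{**}$ sends elements of $X$ to countable subsets of $X^*$, not the other way around (the Jayne--Rogers selection gives $g\colon X\to X^*$, and one sets $\Phi(x)=\{g_n(x)\setsep n\in\en\}\subset X^*$). Correspondingly, the norming map in (2)$\Rightarrow$(3) must be $\eta\colon X^*\to[B_X]^{\le\omega}$, since it is $X$ that norms $X^*$; your analogy ``$D$ replaced by $X^*$, $M$ replaced by $X$'' is inverted. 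More seriously, in (4)$\Rightarrow$(5) you assume the restriction $\overline{G(C)}\to(\clin C)^*$ is a bijection, but condition (4) only asserts surjectivity; without injectivity the adjoint is not onto and Lemma~\ref{L:jednaprojekce} does not apply. The paper first restricts to the cofinal $\sigma$-closed family $\Gamma$ of those $C$ for which the restriction is an isometry on $\overline{G(C)}$, proving cofinality by iterating a norming map $\eta\colon X^*\to[B_X]^{\le\omega}$ --- a step you have omitted.

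For (5)$\Rightarrow$(1), no induction is needed: your ``base-case'' computation works verbatim for any separable $Y\subset X$ regardless of $\dens X$, since a countable dense subset of $Y$ lies in $X\subset D=\bigcup_s P_s^*X^{**}$ and is therefore fixed by some single $P_s^*$, whence $P_sX^*$ surjects onto $Y^*$. Your induction step is in fact wrong: $\sigma(X^*,D)$-continuity of $P_A$ is equivalent to $P_A^*(D)\subset D$, which does \emph{not} imply $P_A^*(X)\subset X$ (i.e., weak$^*$-continuity), so you cannot conclude that the PRI projections on $X^*$ are adjoints of projections on $X$. Moreover, by Theorem~\ref{T:komutativni} even the $\sigma(X^*,D)$-continuity of all the $P_A$'s would already force $D$ to be a $\Sigma$-subspace, which is not part of the hypothesis.
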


The equivalences from this theorem are known. The equivalence (1)$\Leftrightarrow$(5) follows from
 \cite[Proposition 26 and Theorem 15]{kubis-skeleton} (see also \cite[Remark on p. 1628]{CuFa-AsplundWCG}). The equivalence (1)$\Leftrightarrow$(2) is proved in \cite[Proposition 8.2.1]{fabian-kniha} and the equivalence (1)$\Leftrightarrow$(4) follows from the proof of  \cite[Theorem 2.3]{CuFa-Asplund}. We provide a proof and point out what is deep and what is easy. 

\begin{proof}
The implication (1)$\Rightarrow$(2) is the deep one and is proved in \cite[Proposition 8.2.1]{fabian-kniha}. Let us recall just a sketch of the proof. Let $X$ be an Asplund space. It follows from the Jayne-Rogers selection theorem \cite[Theorem 8.1.2]{fabian-kniha} that there is a function $g:X\to X^*$ with the properties
\begin{itemize}
\item $\norm{g(x)}=1$ and $g(x)(x)=\norm{x}$ for each $x\in X$;
\item $g$ is of the first Baire class.
\end{itemize}
It follows there is a sequence $(g_n)$ of continuous functions $g_n:X\to B_{X^*}$ which pointwise converges to $g$. If we take
$$\Phi(x)=\{g_n(x)\setsep n\in\en\},\quad x\in X,$$
then the pair $(X,\Phi)$ is a projectional generator. 

Indeed, assume $M\subset X$ is such that $\overline{M}$ is a linear subspace and that there is some $$x^{**}\in \wscl{M\cap B_X}\cap \Phi(M)^\perp\setminus\{0\}.$$
Since the functions $g_n$ are continuous, we deduce that $x^{**}\in\Phi(\overline{M})^\perp$, so, without loss of generality $M$ is a closed linear subspace of $X$.
Fix some $x^*\in X^*$ such that $x^{**}(x^*)\ne0$. We will construct by induction points $x_n\in B_X\cap M$ and $y^*_{n,k}\in X^*$ for $k,n\in\en$
such that the following conditions are fulfilled for each $n\in\en$.
\begin{itemize}
\item $\abs{x^*(x_n)-x^{**}(x^*)}<\frac1n$,
\item $\abs{y_{m,k}^*(x_n)}<\frac1n$ for $m,k<n$,
\item $\{y_{n,k}^*\setsep n\in\en\}=\Phi(\lin_\qe\{x_1,\dots,x_k\})$.
\end{itemize}
It is clear that the construction can be performed. Set $$V_0=\lin_\qe\{x_n\setsep n\in\en\}\mbox{ and }V=\clin\{x_n\setsep n\in\en\}=\overline{V_0}.$$
Let $z^{**}$ be any weak$^*$-cluster point (in $X^{**}$) of the sequence $(x_n)$. Then $z^{**}(x^*)=x^{**}(x^*)\ne0$.
Further $z^{**}(y_{n,k})=0$ for $k,n\in\en$, hence $z^{**}\in\Phi(V_0)^\perp$. By the continuity of the functions $g_n$ we deduce $z^{**}\in\Phi(V)^\perp$, hence also $z^{**}(g(x))=0$ for each $x\in V$. 

Let $J:V\to X$ be the canonical isometric inclusion. Then $J^*:X^*\to V^*$ is the restriction map and $J^{**}:V^{**}\to X^{**}$ is an isometric inclusion with range $\wscl V$.
Since $z^{**}\in\wscl V$, we can define $v^{**}=(J^{**})^{-1}z^{**}$. Then $v^{**}\in V^{**}\setminus \{0\}$ and for each $x\in V$ we have
$$v^{**}(g(x)|_V)=v^{**}(J^*g(x))=J^{**}(v^{**})(g(x))=z^{**}(g(x))=0.$$
Further, for each $x\in V$ we have $\norm{g(x)|_V}\le 1$ and $g(x)(x)=\norm{x}$. It follows that $\{g(x)|_V\setsep x\in V\}$ is a James boundary for $V$. Since $V^*$ is separable, by \cite[Theorem 3.122]{FHHMZ}  (or by Rod\'e's theorem -- see \cite{rode} or \cite[Theorem 5.7]{FLP}) we deduce $\clin\{g(x)|_V\setsep x\in V\}=V^*$, so $v^{**}=0$, a contradiction.

Finally, note that the proof was done for real spaces, but the complex case easily follows. Indeed, if $X$ is a complex Asplund space, its real version is a real Asplund space and the projectional generator for the real version works for the complex case as well.

(2)$\Rightarrow$(3) This implication is rather easy, it follows essentially from the proof of \cite[Lemma 6.1.3]{fabian-kniha}.
We will provide a proof in the real case. The proof in the complex case is the same, one just needs to replace $\qe$ by $\qe+i\qe$ at the appropriate places.

SO, let $(X,\Phi)$ be a projectional generator. Further,
%for each $x\in X$ let $g(x)\in B_{X^*}$ be such that $\abs{g(x)(x)}=\norm{x}$ and, moreover, 
for each $x^*\in X^*$ let $\eta(x)\subset B_{X}$ be a countable set such that $\norm{x^*}=\sup\{\abs{x^*(x)}\setsep x\in\eta(x^*)\}$.

Fix any $C\in[X]^{\le\omega}$. Let $\psi_0(C)=\lin_\qe C$ and define for $n\in\en\cup\{0\}$ by induction 
\begin{itemize}
\item $\psi_{2n+1}(C)=\psi_{2n}(C)\cup \lin_\qe (\psi_{2n}(C)\cap X^*\cup \Phi(\psi_{2n}(C)\cap X))$,
\item $\psi_{2n+2}(C)=\psi_{2n+1}(C)\cup\lin_\qe (\psi_{2n+1}(C)\cap X\cup\eta(\psi_{2n+1}(C)\cap X^*))$.
\end{itemize}
Clearly the mappings $\psi_n$ are $\omega$-monotone, thus the mapping $\psi$ defined by
$$\psi(C)=\bigcup_n \psi_n(C)$$
is $\omega$-monotone as well. We will show that $\psi$ is the sought mapping.

Fix any $C\in[X]^{\le\omega}$. Then clearly $C\subset\psi(C)$ and both $\psi(C)\cap X$ and $\psi(C)\cap X^*$ are countable $\qe$-linear spaces, hence their closures are linear spaces. Further, for any $x^*\in\psi(C)\cap X^*$ we have $\eta(x^*)\subset\psi(C)\cap X$, thus
$\norm{x^*}=\norm{x^*|_{\overline{\psi(C)\cap X}}}$. So, it follows that the restriction mapping
 $x^*\mapsto x^*|_{\overline{\psi(C)\cap X}}$ is an isometry of $\overline{\psi(C)\cap X^*}$ into $(\overline{\psi(C)\cap X})^*$. To complete the proof of the third property it remains to show that it is even onto. 
 
 To this end denote $Y=\overline{\psi(C)\cap X}$, $Z=\overline{\psi(C)\cap X^*}$ and let $j$ be the canonical isometric embedding of $Y$ into $X$. Then $j^*:X^*\to Y^*$ is the restriction mapping. Above we have proved that $j^*|_Z$ is an isometry, so it has a closed range. If it is not onto, Hahn-Banach theorem yields $y^{**}\in Y^{**}\setminus\{0\}$ such that $y^{**}|_{j^*(Z)}=0$.
Set $x^{**}=j^{**}y^{**}$. Since $j^{**}$ is again an isometric embedding, $x^{**}\ne0$. Moreover,
$x^{**}\in\wscl{Y}$ (as $\wscl Y$ is the range of $j^{**}$).  Further, for any $x^*\in Z$ we have
$$x^{**}(x^*)=j^{**}x^{**}(x^*)=x^{**}(j^*x^*)=0,$$
thus $x^{**}\in Z^\perp$. So,
$$0\ne x^{**}\in\wscl{Y}\cap Z^\perp=\wscl{\psi(C)\cap X}\cap (\psi(C)\cap X^*)^\perp
\supset \wscl{\psi(C)\cap X}\cap (\Phi(\psi(C)\cap X))^\perp,$$
a contradiction with the properties of projectional generator.

Finally observe that $\Phi(X)\subset \bigcup \{\psi(C)\cap X^*\setsep C\in[X]^{\le\omega}\}$, thus
$$
\left(\bigcup \{\psi(C)\cap X^*\setsep C\in[X]^{\le\omega}\}\right)^\perp\subset \Phi(X)^\perp
=\Phi(X)^\perp\cap \wscl X=\{0\}.$$
Since $\bigcup \{\psi(C)\cap X^*\setsep C\in[X]^{\le\omega}\}$ is $\qe$-linear, Hahn-Banach theorem completes the proof.

(3)$\Rightarrow$(4) It is enough to set $G(C)=\psi(C)\cap X^*$. 

(4)$\Rightarrow$(5) Let $G$ be the mapping provided by (4). The index set for the skeleton will be
$$\Gamma=\{C\in [X]^{\le\omega}\setsep x^*\mapsto x^*|_{\clin C}\mbox{ is an isometry on } \overline{G(C)}\}.$$
Let us show that $\Gamma$ is a cofinal $\sigma$-closed subset of $[X]^{\le\omega}$. 

Fix an increasing sequence $(C_n)$ in $\Gamma$ and set $C=\bigcup_n C_n$. Then $C\in\Gamma$ as well. Indeed, let $x^*\in G(C)$. Then there is some $n\in\en$ with $x^*\in G(C_n)$. It follows
that
$$\norm{x^*|_{\clin C}}\le\norm{x^*}=\norm{x^*|_{\clin C_n}}\le\norm{x^*|_{\clin C}},$$
so $\norm{x^*}=\norm{x^*|_{\clin C}}$. Passing to the closure shows that $C\in\Gamma$.

Further, let $C\in[X]^{\le\omega}$ be arbitrary. Let $\eta$ denote the mapping used in the proof of (2)$\Rightarrow$(3). Then we set $C_0=C$ and define by induction $C_n=C_{n-1}\cup\eta(G(C_{n-1}))$
for $n\in\en$. Finally, set $B=\bigcup_n C_n$. Then $B\in[X]^{\le\omega}$, $B\supset C$ and $\eta(G(B))\subset B$, so clearly $B\in\Gamma$.

Having the index set, we will construct the projections. Fix any $C\in\Gamma$. Let $Y=\clin C$ and $Z=\overline{G(C)}$. Then the mapping $x^*\mapsto X^*|_Y$ is an isometry of $Z$ onto $Y^*$. 
Let $j:Y\to X$ and $\iota:Z\to X^*$ be the canonical isometric inclusions. Since $j^*:X^*\to Y^*$ is the restriction mapping, we get that $j^*\circ\iota$ is an isometry of $Z$ onto $Y^*$. It follows that $\iota^*\circ j^{**}=(j^*\circ\iota)^*$ is an isometry of $Y^{**}$ onto $Z^*$. Further, $j^{**}$ is an isometric inclusion of $Y^{**}$ into $X^{**}$ with range $\wscl Y$.
So, $\iota^*$ maps $\wscl Y$ isometrically onto $Z^*$. Since $\iota^*(x^{**})=x^{**}|_Z$, Lemma~\ref{L:jednaprojekce} shows that there is a bounded linear projection $P_C$ on $X^*$ such that $P_CX^*=Z$ and $P_C^*X^{**}=\wscl Y$ (in fact, $P_C$ has norm one by the respective proof).

Let us show that $(P_C)_{C\in\Gamma}$ is a projectional skeleton on $X^*$. We already know that each $P_C$ is a bounded linear projection. Since $P_C X^*=\overline{G(C)}$ for each $C\in\Gamma$,
it is separable, so the property (i) is fulfilled. The property (iii) follows from the assumption that $G$ is $\omega$-monotone. Let us show the property (ii). Assume $C_1,C_2\in \Gamma$ are such that $C_1\subset C_2$. Then
$$P_{C_1}X^*=\overline{G(C_1)}\subset\overline{G(C_2)}=P_{C_2}X^*$$
and
$$P_{C_1}^*X^{**}=\clin^{w^*} C_1\subset\clin^{w^*} C_2=P_{C_2}^*X^{**},$$
so $P_{C_1}P_{C_2}=P_{C_2}P_{C_1}=P_{C_1}$. Finally, the property (iv) follows from the first property of $G$. Indeed, let $x^*\in X^*$. Then there are sequences $(C_n)$ in $[X]^{\le\omega}$ and $(x_n^*)$ in $X^*$ such that $x_n^*\in G(C_n)$ and $x_n^*\to x^*$. Let $C\in\Gamma$ be a set containing each $C_n$. then $x_n^*\in G(C)$ for each $n\in\en$, thus $x^*\in \overline{G(C)}=P_CX^*$.

Finally, since $P_C^* X^{**}=\clin^{w^*}C\supset C$, the induced subspace contains $X$.

(5)$\Rightarrow$(1) Let $(P_s)_{s\in\Gamma}$ be a projectional skeleton on $X^*$ such that the induced subspace contains $X$. Since $X$ is $1$-norming in $X^{**}$ we may assume without loss of generality that it is a $1$-projectional skeleton (by Lemma~\ref{L:norma projekci}). Let $Y$ be a separable subspace of $X$. Let $C\subset Y$ be a countable dense set. Then there is some $s\in\Gamma$ such that $P_sx=x$ for $x\in C$. It follows that $P_sx=x$ for $x\in Y$. 

Let $y^*\in Y^*$. Hahn-Banach theorem yields $x^*\in X^*$ with $x^*|_Y=y^*$. Moreover, for any $y\in Y$ we have
$$P_sx^*(y)=x^*(P_s^* y)=x^*(y)=y^*(y).$$
It follows that the mapping $x^*\mapsto x^*|_Y$ maps $P_sX^*$ onto $Y^*$. Since $P_sX^*$ is separable, we infer that $Y^*$ is separable as well.
\end{proof}

\begin{remark}\label{remark-Asplund}
(1) Let us stress that the characterizing property of Asplund spaces is not the existence of a $1$-projectional skeleton on the dual space, but the existence of such a skeleton whose induced subspace contains the original space (canonically embedded in the bidual). Indeed, for example $\C(K)^*$ is $1$-Plichko for any compact space $K$ (see, e.g., \cite[Example 4.10(a)]{ja-bidual} or \cite[Theorem 5.5]{ja-val-exa}), but not every $\C(K)$ space is Asplund. More generally, dual to any $C^*$-algebra is $1$-Plichko by \cite[Corollary 1.3]{BHK-vN} (for further generalizations see \cite{BHK-JBW,triples}).

(2) Let $X$ be an Asplund space. By the preceding theorem we know that there is a projectional skeleton on $X^*$ such that the induced subspace contains $X$. We point out that the induced subspace is not equal to $X$ (unless $X$ is reflexive), it is larger and equal to
$$\begin{aligned}
D(X)&=\{x^{**}\in X^{**}\setsep \exists C\subset X\mbox{ countable }:x^{**}\in\wscl C\}
\\&=\{x^{**}\in  X^{**}\setsep \exists (x_n)\mbox{ a sequence in }X: x_n\xrightarrow{w^*} x^{**}\}.\end{aligned}$$
This follows easily from the topological properties of induced subspaces.

(3) The projectional skeleton from the preceding theorem need not be commutative. The class of those spaces $X$ such that $X$ is contained in a $\Sigma$-subspace of $X^{**}$ (thus such that $D(X)$ is a $\Sigma$-subspace) is called \emph{class (T)} in \cite{ja-bidual}. By the above theorem the class (T) is a subclass of Asplund spaces (see also \cite[Theorem 4.1]{ja-bidual}). It contains many Asplund spaces (cf. \cite[Theorem 4.4, its corollaries and Example 4.8]{ja-bidual}), but not all Asplund spaces (by \cite[Example 4.10(b)]{ja-bidual}). 

There are Asplund spaces which do not belong to the class (T) but simultaneously their duals are $1$-Plichko. Indeed, if $K$ is any scattered compact space, then $\C(K)$ is Asplund and, moreover, $\C(K)^*$ is canonically isometric to $\ell^1(K)$ which is $1$-Plichko. If $K$ is uncountable, then there are many $1$-norming $\Sigma$-subspaces of $\ell^1(K)^*=\ell^\infty(K)$ (see \cite[Example 6.9]{ja-survey}), but it may happen that none of them contains $\C(K)$ (this takes place for example if $K=[0,\omega_2]$, see \cite[Example 4.10(b)]{ja-bidual} and its proof).
\end{remark}

We do not know of any nontrivial characterization of the class (T). However, there is a smaller subclass having nice characterizations. They are collected in the following theorem.

\begin{thm}\label{T:AsplundWCG} Let $X$ be a Banach space. The following assertions are equivalent.
\begin{enumerate}
\item $X$ is simultaneously Asplund and weakly Lindel\"of determined.
\item There is a norm-dense subset $M\subset X^*$ and an $\omega$-monotone mapping $\varphi:[X\cup M]^{\le\omega}\to[X\cup M]^{\le\omega}$ such that for each $A\in[X\cup M]^{\omega}$ we have
\begin{itemize}
\item $A\subset\varphi(A)$,
\item both $\overline{\varphi(A)\cap M}$ and $\overline{\varphi(A)\cap X}$ are linear subspaces,
\item $\wscl{\varphi(A)\cap M}=\overline{\varphi(A)\cap M}$,
\item the mapping $x^*\mapsto x^*|_{\overline{\varphi(A)\cap X}}$ is a bijection of $\overline{\varphi(A)\cap M}$ onto $(\overline{\varphi(A)\cap X})^*$
\end{itemize}
\item There is a shrinking projectional skeleton $(P_s)_{s\in\Gamma}$ on $X$ (i.e., such that $(P_s^*)_{s\in\Gamma}$ is a projectional skeleton on $X^*$). Moreover, this skeleton may be taken to be commutative.
\item There is a projectional skeleton $(Q_s)_{s\in\Gamma}$ on $X^*$ such that $Q_s^*(X)\subset X$ for each $s\in \Gamma$. Moreover, this skeleton may be taken to be commutative.
\item There is a shrinking Markushevich basis $(x_\alpha,x_\alpha^*)_{\alpha\in\Lambda}$ on $X$
(i.e., such that $\clin\{x_\alpha^*\setsep \alpha\in\Lambda\}=X^*$).
\item There is a Markushevich basis $(x_\alpha^*,x_\alpha^{**})_{\alpha\in\Lambda}$ on $X^*$ such that $x_\alpha^{**}\in X$ for each $\alpha\in\Lambda$.
\item $X$ is simultaneously Asplund and weakly compactly generated.
\end{enumerate}
\end{thm}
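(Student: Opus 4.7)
The plan is to prove the equivalences by closing a cycle $(1)\Rightarrow(3)\Rightarrow(1)$, combined with the dual equivalences $(3)\Leftrightarrow(4)$ and $(5)\Leftrightarrow(6)$, the classical equivalences $(5)\Leftrightarrow(7)$, and a separate treatment of $(2)$. The backbone uses Theorem~\ref{T:asplund} for the Asplund side, Corollary~\ref{C:wld} for the WLD side, and Proposition~\ref{P:PRI} for the transfinite construction.

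The dual pair $(3)\Leftrightarrow(4)$ is essentially formal: if $(P_s)_{s\in\Gamma}$ is a shrinking projectional skeleton on $X$, then $(P_s^*)_{s\in\Gamma}$ is a projectional skeleton on $X^*$ by the very definition of \emph{shrinking}, and since $P_s^{**}$ restricted to $X\subset X^{**}$ equals $P_s$ and $P_s(X)\subset X$, one has $(P_s^*)^*(X)\subset X$; commutativity transfers, giving (4). Conversely, given $(Q_s)_{s\in\Gamma}$ on $X^*$ with $Q_s^*(X)\subset X$, the restrictions $P_s=Q_s^*|_X$ form a projectional skeleton on $X$ whose adjoints recover $(Q_s)$, hence shrinking holds. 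The pair $(5)\Leftrightarrow(6)$ is equally direct: viewing $x_\alpha\in X$ as an element of $X^{**}$, the family $(x_\alpha^*,x_\alpha)_{\alpha\in\Lambda}$ is a biorthogonal system in $X^*\times X^{**}$; the shrinking condition $\clin\{x_\alpha^*:\alpha\in\Lambda\}=X^*$ furnishes the linear-density M-basis property, and the other requirements match after exchanging roles. The classical equivalence $(5)\Leftrightarrow(7)$ is already in the literature: $(5)\Rightarrow(7)$ follows because $\{x_\alpha/\|x_\alpha\|:\alpha\in\Lambda\}\cup\{0\}$ is weakly compact (the shrinking property forces $0$ as the only possible weak cluster point) and spans $X$, and $(7)\Rightarrow(5)$ is the classical construction of a shrinking M-basis on an Asplund WCG space; moreover $(7)\Rightarrow(1)$ holds since every WCG space is WLD.

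The heart of the matter is $(3)\Leftrightarrow(1)$. For $(3)\Rightarrow(1)$, a shrinking skeleton on $X$ has induced subspace all of $X^*$, so $X$ is WLD by Corollary~\ref{C:wld}; by $(3)\Leftrightarrow(4)$ the adjoint skeleton $(P_s^*)$ on $X^*$ has induced subspace in $X^{**}$ containing $X$ (for each $x\in X$, property (iv) of the original skeleton gives $s$ with $P_sx=x$, so $x=P_s^{**}x\in P_s^{**}X^{**}$), whence $X$ is Asplund by Theorem~\ref{T:asplund}(5)$\Rightarrow$(1). For $(1)\Rightarrow(3)$ I will use transfinite induction on $\kappa=\dens X$. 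The separable case is classical: every separable Asplund space admits a shrinking M-basis, from which a trivial shrinking commutative skeleton is manufactured. For uncountable $\kappa$, $X$ being WLD provides a commutative $1$-projectional skeleton (Theorem~\ref{T:char sigma subspace}(1)$\Leftrightarrow$(5)), whence a commutative PRI $(P_\alpha)_{\alpha\le\kappa}$ via Proposition~\ref{P:PRI}. Each block $Y_\alpha=(P_{\alpha+1}-P_\alpha)X$ is Asplund (as a subspace of $X$) and admits a commutative $1$-projectional skeleton by Proposition~\ref{P:PRI}(c) (commutativity is inherited from the skeleton on $X$), hence is itself WLD. By the inductive hypothesis each $Y_\alpha$ admits a shrinking commutative projectional skeleton, and amalgamating these in the fashion of the proof of Theorem~\ref{T:char sigma subspace}(5)$\Rightarrow$(6)---but tracking the adjoint skeletons on the $Y_\alpha^*$ in parallel---delivers a shrinking commutative skeleton on $X$.

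The remaining condition $(2)$ is tied to $(3)$ by arguments parallel to $(4)\Leftrightarrow(5)$ in Theorem~\ref{T:char sigma subspace}. For $(3)\Rightarrow(2)$, fix a commutative shrinking $1$-projectional skeleton $(P_s)$, take $M=X^*$, and build $\varphi$ by a standard closure-under-iterates construction so that $\overline{\varphi(A)\cap X}=P_{s(A)}X$ and $\overline{\varphi(A)\cap M}=P_{s(A)}^*X^*$ for a suitable $\omega$-monotone choice $A\mapsto s(A)\in\Gamma$. The key extra condition $\wscl{\varphi(A)\cap M}=\overline{\varphi(A)\cap M}$ is delivered by the weak$^*$-closedness of $P_{s(A)}^*X^*$, and the isometric restriction isomorphism mirrors the one appearing in Lemma~\ref{L:jednaprojekce}. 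For $(2)\Rightarrow(3)$, apply Lemma~\ref{L:jednaprojekce} to each pair $(\overline{\varphi(A)\cap X},\overline{\varphi(A)\cap M})$ to produce commuting projections indexed by the cofinal $\sigma$-closed subset $\{A:\varphi(A)=A\}$ of $[X\cup M]^{\le\omega}$; norm density of $M$ in $X^*$ forces the induced subspace on $X$ to be all of $X^*$, i.e., the skeleton is shrinking. The main obstacle is the amalgamation step in $(1)\Rightarrow(3)$: one must propagate \emph{simultaneously} commutativity (a WLD-type property) and shrinking (an Asplund-type property) across the PRI blocks, while the existing proof of Theorem~\ref{T:char sigma subspace}(5)$\Rightarrow$(6) handles only the commutativity; the delicate point is to verify that the gluing is compatible with the adjoint skeletons on each $Y_\alpha^*$ so that the assembled skeleton on $X$ has its adjoint forming a skeleton on $X^*$.
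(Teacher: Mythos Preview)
Your overall strategy has two genuine gaps.

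First, your implication scheme does not close. You establish the block $(1)\Leftrightarrow(2)\Leftrightarrow(3)\Leftrightarrow(4)$, the block $(5)\Leftrightarrow(6)\Leftrightarrow(7)$, and the single arrow $(7)\Rightarrow(1)$. There is no arrow going from the first block to the second, so the seven conditions are not shown to be equivalent. The paper handles this by proving $(3)\Rightarrow(5)\,\&\,(7)$ via a transfinite induction: starting from a shrinking commutative $1$-skeleton, one builds a PRI $(P_\alpha)$ such that $(P_\alpha^*)$ is also a PRI on $X^*$, applies the inductive hypothesis to obtain a shrinking Markushevich basis on each $(P_{\alpha+1}-P_\alpha)X$, and glues these via \cite[Proposition 6.2.5]{fabian-kniha}. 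This is precisely the kind of amalgamation you attempt elsewhere, but at the level of Markushevich bases (where the gluing is standard) rather than at the level of skeletons.

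Second, and relatedly, your route $(1)\Rightarrow(3)$ by direct transfinite amalgamation of shrinking skeletons on the blocks $Y_\alpha=(P_{\alpha+1}-P_\alpha)X$ is not carried out, and you rightly flag it as the ``main obstacle''. The reference to the proof of Theorem~\ref{T:char sigma subspace}(5)$\Rightarrow$(6) does not help here, since that argument builds a Markushevich basis, not a skeleton; there is no ready-made device in the paper for gluing skeletons on the $Y_\alpha$ into one on $X$ while preserving the shrinking property. The paper avoids this difficulty entirely: it proves $(1)\Rightarrow(2)$ by combining the $\omega$-monotone mapping $\psi$ coming from the WLD structure (Theorem~\ref{T:char induced}) with the $\omega$-monotone mapping $\theta$ coming from the Asplund structure (Theorem~\ref{T:asplund}(3)), iterating the two to produce a single $\varphi$ for which the norm and weak$^*$ closures of $\varphi(A)\cap M$ agree. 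Then $(2)\Rightarrow(3)$ is a direct application of Lemma~\ref{L:jednaprojekce}, and no amalgamation is needed. In short, condition $(2)$ is not a peripheral add-on but the pivot of the paper's argument; you treat it as such for $(2)\Leftrightarrow(3)$ but bypass it for $(1)\Rightarrow(3)$, which is where it does the real work.
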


Several equivalences from this theorem are already known. The equivalence (1), (5) and (7) is contained in \cite[Theorem 8.3.3 and the following remark]{fabian-kniha}. The equivalence (1)$\Leftrightarrow$(3) is proved in \cite[Theorem 15]{CuFa-AsplundWCG}. The added value of the present theorem is mainly the assertion (2) and the (almost cyclic) way of proving. 

\begin{proof}[Proof of Theorem~\ref{T:AsplundWCG}.] (1)$\Rightarrow$(2)
Assume that $X$ is simultaneously Asplund and WLD. Since $X$ is WLD, $X^*$ is a $\Sigma$-subspace of itself, so there is a (commutative) projectional skeleton on $X$ with induced subspace $X^*$ (by Theorem~\ref{T:char sigma subspace}). Let $\psi:[X\cup X^*]^{\le\omega}\to[X\cup X^*]^{\le\omega}$ be an $\omega$-monotone mapping with properties from Theorem~\ref{T:char induced}(3).
Further, let $\theta:[X]^{\le\omega}\to[X\cup X^*]^{\le\omega}$ be an $\omega$-monotone mapping with properties from Theorem~\ref{T:asplund}(3). 

We will modify these two mappings. First, let 
$$M=\bigcup\{\theta(C)\cap X^*\setsep C\in[X]^{\le\omega}\}.$$ It follows from the properties of $\theta$ that $M$ is a norm-dense subset of $X^*$. For each $x^*\in M$ fix some $C(x^*)\in[X]^{\le\omega}$ with $x^*\in\theta(C(x^*))$.
Define a mapping $\tilde{\theta}:[X\cup M]^{\le\omega}\to[X\cup M]^{\le\omega}$ by
$$\tilde{\theta}(A)=\theta\left(A\cap X\cup \bigcup\{C(x^*)\setsep x^*\in A\cap M\}\right),\qquad A\in[X\cup M]^{\le\omega}.$$
It is clear that $\tilde{\theta}$ is $\omega$-monotone and $\tilde{\theta}(A)\supset A$ for each $A\in[X\cup M]^{\le\omega}$. Further, since $\tilde{\theta}(A)=\theta(C)$ for some $C$, $\tilde{\theta}$ has the obvious analogues of the properties of $\theta$.

Let us continue by modifying $\psi$. Since $M$ is norm-dense in $X^*$, there is a mapping
$\zeta:X^*\to[M]^{\le\omega}$ such that $x^*\in\overline{\zeta(x^*)}$ for each $x^*\in X^*$. Moreover, we can assume that $\zeta(x^*)=\{x^*\}$ for $x^*\in M$. For $A\in[X\cup M]^{\omega}$
set $\psi_0(A)=A$ and define by induction
$$\psi_n(A)=  \psi(\psi_{n-1}(A))\cap X \cup\zeta(\psi(\psi_{n-1}(A))\cap X^*)\mbox{\quad for } n\in\en.$$
It is clear that $\psi_n$ is an $\omega$-monotone mapping of $[X\cup M]^{\le\omega}$ to $[X\cup M]^{\le\omega}$ and that $\psi_n(A)\supset\psi_{n-1}(A)$ for each $A\in[X\cup M]^{\le\omega}$. Hence also the mapping
$$\tilde{\psi}(A)=\bigcup_n\psi_n(A),\qquad A\in[X\cup M]^{\le\omega}$$
is $\omega$-monotone.

Finally, we will define the sought mapping $\varphi$. For $A\in[X\cup M]^{\le\omega}$ set
$\varphi_0(A)=A$ and define by induction
$$\varphi_n(A)=\tilde{\psi}(\tilde{\theta}(A)),\qquad n\in\en,$$
and $\varphi(A)=\bigcup_n\varphi_n(A)$.

It is clear each $\varphi_n(A)$ is $\omega$-monotone and $\varphi_n(A)\supset\varphi_{n-1}(A)$ for each $n$, thus also $\varphi$ is $\omega$-monotone. Moreover, for any $A\in[M\cup X]^{\le\omega}$ the following properties hold.
\begin{itemize}
\item $A\subset\varphi(A)$. 
\item Both $\overline{\varphi(A)\cap X}$ and $\overline{\varphi(A)\cap M}$ are linear subspaces.
\item $x^*\mapsto x^*|_{\overline{\varphi(A)\cap X}}$ is an isometry of $\overline{\varphi(A)\cap M}$ onto $(\overline{\varphi(A)\cap X})^*$. This follows from the properties of $\theta$ as $\varphi(A)=\tilde{\theta}(\varphi(A))=\theta(C)$ for some $C$.
\item $x^*\mapsto x^*|_{\overline{\varphi(A)\cap X}}$ is a bijection of $\wscl{\varphi(A)\cap M}$ onto $(\overline{\varphi(A)\cap X})^*$. Indeed, $\varphi(A)=\tilde{\psi}(\varphi(A))$ and for any $C$ we have
$$\tilde{\psi}(C)\cap X=\psi(\tilde{\psi}(C))\cap X,\quad
\overline{\tilde{\psi}(C)\cap M}=\overline{\psi(\tilde{\psi}(C))\cap M}.$$
\end{itemize}
So, in particular $\overline{\varphi(A)\cap M}=\wscl{\varphi(A)\cap M}$ and the proof is complete.

(2)$\Rightarrow$(3) Let $\varphi$ be the mapping from (2). For the index set take $[X\cup M]^{\le\omega}$. By Lemma~\ref{L:jednaprojekce} for each $A\in\Gamma$ there is a bounded linear projection $P_A$ on $X$ such that $P_AX=\overline{\varphi(A)\cap X}$ and $P_A^*X^*=\wscl{\varphi(A)\cap M}=\overline{{\varphi(A)\cap M}}$. Now, as in the proof of Theorem~\ref{T:char induced}(3)$\Rightarrow$(1) we see that $(P_A)_{A\in\Gamma}$ is a projectional skeleton on $X$ with induced subspace $X^*$. Moreover, by the $\omega$-monotonicity of $\varphi$ together with the coincidence of the weak$^*$ and norm closures of $\varphi(A)\cap M$ we infer that $(P_A^*)_{A\in\Gamma}$ is a projectional skeleton on $X^*$.

(3)$\Rightarrow$(4) Assume that $(P_\alpha)_{\alpha\in\Lambda}$ is a shrinking projectional skeleton on $X$. Then $(P_\alpha^*)_{\alpha\in\Lambda}$ is a projectional skeleton on $X^*$. Moreover, $P_\alpha^{**}(X)\subset X$ as the restriction of $P_\alpha^{**}$ to $X$ equals $P_\alpha$.

(4)$\Rightarrow$(3) Assume $(Q_\alpha)_{\alpha\in\Lambda}$ is projectional skeleton on $X^*$ such that $Q_\alpha^*X\subset X$ for each $\alpha$. By \cite[Proposition 9]{kubis-skeleton} we can without loss of generality assume that the projections $Q_\alpha$ are uniformly bounded, say by a constant $C\ge1$.

Set $P_\alpha=Q_\alpha^*|_X$. We claim that $(P_\alpha)_{\alpha\in\Lambda}$ is a projectional skeleton on $X$ and $P_\alpha^*=Q_\alpha$ for each $\alpha\in\Lambda$.
It is clear that each $P_\alpha$ is a projection such that $\norm{P_\alpha}\le C$. Moreover,
obviously $P_\alpha^*=Q_\alpha$. It remains to check the properties (i)--(iv) of  projectional skeletons. Firstly, $Q_\alpha X^*=P_\alpha^*X^*$ is isomorphic to the dual of $P_\alpha X$.
So, $P_\alpha X$ is separable (as its dual is), which proves the property (i). The property (ii) is obvious. To prove the property (iii) let $(\alpha_n)$ be an increasing sequence in $\Lambda$. Since $(Q_\alpha)_{\alpha\in\Lambda}$ is a projectional skeleton, there is $\alpha=\sup_n \alpha_n\in\Lambda$ and, moreover, $Q_{\alpha_n}x^*\to Q_\alpha x^*$ for each $x^*\in X^*$ (as the skeleton $(Q_\alpha)_{\alpha\in\Lambda}$ satisfies the property (iii')). Now it easily follows that $Q_\alpha^* x^{**}\overset{w^*}{\longrightarrow} Q_\alpha^* x^{**}$ for each $x^{**}\in X$.
Since the restriction of the weak$^*$ topology on $X^{**}$ to $X$ coincides with the weak topology of $X$, we deduce $P_{\alpha_n} x\overset{w}{\longrightarrow}P_\alpha x$ for each $x\in X$, so
$$P_\alpha X=\overline{\bigcup_n P_{\alpha_n}X}^{w}=\overline{\bigcup_n P_{\alpha_n}X}$$
as the union is a linear subspace. This completes the proof of the property (iii). 

To prove the property (iv) set
$$P x=\lim_{\alpha\in\Lambda} P_\alpha x,\qquad x\in X.$$
Then $P$ is a well-defined projection on $X$ with $\norm{P}\le C$ (cf. \eqref{eq:P_A def}). We claim that $P$ is the identity of $X$. If not, then $\ker P\ne\{0\}$, so there is some $x\in X\setminus\{0\}$ with $Px=0$. It means that $P_\alpha x=0$ for each $\alpha\in X$ (see Lemma~\ref{L:P_sP_A}(a)). Thus
$$x\in \bigcap_{\alpha\in\Lambda}\ker P_\alpha =  \bigcap_{\alpha\in\Lambda} (P_\alpha^* X^*)_\perp =\left(\bigcup_{\alpha\in\alpha} Q_\alpha X^*\right)_{\perp}=(X^*)_\perp=\{0\},$$
a contradiction. So, $P$ is the identity mapping and now the property (iv) easily follows from the property (iii).

(3)$\Rightarrow$(5)\&(7) Assuming (3) we will show that there is a Markushevich basis $(x_\alpha,x^*_\alpha)_{\alpha\in\Lambda}$ in $X$ which is shrinking, and, moreover,
the set $\{x_\alpha\setsep \alpha\in\Lambda\}\cup\{0\}$ is $\sigma$-compact in the weak topology.

The proof will be done by transfinite induction on the density character of $X$. First assume that $X$ is separable. Let $(P_s)_{s\in \Gamma}$ be a shrinking projectional skeleton on $X$. It follows from the properties of the skeleton that there is some $s\in\Gamma$ such that $P_s$ is the identity on $X$. Thus $P_s^*$ is the identity on $X^*$. Since the adjoint projections form a projectional skeleton on $X^*$, they have separable ranges. So, $X^*$ is separable. Now, it is a classical result that any space with a separable dual admits a shrinking Markushevich basis (see, e.g., \cite[Theorem 1.22]{HMVZ-biortogonal}). Moreover, the basis is countable, so the weak $\sigma$-compactness trivially follows.

Further, assume that $\kappa$ is an uncountable cardinal such that the implication 
holds for any space of density strictly less than $\kappa$. Let $X$ be a Banach space of density character $\kappa$ having a shrinking projectional skeleton $(P_s)_{s\in\Gamma}$. Since the induced subspace equals whole $X^*$, we can assume that it is a commutative $1$-projectional skeleton (by Theorem~\ref{T:komutativni} and Lemma~\ref{L:norma projekci}). Let $(A_\alpha)_{\alpha\le\kappa}$ be a transfinite sequence of subsets of $\Gamma$ provided by Lemma~\ref{L:A_alpha}. By Proposition~\ref{P:PRI} the transfinite sequence $(P_{A_\alpha})_{\alpha\le\kappa}$ is a PRI on $X$. 

Further, denote $Q_s=P_s^*$ for $s\in\Gamma$. By the assumptions $(Q_s)_{s\in\Gamma}$ is a projectional skeleton on $X^*$ (in fact a commutative $1$-projectional skeleton by the above).
So, for any directed set $A\subset\Gamma$ we can define the projection $Q_A$ on $X^*$ by the formula \eqref{eq:P_A def}. We claim that $Q_A=P_A^*$. Indeed, given $x^*\in X^*$ and $x\in X$ we have
$$P_A^* x^* (x)=x^*(P_Ax)=\lim_{s\in A} x^*(P_sx)=\lim_{s\in A}P_s^*x^*(x)=\lim_{s\in A}Q_sx^*(x)=Q_Ax^*(x).$$

It follows that $(P_{A_\alpha}^*)_{\alpha\le\kappa}=(Q_{A_\alpha})_{\alpha\le\kappa}$ is a PRI on $X^*$.

Now fix $\alpha<\kappa$ and let $P=P_{\alpha+1}-P_\alpha$. Since the skeleton $(P_s)_{s\in\Gamma}$ is commutative, Lemma~\ref{L:P_sP_A} yields $P_sP=PP_s$ for each $s\in \Gamma$. In particular, $(P_s|_{PX})_{s\in \Gamma}$ is a projectional skeleton on $PX$. We will show it is shrinking.
To this end first observe that
$$(P_s|_{PX})^*y^*=P_s^*(y^*\circ P)|_{PX},\qquad y^*\in(PX)^*, s\in\Gamma.$$
Indeed, fix $s\in\Gamma$, $y^*\in(PX)^*$ and $x\in PX$. Then
$$(P_s|_{PX})^*y^*(x)=y^*(P_s|_{PX}x)=y^*(P_sx)=y^*(P_sPx)=y^*(PP_sx)=y^*\circ P(P_sx)=
P_s^*(y^*\circ P) x.$$
So, if we define $T:(PX)^*\to X^*$ by $T(y^*)=y^*\circ P$ and $S:X^*\to (PX)^*$ by $Sx^*=x^*|_{PX}$,
then $(P_s|_{PX})^*=SP_s^*T$. We know that $(P_s|_{PX})^*$ is a projection on $(PX)^*$ and that $(P_s^*)_{s\in\Gamma}$ is a projectional skeleton on $X^*$. Let us prove the properties (i)-(iv) of projectional skeletons. The property (i) follows from the fact that $P_s^*$ has separable range,
the property (ii) is obvious. The property (iii) is easy. Let us show the property (iv). Fix any $y^*\in (PX)^*$. Then $Ty^*\in X^*$, so there is some $s\in\Gamma$ with $P_s^*Ty^*=Ty^*$. Then 
$SP_s^*Ty^*=STy^*=y^*\circ P|_{PX}=y^*$.

So, for each $\alpha<\kappa$ the space $(P_{\alpha+1}-P_\alpha)X$ admits a shrinking projectional skeleton, so by the induction hypothesis it admits a shrinking Markushevich basis $(x_{\alpha,j},x_{\alpha,j}^*)_{j\in J_\alpha}$ such that the set $\{x_{\alpha,j}\setsep j\in J_\alpha\}\cup\{0\}$ is $\sigma$-compact in the weak topology. Moreover, we can assume that $\norm{x_{\alpha,j}}\le1$ for each $j$ and $\alpha$.

We claim that 
$$(x_{\alpha,j},x_{\alpha,j}^*\circ(P_{\alpha+1}-P_\alpha))_{\alpha<\kappa,j\in J_\alpha}$$
is a Markushevich basis with the required properties. This follows from the proof of \cite[Proposition 6.2.5]{fabian-kniha}. Indeed, in the assertion (ii) of the quoted proposition is proved that it is a shrinking Markushevich basis. Moreover, in the assertion (i) it is proved that
$\{0\}\cup\bigcup_{\alpha<\kappa}K_\alpha$ is weakly compact whenever $K_\alpha\subset B_X\cap (P_{\alpha+1}-P_\alpha)X$ is weakly compact for each $\alpha<\kappa$.

(5)$\Rightarrow$(6) Let $(x_\alpha,x_\alpha^*)_{\alpha\in\Lambda}$ be a shrinking Markushevich basis on $X$. Then clearly $(x_\alpha^*,x_\alpha)_{\alpha\in\Lambda}$ is a Markushevich basis on $X^*$.

(6)$\Rightarrow$(5) Let $(x_\alpha^*,x_\alpha)_{\alpha\in\Lambda}$ be a Markushevich basis on $X^*$, where $x_\alpha\in X$ for each $\alpha\in\Lambda$. Then $(x_\alpha,x_\alpha^*)_{\alpha\in\Lambda}$ is a shrinking Markushevich basis on $X$. The only property which requires a proof is that $\clin\{x_\alpha\setsep \alpha\in\Lambda\}=X$. But it is an immediate consequence of the bipolar theorem, as 
$$\clin\{x_\alpha\setsep \alpha\in\Lambda\}=(\{x_\alpha\setsep \alpha\in\Lambda\}^\perp)_\perp=\{0\}_\perp=X.$$

(5)$\Rightarrow$(1) Let $(x_\alpha,x_\alpha^*)_{\alpha\in\Lambda}$ be a shrinking Markushevich basis on $X$. Since
$$\{x^*\in X^*\setsep \{\alpha\in\Lambda\setsep x^*(x_\alpha)\ne0 \}\mbox{ is countable}\}$$
is a closed linear subspace of $X^*$ containing each $x_\alpha^*$, it is equal to $X^*$. So, $X^*$ is a $\Sigma$-subspace of itself, so $X$ is WLD.

Similarly, $X$ is contained in a $\Sigma$-subspace of $X^{**}$, so $X$ belongs to the class (T).
Hence $X$ is Asplund by \cite[Theorem 4.1]{ja-bidual}
(or by Theorems~\ref{T:asplund} and~\ref{T:char sigma subspace}).

(7)$\Rightarrow$(1) It follows from the deep results of \cite{Amir-Lindenstrauss} that any WCG space is WLD.

\smallskip

It remains to observe that the projectional skeleton in (3) or (4) can be chosen to be commutative.

Case (3): Assume that $(P_s)_{s\in\Gamma}$ is a shrinking projectional skeleton on $X$. Then the induced subspace  is whole $X^*$, so the skeleton can be taken to be commutative (up to restricting to a $\sigma$-closed cofinal subset of the index set by Theorem~\ref{T:komutativni}.

Case (4): Assume that $(Q_s)_{s\in\Gamma}$ is a projectional skeleton on $X^*$ such that $Q_s^*(X)\subset X$ for each $s\in\Gamma$. By the proof of (4)$\Rightarrow$(3) there is a $\sigma$-closed cofinal subset $\Gamma'\subset\Gamma$ and a projectional skeleton $(P_s)_{s\in\Gamma'}$ on $X$ such that $P_s^*=Q_s$ for $s\in\Gamma'$. So, by case (3) we can assume that the skeleton $(P_s)$ is commutative, then $(Q_s)$ is commutative as well.
\end{proof}

If $X$ is an Asplund space, we know that $X^*$ admits a projectional skeleton (with induced subspace $D(X)$), hence there exists a Markushevich basis of $X^*$ (by Theorem~\ref{T:M-basis}). However, it is not clear whether it may have nice properties related to $D(X)$. In the following example we show that at least sometimes it is true, but the reason has nothing to do with the specific projectional skeleton.

\begin{example2}\label{ex:scattered} Let $K$ be a scattered locally compact space. Then $X=\C_0(K)$ is an Asplund space
and $X^*$ can be canonically identified with $\ell^1(K)$.
 Consider the canonical Markushevich basis of $\ell^1(K)$, i.e., $(e_x,e_x^*)_{x\in K}$, where $e_x$ and $e_x^*$ are the canonical basic vectors in $\ell^1(K)$ and $\ell^\infty(K)$, respectively. Regardless of the concrete topological structure of $K$ the set
$$H=\{e_x\setsep x\in K\}\cup\{0\}$$
is $\sigma(X^*,D(X))$-closed in $X^*$. Indeed, it is even weak$^*$-closed -- if $K$ is even compact, then $(H\setminus\{0\},w^*)$ is homeomorphic to $K$, if $K$ is not compact, then $(H,w^*)$ is homeomorphic to the one-point compactification of $K$.

Moreover, for the Markushevich basis $(e_x,e_x^*)_{x\in K}$ defined above the following holds.
\begin{itemize}
\item $e_x^*\in X$ for each $x\in K$ if and only if $K$ is discrete, i.e., if $X=c_0(K)$.
\item $e_x^*\in D(X)$ for each $x\in K$ if and only if each point of $K$ is $G_\delta$, i.e., if $K$ is locally countable.
\end{itemize}
\end{example2}

In some cases there is a better Markushevich basis than the one described in the previous example.
Some concrete cases are described in the following examples.

\begin{example2}
Let $K$ be a countable compact space. Then $\C(K)$ admits a (countable) shrinking Markushevich basis (as $\C(K)^*$ is separable), but it must be different from the basis from the previous example unless $K$ is finite. An explicit formula can be given as follows. Firstly, $K$ is homeomorphic to the ordinal segment $[0,\eta]$ for some $\eta<\omega_1$ (we assume $K$ is infinite, so $\eta\ge\omega$). Fix a bijection $\xi:[0,\omega)\to[0,\eta]$ such that $\xi(0)=0$. We define a Markushevich bases $(f_n,\mu_n)_{n<\omega}$ in  $\C(K)$ as follows:
$$\begin{aligned}
f_0&=\chi_{\{\xi(0)\}}=\chi_{\{0\}}, \mu_0=\delta_{\xi(0)}=\delta_0,\\
f_n&=\chi_{(\max[0,\xi(n))\cap\xi(\{0,\dots,n-1\}),\xi(n)]} \mbox{ if }n\ge1,\\
\mu_n&=\begin{cases}
\delta_{\xi(n)} &\mbox{ if }\xi(\{0,\dots,n-1\})\cap(\xi(n),\eta]=\emptyset,\\
\delta_{\xi(n)}-\delta_{\min\xi(\{0,\dots,n-1\})\cap(\xi(n),\eta]} & \mbox{ otherwise}.
\end{cases}
\end{aligned}$$
It is easy to check that $(f_n,\mu_n)_{n<\omega}$ is a shrinking Markushevich basis of $\C(K)$.
\end{example2}

\begin{example2}
If $K$ is a compact scattered space of cardinality $\aleph_1$, then there is a Markushevich basis on $X^*=\C(K)^*$ such that the biorthogonal functionals belong to $D(X)$. Unless $K$ is countable, the basis from Example~\ref{ex:scattered} fails this property. In case $K=[0,\eta]$ where $\eta\in[\omega_1,\omega_2)$, an explicit formula can be given as follows. Fix a bijection $\xi:[0,\omega_1)\to[0,\eta]$ such that $\xi(0)=0$.  We define a Markushevich bases $(\mu_\alpha,f_\alpha)_{\alpha<\omega_1}$ in  $\C(K)^*$ as follows:
$$\begin{aligned}
f_0&=\chi_{\{\xi(0)\}}=\chi_{\{0\}}, \mu_0=\delta_{\xi(0)}=\delta_0,\\
f_\alpha&=\chi_{[\sup\{\xi(\beta)+1\setsep\beta<\alpha\ \&\ \xi(\beta)<\xi(\alpha)\},\xi(\alpha)]} \mbox{ if }\alpha\ge1,\\
\mu_n&=\begin{cases}
\delta_{\xi(\alpha)} &\mbox{ if }\xi([0,\alpha))\cap(\xi(\alpha),\eta]=\emptyset,\\
\delta_{\xi(\alpha)}-\delta_{\min(\xi([0,\alpha))\cap(\xi(n),\eta])} & \mbox{ otherwise}.
\end{cases}
\end{aligned}$$
It is easy to check that $(\mu_\alpha,f_\alpha)_{\alpha<\omega_1}$ is a Markushevich basis of $\C(K)^{*}$. Moreover, $f_\alpha\in D(\C(K))$ for each $\alpha<\omega_1$ as each $f_\alpha$ is a function of the first Baire class, being the characteristic function of a closed $G_\delta$ set.
\end{example2}

\section{Open problems}

In this section we collect several questions on projectional skeletons, projectional generators, Markushevich bases and related topics which remain open.

We start by the following question on a possible generalization of Corollary~\ref{C:wld}.

\begin{ques} Assume that $X$ is a Banach space and $D\subset X^*$ is a subspace induced by a projectional skeleton on $X$ which is of finite codimension in $X^*$. Is $D$ necessarily a $\Sigma$-subspace?
\end{ques}

Banach spaces whose duals admit a $\Sigma$-subspace of finite codimension have been studied in \cite{ja-almostwld}. It is not clear whether there are non-commutative variants. It is easy to observe that they cannot be found among continuous functions on ordinals or on trees. A natural candidate could be a dual to a quasireflexive space. Indeed, let $X$ be quasireflexive. Then $X$ is Asplund, so there is a projectional skeleton on $X^*$ such that the induced subspace contains $X$. Since $X$ is of finite codimension in $X^{**}$, a fortiori $D(X)$ is of finite codimension in $X^{**}$. However, any quasireflexive space is weakly compactly generated by \cite{valdivia-qr}, so it belongs to the class (T). In fact, since the dual of a quasireflexive space is again quasireflexive, hence weakly compactly generated, necessarily $D(X)=X^{**}$. But it seems that the following problem is open.

\begin{ques} Let $X$ be an Asplund space such that $D(X)$ has finite codimension in $X^{**}$. Does $X$ belong to the class (T)?
\end{ques}

%Observe that the answer is positive if $X=\C_0(T)$ where $T$ is a scattered locally compact space. Indeed, if $T$ is countable, then $X$ is separable and $D(X)=X$. 

A further question is connected to the existence of a nice Markushevich basis. We know that any Banach space admitting a projectional skeleton has a Markushevich basis (by Theorem~\ref{T:M-basis}). However, the following natural question is open.

\begin{ques}\label{Q:M-basis uzavrena?} Assume that a Banach space $X$ admits a projectional skeleton $(P_s)_{s\in\Gamma}$ and $D\subset X^*$ is the induced subspace. Does there exist a Markushevich basis $(x_\alpha,x_\alpha^*)_{\alpha\in\Lambda}$ such that the set $H=\{x_\alpha\setsep \alpha\in\Lambda\}\cup\{0\}$ satisfies 
\begin{itemize}
\item $H$ is $\sigma(X,D)$-closed in $X$; or
\item $P_s(H)\subset H$ for $s\in\Gamma'$ for some cofinal $\sigma$-closed $\Gamma'\subset\Gamma$; or at least
\item $(H,\sigma(X,D))$ is monotonically Sokolov?
\end{itemize}
\end{ques}

Observe that the positive answer to the first question implies the positive answer to the second one (by Lemma~\ref{L:invariance}(b)). Further, the positive answer to the second question implies the positive answer to the third one. This follows from the following lemma. 

\begin{lemma} Let $X$ be a Banach space with a projectional skeleton $(P_s)_{s\in\Gamma}$. Let $D$ denote the respective induced subspace. Let $H\subset X$. Assume that $P_s(H)\subset H$ for $s$ from some cofinal $\sigma$-closed $\Gamma'\subset\Gamma$. Then $(H,\sigma(X,D))$ is monotonically Sokolov.
\end{lemma}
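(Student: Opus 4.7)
The plan is to absorb $\Gamma'$ into $\Gamma$ and thereby assume $P_s(H)\subset H$ for every $s\in\Gamma$, and then build the monotonically Sokolov assignment directly from the restricted projections $P_s|_H$. The crucial preliminary observation is that each $P_s$ is $\sigma(X,D)$-to-$\sigma(X,D)$ continuous on $X$: for any $x^*\in X^*$ we have $x^*\circ P_s=P_s^*x^*\in P_s^*X^*\subset D$, so $P_s$ is in fact $\sigma(X,D)$-to-weak continuous, and a fortiori $\sigma(X,D)$-to-$\sigma(X,D)$ continuous. Consequently each $P_s|_H$ is a $\sigma(X,D)|_H$-continuous retraction of $H$ onto $P_sX\cap H$.

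For each $F$ in the family $\F(H)$ of nonempty $\sigma(X,D)|_H$-closed subsets of $H$, set $G_F=\overline{F}^{\sigma(X,D)}\subset X$; as $F$ is relatively closed in $H$, $G_F\cap H=F$. I would then repeat the proof of Lemma~\ref{L:invariance}(b) almost verbatim: it passes to the canonical $J\colon X\to\C(K)$, which is a $\sigma(X,D)$-to-$\tau_p(D\cap K)$ homeomorphism onto the $\tau_p$-closed set $J(X)\subset\C(K)$, notes that $J(G_F)$ is $\tau_p$-closed, and invokes \cite[Proposition 3.1]{cuth-simult}; linearity of the invariant set is nowhere used in this step. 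The outcome is a cofinal $\sigma$-closed $\Gamma_F\subset\Gamma$ with $P_s(G_F)\subset G_F$ for $s\in\Gamma_F$, which combined with $P_s(H)\subset H$ gives $P_s(F)\subset G_F\cap H=F$.

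With the family $\{\Gamma_F\setsep F\in\F(H)\}$ at hand, fix an upper-bound function $\phi\colon\Gamma\times\Gamma\to\Gamma$, a basepoint $s_0\in\Gamma$, and for each $F\in\F(H)$ a cofinal selector $\psi_F\colon\Gamma\to\Gamma_F$ with $\psi_F(s)\ge s$. For $\A\in[\F(H)]^{\le\omega}$ set $A_0(\A)=\{s_0\}$ and
$$A_{n+1}(\A)=A_n(\A)\cup\{\phi(s,t)\setsep s,t\in A_n(\A)\}\cup\{\psi_F(s)\setsep F\in\A,\,s\in A_n(\A)\},$$
and write $A(\A)=\bigcup_n A_n(\A)$. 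A routine induction shows that $A$ is an $\omega$-monotone map into $[\Gamma]^{\le\omega}$ and that each $A(\A)$ is countable and directed. Moreover $A(\A)\cap\Gamma_F$ is cofinal in $A(\A)$ for every $F\in\A$ (because $\psi_F(s)\in A(\A)\cap\Gamma_F$ and $\psi_F(s)\ge s$ for every $s\in A(\A)$), so the supremum $s(\A)=\sup A(\A)\in\Gamma$ lies in each such $\Gamma_F$ by $\sigma$-closedness. Therefore $r_\A:=P_{s(\A)}|_H$ is a continuous retraction of $(H,\sigma(X,D))$ onto $P_{s(\A)}X\cap H$ satisfying $r_\A(F)\subset F$ for every $F\in\A$.

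Finally, for every $s\in\Gamma$ pick a countable $\eta(s)\subset P_sX$ that is norm-dense in $P_sX$, set
$$\N_0(s)=\{U(y,1/n)\cap H\setsep y\in\lin_{\qe}\eta(s),\,n\in\en\}$$
with $U(y,r)$ the open norm-ball of $X$, and put $\N(\A)=\bigcup_{s\in A(\A)}\N_0(s)$; this is $\omega$-monotone in $\A$ because $A$ is. To check that $\N(\A)$ is an outer network for $r_\A(H)=P_{s(\A)}X\cap H$, let $U=U_X\cap H$ be $\sigma(X,D)|_H$-open and $x\in U\cap r_\A(H)$. Then $U_X$ is norm-open in $X$ and, by property (iii) of the skeleton combined with the construction of $A(\A)$, $P_{s(\A)}X$ coincides with the norm-closure of $\bigcup_{s\in A(\A)}P_sX$; so $x$ is norm-approximable by points $y\in\lin_{\qe}\eta(s)$ with $s\in A(\A)$, and for $n$ large enough the ball $U(y,1/n)$ contains $x$ and lies inside $U_X$, yielding the required $N=U(y,1/n)\cap H\in\N(\A)$ with $x\in N\subset U$.

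The main obstacle is the second step, namely the extension of Lemma~\ref{L:invariance}(b) from $\sigma(X,D)$-closed subspaces to arbitrary $\sigma(X,D)$-closed subsets of $X$. A careful reading of that proof shows that linearity enters only when identifying the induced subspace of the restricted skeleton, not when extracting the cofinal $\sigma$-closed set $\Gamma_F$ from \cite[Proposition 3.1]{cuth-simult}; so the generalization is essentially automatic once the proof is rewritten at this level of generality.
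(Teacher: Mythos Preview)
Your proof is correct and follows the same overall strategy as the paper: reduce to $\Gamma'=\Gamma$, produce for each countable family $\A$ of closed subsets of $H$ an index $s$ with $P_s(F)\subset F$ for all $F\in\A$, and set $r_\A=P_s|_H$ with outer network coming from norm-balls centered at a countable dense subset of $P_sX$. The difference lies in how the invariance index is found. The paper simply reuses the explicit construction from the proof of (1)$\Rightarrow$(4) of Theorem~\ref{T:char induced}: there the retraction $r_\A$ built on $(X,\sigma(X,D))$ is always one of the $P_s$, so it automatically preserves $H$, and intersecting everything with $H$ yields the assignment on $H$ in a couple of lines. You instead work modularly: for each closed $F\subset H$ you extract a cofinal $\sigma$-closed $\Gamma_F$ with $P_s(\overline F)\subset\overline F$ via the set-valued version of Lemma~\ref{L:invariance}(b) (correctly noting that linearity of $Y$ is irrelevant in that argument), and then you run an $\omega$-monotone closing-off procedure to land simultaneously in all $\Gamma_F$, $F\in\A$. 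Your route is a bit longer but more self-contained, while the paper's is shorter because it amortizes the work into Theorem~\ref{T:char induced}; substantively the two arguments coincide.
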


\begin{proof}
Since $(P_s)_{s\in\Gamma'}$ is a projectional skeleton on $X$ with induced subspace $D$, without loss of generality we assume $\Gamma'=\Gamma$. Let $\A\mapsto(r_{\A},\N(\A))$ be the assignment constructed in the proof of the implication (1)$\Rightarrow$(5) of Theorem~\ref{T:char induced}.
Now, for $\B\in[\F(H)]^{\le\omega}$ set
$$\A(\B)=\{\overline{F}^{\sigma(X,D)}\setsep F\in\B\}\in[\F(X)]^{\le\omega}.$$
Clearly $\A$ is $\omega$-monotone. Next, for $\B\in\F(H)$ set
$$q_{\B}=r_{\A(\B)}|_H \mbox{ and }\M(\B)=\{N\cap H\setsep N\in\N(\A(\B))\}.$$
Clearly, the assignment $\M$ is $\omega$-monotone. Moreover, by the construction the mapping $r_{\A(\B)}$ is one of the projections $r_s$. So, $H$ is invariant for $r_{\A(\B)}$ and hence $q_\B$ is a well-defined continuous retraction on $(H,\sigma(X,D))$. Further, for each $B\in \B$ we have
$$q_\B(B)=r_{\A(\B)}(\overline B\cap H)\subset r_{\A(\B)}(\overline B)\cap r_{\A(\B)}(H)
\subset \overline{B}\cap H=B.$$
Finally, $\M(\B)$ is clearly an outer network for $q_\B(H)$. 
\end{proof}

The answer to the first question is positive in case the skeleton is commutative and it is witnessed by the Markushevich basis constructed using a PRI (see Theorem~\ref{T:char sigma subspace} and Remark~\ref{Remark Sigma}(d,e)).
The non-commutative case seems to be more complicated. The answer is positive for spaces of continuous functions on ordinals 
(by Proposition~\ref{P:ord Mbaze ocasni}), for continuous functions on trees (by Proposition~\ref{P:trees Mbaze ocasni})
and for duals to Asplund $C(K)$ spaces (by Example~\ref{ex:scattered}). Let us point out that the Markushevich basis witnessing
the positive answer is in all the cases in a sense `canonical', but it need not come from a PRI (see the comments after the quoted results). Moreover, a Markushevich basis constructed using a PRI may even fail all the properties (see Example~\ref{E:Mbaze divna}(b)).
The quoted example illustrates that the choice of a particular PRI does matter. So, it is natural to ask whether there is always a `nice'
PRI. In particular:

\begin{ques}
Let $\eta\ge\omega_2$ and $\kappa=\card\eta$. Is there a bijection $\xi:I(\kappa)\to I(\eta)$ such that the set $\{f_\alpha\setsep \alpha\in I(\kappa)\}\cup\{0\}$ (defined before Proposition~\ref{P:ord-Mbaze z PRI}) is $\sigma(\C([0,\eta]),D(\eta))$-closed?
\end{ques}

Note that if $\eta$ is a cardinal, $\xi$ can be the identity (by Example~\ref{E:Mbaze divna}(a)). In some further special cases it is not hard to construct respective $\xi$. But we do not know whether it is possible in general.

The following special case of Question~\ref{Q:M-basis uzavrena?} seems to be open as well.

\begin{ques}
Let $X$ be an Asplund space. Is there a Markushevich basis $(x_\alpha^*,x_\alpha^{**})_{\alpha\in\Lambda}$ of $X^{**}$ such that the set $\{0\}\cup\{x_\alpha^*\setsep \alpha\in\Lambda\}$ is $\sigma(X^*,D(X))$-closed (or even weak$^*$-closed)?
(Recall that $D(X)$ is the subspace of $X^{**}$ defined in Remark~\ref{remark-Asplund}(2).)
\end{ques}

Another question is whether the existence of a projectional skeleton is equivalent to the existence of a projectional generator.

\begin{ques}\label{Q:PG} Assume that a Banach space $X$ admits a projectional skeleton $(P_s)_{s\in\Gamma}$ and $D\subset X^*$ is the induced subspace.
\begin{itemize}
\item Is there a projectional generator on $X$ with domain $D$?
\item Given $x^*\in D$, fix $s(x^*)\in\Gamma$ such that $x^*=P_{s(x^*)}^*x^*$ and let $\Phi(x)\subset P_{s(x^*)}X$ be a countable dense subset. Is $(D,\Phi)$ a projectional generator?
\end{itemize}
\end{ques}

We point out that the answer to the first question is positive in the commutative case (by Theorem~\ref{T:char sigma subspace}),
for continuous functions on ordinals (by Propositions~\ref{P:ord Mbaze ocasni}(d)) and for continuous functions on trees
(by Proposition~\ref{P:trees PG}). Note that the quoted propositions do not answer the second question, as the projectional generators are constructed in a similar but a bit different manner than suggested.

If $X$ is an Asplund space, its dual admits a projectional generator with domain $X$ (see Theorem~\ref{T:asplund}).
However, the following question seems to be open.

\begin{ques}
Let $X$ be an Asplund space. Is there a projectional generator on $X^*$ with domain $D(X)$?
\end{ques}

Conversely, assume that $X$ is a Banach space which admits a projectional generator with domain $Y\subset X^*$.
It is not hard to construct then a projectional skeleton -- one possibility is to use
\cite[Proposition 7 and Theorem 15]{kubis-skeleton}. Another possibility is to use the method of the proof of \cite[Lemma 6.1.3 and Proposition 6.1.7]{fabian-kniha} to prove the validity of the condition (2) in Theorem~\ref{T:char induced} starting from a projectional generator. The space induced by the respective skeleton may be larger than $Y$ -- it is the smallest weak$^*$-countably closed subspace of $X^*$ containing $Y$, call it $D(Y)$. The following abstract question seems to be open as well.

\begin{ques}
Assume that $X$ is a Banach space which admits a projectional generator with domain $Y\subset X^*$. Does it admit a projectional generator with domain $D(Y)$? Can such a projectional generator be found as an extension of the original one?
\end{ques}

Another natural question is the following one.

\begin{ques}
Assume that $X$ is a Banach space which admits a projectional generator with domain $Y\subset X^*$. Is there a projectional generator $\Phi$ with domain $Y$ such that
$$\forall M\subset Y: \clin^{w^*} M\cap \Phi(M)^\perp=\{0\}\quad ?$$
\end{ques}

This stronger condition can be achieved for Plichko spaces (by the proof of Theorem~\ref{T:char sigma subspace}), for continuous functions on ordinals (by the proof of Proposition~\ref{P:ord Mbaze ocasni}(d)) and for continuous functions on trees (by the proof of Proposition~\ref{P:trees PG}). On the other hand, for duals of Asplund spaces the stronger condition was not proved. In the proof of Theorem~\ref{T:asplund} it was strongly used the assumption that $\overline{M}$ is linear.

Another interesting problems concern the relationship of primarily Lindel\"of spaces and monotonically Sokolov ones. Monotonically Sokolov spaces (more precisely their continuous images) can be viewed as a noncommutative version of primarily Lindel\"of ones. The first question is the following one.

\begin{ques}\label{qq1} Are monotonically Sokolov spaces stable under continuous images? \end{ques}

Note that primarily Lindel\"of spaces are stable to continuous images by the very definition, monotonically Sokolov spaces are stable to $\er$-quotient images by \cite[Theorem 3.4(g)]{RoHe-sokolov}. The stability to continuous images is not discussed in \cite{RoHe-sokolov}. We conjecture that the stability fails but we do not know any counterexample.

Assuming the answer is negative, the following question is natural.

\begin{ques} Assume that $T$ is simultaneously primarily Lindel\"of and monotonically Sokolov. Is $T$ an $\er$-quotient image of a closed subset of $(L_\Gamma)^\en$?
\end{ques}

Another question is inspired by the fact that primarily Lindel\"of spaces are defined by an explicit representation, while monotonically Sokolov
are defined by existence of a certain family of retractions. So, we can ask the following general question.

\begin{ques} Is it possible to characterize monotonically Sokolov space by an explicit representation (similar to that of primarily Lindel\"of spaces)?
\end{ques}

Note that this is related to a similar problem of the existence of an explicit representation of compact spaces with a retractional skeleton (similar
to that of Valdivia compacta) or of Banach spaces with a projectional skeleton (similar to that of Plichko spaces). It seems to be related also to the problem of a relationship of a Markushevich basis to the subspace induced by a projectional skeleton discussed above.

\subsection*{Added during the revision process} After submition of  this paper  Question~\ref{qq1} has been answered in the negative in \cite[Examples~3.1 and~3.2]{CSRH-new}. 

\section*{Acknowledgment}

The author is grateful to Mari\'an Fabian for many useful comments.

%\bibliography{sample}\bibliographystyle{acm}
\def\cprime{$'$}

\end{document}